\crefname{subsection}{Subsection}{subsections}
\crefname{figure}{Figure}{Figure}
\newcommand{\customlabel}[2]{%
   \protected@write \@auxout {}{\string \newlabel {#1}{{#2}{\thepage}{#2}{#1}{}} }%
   \hypertarget{#1}{#2}%
}
\definecolor{background}{HTML}{F5F6FF}
\definecolor{border}    {HTML}{B8C0FF}
\definecolor{comment}   {HTML}{35C04A}
\newtcolorbox{sagecolorbox}{
  width=\textwidth,
  colback = background,
  colframe = border,
  arc = 3px,
  boxrule = 1pt,
  fontupper = \scriptsize\ttfamily,
  left  = 0.15cm,
  right = 0.15cm,
  top   = -0.2cm,
  bottom= -0.2cm,
  before skip = 0.4cm,
  after skip  = 0.4cm,
  width = \linewidth,
  center
}
\newtheorem{theorem}{Theorem}[section]
\newtheorem*{theorem*}{Theorem}
\newtheorem{proposition}[theorem]{Proposition}
\newtheorem*{proposition*}{Proposition}
\newtheorem*{corollary*}{Corollary}
\newtheorem*{lemma*}{Lemma}
\newtheorem*{conj*}{Conjecture}
\theoremstyle{definition}
\newtheorem{definition}[theorem]{Definition}
\newtheorem{remark}[theorem]{Remark}
\newtheorem{example}[theorem]{Example}
\newcommand{\Function}[5]{
  \[
    \begin{array}{rrcl}
    #1: & #2 & \to     & #3 \\
        & #4 & \mapsto & #5
    \end{array}
  \]
}
\newcommand{\FunctionNoname}[4]{
  \[
    \begin{array}{rcl}
      #1 & \to     & #2 \\
      #3 & \mapsto & #4
    \end{array}
  \]
}
\newcommand{\EqFunction}[6]{
  \begin{equation}
    \label{#6}
    \begin{array}{rrcl}
    #1: & #2 & \to     & #3 \\
        & #4 & \mapsto & #5
    \end{array}
  \end{equation}
}
\renewcommand{\tilde}{\widetilde}
\renewcommand\epsilon{\varepsilon}
\renewcommand\geq{\geqslant}
\renewcommand\leq{\leqslant}
\newcommand\N{\mathbb{N}}
\newcommand\C{\mathbb{C}}
\newcommand\Q{\mathbb{Q}}
\newcommand\F{\mathbb{F}}
\newcommand\Fq{\mathbb{F}_q}
\newcommand\Fqm{\F_{q^m}}
\newcommand\R{\mathbb{R}}
\newcommand\Z{\mathbb{Z}}
\newcommand{\Hom}{\text{\rm Hom}}
\newcommand\End{\text{\rm End}}
\newcommand\Acar{\text{\rm char}_A}
\newcommand\rk{\text{\rm rk}}
\newcommand{\p}{\mathfrak{p}}
\newcommand{\pp}{\mathfrak{p}}
\newcommand{\Fpp}{\F_{\pp}}
\renewcommand{\H}{\mathcal H}
\DeclareMathOperator{\Pic}{Pic}
\DeclareMathOperator{\Cl}{Cl}
\newcommand\calX{\mathcal{X}}
\newcommand\bk{\boldsymbol{k}}
\newcommand\bs{\boldsymbol{s}}
\newcommand{\Kinf}{K_{\infty}}
\newcommand{\Cinf}{C_{\infty}}
\newcommand{\MM}{\mathbf{M}}
\newcommand{\Tl}{\mathbf{T}_{\mathfrak l}}
\newcommand{\TT}{\mathbf{T}}
\newcommand{\ee}{\mathbf{e}}
\renewcommand{\a}{\mathfrak{a}}
\newcommand{\fll}{\mathfrak{l}}
\newcommand{\Fs}{F^{\text{\rm s}}}
\newcommand{\Ks}{K^{\text{\rm s}}}
\newcommand{\Mat}{\text{\rm Mat}}
\newcommand{\Gal}{\text{\rm Gal}}
\newcommand{\BC}{\text{\rm BC}}
\DeclareMathOperator{\GL}{\mathrm{GL}}
\DeclareMathOperator{\enc}{\mathrm{enc}}
\newcommand{\phiact}[1]{{}^\phi\! #1}
\newcommand{\psiact}[1]{{}^\psi\! #1}
\newcommand{\Ftau}{F\{\tau\}}
\newcommand{\rgcd}{\mathrm{rgcd}}
\newcommand{\llcm}{\mathrm{llcm}}
\newcommand{\ie}{\textit{i.e.,~}}
\newcommand{\eg}{\textit{e.g.,~}}
\DeclareMathOperator{\h}{h}
\newcounter{sageexplaincounter}[section]
\renewcommand{\thesageexplaincounter}{\thesection.\arabic{sageexplaincounter}}
\newenvironment{sageexplain}
{
  \vskip 1em
  \begin{addmargin}[0.05\linewidth]{0.05\linewidth}
  \footnotesize
  \color{black!70}
  \parindent=0pt
  \refstepcounter{sageexplaincounter}%
  \textbf{SageMath example~\thesageexplaincounter.}
}
{
  \end{addmargin}
  \vskip 1em
}
\newcolumntype{C}[1]{>{\centering\hspace{0pt}\arraybackslash}m{#1}} 
\newcolumntype{L}[1]{>{\raggedright\hspace{0pt}\arraybackslash}m{#1}} 
\title{A computational approach to Drinfeld modules}
\author[1]{Cécile Armana}
\author[2]{Elena Berardini}
\author[2]{Xavier Caruso}
\author[3]{Antoine Leudière}
\author[4]{Jade Nardi}
\author[5]{Fabien Pazuki}
\affil[1]{Univ. Lille, CNRS, UMR 8524 - Laboratoire Paul Painlevé, F-59000 Lille,
France}
\affil[2]{CNRS; IMB, Université de Bordeaux, France}  
\affil[3]{University of Calgary, Canada}
\affil[4]{CNRS; Univ Rennes, IRMAR - UMR 6625, F-35000 Rennes, France} 
\affil[5]{Department of Mathematical Sciences, University of Copenhagen, Universitetsparken 5,
    2100 Copenhagen, Denmark}
\date{}
\newcommand{\subjclass}[1]{
  \gdef\@subjclass{#1}
}
\newcommand{\keywords}[1]{
  \gdef\@keywords{#1}
}
\newcommand{\printclassifications}{
  \ifx\@subjclass\@empty\else
    \begingroup
      \renewcommand\thefootnote{}
      \footnotetext{\textit{2020 Mathematics Subject Classification.} \@subjclass}
    \endgroup
  \fi
  \ifx\@keywords\@empty\else
    \begingroup
      \renewcommand\thefootnote{}
      \footnotetext{\textit{Keywords:} \@keywords}
    \endgroup
  \fi
}
\begin{document}
\sloppy
	\maketitle

	\begin{abstract} 
This survey provides a practical and algorithmic perspective on Drinfeld modules over $\Fq[T]$. Starting with the construction of the Carlitz module, we present Drinfeld modules in any rank and some of their arithmetic properties. We emphasise the analogies with elliptic curves, and in the meantime, we also highlight key differences such as their rank structure and their associated Anderson motives.

This document is designed for researchers in number theory, arithmetic geometry, algorithmic number theory, cryptography, or computer algebra, offering tools and insights to navigate the computational aspects of Drinfeld modules effectively. We include detailed SageMath implementations to illustrate explicit computations and facilitate experimentation. Applications to polynomial factorisation, isogeny computations, cryptographic constructions, and coding theory are also presented. 
\end{abstract}

\subjclass{Primary 11G09, Secondary 11R58, 11-04.}

\keywords{Drinfeld modules, Carlitz module, Ore polynomials, Anderson motives, computer algebra, cryptography, coding theory.} 
\printclassifications

\setcounter{tocdepth}{2}
\tableofcontents

\section{Introduction} \label{sec:motivations}

Number theory is sometimes depicted as the study of \emph{number fields}, which are finite extensions of $\mathbb{Q}$, together with their Galois properties. 
The simplest class of number fields are the so-called \emph{cyclotomic fields} obtained from $\mathbb{Q}$ by adjoining roots of unity. 
Although quite elementary in appearance, they revealed a remarkable structure and numerous applications, including Kummer’s proof of many cases of Fermat’s Last Theorem.
Ultimately, their investigation leads to class field theory.
Beyond cyclotomic extensions, one finds nonabelian extensions of $\Q$.
Here, the situation is far less understood; nevertheless, algebraic geometry provides powerful tools for building such extensions.
As a basic example, the field generated by the coordinates of the $n$-torsion points of an elliptic curve defined over $\Q$ is a number field whose Galois group naturally sits in $\GL_2(\Z/n\Z)$.
Studying these extensions proved to be fascinating and again led to outstanding applications, including the complete proof of Fermat's Last Theorem due to Wiles and Taylor--Wiles.
Nowadays, these developments are encompassed in the far-reaching Langlands programme, which, roughly speaking, aims at understanding all number fields by group-theoretical means.

In parallel with number fields, one often considers function fields, which are finite extensions of $\Fq(T)$ where $\Fq$ is a finite field.
However, extending the definitions of cyclotomic fields and elliptic curves to function fields in a straightforward way does not lead to notions exhibiting sufficiently rich arithmetic content. Regarding cyclotomy, for example, we note that the extensions of $\Fq(T)$ generated by roots of unity are simply the fields $\Fqm(T)$---which leaves out many interesting abelian function fields.

In the 1930s, Carlitz~\cite{C35} introduced an analogue of the exponential function in the framework of function fields, resulting in the construction of a new large family of cyclotomic extensions. Roughly speaking, these extensions are defined by adjoining ``$f(T)$-th roots of unity'' for any \emph{polynomial} $f(T) \in \Fq[T]$.
About forty years later, Drinfeld went further and proposed new objects, first called \emph{elliptic modules}, as a meaningful arithmetic replacement of elliptic curves over function fields.
Elliptic modules, which are nowadays called \emph{Drinfeld modules}, allow for building extensions with Galois groups sitting in $\GL_r(\Fq[T]/f(T)\Fq[T])$ for any rank~$r$ and any polynomial $f(T) \in \Fq[T]$.
In rank~$1$, Drinfeld's theory meets Carlitz's constructions, while in rank~$2$, it mimics the classical theory of elliptic curves.
However, Drinfeld's framework allows one to explore higher ranks similarly---this feature turned out to be of remarkable importance towards the Langlands program for function fields~\cite{drinfeld1980langlands}, as it allowed Lafforgue~\cite{lafforgue2002chtoucas} to completely establish it in the case of $\GL_r$ in 2002.

We also rapidly mention that, the same way that elliptic curves are related to modular forms, Drinfeld modules are also related to the so-called Drinfeld modular forms \emph{via} modularity theorems~\cite{drinfel1974elliptic,gekeler-reversat-96}.
They are also the source of a rich theory of transcendence over function fields, including algebraic independence \cite[Chapter~10]{Tha04}.

Algorithms for elliptic curves have been studied for a long time.
On the contrary, despite the remarkable impacts of Drinfeld modules, their algorithmic aspects are less known and only a subject of very recent developments~\cite{caranay_computing_2018,caranay2020computing,W22,ayotte2023arithmetic,CL23, leudiere_morphisms_2024, caruso_computation_2025, gloch_algorithms_2025}.
This research was initially driven by the profusion of shared properties with elliptic curves, and more generally, abelian varieties, over number fields, but it has now started to gain its independence and even to go further in certain directions.
As a striking example, the problems of computing isogenies or $L$-functions are basically solved for Drinfeld modules, whereas they are still challenging questions over number fields.
Moreover, algorithms for Drinfeld modules also start to find applications in connected domains, including computer algebra~\cite{doliskani_drinfeld_2021}, cryptography~\cite{scanlon_public_2001,joux_drinfeld_2019,leudiere_hard_2022} and coding theory~\cite{bassa2015good,BDM24preprint}.

\subsection{Purpose and organisation of the survey}

This document is designed as a general introduction to the theory of Drinfeld modules, with a particular look towards computational aspects and applications.
Our presentation will be illustrated by many examples handled with the help of the software SageMath, which includes an implementation of Drinfeld modules: the first features were integrated in SageMath 10.0~\cite{ayotte_drinfeld_2023, leudiere_morphisms_2024}, while the more recent ones were added at the same time as writing the present article.

In practice, our text will be interspersed with SageMath snippets as follows.

\begin{sageexplain}
\begin{sagecolorbox}
\begin{sagecommandline}

  sage: # This is a comment in the SageMath interpretor
  sage: # The following is a command and its output
  sage: 57.is_prime()
  sage: # Perfection.

\end{sagecommandline}
\end{sagecolorbox}
\end{sageexplain}

We start in~\cref{sec:carlitz} with Carlitz's \emph{analytic construction} of the so-called \emph{Carlitz module}, and present its applications to cyclotomy.
In~\cref{sec:drinfeld}, we climb the ladder of ranks and define general Drinfeld modules, moving moreover from an analytic treatment to an algebraic one.
There, we also introduce classical algebraic invariants attached to Drinfeld
modules, namely their \emph{Tate modules} and their ``motives'', called
\emph{Anderson motives}, the latter being important algorithmic assets.

\cref{ssec:drinfeld:morphisms} is dedicated to morphisms between Drinfeld modules, a central topic of the theory; we will particularly study the action they induce at the level of Anderson motives and, building on this, will associate meaningful invariants to them.
Our goal is to equip the reader with the necessary tools and insights to navigate these structures.
In \cref{sec:arithmetics}, we continue drawing parallels between Drinfeld modules and elliptic curves, regarding their arithmetic aspects.
We draw in particular a picture of Drinfeld modules over finite fields, underlying the importance of the Frobenius isomorphism, and over function fields, giving an overview of the theory of $L$-series.
We also explore some recent developments related to height theory for Drinfeld modules, a topic which is somewhat less treated in standard references.
Finally, \cref{sec:applications} gives an overview of the applications of Drinfeld modules to polynomial factorisation, cryptography and coding theory.

Although the material presented in the survey is somewhat standard, our presentation deviates from the classical ones by being clearly algorithm-oriented.
In particular, we give a prominent role to Anderson motives, as they provide a concrete incarnation of Drinfeld modules with which it is easier to handle computations.
Indeed, unlike Drinfeld modules, Anderson motives are actually modules in the
classical sense, allowing explicit calculations \emph{via} standard linear
algebra methods and polynomial arithmetic.
While emphasising similarities between Drinfeld modules and elliptic curves, this point of view also allows one to touch upon some of their key differences:
Anderson motives embody Grothendieck motives, but unlike them, can
be described using only elementary algebraic definitions. They thus played a key
role in recent algorithmic developments, allowing for solving computational
problems in greater generality (higher rank, notably), and being used in new
coding theory constructions.

For a more comprehensive treatment of Drinfeld modules and their
generalisations (Anderson motives, abelian modules, shtukas), with complementary perspectives, we refer the reader
to~\cite{G98,Rosen_2002,Tha04,brownawell_rapid_2020, poonen_introduction_2022, Pap23} (in chronological order, this list not being exhaustive).

\subsection{Setting and notation}
\label{subsection:setting-and-notations}

Let $p$ be a prime number and $q$ be a power of $p$. Let $\Fq$ be a finite field with $q$ elements. We denote by $A:= \Fq[T]$ the ring of univariate polynomials with coefficients in $\Fq$ and $K=\Fq(T)$ its field of fractions. The field $K$ is a global function field over $\Fq$. 

\begin{remark}
In full generality, Drinfeld modules are defined for $K$ being the field of rational functions over a smooth curve defined over a finite field.
Nonetheless, throughout this text, we will restrict ourselves to the case where $A = \Fq[T]$ (corresponding to the curve $\mathbb P^1$), considering that the theory is already rich enough for our exposition. In order to facilitate the study of Drinfeld modules over general rings, we point out parts where the assumption $A=\Fq[T]$ is crucial.
\end{remark}

The place of $K$ corresponding to $\frac{1}{T}$ is denoted by $\infty$. We set $\Kinf := \Fq((\frac{1}{T}))$, which is a field isomorphic to the completion of $K$ at $\infty$. We equip $\Kinf$ with the absolute value $q^{\deg(\cdot)}$. The completion of a fixed algebraic closure of $\Kinf$ is denoted by $\Cinf$, and the absolute value uniquely extends to $\Cinf$. Both fields $\Kinf$ and $\Cinf$ are non-archimedean. The field $\Cinf$ is complete and algebraically closed. 

A set of classical analogies between the function field and the number field settings, in which $\Z$ and $A$ play the same role, is given in \cref{fig-settingsanalogies}. However, contrary to $\C/\R$, the extension $\Cinf/\Kinf$ has infinite degree. The elements of $\Cinf$ may be difficult to apprehend, especially for computational applications. Note that for a field $K$ of characteristic $0$, the algebraic closure of $K((t))$ is isomorphic to the Newton--Puiseux field $\bigcup_{i=1}^\infty K((t^{1/i}))$. In our case, as $\Kinf$ has positive characteristic, its algebraic closure contains more than Newton--Puiseux series---we refer to \cite{Kedlaya01} for more details.

\begin{figure}
	\begin{tikzpicture}[scale=1.5]
		\node (Z) at (0,0) {$\mathbb{Z}$};
		\node (Q) at (0,1) {$\mathbb{Q}$};
		
		\node (R) at (-1,2) {$\mathbb{R}$};
		\node (C) at (-1,3) {$\mathbb{C}$};
		\node[align=center] (val1) at (-1,4) {complex \\ modulus};

		\node (Ql) at (1,2) {$\Q_\ell$};
		\node (Cl) at (1,3) {$\C_\ell$};
		\node[align=center] (val2) at (1,4) {$\ell^{-v_\ell(\cdot)}$};

		\node at ($(Z)!0.5!(Q)$) {\rotatebox{90}{$\subset$}};
		\draw (Q) -- (R);
		\draw (R) -- node [left] {2} (C) ;
		
		\draw (Q) -- (Ql);
		\draw (Ql) -- node [right] {} (Cl) ;
		
		\draw[<->] (C) -- node [above] {$\sim$} node [below] {as fields} (Cl) ;
	
	\begin{scope}[xshift=3.5cm]
		\node (A) at (0,0) {$A=\Fq[T]$};
		\node (K) at (0,1) {$K=\Fq(T)$};
		\node (Kinf) at (0,2) {$\Kinf=\Fq((\frac{1}{T}))$};
		\node (Cinf) at (0,3) {$\Cinf$};
		\node (val3) at (0,4) {$q^{\deg(\cdot)}$};
	\end{scope}
	
	\node at ($(A)!0.5!(K)$) {\rotatebox{90}{$\subset$}};
	\draw (K) -- (Kinf);
	\draw (Kinf) -- node [right] {$\infty$} (Cinf) ;

	\begin{scope}[xshift=-4cm]
\node (ring) at (0,0) {Ring};
\node[align=center] (ff) at (0,1) {Field of \\fractions};
\node[align=center] (comp) at (0,2) {Completion};
\node[align=center] (alg) at (0,3) {Completion of an\\algebraic closure};
\node[align=center] (val0) at (0,4) {Absolute value};
\end{scope}

\node (accol) at (3,4.3) {};
\draw[decorate,decoration={brace,amplitude=10pt}] (accol.north -| val2.west) -- (accol.north -| val3.east) node[midway,above=10pt] {non--archimedean};

	\end{tikzpicture}
	\caption{An analogy to keep in mind when starting to work with Drinfeld modules.}
\label{fig-settingsanalogies}
\end{figure}
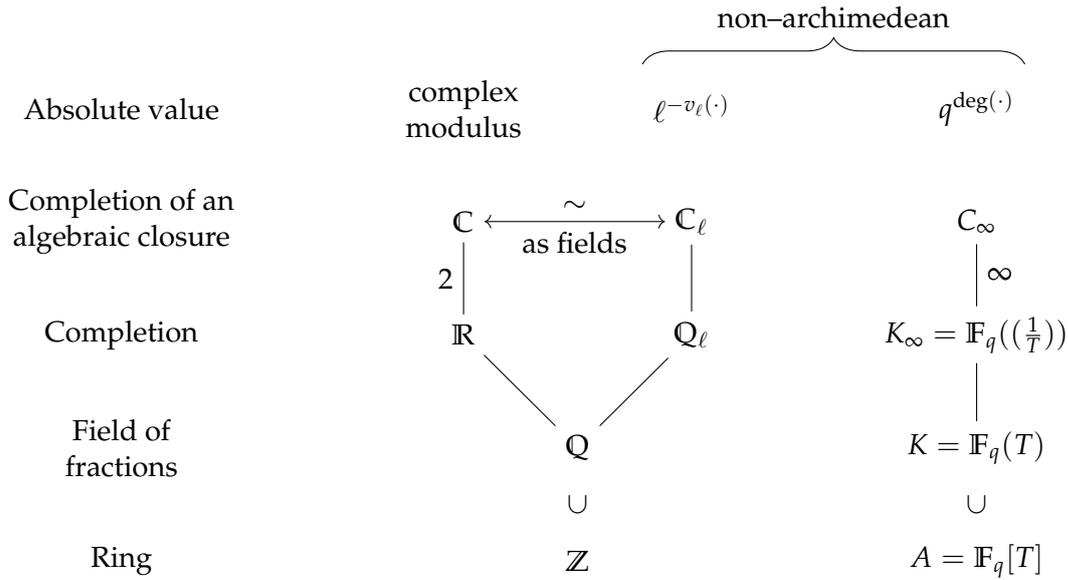

\begin{sageexplain}
Throughout this article, we shall use the base ring $A = \F_7[T]$
for all our examples in SageMath. The code below creates $A$ and
its completion at infinity $\Kinf$.

\begin{sagecolorbox}
\begin{sagecommandline}

  sage: F7 = GF(7)
  sage: A.<T> = F7['T']
  sage: A.completion(infinity)

\end{sagecommandline}
\end{sagecolorbox}
\end{sageexplain}

\paragraph{Euler-Poincaré characteristic.}

We also recall the following classification result over the principal ideal domain $A$:
any finitely generated $A$-module $M$ is isomorphic to a direct sum
of the form
\[
  M \simeq A^n \oplus A/a_1A \oplus \cdots \oplus A/a_mA
\]
where $n$ is a nonnegative integer, often referred to as the \emph{rank}
of $M$, and the $a_i$ lie in $A$. Besides, $n$ is uniquely determined, and the $a_i$ are too, if we impose the condition that
they are monic and that $a_i$ divides $a_{i+1}$ for all $i$.

In particular, if $M$ is finite then $n$ is necessarily zero and $M$ 
becomes isomorphic to $A/a_1A \oplus \cdots \oplus A/a_mA$. The ideal 
generated by the product $a_1 \cdots a_m$ does not depend on this 
presentation: it is the so-called \emph{Euler--Poincaré characteristic} 
or the \emph{Fitting ideal} of~$M$ \cite[Chapter 3, 
\S~8]{lang_algebra_2002}.
We will denote it by $|M|$ throughout this article and use it to measure the size of $M$. In analogy with the number field setting, it simply corresponds to the cardinality.
	
\section{Discovering the Carlitz module}

\label{sec:carlitz}

The first instance of a Drinfeld module was introduced by Leonard Carlitz in 1935 \cite{C35}. At the time, his work received little attention and was mostly forgotten until it was rediscovered in the 1970s, following the work of Drinfeld \cite{drinfeld_commutative_1977}. Carlitz's construction is now called the \emph{Carlitz module}. We motivate its introduction with the \emph{Carlitz exponential} in characteristic~$p$, by building on analogies and differences with classical exponential functions in characteristic~$0$. The following presentation is largely inspired by \cite[\S~2.1]{Tha04}, with additional details.
 
\subsection{Looking for an exponential function}
\label{ssec:lookforexp}

The starting point of the theory for the Carlitz module is the desire to find an analogue of the cyclotomic theory in function fields. Recall that, in classical number theory, the cyclotomic extensions of $\Q$ are the extensions of the form $\Q(\zeta_n)$ where $\zeta_n$ is a primitive $n$-th root of unity, with $n\geq2$ an integer. There are several ways to think of $\zeta_n$. One of them is of an analytic nature, through the classical exponential function $\exp: \C\to\C$: 
$$\zeta_n = \exp\left(\frac{2i\pi}n\right).$$

An approach to construct relevant cyclotomic extensions of $K=\Fq(T)$ is to generalise the above construction. We are then looking for a function $e: \Cinf \rightarrow \Cinf$ which would play the role of an exponential function in our setting. This will also lead us to define an analogue of $\pi$.

A first way to define the classical exponential function $\exp$ is through its differential equation $\exp' = \exp$. Power series solutions 
to this equation would necessarily be of the form $c\sum_{n\geq0} \frac{x^n}{n!}$ for some constant $c$. However, this does not make sense in $\Cinf$ 
since~$n! = 0$ when $n\geq p$.
A second way to look at the exponential is \emph{via} its functional equation 
$$\exp(x+y)=\exp(x)\exp(y).$$
Unfortunately, this fails again. Indeed, any function $e$ defined over $\Cinf$ satisfying 
the above functional equation would also satisfy $e(0)=e(0)^2$, and therefore $e(0) \in\{0,1\}$. 
Since $e(0)=e(px)=e(x)^p$ because $\Cinf$ has characteristic~$p$, 
we would finally derive that $e$ is identically zero or identically one.

The idea, which turns out to work, originally developed by Carlitz, is to take advantage of the existence of additive functions in characteristic~$p$ and to modify the functional equation by replacing the product on the right-hand side by a sum: we are now
looking for functions $e : \Cinf \to \Cinf$ satisfying
$$\forall x, y \in \Cinf, \quad e(x+y) = e(x) + e(y),$$
\ie simply additive functions. Since our setting is by nature 
not only additive but $\Fq$-linear (everything is defined over
$\Fq$), we will focus more specifically on \emph{$\Fq$-linear} 
functions $e$.

As in the case of the classical exponential function, one expects the function $e$ to be \emph{entire}, that is, given by a power series which converges everywhere on $\Cinf$. 
A standard result of ultrametric analysis, recalled in \cite[Proposition~2.7.12]{Pap23}, states that any entire and non-constant function $f:\Cinf \to \Cinf$ is surjective. Thus, our exponential function $e$ would also be surjective.  
Let $\Lambda$ be the set of its zeros. It is a discrete $\Fq$-linear subspace of 
$\Cinf$, meaning that the intersection of $\Lambda$ with any closed ball of positive radius in $\Cinf$ is finite. If such a function $e$ existed, we would get an exact sequence of $\Fq$-vector spaces of the form

\begin{equation}\label{eq:exact-seq-inspi}
0 \longrightarrow \Lambda \longrightarrow \Cinf \overset{e}{\longrightarrow} \Cinf \longrightarrow 0,
\end{equation}
which is to be compared to the exact sequence of $\Z$-modules
for the classical exponential function
\begin{equation}\label{eq:exact-seq-classicalexp}0 \longrightarrow 2i\pi\Z \longrightarrow \C \overset{\exp}{\longrightarrow} \C^\times \longrightarrow 0.\end{equation}
Comparing those two exact sequences provides additional hints of what $\Lambda$ should be. Indeed, recall that
the analogue of $\Z$ in our setting is the ring $A = \Fq[T]$. We
thus aim at expressing $\Lambda$ as a rank-$1$ $A$-lattice $\Lambda = \tilde\pi A$ generated by some constant $\tilde\pi$, that we still need to determine. 
Furthermore, by analogy, we would like \eqref{eq:exact-seq-inspi} to be an exact sequence of \emph{$A$-modules}, for some $A$-module structures to be defined on $\Lambda$ and on both copies of $\Cinf$. First, we need to interpret $e:\Cinf \to \Cinf$ as a homomorphism of $A$-modules. We already know that $e(x+y)=e(x)+e(y)$ for any $x$ and $y$ in $\Cinf$. However, we cannot have $e(az)=a e(z)$ for $a\in A$, as the functions $z \mapsto e(az)$ and $z \mapsto a e(z)$ do not have the same set of zeros. 
The idea of Carlitz, which is central to the theory of Drinfeld modules, is to equip the copy of $\Cinf$ in the codomain with a new structure of $A$-module compatible with $e$.

\begin{remark}
Before proceeding, we underline the similarity with the classical exponential function,
see Equation~\eqref{eq:exact-seq-classicalexp}: 
the codomain of the latter is not $\C$, but $\C^\times$ equipped with its 
multiplicative structure. In the function field setting, changing the 
structure on the codomain is also required, but the modification is 
of a different nature: we keep the additive structure but modify the action of $A$.
\end{remark}

Concretely, for any polynomial $a\in A$, we look for a functional
equation of the form
\begin{equation}\label{eq:functeqcarlitzexp}\forall z \in \Cinf,\quad e(az) = \phi_a (e(z))\end{equation}
where $\phi_a : \Cinf \to \Cinf$ is a function (dependent on $a$) to 
determine, which will define a new structure of $A$-module
on $C_\infty$, denoted  by $\phiact{\Cinf}$, through the rule
$$\forall a \in A, \forall z \in \Cinf, \quad a \star z = \phi_a(z).$$

We will thus get a family of commutative diagrams of $A$-modules:
\begin{center}
\begin{tikzcd}
	0 \arrow[r] & \Lambda \arrow[d,"\text{mult}_a"] \arrow[r] & \Cinf \arrow[d,"\text{mult}_a"] \arrow[r,"e"] &\phiact{\Cinf}\arrow[d,"\phi_a"] \arrow[r] &0 \\
	0 \arrow[r] & \Lambda \arrow[r] & \Cinf \arrow[r,"e"] &\phiact{\Cinf}\arrow[r] &0
\end{tikzcd}
\end{center}
where $\text{mult}_a$ denotes the usual multiplication by $a \in A$.

\subsection{Analytic construction of the Carlitz module}\label{ssec:carlitz}

\subsubsection{Finding a formula for the function \texorpdfstring{$e$}{e}}

We now aim at constructing the function $e$ with the properties highlighted in
\cref{ssec:lookforexp}. Namely, we fix a rank-$1$ $A$-lattice of the form $\Lambda  = cA$, with $c \in \Cinf^\times$, and we are looking for an $\Fq$-linear entire function $e_\Lambda: \Cinf \to \Cinf$ whose kernel is the lattice $\Lambda$.

To build $e_\Lambda$, we recall another standard result of ultrametric analysis: any
entire function $f : \Cinf \to \Cinf$ is determined, up to multiplication by a
nonzero constant, by its set of zeros counted with multiplicity; this is a 
consequence  of an ultrametric version of the Weierstraß ``preparation theorem'' (see~\cite[\S~2.7.2]{Pap23} for more details).

In our case, we end up with the expression
\begin{equation}\label{eq-defe}
e_\Lambda(z) = 
z \prod_{\lambda \in \Lambda \setminus\{0\}} \left(1-\frac{z}{\lambda}\right).
\end{equation}
The fact that the above product indeed converges to an $\Fq$-linear entire function 
$e_\Lambda$ is a consequence of the discreteness of $\Lambda$ in $\Cinf$ 
\cite[Proposition~5.1.3]{Pap23}.

\subsubsection{Setting a new structure of \texorpdfstring{$A$}{e}-module on \texorpdfstring{$\Cinf$}{C\_∞}}
We now explain the construction of the morphisms $\phi_a$ in the functional Equation~\eqref{eq:functeqcarlitzexp}.
First of all, we observe that since $A=\Fq[T]$ is generated by $T$ over $\Fq$, knowing $\phi_T$ for the indeterminate 
$T$ is enough to recover all the functions $\phi_a$. Indeed, we have, for instance,
$$ \phi_{T^2}(e(z)) = e(T^2 z) = e(T(Tz)) = \phi_T(e(Tz)) = (\phi_T \circ \phi_T)(e(z)),$$
which would give $\phi_{T^2}= \phi_T \circ \phi_T$ by identification.
More generally, for any integer $m\geq1$, we get 
$$\phi_{T^m} =\underbrace{\phi_T \circ \cdots \circ \phi_T}_{m \text{ times}}.$$
Using the $\Fq$-linearity of $e_\Lambda$, 
the function $\phi_{a}$ is obtained by taking the suitable $\Fq$-linear
combinations of the $\phi_{T^m}$.

We now focus on the construction of $\phi_T$.
We know that the map $z \mapsto e_\Lambda(Tz)$ defines an entire function 
whose set of zeros is $\frac{1}{T}\Lambda$, and that all zeros are simple. Let us 
look at the map
$$z \mapsto \prod_{\lambda \in \frac{1}{T} \Lambda / \Lambda} (e_\Lambda(z)-e_\Lambda(\lambda)).$$
Recall that $e_\Lambda$ has $\Lambda$ for kernel so $e_\Lambda(\lambda)$ is well defined for $\lambda \in \frac{1}{T} \Lambda / \Lambda$. 
Observing that $\text{Card}\big(\frac{1}{T} \Lambda / \Lambda\big) = q$, we see that the above product is finite and defines a polynomial in $e_\Lambda(z)$ of degree~$q$.
As $e_\Lambda$ is entire, it also defines an entire function.
Moreover, its set of zeros is $\frac{1}{T}\Lambda$ and all its zeros are 
simple. Therefore, there exists $k\in\Cinf^\times$ such that
\begin{equation}
\label{eq-eTz}
e_\Lambda(Tz)= k\displaystyle\prod_{\lambda \in \frac 1 T  \Lambda / \Lambda} (e_\Lambda(z)-e_\Lambda(\lambda)).
\end{equation}
Given that we aim at obtaining $\phi_T(e_\Lambda(z)) = e_\Lambda(Tz)$, we then define
\begin{equation}
\label{eq-phiT}
\phi_T(x) = k \displaystyle\prod_{\lambda \in \frac 1 T  \Lambda / \Lambda} (x-e_\Lambda(\lambda))\in \Cinf[x].
\end{equation}
The $A$-module structure on $C_\infty$ defined by the rule
\begin{equation}
\label{eq-phiCinf}
\forall a \in A, \forall z \in \Cinf, \quad a \star z = \phi_a(z)
\end{equation}
will be denoted by $\phiact\Cinf$, as announced previously.

Since the set of zeros $\frac{1}{T} \Lambda / \Lambda$ is a 
one-dimensional $\Fq$-linear subspace of $\Cinf$, we deduce that
$\phi_T(x)$ is an $\Fq$-\emph{linear} polynomial of degree $q$, 
\ie it has the form $\phi_T(x) = k_1 x + k_2 x^q$
for some $k_1, k_2 \in \Cinf$ with $k_2\neq 0$. 
From Equation~\eqref{eq-defe}, we derive the estimation $e(z) 
= z + O(z^2)$. By identifying the coefficients in $z$ in the functional 
equation $e_\Lambda(Tz) = \phi_T(e_\Lambda(z))$, we then get $k_1 = T$.
Similarly, by identifying the coefficients in $x^q$ in Equation~\eqref{eq-phiT},
we find $k_2 = k$. Therefore, we end up with the simple formula
\[\phi_T(x)=T x + k x^q.\]

\subsubsection{Normalising the lattice \texorpdfstring{$\Lambda$}{Lambda}}

We still do not know what the constant $k$ is, but it turns out that we can freely choose its value by rescaling the lattice $\Lambda$. Indeed, if instead of starting with $\Lambda$, we start with $u\Lambda$ for $u\in\Cinf^\times$, we get the exponential function $e_{u\Lambda}$, which is connected to the original one
$e_{\Lambda}$ by the relation \[e_{u\Lambda}(z) = u e_\Lambda (z/u).\] 
Indeed, on both sides, the functions have the same set of zeros with multiplicities
and the same first term $z$ in their expansion. The final formula we get for the polynomial
$\phi_T^{u\Lambda}$ attached to the lattice $u\Lambda$ reads
\[\phi_T^{u\Lambda}(x)=T x + u^{1-q} k x^q.\]
The standard choice is to normalise our exponential map, to finally obtain the normalised form $\phi_T(x)=T x + x^q$.

The corresponding lattice is usually denoted by $\tilde\pi A$; the constant $\tilde\pi$ is then understood as the function field analogue of $2i\pi$ (see \cite[p.~317]{Pap23} for instance). 
With this normalization, we get the \emph{Carlitz exponential} map, denoted by $e_C$, whose kernel is $\tilde{\pi} A$ and with functional equation 
\begin{equation}
\label{eq:equationeC}
\forall z \in \Cinf, \quad
  e_C(Tz) = \phi_T(e_C(z)) = Te_C(z) + e_C(z)^q.
\end{equation}
Another advantage of this choice is that the coefficients of $e$ are all rational functions, \ie elements of $K$.
They can be computed iteratively by using the functional equation~\eqref{eq:equationeC} (see Subsection~\ref{ssec:drinfeld:uniformization} for more details).

\begin{sageexplain}
SageMath provides direct functionalities for computing the Carlitz exponential.

\begin{sagecolorbox}
\begin{sagecommandline}

  sage: eC = carlitz_exponential(A, prec=100)
  sage: eC

\end{sagecommandline}
\end{sagecolorbox}

We can now check the functional equation:

\begin{sagecolorbox}
\begin{sagecommandline}

  sage: z = parent(eC).gen()
  sage: eC(T*z) == T*eC + eC^7
  True

\end{sagecommandline}
\end{sagecolorbox}

\end{sageexplain}

\noindent
The datum of the degree-$q$ polynomial $\phi_T (x)= Tx + x^q$ 
and all the subsequent functions $\phi_a$ forms the so-called \emph{Carlitz module}.

\begin{sageexplain}
We instantiate the Carlitz module over $\F_7$ and compute $\phi_a$ for
$a = T^2 + T + 1$.

\begin{sagecolorbox}
\begin{sagecommandline}

  sage: phi = CarlitzModule(A)
  sage: phi(T^2 + T + 1)

\end{sagecommandline}
\end{sagecolorbox}

In the output above, $\tau$ represents the Frobenius map $x \mapsto x^q$.
This notation is of primary importance in the theory of Drinfeld modules
and will be explained in more detail in \cref{ssec:drinfeld:ore}.
\end{sageexplain}

From a work of Carlitz~\cite{C35}, one can derive the following 
\begin{equation}
\label{eq:pi}
\tilde{\pi} = T \sqrt[q-1]{-T} \prod_{i=1}^{+\infty} \left ( 1 - \frac{1}{T^{q^i -1}} \right)^{-1}.
\end{equation}
Although we will not need it, we also mention that Wade~\cite{Wade}
showed that $\widetilde{\pi}$ is transcendental over $K$.

\subsection{Application to the cyclotomic theory}
\label{applications_cyclotomic_theory}

One motivation for building the Carlitz exponential
was to find an analogue of the cyclotomic theory in the framework of
function fields. Classically, the cyclotomic fields are those obtained
by adding the $n$-th roots of unity (for an integer $n\geq2$), \ie the kernel of the 
group morphism $\C^\times \to \C^\times, x \mapsto x^n$.
In the function field setting, the analogues of these morphisms are the maps $\phi_a$ defining the Carlitz module.
We are then naturally lead to consider the extension $K(\phi[a])/K$ where, by definition,
$$\phi[a] = \big\{\,z \in \phiact\Cinf \ , \
a \star z = \phi_a(z) = 0\,\big\}.$$
It is the so-called \emph{$a$-torsion} of the Carlitz module, which is the $a$-torsion submodule of $\phiact\Cinf$, therefore an $A$-submodule of $\phiact\Cinf$.

\begin{theorem}[{Carlitz, see \cite[Proposition~12.5]{Rosen_2002}}]
For all $a \in A$, 
the extension $K(\phi[a])/K$ is Galois and its Galois group
is canonically isomorphic to $(A/aA)^\times$.
\end{theorem}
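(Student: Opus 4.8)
The plan is to mimic the classical proof that $\Q(\zeta_n)/\Q$ is Galois with group $(\Z/n\Z)^\times$, using the Carlitz module structure in place of the multiplicative structure. First I would observe that $\phi[a]$ is, by construction, the kernel of the $\Fq$-linear (separable, since $\phi_a(x) = ax + \cdots + x^{q^{\deg a}}$ has nonzero linear coefficient when $a \neq 0$) polynomial $\phi_a(x) \in K[x]$; hence $K(\phi[a])$ is the splitting field of $\phi_a$ over $K$, which makes the extension normal, and separability of $\phi_a$ makes it separable, so $K(\phi[a])/K$ is Galois. Next I would identify the $A$-module structure of $\phi[a]$: since $\phi_a$ has degree $q^{\deg a}$ and is separable, $\phi[a]$ has exactly $q^{\deg a} = |A/aA|$ elements, and it is an $A/aA$-module (as $a$ annihilates it); using the structure theorem for finite $A$-modules recalled in the excerpt, together with the fact (provable by the same counting argument applied to $\phi_b$ for $b \mid a$) that $\phi[b]$ has $|A/bA|$ elements for every divisor $b$ of $a$, one concludes that $\phi[a]$ is a free $A/aA$-module of rank $1$. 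Fix a generator $\lambda_0$.

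The heart of the argument is the injection of $\Gal(K(\phi[a])/K)$ into $(A/aA)^\times$. Any $\sigma$ in the Galois group commutes with all $\phi_c$ for $c \in A$ (these have coefficients in $K$, which $\sigma$ fixes) and with addition, so $\sigma$ acts on $\phi[a]$ as an $A/aA$-module automorphism; since $\phi[a] \simeq A/aA$, such an automorphism is multiplication by a unit $u_\sigma \in (A/aA)^\times$, and $\sigma \mapsto u_\sigma$ is a group homomorphism. It is injective because $K(\phi[a])$ is generated over $K$ by $\lambda_0$ (indeed every element of $\phi[a]$ is an $\Fq$-linear combination of $\{\phi_{T^i}(\lambda_0)\}$, hence lies in $K(\lambda_0)$), so $\sigma$ is determined by $\sigma(\lambda_0) = \phi_{u_\sigma}(\lambda_0)$. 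This gives $[K(\phi[a]):K] = |\Gal(K(\phi[a])/K)| \le |(A/aA)^\times|$, and the map is a canonical isomorphism as soon as we show the reverse inequality, i.e.\ surjectivity.

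Surjectivity — equivalently $[K(\phi[a]):K] = |(A/aA)^\times|$ — is the main obstacle and the genuinely arithmetic input, exactly as in the number-field case where one needs the irreducibility of the cyclotomic polynomial. The cleanest route is to reduce to prime powers $a = \p^n$ (since $A/aA \simeq \prod_i A/\p_i^{n_i}A$ and $\phi[a] = \bigoplus_i \phi[\p_i^{n_i}]$, with a ramification/coprimality argument showing the corresponding subextensions are linearly disjoint, so their degrees multiply, as do the orders of the unit groups) and then analyze a single $\p = (P)$ with $P$ monic irreducible. For $a = P$, one shows $\phi_P(x)/x = \sum_{j} c_j x^{q^{\deg P \cdot j} - ?}$ — more precisely $\Phi_P(x) := \phi_P(x)/x$, of degree $q^{\deg P} - 1$ — is an Eisenstein-type polynomial at $P$ (its constant term is $P$ and it reduces mod $P$ to a $q^{\deg P} - 1$ power of $x$ up to a unit), so it is irreducible over $K$ and $K(\phi[P])/K$ is totally ramified of degree $q^{\deg P} - 1 = |(A/PA)^\times|$ at $P$; for $a = P^n$ one iterates, showing each step $K(\phi[P^{n}])/K(\phi[P^{n-1}])$ has degree $q^{\deg P}$ via a similar Eisenstein argument for the minimal polynomial of a generator of $\phi[P^n]$ over the previous field. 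Assembling the prime-power cases via the Chinese Remainder decomposition and linear disjointness then yields the full statement; alternatively, one may simply cite \cite[Proposition~12.5]{Rosen_2002} for these ramification computations, which are the technical core and whose details I would not reproduce here.
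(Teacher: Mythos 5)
Your proposal is correct and follows essentially the same route as the paper's sketch: the homomorphism $\Gal(K(\phi[a])/K)\to(A/aA)^\times$ is obtained from the action on $\phi[a]$ viewed as a free rank-one $A/aA$-module, injectivity is the routine generation argument, and surjectivity is the genuine arithmetic content handled via the factorisation of $\phi_a$ (your Eisenstein-at-$\p$ analysis is precisely what the paper alludes to and delegates to \cite[Theorem~12.8]{Rosen_2002}). The only point worth flagging is that the linear-disjointness step in your prime-power reduction also rests on the ramification computations you cite, so it is not an independent shortcut.
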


\begin{proof}[Sketch of the proof]
The construction of the map $\alpha : \Gal(K(\phi[a])/K) \to 
(A/aA)^\times$ is similar to the classical case. Let $\sigma$
be an automorphism to $K(\phi[a])$ preserving $K$.
Then $\sigma$ acts on $\phi[a]$, as the latter is the set of
roots of the polynomial $\phi_a$. We then notice that $\phi[a]$
is a free module of rank~$1$ over $A/aA$ and that $\sigma$ acts
linearly. Therefore $\sigma$ must act by multiplication by an element of $A/aA$, which needs to be invertible given that $\sigma$
is an isomorphism. We thus get a uniquely defined scalar in
$(A/aA)^\times$, which is by definition $\alpha(\sigma)$.

Proving the injectivity of $\alpha$ is routine: given that 
$K(\phi[a])$ is by definition generated over $K$ by the elements
of $\phi[a]$, a $K$-automorphism of $K(\phi[a])$ is uniquely
determined by its action on $\phi[a]$. On the contrary, the
surjectivity of $\alpha$ is more difficult and showing it amounts
more or less to studying the factorisation properties of $\phi_a$.
We refer to \cite[Theorem~12.8]{Rosen_2002} for a complete proof of this statement.
\end{proof}

In the case of number fields, a striking result about 
cyclotomic extensions is that they exhaust all abelian extensions 
of $\Q$: it is the famous Kronecker--Weber's theorem. 
Hayes~\cite{Hayes_1974} proved an analogue of this statement in the
context of function fields.
However, this case is a bit more subtle, due to the two following facts, both related to ramification.
First of all, contrary to $\Q$, the base field $K = \Fq(T)$ certainly
admits everywhere unramified extensions: these are the extensions
of the form $F(T)$ where $F$ is a finite extension of $\Fq$.
Those extensions are abelian, and they are not of the form
$K(\phi[a])$.
The second source of difficulties is that Carlitz's construction  $K(\phi[a])$ 
only produces extensions that are tamely ramified at $\infty$ \cite[Theorem 3.1]{Hayes_1974}. In some
sense, this is due to our choice of the integral ring $\Fq[T]$ inside
$\Fq(T)$. For instance, had we initially chosen $\Fq[\frac 1 T]$ instead of $\Fq[T]$, we would have ended up with a different
family of cyclotomic extensions, tamely ramified at $0$. 
In \cite{Hayes_1974}, Hayes proves that combining these two families, as well
as the previously-discussed everywhere unramified extensions, then an analogue of the Kronecker--Weber's theorem does hold.

Hayes' theorem is an outstanding outcome, which definitely demonstrates the pivotal role of the Carlitz module and of the subsequent Carlitz cyclotomic extensions in the arithmetic theory of function fields. Beyond that, Hayes' theorem is often considered the seed of an explicit version of the class group
theory in the context of function fields.

\subsection{The Carlitz zeta function}
We have seen previously that the classical exponential function has an interesting and relevant function field analogue. It turns
out that this is not an isolated example: actually, several arithmetic functions of interest also have a twin in the world of function fields (see \eg\cite[\S~5.4]{Pap23}).

As an example, we present here the function field analogue of the Riemann zeta function, the so-called \emph{Carlitz zeta function}.
It is defined by
\EqFunction
  {\zeta_C}
  {\N}
  {\Kinf}
  {s}
  {\displaystyle \sum_{\a \in A_+} \frac 1{\a^s},}
  {eq:zetaC}
where $A_+$ denotes the subset of $A = \Fq[T]$ consisting of monic
polynomials.

Similarly to the case of number fields, the Carlitz zeta function can be expressed as an Euler product
\begin{equation}
\label{eq:zetaCEuler}
\zeta_C(s) = \prod_\p \frac 1{1 - \p^{-s}},
\end{equation}
where the product runs over all irreducible monic polynomials $\p$ of $A$.
As in the classical case, navigating between the expressions~\eqref{eq:zetaC} 
and~\eqref{eq:zetaCEuler} boils down to writing down the decomposition
of each $\a \in A_+$ as the product of irreducible factors, and factorising the resulting formula.

\begin{sageexplain}
\label{sage:carlitzZeta}
The following code directly computes the value of the Carlitz zeta function.

\begin{sagecolorbox}
\begin{sagecommandline}

  sage: carlitz_zeta(A, s=1, prec=40)
  sage: carlitz_zeta(A, s=2, prec=40)
  sage: carlitz_zeta(A, s=6, prec=100)

\end{sagecommandline}
\end{sagecolorbox}
\end{sageexplain}

Carlitz proved in~\cite{C35} that, whenever $s$ is a multiple of $q-1$,
one has
\begin{equation}
\label{eq:Lseries:carlitzspecial}
\zeta_C(s) = \frac{\BC_s}{\Pi(s)} \cdot \tilde \pi^s,
\end{equation}
which appears to be an analogue of the Euler formula expressing the 
values of the Riemann zeta function at positive even numbers.
In Equation~\eqref{eq:Lseries:carlitzspecial}, the constant $\tilde \pi$ is the Carlitz period we already encountered at the end of 
\cref{ssec:carlitz}. 
The notation $\BC_s$ refers to the Bernoulli--Carlitz numbers, while 
$\Pi(s)$ is the Carlitz factorial \cite[Chapter 9]{G98}.

\begin{sageexplain}
We check Formula~\eqref{eq:Lseries:carlitzspecial} for $s = q{-}1$, 
using the following explicit congruence for $\tilde \pi^{q-1}$ derived
from Equation~\eqref{eq:pi}:
$$\tilde{\pi}^{q-1} 
 = -\frac{T^q}{{\displaystyle \prod_{i=1}^{+\infty}} \left ( 1 - \frac{1}{T^{q^i -1}} \right)^{q-1}}
 \equiv
 -\frac{T^q}{{\displaystyle \prod_{i=1}^{n-1}} \left ( 1 - \frac{1}{T^{q^i -1}} \right)^{q-1}} \pmod{T^{q+1-q^n}}.$$

\begin{sagecolorbox}
\begin{sagecommandline}

  sage: # here $q = 7$
  sage: BC6 = carlitz_bernoulli(A, 6)
  sage: Pi6 = carlitz_factorial(A, 6)
  sage: pi6 = -T^7 / prod((1 - T^(1-7^i))^6 for i in range(1, 3)) + O(1/T^100)
  sage: BC6 / Pi6 * pi6

\end{sagecommandline}
\end{sagecolorbox}
\end{sageexplain}

\section{Drinfeld modules and their Anderson motives}

\label{sec:drinfeld}

In \cref{sec:carlitz}, we have extended the construction of  the exponential function in the function field setting by building
explicitly an isomorphism $\Cinf / \tilde\pi A \to \phiact{\Cinf}$,
mimicking the standard bijection 
$\C / 2i\pi \Z \stackrel\exp\longrightarrow \C^\times$.
In the classical setting, quotienting out $\C$ by lattices of
rank~$2$ is also very fruitful as it leads to the theory of
elliptic curves. The exponential function is then replaced by the
Weierstraß $\wp$ function, inducing the map
\[\begin{array}{rcl}
    \C/\Lambda & \to  & \mathbb P^2(\C)\\
     z & \mapsto & (\wp(z):\wp'(z):1),\\
     0 & \mapsto & (0:1:0),
\end{array}\]
which identifies $\C/\Lambda$ with the complex points of an elliptic curve.
In the world of function fields, a similar construction occurs: one can consider a general lattice $\Lambda \subset \Cinf$, and the quotient $\Cinf/\Lambda$ with its natural structure of
$A$-module. Here, we require $\Lambda$ to be discrete, in order
to get a suitable topology on the quotient, even though it can be of arbitrary rank, given that, contrary to the classical case, 
the extension $\Cinf/K$ is infinite.
Nonetheless, all the constructions we carried out in 
\cref{ssec:carlitz} extend \emph{verbatim}: to each such
lattice $\Lambda$ (of any rank), one can associate an exponential map $e_\Lambda$
defined by
\Function
  {e_\Lambda}
  {\Cinf}
  {\Cinf}
  {z}
  {\displaystyle z \prod_{\lambda \in \Lambda \setminus\{0\}}
  \left(1-\frac{z}{\lambda}\right),}
which sits in the exact sequence
$$0 \longrightarrow \Lambda \longrightarrow \Cinf 
\overset{e_\Lambda}{\longrightarrow} \Cinf \longrightarrow 0.$$
Moreover, for each $a \in A$, the function $e_\Lambda$ satisfies 
the functional equation 
$$e_\Lambda(az) = \phi_a(e_\Lambda(z)),$$
where $\phi_a$ is a polynomial of the form
$\phi_a(x) = a x + c_1 x^q + \cdots + c_r x^{q^r}$ (with $c_i \in
\Cinf$ and $c_r\neq 0$) explicitly defined by
\begin{equation}
\label{eq:phia}
\phi_a(x) := a x 
  \prod_{\lambda \in (\frac 1 a  \Lambda / \Lambda)^\times} 
    \hspace{-1ex} \left(1 - \frac x {e_\Lambda(\lambda)}\right).
\end{equation}
The family $(\phi_a)_{a\in A}$ endows $\Cinf$ with a structure of $A$-module denoted by $\phiact{\Cinf}$ and given, for any $a\in A$ and $z\in\phiact{\Cinf}$, by $a \star z = \phi_{a}(z)$, as in Equation~\eqref{eq-phiCinf}. It is what we call a
\emph{Drinfeld module} of rank~$r$ over $\Cinf$. Via the exponential map $e_{\Lambda}$, we get an isomorphism of $A$-modules $\Cinf/\Lambda \simeq {}^\phi \Cinf$.

Similarly to the case of elliptic curves, a decisive advantage
of the description coming from the $\phi_a$ is its algebraicity: the $\phi_a$ are not general analytic functions, but polynomials.
Consequently, they make sense over a general base and working 
over $\Cinf$ is no longer required.
The general definition of Drinfeld modules follows from this
basic observation. 
We will write it down precisely in \cref{ssec:drinfeld:def}, after some introduction to Ore polynomials carried out
in \cref{ssec:drinfeld:ore}.
We then continue in \cref{ssec:drinfeld:uniformization} by
comparing the algebraic definition with the analytic one. Finally, in \cref{ssec:drinfeld:realizations}, we explain how to associate an Anderson module to a Drinfeld module: this will allow us to use characteristic polynomials and reduction theorems at a later stage.

\subsection{Ore polynomials}
\label{ssec:drinfeld:ore}
As mentioned above, the algebraic definition of Drinfeld modules
will be captured by the data set of polynomials $\{\phi_a\}_{a \in A}$. 
In this preliminary subsection, we introduce the ring formed by those polynomials and collect their basic properties.

\smallskip
Clearly, the functions $\phi_a$ as defined in Equation~\eqref{eq:phia} have
a distinctive form: they define $\Fq$-linear functions, which 
amounts to saying that they only involve terms of the form $c_i x^{q^i}$
with $i \in \N$.
In some sense, the $\phi_a$ are polynomials in the $q$-Frobenius endomorphism $x \mapsto x^q$. 
This property is captured by the notion of Ore polynomials, defined
below.

\begin{definition}[Ore polynomials] \label{def:ore}
Let $F$ be an $\Fq$-algebra.
We denote by $F\{\tau\}$ the noncommutative ring of \emph{Ore polynomials} 
(also known as \emph{skew polynomials}, or \emph{twisted polynomials})
\[F\{\tau\}=\left\{ \sum_{i=0}^n c_i \tau^i \,\Big|\, n \geq 0, c_i \in F\right\},\]
with the classical additive law, and the multiplication defined by $\tau c= c^q \tau$ for every $c \in F$.

Writing $f = \sum_i c_i \tau^i$, we define the \emph{$\tau$-degree} (resp.~\emph{$\tau$-valuation}) of $f$ as the largest (resp.~smallest) integer $i$ such that $c_i \neq 0$.
\end{definition}

In \cref{def:ore}, the letter $\tau$ is just a formal
variable, without further signification. However, it should of
course be understood as the Frobenius endomorphism:
Ore polynomials then correspond to polynomials in the $q$-Frobenius
as claimed previously.
The noncommutative multiplication law is also reminiscent of this 
interpretation; indeed, applying the multiplication by a scalar $c$ and 
then the $q$-Frobenius amounts to applying first the $q$-Frobenius and 
then the multiplication by $c^q$.

\begin{sageexplain}
\label{sage:orepolynomials}
In SageMath, the ring of Ore polynomials can be constructed as follows.

\begin{sagecolorbox}
\begin{sagecommandline}

  sage: F.<z> = F7.extension(2)
  sage: frob = F.frobenius_endomorphism()
  sage: Ftau.<tau> = OrePolynomialRing(F, frob)
  sage: Ftau

\end{sagecommandline}
\end{sagecolorbox}

We showcase the noncommutativity of $\Ftau$:

\begin{sagecolorbox}
\begin{sagecommandline}

  sage: z * tau
  sage: tau * z

\end{sagecommandline}
\end{sagecolorbox}

\end{sageexplain}


A nice feature of Ore polynomials is that their ring, despite being
noncommutative, is left-Euclidean with respect to the \emph{$\tau$-degree}
\cite{Ore33}. This means that right-Euclidean division can be performed,
and that left-ideals all have a unique monic generator. For a nonempty family
$S$ of Ore polynomials, one can compute their \emph{right-greatest common
divisor} (or \emph{right-gcd}, for short) $\rgcd(S)$ and their
\emph{left-least common multiple} (or \emph{left-lcm}, for short) 
$\llcm(S)$. In practice, variants of the long division and Euclidean 
algorithms can be used; complexity statements can be found in 
\cite[\S~3.1]{leudiere_computing_2024}. More advanced primitives have 
been described in \cite{caruso_new_2017, caruso_fast_2017}: they are 
based on faster algorithms for the multiplication of two Ore polynomials 
with coefficients in a finite field.

\begin{sageexplain}
  Right-gcds can be easily computed as follows:

  \begin{sagecolorbox}
  \begin{sagecommandline}

    sage: f = (1 + tau) * (z + tau)
    sage: g = (1 - tau) * (z + tau)
    sage: f.right_gcd(g)

  \end{sagecommandline}
  \end{sagecolorbox}

  We underline that having a common factor on the left does not
  imply the nontriviality of the right-gcd:

  \begin{sagecolorbox}
  \begin{sagecommandline}

    sage: f = (1 + tau) * (z + tau)
    sage: g = (1 + tau) * (z - tau)
    sage: f.right_gcd(g)

  \end{sagecommandline}
  \end{sagecolorbox}

\end{sageexplain}

\subsubsection{Kernels of Ore polynomials}
\label{subsubsection:ore-pols-kernels}

To carry on the identification of $\tau$ to the $q$-Frobenius, we interpret the Ore polynomials in $F\{\tau\}$ as actual endomorphisms, namely on a separable closure $\Fs$ of $F$: to an element $\displaystyle{f=\sum_{i=0}^n c_i \tau^i \in F\{\tau\}}$, we associate the
transformation
\FunctionNoname
  {\Fs}
  {\Fs}
  {x}
  {\displaystyle{\sum_{i=0}^n c_i x^{q^i}},}
that, in a slight abuse of notation, we continue to denote by $f$.
This transformation is $\Fq$-linear and $F$-\emph{algebraic} in the
sense that it is given by a polynomial over $F$.
One proves that any $\Fq$-linear algebraic endomorphism of $\Fs$ 
comes from a uniquely determined Ore polynomial \cite[Lemma~3.1.4]{Pap23}. 
Another nice illustration of this correspondence is given by the following proposition.

\begin{proposition}
\label{prop:Orebijection}
There is a bijection:
$$\begin{array}{rcl}
\left\{ \begin{array}{c}
  \text{\rm Ore polynomials } f \in F\{\tau\}\\
  \text{\rm with constant term } 1
\end{array}\right\} &
\longrightarrow & 
\left\{ \begin{array}{c}
  \text{\rm finite dimensional $F$-linear subspaces} \\
  V \subset \Fs \text{\rm ~stable by } \Gal(\Fs/F)
\end{array}\right\} \smallskip \\
f\phantom{replace au centre} & \longmapsto & \phantom{on replace au centre}\ker f.
\end{array}$$
\end{proposition}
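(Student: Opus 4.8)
The plan is to exhibit an explicit inverse to the map $f \mapsto \ker f$, and verify that both composites are the identity. The two ingredients I would rely on are: (i) the correspondence, cited just before the proposition, between $\Fq$-linear $F$-algebraic endomorphisms of $\Fs$ and Ore polynomials in $F\{\tau\}$; and (ii) the left-Euclidean structure of $F\{\tau\}$, which guarantees that a nonzero left ideal has a unique monic generator.

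First I would check the map is well-defined: if $f \in F\{\tau\}$ has constant term $1$, then as a polynomial map $\Fs \to \Fs$ it is $\Fq$-linear and separable (its derivative is the nonzero constant $1$), so $V := \ker f$ has cardinality $q^{\deg_\tau f}$; being the kernel of an $F$-linear map it is an $F$-linear subspace, it is finite-dimensional, and since $f$ has coefficients in $F$ it commutes with the action of $\Gal(\Fs/F)$, so $V$ is Galois-stable. Conversely, given such a $V$, set $f_V(x) := \prod_{v \in V}(x - v)$; this is a separable polynomial of degree $|V|$. I would show $f_V$ is $\Fq$-linear: the classical argument is that $V$ being an $\Fq$-subspace forces $f_V(x+y) - f_V(x) - f_V(y)$ and $f_V(\lambda x) - \lambda f_V(x)$ (for $\lambda \in \Fq$) to be polynomials of degree $< |V|$ vanishing on all of $V$, hence zero. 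By ingredient (i), $f_V$ corresponds to an Ore polynomial, which I still call $f_V$; its $\tau$-valuation is $0$ because $f_V$ is separable (equivalently, $x \mid f_V$ would force a repeated root at $0$), and rescaling so that the constant term is $1$ is automatic since $f_V$ is monic of degree $|V|$ as a $\tau$-polynomial with leading-and-lowest coefficients both needing normalization — more precisely, the constant term of $f_V$ as an Ore polynomial is $\prod_{v\in V\setminus\{0\}}(-v) \cdot (\text{unit})$; I would instead normalize by taking $f_V$ to be the monic-in-$x$ generator and then check directly from $\ker f_V = V$ and separability that its $\tau$-constant term, after the unique scaling making it the element of the left ideal with constant term $1$, is indeed $1$. (The cleanest route: the left ideal of all Ore polynomials vanishing on $V$ is nonzero, so has a unique generator $g$ with $\tau$-valuation $0$; scale $g$ so its constant term is $1$; call this $f_V$.)

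Then I would verify the two composites. Starting from $f$ with constant term $1$: $\ker f = V$, and $f$ lies in the left ideal of Ore polynomials vanishing on $V$; that ideal's normalized generator $f_V$ divides $f$ on the right and has the same $\tau$-degree (both equal $\dim_{\Fq} V$, by separability and the count $|V| = q^{\deg_\tau}$), so $f = f_V$. Starting from $V$: $f_V$ vanishes exactly on $V$ by construction of the generator, and separability (valuation $0$) plus the degree count $|\ker f_V| = q^{\deg_\tau f_V} \le |V|$ together with $V \subseteq \ker f_V$ forces $\ker f_V = V$. Both composites are therefore the identity.

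\textbf{The main obstacle.} The routine parts are the $\Fq$-linearity of $f_V$ and the Galois-stability check. The one genuinely delicate point is bookkeeping around \emph{normalization and separability simultaneously}: one must be careful that "constant term $1$" for an Ore polynomial is equivalent to "separable with a prescribed scaling", and match the degree of the product polynomial $\prod_{v\in V}(x-v)$ (a usual polynomial of degree $|V|$) with the $\tau$-degree of the corresponding Ore polynomial (which is $\log_q |V|$). Getting these two notions of degree aligned, and confirming that the left-ideal generator has $\tau$-valuation exactly $0$ so that the normalization to constant term $1$ makes sense, is where I would spend the most care.
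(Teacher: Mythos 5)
Your argument is correct and takes essentially the same route as the paper, which only cites \cite[\S~1.2]{G98} for the proof and describes the inverse map as $\tilde f(x) = x\prod_{v\in V\setminus\{0\}}(1-x/v)$ --- precisely your normalized left-ideal generator --- so your verification of the two composites (matching $\deg_\tau$ with $\dim_{\Fq}\ker$ via separability, and using the valuation-$0$ generator) fills in exactly what the paper delegates to the reference. One small slip to fix: the map induced by $f$ is $\Fq$-linear, not $F$-linear, so $\ker f$ is an $\Fq$-subspace (the ``$F$-linear'' in the statement is evidently a typo for ``$\Fq$-linear'', as \cref{rem:Orebijection} and the surrounding discussion confirm), and your phrase ``being the kernel of an $F$-linear map it is an $F$-linear subspace'' should be corrected accordingly.
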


We refer to \cite[\S~1.2]{G98} for a proof of the proposition.
Here, we just mention that the inverse bijection can be explicitly
described as follows. Given a subspace $V \subset \Fs$, we form the
(classical) polynomial
$$\tilde f(x) = x \prod_{v \in V\setminus\{0\}} \left(1 - \frac x v\right).$$

The stability under the Galois action ensures that $\tilde f$ has
coefficients in $F$. Besides, a calculation shows that it defines
an $\Fq$-linear function.
Thus, the classical polynomial $\tilde f \in F[x]$ can be seen as an Ore
polynomial in $F\{\tau\}$. This Ore polynomial is the inverse image of $V$
under the bijection of \cref{prop:Orebijection}.
\begin{proposition}
\label{rem:Orebijection}
The bijection of \cref{prop:Orebijection} is increasing
in the sense that $f$ right-divides $g$ if and only if $\ker f \subset \ker g$. This implies, in particular, that right-gcds (resp.~left-lcms) of
Ore polynomials on the left-hand side correspond to intersections
(resp.~sums) of subspaces on the right-hand side.
\end{proposition}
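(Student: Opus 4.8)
The plan is to leverage \cref{prop:Orebijection} and reduce the divisibility claim to the kernel inclusion, then deduce the statement about right-gcds and left-lcms formally. First I would establish the equivalence ``$f$ right-divides $g$ $\iff$ $\ker f \subset \ker g$'' for Ore polynomials $f, g$ with constant term $1$. For the forward direction, if $g = h f$ in $F\{\tau\}$ for some $h \in F\{\tau\}$, then interpreting both sides as $\Fq$-linear maps on $\Fs$ (as in \cref{subsubsection:ore-pols-kernels}) gives $g = h \circ f$ as transformations, so any $x$ with $f(x) = 0$ satisfies $g(x) = h(f(x)) = h(0) = 0$; hence $\ker f \subset \ker g$. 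Note that $h$ automatically has constant term $1$ as well, since the constant terms multiply.

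For the converse, suppose $\ker f \subset \ker g$. Here I would use the left-Euclidean structure of $F\{\tau\}$ with respect to the $\tau$-degree (recalled after \cref{def:ore}): perform the right-division $g = h f + r$ with $r \in F\{\tau\}$ of $\tau$-degree strictly less than that of $f$, or $r = 0$. Evaluating on $\Fs$, for every $x \in \ker f$ we get $r(x) = g(x) - h(f(x)) = 0 - h(0)$; the subtlety is that $h$ need not have constant term $1$, so I would instead argue at the level of kernels more carefully. A cleaner route: by \cref{prop:Orebijection} the subspace $\ker f$ corresponds to the unique Ore polynomial $\tilde f$ with constant term $1$ and kernel $\ker f$; but $f$ itself has constant term $1$ and kernel $\ker f$, so by uniqueness $f = \tilde f$. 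Now $r$ vanishes on all of $\ker f$, which is an $F$-linear subspace of $\Fs$ stable under $\Gal(\Fs/F)$ of dimension equal to $\deg_\tau f$ (each distinct root of $f$, counted once since $f$ has simple roots, being a dimension in the $\Fq$-count — more precisely $\dim_{\Fq}\ker f = \deg_\tau f$). Since $r$ is an $\Fq$-linear algebraic endomorphism of $\Fs$ vanishing on a space of $\Fq$-dimension $\deg_\tau f > \deg_\tau r$, and such a map has at most $q^{\deg_\tau r}$ roots, we must have $r = 0$. Hence $f$ right-divides $g$.

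Granting the equivalence, the statements about right-gcds and left-lcms follow formally from the universal properties. The right-gcd $d = \rgcd(S)$ is characterised by: $d$ right-divides every element of $S$, and every common right-divisor of $S$ right-divides $d$. Translating via the equivalence, $\ker d$ contains $\ker f$ for every $f \in S$, and $\ker d$ is contained in any subspace containing all the $\ker f$; that is exactly the characterisation of $\sum_{f \in S} \ker f$. Wait — one must be careful with the direction: $f$ right-divides $g$ corresponds to $\ker f \subset \ker g$, so ``$d$ right-divides each $f$'' means $\ker d \subset \ker f$ for all $f$, hence $\ker d \subset \bigcap_f \ker f$, and maximality of $d$ among common right-divisors corresponds to maximality of $\ker d$ among subspaces contained in all $\ker f$, giving $\ker d = \bigcap_f \ker f$. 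Dually, $\ell = \llcm(S)$ is the monic Ore polynomial with $\ker \ell = \sum_{f \in S} \ker f$. I expect the main obstacle to be the converse direction of the divisibility equivalence, specifically pinning down that $\dim_{\Fq} \ker f = \deg_\tau f$ (equivalently that an Ore polynomial with constant term $1$ has separable associated polynomial with the expected number of roots) and that this forces the remainder to vanish; everything else is bookkeeping with the left-Euclidean structure and the bijection of \cref{prop:Orebijection}.
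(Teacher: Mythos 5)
The paper does not actually prove this proposition: it is stated without proof, the underlying bijection being deferred to \cite[\S~1.2]{G98}. Your argument is the standard one and it is correct: the forward implication is immediate from $g = h\circ f$ as maps on $\Fs$; the converse follows from right-Euclidean division $g = hf + r$, the fact that $r$ vanishes on $\ker f$ (which has $q^{\deg_\tau f}$ elements because $f$, having constant term $1$, is separable), and counting roots of the nonzero polynomial $r$ of degree at most $q^{\deg_\tau r} < q^{\deg_\tau f}$; the gcd/lcm statements then follow formally from the universal properties of $\rgcd$ and $\llcm$ as generators of the sum, respectively the intersection, of left ideals. Two small remarks: your worry that ``$h$ need not have constant term $1$'' is a non-issue, since $h(0)=0$ for \emph{any} Ore polynomial viewed as an ($\Fq$-linear) map, so the division argument goes through directly without the detour through uniqueness of $\tilde f$; and in the last part one should note that $\rgcd$ and $\llcm$ are normalised to be monic rather than to have constant term $1$, but rescaling by a unit of $F$ changes neither the kernel nor divisibility, so this does not affect the conclusion.
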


\begin{remark}
Restricting to Ore polynomials with a constant term equal to $1$ in
\cref{prop:Orebijection} is important for two reasons.
First, it permits normalising the Ore polynomials as, of course,
$f$ and $cf$ (for $c \in F^\times$) have the same kernel. Secondly, it permits discarding all Ore polynomials with vanishing constant
terms. Indeed, we observe that $f$ and $f\tau$ also share the same kernel, given that the $q$-Frobenius induces a bijection on each Galois-stable $\Fq$-linear subspace of $\Fs$.
\end{remark}

\begin{remark}
  The polynomial $\tilde f$ defined above can be expressed in
  terms of elementary symmetric functions. If we look at $\tilde f$ as a
  regular polynomial, most of its coefficients are zero; we would also want
  root-coefficients relations defining $\tilde f$ to be given as expressions in
  the sole kernel basis elements. Consequently, we introduce the following
  objects. Consider an integer $n \geq 0$, and formal variables $X_1, \dots,
  X_n, X$. The \emph{Moore determinant} of the family $(X_1, \dots, X_n)$ is
  defined as
  \[
    \Delta(X_1, \dots, X_n) = \det
    \begin{pmatrix}
      X_1 & \cdots & X_n \\
      X_1^q & \cdots & X_n^q \\
      \vdots & \vdots & \vdots \\
      X_1^{q^{n-1}} & \cdots & X_n^{q^{n-1}}
    \end{pmatrix}.
  \]
  It is a polynomial in $X_1, \dots, X_n$, and was introduced by Moore
  \cite{moore_two-fold_1896} as an $\Fq$-linear analogue of the classical
  \emph{Vandermonde determinant}. This polynomial satisfies (see \cite[Theorem~10]{ore_special_1933} or
  \cite[Proposition~1]{elkies_linearized_1999})
  \[
    \Delta(X_1, \dots, X_n) =
    \prod_{i=1}^n
      \prod_{(k_1, \dots, k_{i-1}) \in \Fq^{i-1}}
        \left(X_i + \sum_j k_j X_j\right),
  \]
  which means that a family $(x_1, \dots, x_n)$ is $\Fq$-linearly dependant, if and only if, $\Delta(x_1, \dots, x_n)=0$. 
  In particular, if $(\omega_1, \dots, \omega_n)$ is a basis of $\ker f$,
  the univariate polynomial $\Delta(x, \omega_1, \ldots, \omega_n)$ vanishes exactly 
  on $\ker f$. After renormalisation, we obtain
  \[
    \tilde f(x) = \frac {\Delta(x, \omega_1, \dots, \omega_n)}
                          {\Delta(\omega_1^q, \dots, \omega_n^q)}.
  \]
  Similarly, the unique monic separable polynomial whose set of zeros is $\ker f$
  is given by
  \[
    \bar{f}(x) := \frac {\Delta(x, \omega_1, \dots, \omega_n)}
                          {\Delta(\omega_1, \dots, \omega_n)}.
  \]
  Expanding the determinants, one proves the existence of universal
  polynomials $\lambda^{(n)}_0, \dots, \lambda^{(n)}_n \in
  \Fq[X_1, \dots, X_n]$ such that $\bar f(x) = \sum_{i=0}^n \lambda^{(n)}_i x^{q^i}$.
  Each $\lambda^{(n)}_i$ is homogeneous of degree $q^n - q^i$.
  Altogether, they form the set of $\Fq$-linear elementary symmetric functions.
\end{remark}

\subsection{Algebraic Drinfeld modules}
\label{ssec:drinfeld:def}
We are now ready to define Drinfeld modules over a general base, 
which might be different from $C_\infty$. The definition is of an algebraic nature. 
Roughly speaking, a Drinfeld module over a field $F$ is the datum of
an algebraic structure of $A$-module on $F$. Here, algebraicity means
that the multiplication by any element $a \in A$ is given by an Ore
polynomial $\phi_a \in F\{\tau\}$.

We start by introducing the base fields we will be interested in.

\begin{definition}
\label{def:Afield}
An \emph{$A$-field} is a field $F$ equipped with a ring homomorphism 
$\gamma : A \to F$.
The \emph{$A$--characteristic} of $F$ is the ideal of $A$ defined by
$\Acar(F)= \ker \gamma$. 
\end{definition}

Throughout the article, we set $z=\gamma(T)$. Since we suppose that $A=\Fq[T]$, the datum of $z$ completely determines $\gamma$. Beyond $F = \Cinf$, the most important $A$-fields considered in the literature are the following.
\begin{itemize}
\item 
$F$ is a finite extension of $\Fq(T)$ and $\gamma : A \to F$ is the canonical injection (so the $A$-characteristic is zero); this choice leads to the theory of \emph{rational} Drinfeld modules.
\item 
$F$ is a finite extension of $\Fq$ and $\gamma : A \to F$ is a presentation of $F$ as a quotient of $A = \Fq[T]$ (so the $A$-characteristic is nonzero, it is the defining polynomial of $F$); this choice leads to the theory of \emph{finite} Drinfeld modules. 
\end{itemize}

We will come back to these two particular bases and Drinfeld
modules over them in \cref{sec:arithmetics}.

\begin{definition}
\label{def:drinfeldmodule}
A \emph{Drinfeld module} over a $A$-field $(F, \gamma)$ is a homomorphism of $\Fq$-algebras
\Function
  {\phi}
  {A}
  {\Ftau}
  {a}
  {\phi_a}
such that:
\begin{enumerate}[label=(\roman{enumi})]
\item \label{cond:nontrivial}
$\phi_a$ is nonconstant for at least one element $a \in A$,
\item \label{cond:tangent}
for all $a \in A$, the constant coefficient of $\phi_a$ is $\gamma(a)$.
\end{enumerate}
\end{definition}

Condition~\ref{cond:nontrivial} in \cref{def:drinfeldmodule}
is not essential: it simply eliminates trivial cases, which may be a source
of problems at some point.
On the other hand, Condition~\ref{cond:tangent} is a normalisation condition which reflects the fact that the derivative of the exponential 
map (at $0$) is~$1$ (compare with Equation~\eqref{eq:phia}). In some 
sense, it says that the action of $\phi_a$ on the tangent space, 
\ie the action of the derivative of $\phi_a(x)$, should be the 
multiplication by $a$. This requirement is a standard fact in the theory 
of elliptic curves (or, more generally, abelian varieties) where the multiplication by $a$ on the curve always acts on the Lie algebra by 
scalar multiplication by $a$. 

\begin{remark}
It is also possible to define Drinfeld modules without referring to
$\gamma$ and define $\gamma$ afterwards by setting $\gamma := \phi
\bmod \tau$. However, it is more standard to proceed as we did in
\cref{def:drinfeldmodule}. This point of view also has
the advantage of underlining the importance of the tangent action.
\end{remark}

Since we assume that $A=\Fq[T]$, a Drinfeld module $\phi$ is
entirely determined by the Ore polynomial $\phi_T \in F\{\tau\}$.
Conversely, any choice of $\phi_T$ with positive degree and
constant term equal to $z$ gives rise to a unique Drinfeld module over $A$.
	
The $\tau$-degree of $\phi_T$ is called the \emph{rank} of $\phi$, and we denote it by $r$.
For any $a\in A$, it can be shown that the degree of $\phi_a$ in $\tau$ is equal to $r\deg a$.
 
\begin{sageexplain}
In SageMath, we create a Drinfeld module by specifying the polynomial ring $A$
and the coefficients of $\phi_T$ (we recall that $z$ was defined in
SageMath Example \ref{sage:orepolynomials} as a generator of $\F_{7^2}$).

\begin{sagecolorbox}
\begin{sagecommandline}

  sage: phi = DrinfeldModule(A, [z, z, z, z+1])
  sage: phi

\end{sagecommandline}
\end{sagecolorbox}

\begin{sagecolorbox}
\begin{sagecommandline}

  sage: phi.rank()
  sage: phi(T + 1)

\end{sagecommandline}
\end{sagecolorbox}

\end{sageexplain}

A Drinfeld module $\phi : A \to F\{\tau\}$ defines a structure of
$A$-module over any $F$-algebra $\mathcal F$ through the formula
$a \star x = \phi_a(x)$ for $a \in A$ and $x \in \mathcal F$. We will use the notation $\phiact{\mathcal F}$ for $\mathcal F$ 
equipped with this exotic action\footnote{In the literature, we often
find the notation $\phi(\mathcal F)$ instead; however, for this survey, 
we prefer using $\phiact{\mathcal F}$ in order to minimize the risk of confusion.}.

\subsection{Analytic uniformisation}
\label{ssec:drinfeld:uniformization}

When $F = \Cinf$, we have two concurrent viewpoints on Drinfeld modules: the first is analytic, as depicted in \cref{sec:carlitz} and the introduction of this section, while the second is algebraic, as described in \cref{ssec:drinfeld:def}.
Here, we aim to reconcile these viewpoints and explain that they both, indeed, define the same objects.

The direction ``analytic to algebraic'' has already been discussed in the introduction of Section \ref{sec:drinfeld}. We summarise it by the following theorem.

\begin{theorem}[\protect{\cite[Proposition~5.2.2]{Pap23}}]
Let $\Lambda$ be a discrete $A$-submodule of $\Cinf$ and let
\Function
  {e_\Lambda}
  {\Cinf}
  {\Cinf}
  {z}
  {\displaystyle z\prod_{\lambda \in \Lambda, \lambda \neq 0} \left (1-\frac{z}{\lambda}\right)}
be the corresponding exponential function.
Then, the function $e_\Lambda$ is entire, surjective and $\Fq$-linear.
Its zeros consist exactly of $\Lambda$, and are all simple.
Moreover, for every $a \in A$, there exists a unique Ore polynomial
$\phi_a^{\Lambda} \in \Cinf\{\tau\}$ such that
\begin{equation}\label{eq:A-mod-structure}
    \forall z \in \Cinf, 
    \quad e_\Lambda (az) = \phi_a^{\Lambda} (e_\Lambda (z)).
\end{equation}
Finally, the map
\EqFunction
  {\phi^\Lambda}
  {A}
  {\Cinf\{\tau\}}
  {a}
  {\phi_a^\Lambda}
  {eq:somelabelnamequinestjamaisutilise}
is a Drinfeld module over $\Cinf$ and its rank is equal to the 
rank of $\Lambda$ as an $A$-module.
\end{theorem}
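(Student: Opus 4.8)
The plan is to reduce everything to two inputs: a convergence/analyticity statement for the infinite product defining $e_\Lambda$, and the divisibility dictionary for kernels of Ore polynomials from \cref{prop:Orebijection} and \cref{rem:Orebijection}.

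\emph{Step 1: analytic properties of $e_\Lambda$.} First I would establish that the product $\prod_{\lambda \neq 0}(1 - z/\lambda)$ converges locally uniformly on $\Cinf$ to an entire function. This is where discreteness enters: for any radius $R$, only finitely many $\lambda \in \Lambda$ satisfy $|\lambda| \leq R$, so all but finitely many factors $1 - z/\lambda$ are close to $1$ uniformly on the ball of radius $R$; by the non-archimedean estimate $|1 - z/\lambda - 1| = |z|/|\lambda|$, the tail of the product converges. This gives an entire function whose zero set is exactly $\Lambda$, each zero simple (each factor contributes a simple zero at $\lambda$, and distinct $\lambda$ give distinct zeros). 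Surjectivity then follows from the cited fact that nonconstant entire functions on $\Cinf$ are surjective. For $\Fq$-linearity, I would argue that $\Lambda$ is an $\Fq$-subspace, so by the Moore-determinant description recalled in the remark after \cref{rem:Orebijection}, the finite truncations $z\prod_{0 \neq \lambda \in \Lambda, |\lambda| \leq R}(1 - z/\lambda)$ are $\Fq$-linear polynomials (they are, up to normalisation, the polynomials $\bar f$ attached to finite-dimensional Galois-stable subspaces); passing to the limit preserves $\Fq$-linearity since the space of $\Fq$-linear power series is closed.

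\emph{Step 2: existence and uniqueness of $\phi_a^\Lambda$.} Fix $a \in A$, $a \neq 0$. The composite $z \mapsto e_\Lambda(az)$ is again entire, $\Fq$-linear, with zero set $a^{-1}\Lambda \supset \Lambda$, all zeros simple. I would show that $e_\Lambda(az)$, as an $\Fq$-linear entire function vanishing on $\Lambda$, factors through $e_\Lambda$: concretely, $a^{-1}\Lambda / \Lambda$ is a finite-dimensional $\Fq$-subspace of $\phiact{\Cinf}$ via $e_\Lambda$ (its image $e_\Lambda(a^{-1}\Lambda/\Lambda)$ is a finite-dimensional $\Fq$-subspace of $\Cinf$, of dimension $r\deg a$), so by the inverse bijection of \cref{prop:Orebijection} there is an Ore polynomial $P$ with $\ker P = e_\Lambda(a^{-1}\Lambda/\Lambda)$; setting $\phi_a^\Lambda := (\text{leading normalisation}) \cdot P$ one checks $\phi_a^\Lambda \circ e_\Lambda$ and $e_\Lambda(a \,\cdot\,)$ are two $\Fq$-linear entire functions with the same simple zeros and the same linear term $az$, hence equal by the Weierstraß-type rigidity cited in \cref{ssec:carlitz}. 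Uniqueness of $\phi_a^\Lambda$ follows because $e_\Lambda$ is surjective, so $\phi_a^\Lambda \circ e_\Lambda = \psi \circ e_\Lambda$ forces $\phi_a^\Lambda = \psi$ as functions, hence as Ore polynomials by \cite[Lemma~3.1.4]{Pap23}.

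\emph{Step 3: $\phi^\Lambda$ is a ring homomorphism and a Drinfeld module of the right rank.} Additivity $\phi_{a+b}^\Lambda = \phi_a^\Lambda + \phi_b^\Lambda$ and multiplicativity $\phi_{ab}^\Lambda = \phi_a^\Lambda \circ \phi_b^\Lambda$ are immediate from the defining functional equation \eqref{eq:A-mod-structure} and the surjectivity of $e_\Lambda$ (e.g.\ $\phi_{ab}^\Lambda(e_\Lambda(z)) = e_\Lambda(abz) = \phi_a^\Lambda(e_\Lambda(bz)) = \phi_a^\Lambda(\phi_b^\Lambda(e_\Lambda(z)))$). For Condition~\ref{cond:tangent} of \cref{def:drinfeldmodule}, differentiate \eqref{eq:A-mod-structure} at $z = 0$: since $e_\Lambda'(0) = 1$, the constant coefficient of $\phi_a^\Lambda$ equals $a = \gamma(a)$. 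For the rank: the $\tau$-degree of $\phi_a^\Lambda$ is $\dim_{\Fq} \ker \phi_a^\Lambda = \dim_{\Fq} e_\Lambda(a^{-1}\Lambda/\Lambda) = \dim_{\Fq}(a^{-1}\Lambda/\Lambda)$; when $\Lambda$ has rank $\rho$ as an $A$-module this index is $\rho \cdot \deg a = |A/aA|^{\rho/\,?}$ — more precisely $a^{-1}\Lambda/\Lambda \cong (A/aA)^{\rho}$ as $\Fq$-vector spaces, of dimension $\rho\deg a$ — so $\deg_\tau \phi_a^\Lambda = \rho \deg a$, i.e.\ $\deg_\tau \phi_T^\Lambda = \rho$, which is the rank of the Drinfeld module. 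Condition~\ref{cond:nontrivial} holds since $\phi_T^\Lambda$ has positive $\tau$-degree.

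\emph{Main obstacle.} The genuinely delicate point is Step 1 — specifically, proving that the infinite product converges to an \emph{entire} $\Fq$-linear function, and invoking the correct non-archimedean analysis (Weierstraß preparation / the surjectivity of nonconstant entire maps). In a survey one would quote \cite[Proposition~5.1.3]{Pap23} and \cite[\S~2.7.2, Proposition~2.7.12]{Pap23} rather than reprove them; granting those, Steps 2 and 3 are formal consequences of the Ore-polynomial kernel dictionary and the surjectivity of $e_\Lambda$.
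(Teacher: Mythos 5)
Your proof is correct and follows essentially the same route the paper itself takes: the paper offers no standalone proof of this theorem (it defers to \cite[Proposition~5.2.2]{Pap23}), but the construction it carries out in detail for the Carlitz module in \cref{ssec:carlitz} and summarises in Equation~\eqref{eq:phia} is exactly your argument --- Weierstraß product for $e_\Lambda$, then $\phi_a^\Lambda$ realised as the Ore polynomial with kernel $e_\Lambda\big(\tfrac1a\Lambda/\Lambda\big)$, identified with $e_\Lambda(a\,\cdot)$ by divisor-plus-linear-term rigidity, with the rank read off from $\dim_{\Fq}\big(\tfrac1a\Lambda/\Lambda\big)$. Only cosmetic blemishes remain (the stray ``$r$'' before you introduce $\rho$, and the unresolved ``$?$'' in the rank count, which you immediately correct).
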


The opposite direction ``algebraic to analytic'' is the so-called
\emph{analytic uniformization} theorem for Drinfeld modules. It can be
stated as follows.

\begin{theorem}
\label{th:drinfeld:uniformization}
Let $\phi : A \to \Cinf\{\tau\}$ be a Drinfeld module over $\Cinf$.
Then, there exists a lattice $\Lambda$ in $\Cinf$ such that 
$\phi = \phi^\Lambda$.
\end{theorem}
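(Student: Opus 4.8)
The plan is to construct the lattice $\Lambda$ directly as the kernel of an exponential map $e_\phi$ attached to $\phi$, built so that the functional equation \eqref{eq:A-mod-structure} holds by design, and then to check that the kernel is a discrete $A$-submodule of $\Cinf$ of the correct rank. First I would produce $e_\phi$ as a formal power series $e_\phi(z) = \sum_{i \geq 0} e_i z^{q^i}$ with $e_0 = 1$, by solving the single functional equation $e_\phi(Tz) = \phi_T(e_\phi(z))$ coefficient by coefficient: writing $\phi_T = z\tau^0 + g_1 \tau + \dots + g_r \tau^r$ (so $g_0 = z = \gamma(T)$), comparison of the coefficient of $z^{q^n}$ on both sides gives a recurrence $(T^{q^n} - T)\, e_n = \sum_{i=1}^{\min(n,r)} g_i\, e_{n-i}^{q^i}$, which determines $e_n$ uniquely for all $n \geq 1$ since $T^{q^n} - T \neq 0$ in $\Cinf$. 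This is the algebraic analogue of the iterative computation mentioned after Equation~\eqref{eq:equationeC}.

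Next I would establish that this formal $e_\phi$ actually defines an entire function on $\Cinf$. The main estimate is a lower bound on $|e_n|$: from the recurrence one shows by induction that $v_\infty(e_n) \to +\infty$ fast enough (essentially the dominant contribution forces $\deg e_n \leq -\,\tfrac{q^n - 1}{q-1} \cdot (\text{something growing})$, or more precisely that the valuations grow at least linearly in $q^n$), which guarantees that $\sum e_i z^{q^i}$ converges for every $z \in \Cinf$. This is exactly the kind of ultrametric convergence input already invoked for $e_\Lambda$ in \cref{sec:carlitz}; I would phrase it as: $e_\phi$ is entire, $\Fq$-linear, and has nonzero derivative at $0$, hence by the ultrametric theory recalled earlier (the Weierstraß-type results and \cite[Proposition~2.7.12]{Pap23}) it is surjective with a discrete set of simple zeros. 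Set $\Lambda := \ker e_\phi \subset \Cinf$.

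Then I would check that $\Lambda$ is an $A$-submodule and that $\phi^\Lambda = \phi$. For the $A$-module structure: if $e_\phi(\lambda) = 0$ then $e_\phi(T\lambda) = \phi_T(e_\phi(\lambda)) = \phi_T(0) = 0$ (the constant term of $\phi_T$ is $\gamma(T) = z$, but evaluated at $0$ it vanishes), so $T\lambda \in \Lambda$; since $\Lambda$ is also $\Fq$-stable by $\Fq$-linearity, it is an $A$-submodule. By the functional equation used to define $e_\phi$, the exponential intertwines multiplication-by-$T$ on the source with $\phi_T$ on the target, and extending multiplicatively gives $e_\phi(az) = \phi_a(e_\phi(z))$ for all $a \in A$; comparing with the characterisation of $\phi^\Lambda$ in the previous theorem (uniqueness of the Ore polynomial satisfying \eqref{eq:A-mod-structure}) yields $\phi^\Lambda = \phi$. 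Finally, the rank of $\Lambda$ as an $A$-module must equal the rank of $\phi^\Lambda = \phi$, which is $r$: this follows because $\Lambda \cong \ker(e_\phi) $ fits in $0 \to \Lambda \to \Cinf \xrightarrow{e_\phi} \Cinf \to 0$, and a counting/degree argument on the $a$-torsion — $\phi[a]$ has $|A/aA|^{\,r}$ elements and is identified via $e_\phi$ with $\tfrac1a\Lambda/\Lambda$ — forces $\Lambda$ to be free of rank exactly $r$ over $A$ (here I use that $\Lambda$ is discrete, hence torsion-free and finitely generated in the relevant sense, and that $\Cinf/K$ being infinite-dimensional poses no obstruction to arbitrary rank).

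The main obstacle is the convergence step: turning the purely formal solution $e_\phi$ into a genuine entire function requires the valuation estimate $v_\infty(e_n) \to \infty$ at the right rate, and this is where the recurrence must be analysed carefully rather than just solved. Everything else — the $A$-stability of $\Lambda$, the identification $\phi^\Lambda = \phi$ via uniqueness in the preceding theorem, and the rank computation via torsion counting — is routine once $e_\phi$ is known to be entire with simple zeros forming a discrete set.
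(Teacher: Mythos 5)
Your proposal follows essentially the same route as the paper's own sketch: both solve the functional equation $e_\phi T = \phi_T e_\phi$ formally to obtain the recurrence $(T^{q^n}-T)\,c_n = \sum_{i=1}^{\min(n,r)} g_i c_{n-i}^{q^i}$, then recover $\Lambda$ as the kernel of the resulting entire function. You correctly identify the convergence estimate as the only non-formal step, which is exactly the part the paper also defers to \cite[Theorem~5.2.8]{Pap23}.
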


\begin{proof}[Sketch of the proof]
The first step is to reconstruct the exponential function $e_\Lambda$
as an Ore series over $\Cinf$, \ie an element of the form
$$e_\Lambda = \sum_{n=0}^\infty c_n \tau^n \qquad (c_n \in \Cinf).$$
To find the coefficients $c_n$, we come back to the functional
Equation~\eqref{eq:A-mod-structure} with $a = T$. In our language,
it reads $e_\Lambda T = \phi_T e_\Lambda$ as an equality in the ring of formal Ore power series $\Cinf\{\!\{\tau\}\!\}$.
Writing
$$\phi_T = g_0 + g_1 \tau + \cdots + g_r \tau^r \qquad 
(g_i \in \Cinf,\, g_0 =T)$$ and
identifying the coefficients (taking care of the noncommutativity
relation), we end up with the relation
$$c_n = \frac 1 {T^{q^n} - T} 
  \sum_{i=1}^{\min(n,r)} g_i c_{n-i}^{q^i}$$
which allows one to compute recursively all the $c_n$ (starting with
$c_0 = 1$). Finally, once we know the function $e_\Lambda$, we can reconstruct $\Lambda$ by taking its kernel.
The details of the proof can be found in \cite[Theorem~5.2.8]{Pap23}.
\end{proof}

The proof of \cref{th:drinfeld:uniformization} provides an algebraic perspective on the 
exponential function, which extends to any base of $A$-characteristic
zero. Precisely, if $\phi : A \to F\{\tau\}$ is a Drinfeld module 
over $(F, \gamma)$, one defines the exponential function $e_\phi$ 
as the Ore series
$$e_\phi = \sum_{n=0}^\infty c_n \tau^n$$
where the coefficients $c_n\in F$ are determined by the recurrence 
relation
$$c_n = \frac 1 {z^{q^n} - z} 
  \sum_{i=1}^{\min(n,r)} g_i c_{n-i}^{q^i}.$$
The fact that $F$ has $A$-characteristic zero ensures that the
denominator $$z^{q^n} - z = \gamma(T^{q^n} - T)$$ never vanishes.

Similarly, one can define a \emph{logarithm} function $\ell_\phi
= \sum_{n=1}^\infty \ell_n \tau^n \in F\{\!\{\tau\}\!\}$ as the 
solution of the equation 
$\ell_\phi  \phi_T = z  \ell_\phi$. Identifying the coefficients, we now find the relation
$$\ell_n = \frac 1 {z - z^{q^n}}  
  \sum_{i=1}^{\min(n,r)} g_i^{q^{n-i}} \ell_{n-i},$$
which again is enough to compute recursively all the $\ell_n$.
In addition to the identities 
\begin{align*}
e_\phi  \gamma(a) & = \phi_a  e_\phi, \\
\ell_\phi  \phi_a & = \gamma(a)  \ell_\phi,
\end{align*}
which hold true for all $a \in A$, we have the formal relation
$e_\phi \ell_\phi = \ell_\phi e_\phi = 1$ in $F\{\!\{\tau\}\!\}$.

\begin{sageexplain}
The exponential and the logarithm are implemented 
in SageMath. However, since Ore series do not exist in SageMath, the result is given by an actual series where $\tau^n$ is replaced
by $x^{q^n}$.

\begin{sagecolorbox}
\begin{sagecommandline}

  sage: phi = DrinfeldModule(A, [T, T, 1])
  sage: exp = phi.exponential(prec=100, name='x')
  sage: exp
  sage: log = phi.logarithm(prec=100, name='x')
  sage: log

\end{sagecommandline}
\end{sagecolorbox}

We can check that they are inverse to each other.

\begin{sagecolorbox}
\begin{sagecommandline}

  sage: exp(log)
  sage: log(exp)

\end{sagecommandline}
\end{sagecolorbox}

\end{sageexplain}

\subsection{Realisations of Drinfeld modules: Tate modules and Anderson motives}
\label{ssec:drinfeld:realizations}

The definition of Drinfeld modules may sound quite disconcerting
because they are not in any case modules over a ring.
Even worse, they are not ``parents'' in the sense of SageMath, 
meaning that they have no elements we can pick and work with.

In this subsection, we explain how to associate Drinfeld modules with actual objects of linear algebra. Beyond making us more comfortable, this will allow us afterwards to easily import classical tools from
linear algebra, such as characteristic polynomials, reduction theorems, \emph{etc}.

\subsubsection{Torsion points and Tate modules}\label{ss:torsion}

The first construction we are going to present is the Tate module: it is the straightforward analogue in our context of the classical Tate module of an elliptic curve.
We start by defining torsion points.

\begin{definition}
\label{def:torsion}
Let $\phi: A \to F\{\tau\}$ be a Drinfeld module over $(F, \gamma)$.
For an ideal $\mathfrak a \subset A$, the \emph{$\mathfrak a$-torsion} of $\phi$ is defined 
as the $\mathfrak{a}$-torsion of $\phiact\Fs$, namely
\[
    \phi[\mathfrak a] = \big\{ \, x \in \phiact\Fs \mid \forall a \in \mathfrak a , \,
              \phi_a(x) = 0 \big\}.
\]
\end{definition}

\begin{remark}
Since $A = \Fq[T]$ in our setting, every ideal $\mathfrak a$ of $A$ is principal, 
\ie $\mathfrak a = aA$, and the $\mathfrak a$-torsion is simply the set of roots of $\phi_a$.
The polynomial $\phi_a$ is sometimes called the \emph{Ore polynomial of $a$-division} of $\phi$.
\end{remark}

The $\mathfrak a$-torsion $\phi[\mathfrak a]$ is naturally a 
$A$-submodule of $\phiact\Fs$ which is annihilated by $\mathfrak a$, hence, it is a module over
the quotient ring $A/\mathfrak a$. Besides, it is equipped with an action
of the Galois group $\Gal(\Fs/F)$. It then gives rise to a $A/\mathfrak a$-linear
representation of $\Gal(\Fs/F)$.

\begin{proposition}[\protect{\cite[Corollary 3.5.3]{Pap23}}]
\label{prop:torsion}
Let $\phi: A \to F\{\tau\}$ be a Drinfeld module of rank~$r$.
Let $\mathfrak l$ be a prime ideal of $A$, which is different from the $A$-characteristic of $\phi$.
Then, for all nonnegative integer $n$, $\phi[\mathfrak{l}^n]$ is free of rank $r$ over
$A/\mathfrak l^n$.
\end{proposition}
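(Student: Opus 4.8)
The plan is to work one prime $\mathfrak{l} = \ell A$ at a time and to proceed by induction on $n$, starting from the key case $n = 1$. For $n = 1$, the $\mathfrak{l}$-torsion $\phi[\mathfrak{l}]$ is the set of roots of the Ore polynomial $\phi_\ell \in F\{\tau\}$ inside $\Fs$, which is a module over the field $A/\mathfrak{l} = \F_{\mathfrak{l}}$. Being a module over a field, it is automatically free, so the only thing to pin down is its dimension. First I would argue that $\phi[\mathfrak{l}]$ has dimension exactly $r$ over $\F_{\mathfrak{l}}$: on one hand, as an $\F_q$-linear subspace of $\Fs$ it is the kernel of the separable polynomial underlying $\phi_\ell$, which has $\tau$-degree $r \deg \ell$ (recalled in \cref{ssec:drinfeld:def}); separability is where the hypothesis $\mathfrak{l} \neq \Acar(\phi)$ enters, since it guarantees that the constant coefficient $\gamma(\ell)$ of $\phi_\ell$ is nonzero, hence $\phi_\ell$ has $\tau$-valuation $0$ and thus $q^{r \deg \ell}$ distinct roots in $\Fs$. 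So $\dim_{\F_q} \phi[\mathfrak{l}] = r \deg \ell = r \cdot \dim_{\F_q} \F_{\mathfrak{l}}$, which forces $\dim_{\F_{\mathfrak{l}}} \phi[\mathfrak{l}] = r$.

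Next I would pass from $n = 1$ to general $n$. The multiplication-by-$\ell$ map on $\phiact{\Fs}$, namely the action of $\phi_\ell$, restricts to a surjection $\phi[\mathfrak{l}^{n+1}] \twoheadrightarrow \phi[\mathfrak{l}^n]$: surjectivity holds because $\phi_\ell$, viewed as an endomorphism of $\Fs$, is surjective (it is a nonconstant additive polynomial over an algebraically — or at least separably — closed field, and separability of $\phi_\ell$ again ensures it stays surjective on the relevant torsion), so any $x \in \phi[\mathfrak{l}^n]$ has a preimage $y$ with $\phi_{\ell^{n+1}}(y) = \phi_{\ell^n}(\phi_\ell(y)) = \phi_{\ell^n}(x) = 0$. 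The kernel of this surjection is exactly $\phi[\mathfrak{l}]$. This gives a short exact sequence of $A$-modules
\[
0 \longrightarrow \phi[\mathfrak{l}] \longrightarrow \phi[\mathfrak{l}^{n+1}] \overset{\phi_\ell}{\longrightarrow} \phi[\mathfrak{l}^n] \longrightarrow 0.
\]
By induction $\phi[\mathfrak{l}^n]$ is free of rank $r$ over $A/\mathfrak{l}^n$, and $\phi[\mathfrak{l}]$ is free of rank $r$ over $A/\mathfrak{l}$. Counting $\F_q$-dimensions (equivalently, using the Euler–Poincaré characteristic $|{\cdot}|$ from the setup section) gives $|\phi[\mathfrak{l}^{n+1}]| = \mathfrak{l}^{r(n+1)}$, so $\phi[\mathfrak{l}^{n+1}]$ is a module over $A/\mathfrak{l}^{n+1}$ of the expected size; to conclude it is \emph{free} I would use the structure theorem for finite modules over the PID $A$ (recalled in the excerpt) together with the fact that $\phi[\mathfrak{l}^{n+1}]$ is killed by $\mathfrak{l}^{n+1}$ but contains an element of order exactly $\mathfrak{l}^{n+1}$. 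Concretely, write $\phi[\mathfrak{l}^{n+1}] \simeq \bigoplus_i A/\mathfrak{l}^{e_i}$ with $1 \le e_i \le n+1$; the bound $\dim_{\F_{\mathfrak{l}}} \phi[\mathfrak{l}] = r$ forces the number of summands to be exactly $r$, and then matching $\F_q$-dimensions forces every $e_i = n+1$.

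The main obstacle is really the surjectivity of multiplication-by-$\ell$ on torsion, i.e. showing the connecting map in the short exact sequence is onto; this is where one must use that $\phi_\ell$ is separable (equivalently $\mathfrak{l} \neq \Acar(\phi)$) rather than merely nonzero — if $\mathfrak{l}$ were the characteristic, $\phi_\ell$ would have positive $\tau$-valuation and the torsion would drop in rank. Everything else is bookkeeping with $\F_q$-dimensions and the elementary divisor theorem. An alternative, slightly slicker route avoiding the explicit induction is to observe directly that $\phi[\mathfrak{l}^n]$ is an $A/\mathfrak{l}^n$-module with $\phi[\mathfrak{l}^n][\mathfrak{l}] = \phi[\mathfrak{l}^n] \cap \phi[\mathfrak{l}] = \phi[\mathfrak{l}]$ of $\F_{\mathfrak{l}}$-dimension $r$ and of total $\F_q$-dimension $r n \deg \ell$; over the local ring $A/\mathfrak{l}^n$ (a quotient of a DVR), a finitely generated module is free if and only if the dimension of its socle times $n$ equals its length, which is exactly what we have verified. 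I would present whichever version is cleanest, but the inductive exact-sequence argument is the most transparent and makes the role of the separability hypothesis explicit.
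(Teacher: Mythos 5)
Your argument is correct: the hypothesis $\mathfrak l \neq \Acar(\phi)$ does exactly what you say (it makes $\gamma(\ell) \neq 0$, hence $\phi_\ell$ separable with $q^{r\deg\ell}$ distinct roots and surjective on $\Fs$), and both the inductive exact-sequence count and the socle criterion correctly force all elementary divisors to equal $\mathfrak l^{n}$. The paper itself gives no proof and simply cites \cite[Corollary~3.5.3]{Pap23}; your argument is essentially the standard one found there (Papikian counts the $\mathfrak b$-torsion for each divisor $\mathfrak b \mid \mathfrak l^n$ and invokes the structure theorem, which is the same bookkeeping packaged slightly differently), so nothing is missing.
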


\begin{remark}\label{rk:height}
When $\mathfrak l = \mathfrak p$ equals the $A$-characteristic of $\phi$,
it is still true that $\phi[\mathfrak{p}^n]$ is free over $A/\mathfrak p^n$ but its rank is always smaller than $r$.
The defect $\h(\phi) := r - \rk_{A/\mathfrak p^n}(\phi[\mathfrak{p}^n])$ does
not depend on $n$ and is called the \emph{Frobenius height} of $\phi$,
often referred to simply as \emph{height}.
It can also be interpreted as the $\tau$-valuation of the Ore polynomial 
$\phi_{\mathfrak p}$ divided by the degree of $\p$. This notion of height is different from the other heights attached to Drinfeld modules.
\end{remark}

Passing to the inverse limit, we obtain a family of free modules over $A_{\mathfrak l}$.

\begin{definition}
\label{definition:tate-module}
Let $\phi : A \to F\{\tau\}$ be a Drinfeld module and
let $\mathfrak l$ be a maximal ideal of~$A$.
The \emph{Tate module} of $\phi$ at $\mathfrak l$ is
\[
  \Tl(\phi) = \varprojlim_{n \in \N} \phi[\mathfrak{l}^n].
\]
\end{definition}

It follows that the Tate module of $\phi$ at $\mathfrak l$ has rank $r$ over $A_\mathfrak{l}$ when $\mathfrak l$ is different from the $A$-characteristic of $\phi$, and has rank $r-h(\phi)$ otherwise.

\subsubsection{Anderson motives}
\label{sssec:motives}

The second important construction is that of Anderson
motives. As we shall see, it is a very powerful notion which captures all the information encapsulated in a Drinfeld module and, at the same time, is easy to handle and work with, especially from an algorithmic perspective.
From a more abstract viewpoint, Anderson motives should be
considered as motives of Drinfeld modules in Grothendieck's vision.
So far, no analogue is known in the framework of elliptic curves and
abelian varieties.

\begin{definition}
\label{def:anderson-motive}
Let $\phi: A \to F\{\tau\}$ be a Drinfeld module. Its \emph{Anderson motive} $\MM(\phi)$ is $F\{\tau\}$ endowed with the following extra
structures:
\begin{itemize}
\item a structure of $A$-module given by:
$$\forall a \in A,\, \forall m \in \MM(\phi), \quad
a \bullet m = m \phi_a,$$
where the product on the right-hand side is the multiplication
in $F\{\tau\}$,
\item a structure of $F$-module given by:
$$\forall \lambda \in F,\, \forall m \in \MM(\phi), \quad
\lambda \bullet m = \lambda m,$$
\item an operator $\tau_{\MM(\phi)} : \MM(\phi) \to \MM(\phi)$
defined by $m \mapsto \tau m$.
\end{itemize}
\end{definition}

In what follows, we will omit the sign $\bullet$ and simply write
$am$ or $\lambda m$.
We note that being at the same time a
$A$-module and a $F$-module is equivalent to being a module over
$A \otimes_{\Fq} F$. In addition, since $A = \Fq[T]$ in our setting, we
have $A \otimes_{\Fq} F \simeq F[T]$.
Therefore, the motive $\MM(\phi)$ is no more than a $F[T]$ module
equipped with an additional endomorphism $\tau_{\MM(\phi)}$. We notice, moreover, that the latter is semilinear in the sense that
$$\forall f \in F[T], \, \forall m \in \MM(\phi), \quad
\tau_{\MM(\phi)}(fm) = \tau(f) \, \tau_{\MM(\phi)}(m),$$
where $\tau : F[T] \to F[T]$ is the ring homomorphism acting
trivially on $T$ and raising all the coefficients to the $q$-th power.
\begin{proposition}
\label{prop:basismotive}
If $\phi : A \to F\{\tau\}$ is a Drinfeld module of rank $r$,
then $\MM(\phi)$ is free of rank $r$ over $F[T]$ with basis
$(1, \tau, \ldots, \tau^{r-1})$.
\end{proposition}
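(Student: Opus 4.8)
The plan is to show that every element of $\MM(\phi)$ can be written uniquely as an $F[T]$-linear combination of $1, \tau, \ldots, \tau^{r-1}$. Recall that, as a set, $\MM(\phi) = F\{\tau\}$, and that the $F[T]$-module structure is given by $\lambda \bullet m = \lambda m$ for $\lambda \in F$ and $T \bullet m = m\phi_T$. The first observation I would make is that, since $\phi_T$ has $\tau$-degree $r$ and leading coefficient a unit (indeed $\phi_T = z + g_1\tau + \cdots + g_r\tau^r$ with $g_r \neq 0$, and $F$ is a field), right-division by $\phi_T$ in the Euclidean ring $F\{\tau\}$ is available: every $f \in F\{\tau\}$ can be written $f = q\,\phi_T + \rho$ with $q, \rho \in F\{\tau\}$ and $\deg_\tau \rho < r$. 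Iterating this, I can reduce the $\tau$-degree of any element at the cost of introducing right-multiples of $\phi_T$, i.e. of introducing the $T$-action on lower-degree pieces.

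\textbf{Spanning.} For the spanning statement I would argue by induction on the $\tau$-degree. If $m = \sum_{i=0}^{n} a_i \tau^i$ with $a_i \in F$ and $n < r$, there is nothing to prove. If $n \geq r$, write $a_n \tau^n = a_n \tau^{n-r}\tau^r$; since $\tau^r = g_r^{-1}\big(\phi_T - z - g_1\tau - \cdots - g_{r-1}\tau^{r-1}\big)$, substituting gives
\[
  a_n\tau^n
  = a_n\tau^{n-r} g_r^{-1}\phi_T
    - a_n\tau^{n-r} g_r^{-1}\big(z + g_1\tau + \cdots + g_{r-1}\tau^{r-1}\big).
\]
The first term on the right equals $\big(a_n\tau^{n-r}g_r^{-1}\big)\bullet$ something, more precisely it is $T$ acting on the element $a_n \tau^{n-r}g_r^{-1} \in \MM(\phi)$ — wait, one must be careful: $T \bullet m = m\phi_T$, so $\big(a_n\tau^{n-r}g_r^{-1}\big)\phi_T = T \bullet \big(a_n\tau^{n-r}g_r^{-1}\big)$, and by induction $a_n\tau^{n-r}g_r^{-1}$ (which has $\tau$-degree $n-r < n$) lies in the $F[T]$-span of $1, \tau, \ldots, \tau^{r-1}$; hence so does its image under $T\bullet$. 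The second term has $\tau$-degree $< n$, so it too is in the span by induction. Adding back the remaining $\sum_{i<n} a_i\tau^i$, also of degree $< n$, finishes the induction, and we conclude $\MM(\phi) = F[T]\cdot 1 + \cdots + F[T]\cdot \tau^{r-1}$.

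\textbf{Freeness.} For linear independence, suppose $\sum_{j=0}^{r-1} P_j(T)\bullet \tau^j = 0$ in $\MM(\phi)$ with $P_j \in F[T]$ not all zero. Expanding $P_j(T)\bullet \tau^j$ means replacing each power $T^k$ by right-multiplication by $\phi_{T^k} = \phi_T^k$, so the relation becomes $\sum_j \tau^j\, \big(\sum_k c_{j,k}\phi_T^k\big) = \sum_j \tau^j \widetilde{P_j}(\phi_T) = 0$ — again I must track noncommutativity; the cleanest formulation is that $\sum_j \big(\sum_k c_{j,k}\tau^j\big)\phi_T^k$ must be carefully grouped. The key point is a $\tau$-degree count: let $d = \max_j \deg P_j$, and among the indices achieving the maximum pick the largest such $j_0$. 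The term $c_{j_0,d}\tau^{j_0}\phi_T^{d}$ has $\tau$-degree $j_0 + rd$, and I claim no other term can cancel its top coefficient — because any term $c_{j,k}\tau^j\phi_T^k$ has $\tau$-degree $j + rk \leq (r-1) + rd$, with equality forcing $k = d$ and $j = r-1 \geq j_0$; if $j_0 < r-1$ the top degree is achieved uniquely by no term unless $j = j_0$... This is where a clean argument matters: I would instead observe that $j + rk$ for $0 \le j \le r-1$, $k \ge 0$ is a \emph{base-$r$-like} encoding, so the pairs $(j,k)$ with $j + rk = N$ are determined by $N$ (namely $j = N \bmod r$, $k = \lfloor N/r\rfloor$), hence the top-degree term comes from a \emph{single} pair $(j,k)$, and equals $c_{j,k}\,\tau^j\,(g_r\tau^r)^{\cdots}$-leading-coefficient $\neq 0$. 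The main obstacle I anticipate is precisely this bookkeeping: making the $\tau$-degree/leading-coefficient argument airtight in the noncommutative ring $F\{\tau\}$, keeping straight that $T\bullet m = m\phi_T$ (right multiplication) while the semilinearity twists the scalars. Once the top-degree term is seen to survive, its nonvanishing leading coefficient contradicts the relation $= 0$, forcing all $P_j = 0$. Combining spanning and independence yields that $(1, \tau, \ldots, \tau^{r-1})$ is an $F[T]$-basis of $\MM(\phi)$, which is the assertion.
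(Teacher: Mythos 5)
Your proposal is correct and follows essentially the same route as the paper: your degree-lowering induction using $\tau^r = g_r^{-1}(\phi_T - z - g_1\tau - \cdots - g_{r-1}\tau^{r-1})$ is just the iterated right Euclidean division by $\phi_T$ that the paper uses to write $m = \sum_i m_i \phi_T^i$ with $\deg_\tau m_i < r$. The one addition is that you actually carry out the freeness argument (which the paper leaves as an exercise), and your key observation there --- that $(j,k) \mapsto j + rk$ with $0 \le j \le r-1$ is injective, so the top $\tau$-degree term of $\sum_{j,k} c_{j,k}\tau^j\phi_T^k$ comes from a single pair and has nonzero leading coefficient --- is exactly right.
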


\begin{proof}
We explain how to decompose an element $m \in \MM(\phi)$ in the
given basis. We recall that, by definition, $m$ is an Ore polynomial
in $F\{\tau\}$.
The latter being right Euclidean, we can decompose $m$ in the basis $\phi_T$ as follows:
$$m = m_0 + m_1 \phi_T + \cdots + m_k \phi_T^k,$$
for some nonnegative integer $k$ and some uniquely determined
$m_0, \ldots, m_k \in F\{t\}$ of degree at most $r{-}1$ where
$r = \deg_\tau \phi_T$ is the rank of $\phi$.
Indeed, $m_0$ is obtained as the remainder in the right division of
$m$ by $\phi_T$ and the next $m_i$ are obtained similarly by
dividing the successive quotients. Now writing
$$m_i =
\lambda_{i,0} + \lambda_{i,1} \tau + \cdots
\lambda_{i,r-1} \tau^{r-1},$$ we get the formula:
$$m = \sum_{i=0}^k \sum_{j=0}^{r-1} \lambda_{i,j} \tau^j \phi_T^i
    = \sum_{j=0}^{r-1} \left(\sum_{i=0}^k \lambda_{i,j} T^i\right) \tau^j,$$
which turns out to be the decomposition of $m$ in the basis
$(1, \tau, \ldots, \tau^{r-1})$.

The previous calculation shows that the family $(1, \tau, \ldots,
\tau^{r-1})$ generates $\MM(\phi)$. Proving that it is free is left
as an easy exercise for the reader.
\end{proof}

We say that $(1, \tau, \ldots, \tau^{r-1})$ is the
\emph{canonical basis} of $\MM(\phi)$. The proof we have detailed above
shows that writing an element of $\MM(\phi)$ in the canonical basis is
not a difficult task, which can definitely be implemented on a computer.

\begin{sageexplain}
One can create the motive attached to a Drinfeld module as follows.

\begin{sagecolorbox}
\begin{sagecommandline}

  sage: phi = DrinfeldModule(A, [z, z, z, z+1])
  sage: M = phi.anderson_motive()
  sage: M

\end{sagecommandline}
\end{sagecolorbox}

Now, we can create an element of $\MM(\phi)$ by passing in an Ore
polynomial and observe that the software automatically decomposes it
on the canonical basis.

\begin{sagecolorbox}
\begin{sagecommandline}

  sage: tau = phi.ore_variable()
  sage: M(tau)
  sage: M(tau^2)
  sage: M(tau^3)
  sage: M(phi(T))

\end{sagecommandline}
\end{sagecolorbox}
\end{sageexplain}

Having a canonical basis of $\MM(\phi)$ allows us to represent linear
morphisms with matrices. Following the SageMath convention, we shall
write vectors in row---in particular, the matrix $\Mat(f)$ of a
morphism $f$ will be defined as the matrix whose $i$th row are the
coefficients of the image by $f$ of the $i$th element of the basis.

Using the very same definition, it is also possible to attach a
matrix to semilinear operators, and especially to the endomorphism
$\tau_{\MM(\phi)}$. A simple computation shows that, if $\phi_T =
z + g_1 \tau + \cdots + g_r \tau^r$, then
\begin{equation}
\label{eq:motive:matrixtau}
\Mat\big(\tau_{\MM(\phi)}\big) =
\left( \begin{matrix}
0 & 1 \\
  & 0 & 1 \\
  &   & \ddots & \ddots \\
  &   &   & 0 & 1 \\
\frac{T-z}{g_r}  & \frac{-g_1}{g_r} & \cdots &
\frac{-g_{r-2}}{g_r} & \frac{-g_{r-1}}{g_r}
\end{matrix} \right) \in F[T]^{r\times r}.
\end{equation}

\begin{sageexplain}
We have access to the above matrix using the following syntax.

\begin{sagecolorbox}
\begin{sagecommandline}

  sage: M.matrix()

\end{sagecommandline}
\end{sagecolorbox}

\end{sageexplain}

\begin{example}
\label{ex:motive:carlitz}
An essential example is the motive of the Carlitz module (see Section~\ref{ssec:carlitz}). Recall briefly that
the corresponding Carlitz module is the Drinfeld module of rank $1$ given by the morphism $c : A \to F\{\tau\}$, $T
\mapsto z + \tau$.
Its motive $\MM(c)$ is then a $F[T]$-module of rank $1$,
\ie as a module one has $\MM(c) = F[T] \ee$ where we
used the letter $\ee$ to denote the canonical basis of $\MM(c)$.
In addition, applying Equation~\eqref{eq:motive:matrixtau}, one finds
that the matrix of $\tau_{\MM(c)}$ is simply
$\big(\begin{matrix}T - z \end{matrix}\big)$.
Therefore, the operator $\tau_{\MM(c)}$ is explicitly given by
$$\tau_{\MM(c)}(f \, \ee) =
\tau(f)\, \big(T - z\big) \, \ee.$$
\end{example}

\subsubsection{Comparison between Anderson motives and Tate modules}
\label{sssec:AndersonTate}

The two constructions we have presented previously are, in some 
sense, dual to each other. To properly settle this, we consider
the bilinear map
$$\MM(\phi) \times \Fs \rightarrow \Fs, \, (m,z) \mapsto m(z).$$
For all ideals $\mathfrak a$ of $A$, one checks that it induces a second bilinear map at the level of torsion points as follows:
\begin{equation}
\label{eq:motive:pairing}
\MM(\phi) / \mathfrak a \MM(\phi) \times \phi[\mathfrak a] \rightarrow \Fs.
\end{equation}
It then turns out that the above map is a ``perfect duality'' in 
the sense that it makes appear $\phi[\mathfrak a]$ as the dual of 
$\MM(\phi) / \mathfrak a \MM(\phi)$ and \emph{vice versa}.
Precisely, we have the following theorem.

\begin{theorem}
\label{th:motive:torsion}
Let $\phi : A \to F\{\tau\}$ be a Drinfeld module and let 
$\mathfrak a$ be an ideal of $A$. If the $A$-characteristic of $\phi$
is not zero, we assume that $\mathfrak a$ is coprime with it.
Then the bilinear map
\eqref{eq:motive:pairing} induces an isomorphism
$$\phi[\mathfrak a] \simeq
  \Hom_{F, \tau}\big(\MM(\phi)/\mathfrak a \MM(\phi), \, \Fs \big),$$
where $\Hom_{F, \tau}(\cdot,\cdot)$ denotes the $A$-module of $F$-linear maps commuting with the $\tau$-action, where $\tau$ acts on $F^s$ by $x \mapsto x^q$.
\end{theorem}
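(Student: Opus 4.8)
The plan is to reduce everything to the case of a prime power ideal $\mathfrak a = \mathfrak l^n$ and then pass to a limit, and to exploit the explicit description of both sides via the canonical basis. First I would reduce to $\mathfrak a = \mathfrak l^n$: since $A = \Fq[T]$ is a PID, the Chinese Remainder Theorem gives a decomposition $A/\mathfrak a \simeq \prod_i A/\mathfrak l_i^{n_i}$ over the prime factors of $\mathfrak a$, and both $\phi[\mathfrak a]$ and $\MM(\phi)/\mathfrak a\MM(\phi)$ decompose compatibly along this product (the bilinear map \eqref{eq:motive:pairing} respecting the decomposition), so it suffices to treat $\mathfrak a = \mathfrak l^n$ with $\mathfrak l$ prime, coprime to $\Acar(\phi)$.

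Next I would check that the pairing is well defined and lands in the $\mathfrak l^n$-torsion on the motive side in the appropriate sense, \ie that the induced map $\phi[\mathfrak l^n] \to \Hom_{F,\tau}(\MM(\phi)/\mathfrak l^n\MM(\phi), \Fs)$ makes sense. Given $x \in \phi[\mathfrak l^n]$ and $m \in \MM(\phi)$, one sends $x$ to the map $m \mapsto m(x)$; this is $F$-linear in $m$, commutes with $\tau$ since $(\tau m)(x) = (m(x))^q$, and factors through $\MM(\phi)/\mathfrak l^n\MM(\phi)$ because for $a \in \mathfrak l^n$ one has $(am)(x) = (m\phi_a)(x) = m(\phi_a(x)) = m(0) = 0$ as $x$ is killed by $\phi_a$. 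So the map of the theorem is well defined and visibly $A$-linear.

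The core of the argument is then injectivity together with a count of Euler--Poincaré characteristics. For injectivity: if $x \in \phi[\mathfrak l^n]$ pairs trivially with all of $\MM(\phi)$, then in particular $m(x) = 0$ for $m = 1 \in F\{\tau\}$ (the unit, which is an element of the motive), forcing $x = 0$. For the size comparison, I would compute $|\phi[\mathfrak l^n]|$ using \Cref{prop:torsion}: it is free of rank $r$ over $A/\mathfrak l^n$, hence $|\phi[\mathfrak l^n]| = |A/\mathfrak l^n|^r = (\mathfrak l^n)^r$ as ideals. On the other side, by \Cref{prop:basismotive} the motive $\MM(\phi)$ is free of rank $r$ over $F[T]$, so $\MM(\phi)/\mathfrak l^n\MM(\phi)$ is free of rank $r$ over $F[T]/\mathfrak l^n F[T]$; the target $\Hom_{F,\tau}(-, \Fs)$ is computed by taking $\tau$-invariants of $\Hom_F(-, \Fs)$, and a standard descent argument (Hilbert 90 / the fact that for a finite-dimensional $F$-vector space with semilinear $\tau$-action the space of invariants has the same $\Fq$-dimension, when the action is ``étale'', which is exactly the coprimality-with-$\Acar(\phi)$ hypothesis ensuring the relevant operator is separable) shows the target also has Euler--Poincaré characteristic $(\mathfrak l^n)^r$ over $A$. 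An injective $A$-module homomorphism between finite $A$-modules of the same Euler--Poincaré characteristic is an isomorphism, which closes the case $\mathfrak a = \mathfrak l^n$; reassembling via CRT gives the general statement.

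The main obstacle I anticipate is the descent/$\tau$-invariance step: making precise that $\Hom_{F,\tau}(\MM(\phi)/\mathfrak l^n\MM(\phi), \Fs)$ really has the expected size requires knowing that the semilinear operator $\tau_{\MM(\phi)}$ induces an étale (separable) structure on the quotient, which is where the hypothesis that $\mathfrak a$ is coprime to $\Acar(\phi)$ enters — concretely, via \eqref{eq:motive:matrixtau} the determinant of $\Mat(\tau_{\MM(\phi)})$ is (up to a unit) $\det$ of a companion-type matrix whose reduction mod $\mathfrak l$ is invertible precisely under this coprimality, so that $\tau$ acts invertibly on $\MM(\phi)/\mathfrak l^n\MM(\phi)$ and the Lang-torsor argument applies over $\Fs$. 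Everything else is bookkeeping with the canonical basis and the classification of finite $A$-modules.
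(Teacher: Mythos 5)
Your proposal is correct. The paper does not actually write out a proof of \cref{th:motive:torsion} (it defers to \cite[\S 5.4]{G98} and \cite[\S 2.1]{CL23}), and your argument is essentially the ``pedestrian'' one of the latter reference: well-definedness of $x \mapsto (m \mapsto m(x))$, injectivity by evaluating at $m = 1$, and a size count in which the coprimality hypothesis enters exactly where you locate it --- through the invertibility of $\det \Mat(\tau_{\MM(\phi)}) = \pm(T-z)/g_r$ modulo $\mathfrak a$, which makes the semilinear system $Cv = v^{(q)}$ cutting out $\Hom_{F,\tau}\big(\MM(\phi)/\mathfrak a\MM(\phi), \Fs\big)$ \'etale with exactly $q^{r\deg \mathfrak a}$ solutions, all lying in $\Fs$. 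Two minor remarks: the descent step you need is Lang's theorem for vector groups (an additive, Artin--Schreier-type statement), not Hilbert 90; and the Euler--Poincar\'e bookkeeping (and even the reduction to prime powers) can be bypassed, since an injective $A$-linear map between finite modules of equal cardinality is automatically an isomorphism.
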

\begin{proof}
See \cite[\S 5.4]{G98}, or \cite[\S 2.1]{CL23} for a more
pedestrian approach.
\end{proof}

\begin{remark}
\cref{th:motive:torsion} can be seen as a realization
of Katz's famous equivalence of categories between Galois
representations and Frobenius modules \cite[Proposition~4.1.1]{K73}:
it precisely says that $\phi[\mathfrak a]$ and $\MM(\phi)/ \mathfrak a 
\MM(\phi)$ are in correspondence under this equivalence.
\end{remark}

Passing to the limit, we deduce from \cref{th:motive:torsion}
a formula for the Tate module $\TT_\fll(\phi)$ in terms of $\MM(\phi)$.
To write it down in a simple form, it is convenient to
introduce the dual motive $\MM(\phi)^\vee$ of $\MM(\phi)$.
It is defined by the usual formula
$$\MM(\phi)^\vee = \Hom_{F[T]}\big(\MM(\phi), F[T]\big).$$
The $\tau$-action on it (in the canonical dual basis) is defined
by the matrix
\begin{equation}
\label{eq:motive:matrixtaudual}
\Mat\big(\tau_{\MM(\phi)}\big)^{-1} =
\frac 1{T-z} 
\left( \begin{matrix}
g_1 & 1 \\
g_2 & 0 & 1 \\
\vdots  & & \ddots & \ddots \\
g_r &   &  & 0 & 1 \\
\end{matrix} \right).
\end{equation}
We observe, nonetheless, that the latter does not assume coefficients
in $F[T]$ but in $F[T][\frac 1{T-z}]$.
This means that $\tau_{\MM(\phi)^\vee}$ is \emph{not} an endomorphism
of $\MM(\phi)^\vee$ but only defines a mapping
$$\tau_{\MM(\phi)^\vee} :
  \MM(\phi)^\vee \to \MM(\phi)^\vee\left[\frac 1{T-z}\right].$$
We say that $\MM(\phi)^\vee$ is \emph{noneffective}.

For a maximal ideal $\fll$ of $A$, which is different from the
$A$-characteristic of $\phi$, we now have an isomorphism
\begin{equation}
\label{eq:motive:Tate}
\TT_\fll(\phi) \otimes_{\Fq} \Fs \simeq
  \MM(\phi)^\vee \otimes_{A \otimes F} (A_\fll \otimes \Fs),
\end{equation}
which is compatible with the $\tau$-action (after inverting
$T{-}z$ on the codomain).
We finally recover a formula for $\TT_\fll(\phi)$ by taking the
fixed points under the $\tau$-action.

\section{Morphisms of Drinfeld modules}
\label{ssec:drinfeld:morphisms}

In this section, we keep the notation from \cref{ssec:drinfeld:def}. In particular, we recall that $F$ is an $A$-field via the ring homomorphism $\gamma: A \rightarrow F$ and we set $z=\gamma(T)$.
\subsection{The formal definition}
\label{sssec:defmorphisms}

Drinfeld modules over $(F, \gamma)$ 
are designed to model algebraic structures of 
$A$-modules on $F$-algebras, and likewise, morphisms between two Drinfeld modules 
correspond to $A$-linear mappings which are algebraic, \ie encoded by Ore
polynomials.

The formal definition of a morphism between Drinfeld modules only
retains the underlying Ore polynomial and can be phrased as follows.

\begin{definition}
\label{def:morphism}
Let $\phi, \psi:A \to F\{\tau\}$ be two $A$-Drinfeld modules over 
$(F,\gamma)$.
A \emph{morphism} $u : \phi\to\psi$ is an Ore polynomial $u\in F\{\tau\}$ 
such that $u\phi_a=\psi_a u$ for all $a\in A$. An \emph{isogeny} is a 
nonzero morphism.

We let $\Hom(\phi,\psi)$ denote the set of morphisms from $\phi$
to $\psi$. When $\phi = \psi$, we simply write $\End(\phi)$ for
$\Hom(\phi,\phi)$.
\end{definition}

Since $A$ is $\Fq[T]$, the condition of \cref{def:morphism} is 
equivalent to requiring that $u\phi_T = \psi_T u$, in other words, it is sufficient to check the condition for $a = T$ only. Another remarkable fact is that two isogenous Drinfeld modules necessarily have the same rank. Indeed, the relation $u \phi_T =\psi_T u$ implies on the $\tau$-degrees 
that $\deg u + \deg \phi_T = \deg \psi_T + \deg u$, and thus that ${\deg 
\phi_T= \deg \psi_T}$.

\begin{remark}
\label{rem:notation-hom-end-cloture}
  We warn the reader that multiple conventions are possible for denoting the
  sets of morphisms and of endomorphisms. For example, when one has to distinguish between ordinary and supersingular Drinfeld modules (which we will define in \cref{subsubsec:super-ordi}), it is useful to consider morphisms defined on
  the separable closure, \ie elements $u \in \Fs\{\tau\}$ such that $u
  \phi_T = \psi_T u$. We would then write $\Hom_{\Fs}(\phi, \psi)$ for their
  set, and $\End_{\Fs}(\phi) = \Hom_{\Fs}(\phi, \phi)$. This notation differs
  from the one introduced in \cite[Proposition~3.3.10]{Pap23}, but aligns with the
  SageMath implementation (see SageMath Example~\ref{sage:supersingular-end}).
\end{remark}

We observe that, when only $\phi$ and $u$ are given, it is easy to infer the codomain $\psi$ by solving the equation $u \phi_T = 
\psi_T u$. This remark also leads to the following proposition.

\begin{proposition}
\label{prop:velu-ore}
  Let $\phi : A\to F\{\tau\}$ be a Drinfeld module.
  Let $u \in \Ftau$ be a nonzero Ore polynomial.
  Then $u$ defines an isogeny with domain $\phi$ if, and only if, 
  $u$ right-divides $u \phi_T$.
\end{proposition}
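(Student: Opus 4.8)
The plan is to prove the two implications separately. The forward one is immediate: if $u$ is an isogeny with domain $\phi$, then by \cref{def:morphism} there is a Drinfeld module $\psi$ with $u\phi_a = \psi_a u$ for all $a \in A$, and specialising to $a=T$ exhibits $u\phi_T = \psi_T u$ as a left multiple of $u$, i.e. $u$ right-divides $u\phi_T$. All the content is in the converse, which amounts to reconstructing the codomain $\psi$ out of $u$ and $\phi$ and checking it is a genuine Drinfeld module.

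So assume $u\phi_T = vu$ for some Ore polynomial $v$ (necessarily unique, since $F\{\tau\}$ is a domain and $u\neq 0$). First I would record that $v$ has positive $\tau$-degree: the $\tau$-degree is additive on products, so $\deg_\tau v = \deg_\tau(vu)-\deg_\tau u = \deg_\tau(u\phi_T)-\deg_\tau u = \deg_\tau\phi_T = r\geq 1$. Then, using that $A=\Fq[T]$ is the free $\Fq$-algebra on $T$, I would let $\psi: A\to F\{\tau\}$ be the unique $\Fq$-algebra homomorphism with $\psi_T := v$, and claim it is a Drinfeld module: condition~\ref{cond:nontrivial} of \cref{def:drinfeldmodule} holds because $\deg_\tau\psi_T = r\geq 1$, while condition~\ref{cond:tangent} holds relative to the structure morphism $\gamma' := (\,\cdot\bmod\tau)\circ\psi$, because extracting the constant coefficient is a ring homomorphism $F\{\tau\}\to F$ (its kernel $\tau F\{\tau\}=F\{\tau\}\tau$ is a two-sided ideal), so $\gamma'$ is a ring homomorphism $A\to F$ and, by construction, the constant coefficient of $\psi_a$ equals $\gamma'(a)$ for every $a$. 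Finally I would promote $u\phi_T=\psi_T u$ to $u\phi_a=\psi_a u$ for all $a$ by the usual argument: the set $\{a\in A:\ u\phi_a=\psi_a u\}$ contains $T$ and all of $\Fq$ (scalars in $\Fq$ commute with $\tau$, hence with $u$), and it is stable under sums and products since $\phi$ and $\psi$ are $\Fq$-algebra maps (e.g. $u\phi_{ab}=(u\phi_a)\phi_b=\psi_a(u\phi_b)=\psi_a\psi_b u=\psi_{ab}u$); hence it is all of $A$, and $u\colon\phi\to\psi$ is a nonzero morphism, i.e. an isogeny with domain $\phi$.

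The one point I would flag as needing care is precisely the verification that the reconstructed $\psi$ is a bona fide Drinfeld module, and in particular the behaviour of its $A$-characteristic. When $u$ is separable, i.e. its constant coefficient is nonzero, comparing constant coefficients in $u\phi_T=\psi_T u$ forces $\gamma'=\gamma$, so $\psi$ lives over the same $A$-field as $\phi$; for inseparable $u$ this need not hold, the constant coefficient of $\psi_T$ being $\gamma(T)^{q^{m}}$ with $m$ the $\tau$-valuation of $u$, and the codomain is then a Frobenius twist of $\phi$ (the prototype being the Frobenius isogeny $\tau\colon\phi\to\phi^{(1)}$, where $\phi^{(1)}_T$ is obtained from $\phi_T$ by raising all coefficients to the $q$-th power). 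In every case the construction above produces an honest Drinfeld module, so the equivalence holds as stated; in generic characteristic, where $\gamma$ is injective, the same constant-coefficient comparison moreover shows that every isogeny is automatically separable.
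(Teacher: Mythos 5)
Your argument follows the same route as the paper's: define $\psi_T$ by $u\phi_T=\psi_T u$, check that it generates a Drinfeld module, and extend the intertwining relation from $T$ to all of $A$ (a step the paper leaves implicit and you rightly spell out via the $\Fq$-subalgebra argument). The one substantive divergence is the constant coefficient, and there your computation is the correct one: comparing lowest-order terms gives $z^{q^m}$ with $m$ the $\tau$-valuation of $u$, whereas the paper's proof asserts without qualification that the constant coefficient of $\psi_T$ is $z$. This is not a cosmetic point. Under \cref{def:morphism} the codomain must be a Drinfeld module over the \emph{same} $(F,\gamma)$, so the ``if'' direction as literally stated requires $z^{q^m}=z$; this holds when $u$ is separable or when $m$ is a multiple of $\deg\p$ (the relative Frobenius case), but fails for instance for $u=\tau$ in generic characteristic, where $\tau$ always right-divides $\tau\phi_T$ yet the resulting $\psi$ lives over the twisted structure morphism $a\mapsto\gamma(a)^q\neq\gamma(a)$. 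You identify exactly this phenomenon, which is to your credit, but your closing claim that ``the equivalence holds as stated in every case'' then overshoots: it is true only if one tacitly allows the codomain's $A$-field structure to be a Frobenius twist of $\gamma$, which is not what \cref{def:morphism} permits. In short, your proof is at least as rigorous as the paper's and sharper on the inseparable case; the residual mismatch lies in the statement of the proposition (and the paper's own proof) rather than in your argument.
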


\begin{proof}
We assume that the condition of the proposition is fulfilled.
We define the Ore polynomial $\psi_T$ by the relation $u \phi_T = 
\psi_T u$. Looking at the coefficient of least degree on the right-hand side, we find that the constant coefficient
of $\psi_T$ is $z$. Hence $\psi_T$ can be prolonged to a
Drinfeld module $\psi : A \to F\{\tau\}$ and $u$ defines an
isogeny from $\phi$ to $\psi$.
The converse is proved similarly.
\end{proof}

\begin{sageexplain}
  One can create a morphism by simply passing the Ore
  polynomial defining it: if the codomain is not given, it is 
  automatically computed.

  \begin{sagecolorbox}
  \begin{sagecommandline}

    sage: phi = DrinfeldModule(A, [z, z, 1])
    sage: u = phi.hom(tau + 1)
    sage: u
    sage: psi = u.codomain()
    sage: psi
    sage: (tau + 1) * phi(T) == psi(T) * (tau + 1)

  \end{sagecommandline}
  \end{sagecolorbox}

\end{sageexplain}

\paragraph{Arithmetic operations on morphisms.}
All basic arithmetic operations on morphisms are defined by importing the standard arithmetic on Ore polynomials.
Precisely, morphisms can be:
\begin{itemize}
  \item Summed. If $u, v: \phi \to \psi$ are two morphisms, then the Ore
    polynomial $u{+}v$ also defines a morphism. Indeed, we have $$(u+v) \cdot
    \phi_T = u \phi_T + v \phi_T = \psi_T u + \psi_T v = \psi_T \cdot (u+v).$$

  \item Multiplied by scalars in $\Fq$. Note that since $\Fq$ is in the centre
    of $\Ftau$, for any isogeny $u$ on $\phi$, and any scalar $\lambda \in
    \Fq$, then $\lambda u$ and $u\lambda$ define the same isogeny.

  \item Multiplied by elements in $A$. Given $u : \phi \to \psi$ and $a \in A$,
    the product $a u$ is defined by the Ore polynomial $u \phi_a = \psi_a u$;
    we easily check that it is again an isogeny from $\phi$ to $\psi$.

  \item Composed. Given $u : \phi \to \psi$ and $v : \psi \to \mu$, the product
    of Ore polynomials $uv$ defines a morphism from $\phi$ to $\mu$. Indeed,
    one instantly checks that $uv \phi_T = u \psi_T v = \mu_T uv$. The identity
    morphism also makes sense: it is represented by the constant Ore polynomial $1$.
\end{itemize}
One can summarize the above properties by saying that Drinfeld modules over a fixed base $(F, \gamma)$ form a \emph{$A$-linear category}, meaning
in particular that the sets $\Hom(\phi,\psi)$ are modules over $A$ and
that the $\End(\phi)$ are algebras over $A$.

\begin{sageexplain}\label{sage:dualexample}
We create another isogeny $v$ and compose it with $u$.

\begin{sagecolorbox}
\begin{sagecommandline}

  sage: v = psi.hom(tau + z - 1)
  sage: v
  sage: u * v
  sage: v * u
  sage: u * v == T - 1
  sage: v * u == T - 1

\end{sagecommandline}
\end{sagecolorbox}

\noindent
In the code above, the last two lines check that the composites 
$u \circ v$ and $v \circ u$ are both the scalar multiplications by
$T{-}1$.
We observe, nevertheless, that the Ore polynomials defining them
are different. Indeed, the first one is $\psi_{T-1}$ (since $u \circ v$
is an endomorphism of~$\psi$), while the second one is $\phi_{T-1}$.
We shall see later on in \cref{sssec:norm-dual}
that $T{-}1$ is the norm of $u$, and $v$ is its dual isogeny.
\end{sageexplain}

\begin{proposition}
\label{proposition:rgcd-isogeny}
Let $\phi : A\to F\{\tau\}$ be a Drinfeld module.
Let $u_1, \dots, u_n$ be isogenies with domain $\phi$. Then, the Ore polynomials $\rgcd(u_1, \dots, u_n)$ and 
$\llcm(u_1, \ldots, u_n)$ both define isogenies with domain $\phi$.
\end{proposition}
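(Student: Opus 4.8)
My plan is to reduce the statement to the divisibility criterion of \cref{prop:velu-ore}, which says that a nonzero Ore polynomial $v$ defines an isogeny with domain $\phi$ if and only if $v$ right-divides $v\phi_T$. Since the $u_i$ are nonzero, so are $d := \rgcd(u_1,\dots,u_n)$ and $\ell := \llcm(u_1,\dots,u_n)$, and it is therefore enough to check, separately, that $d$ right-divides $d\phi_T$ and that $\ell$ right-divides $\ell\phi_T$. I will use freely that, $\Ftau$ being left-Euclidean for the $\tau$-degree, its left ideals are principal: concretely, $d$ is a monic generator of the left ideal $\Ftau u_1 + \cdots + \Ftau u_n$ (so that $d$ right-divides each $u_i$ and there exist $v_i \in \Ftau$ with $d = v_1 u_1 + \cdots + v_n u_n$), whereas $\ell$ is a generator of the left ideal $\Ftau u_1 \cap \cdots \cap \Ftau u_n$ (so that each $u_i$ right-divides $\ell$, and $\ell$ right-divides every Ore polynomial that is right-divisible by all the $u_i$).

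For the right-gcd, I would write $\psi^{(i)}$ for the codomain of the isogeny $u_i$, so that $u_i\phi_T = \psi^{(i)}_T u_i$, and then start from the Bézout relation $d = \sum_i v_i u_i$ to get
\[
  d\,\phi_T \;=\; \sum_i v_i\, u_i\phi_T \;=\; \sum_i v_i\, \psi^{(i)}_T\, u_i .
\]
Each $u_i$ lies in the left ideal $\Ftau d$ (because $d$ right-divides $u_i$), hence so does each summand $v_i\,\psi^{(i)}_T\,u_i$, and therefore $d\phi_T \in \Ftau d$; that is, $d$ right-divides $d\phi_T$. By \cref{prop:velu-ore}, $d$ defines an isogeny with domain $\phi$.

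For the left-lcm, the argument is the ``dual'' one. For each $i$ I would write $\ell = e_i u_i$ with $e_i \in \Ftau$, which is possible since $u_i$ right-divides $\ell$; then
\[
  \ell\,\phi_T \;=\; e_i\, u_i\phi_T \;=\; e_i\, \psi^{(i)}_T\, u_i ,
\]
so $u_i$ right-divides $\ell\phi_T$, for every $i$. By the minimality property of the left-lcm, $\ell$ then right-divides $\ell\phi_T$, and \cref{prop:velu-ore} once more yields that $\ell$ defines an isogeny with domain $\phi$. I do not expect any genuine obstacle here: all the conceptual content is already contained in \cref{prop:velu-ore}, and the only point requiring a little care is the left/right bookkeeping — knowing that the right-gcd generates the \emph{left} ideal $\sum_i \Ftau u_i$ (so the Bézout cofactors appear on the left) while the left-lcm generates $\bigcap_i \Ftau u_i$ — together with the harmless remark that the chosen normalisation of $\rgcd$ and $\llcm$ is irrelevant, since the property of ``defining an isogeny with domain $\phi$'' is invariant under left multiplication by an element of $F^\times$.
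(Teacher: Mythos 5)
Your proof is correct and follows essentially the same route as the paper's: a Bézout relation plus \cref{prop:velu-ore} for the right-gcd, and the minimality property of the left-lcm plus \cref{prop:velu-ore} for the other case. The only (immaterial) difference is that you observe each summand $v_i\,\psi^{(i)}_T\,u_i$ lies in $\Ftau\,d$ directly, whereas the paper notes that $d\phi_T$ lies in the left ideal generated by the $u_i$, which is the same ideal.
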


\begin{proof}
  We let $\psi_i$ denote the codomain of each isogeny $u_i$ ($1 \leq i \leq n$).
  For every index $i$, we have the relation $u_i \phi_T = \psi_{i,T} u_i$.

  We now write $u = \rgcd(u_1, \dots, u_n)$.
  Given that $\Ftau$ is left-euclidean, there exists a Bézout relation
  $u = \sum_{i=1}^n m_i u_i$ with $m_i \in F\{\tau\}$.
  We deduce that $u \phi_T = \sum_{i=1}^n m_i \psi_{i,T} u_i$. Hence
  $u \phi_T$ is in the left-ideal generated by the $u_i$, and so
  that it is right-divided by their right-gcd~$u$. 
  We conclude that $u$ defines an isogeny with domain $\phi$ by
  invoking \cref{prop:velu-ore}.

  Let now $v = \llcm(u_1, \ldots, u_n)$.
  By definition $u_i$ right-divides $v$. Thus, we can write 
  $v = m_i u_i$ with $m_i \in F\{\tau\}$ and obtain the relation
  $v \phi_T = m_i u_i \phi_T = m_i \psi_{T,i} u_i$ which proves
  that $u_i$ also right-divides $v \phi_T$. Since this holds for
  all $u$, we find that $v$ also right-divides $v \phi_T$. Again,
  we conclude using \cref{prop:velu-ore}.
\end{proof}

\paragraph{The action on the \texorpdfstring{$\Fs$}{F\^s}-points.}

Another option to approach morphisms of Drinfeld modules is to study their action on $F$-algebras.
Indeed, any morphism of Drinfeld modules $u: \phi \to \psi$ in the previous sense induces maps, which we will still denote by $u$
$$\begin{array}{rcl}
\phiact{\mathcal F} & \to & \psiact{\mathcal F} \\
x & \mapsto &  u(x),
\end{array}$$
for every $F$-algebra $\mathcal F$. 
This construction is particularly relevant when $\mathcal F$ is $\Fs$. Indeed, one can prove that two isogenies $u, v : \phi \to \psi$ are 
equal if, and only if, they induce the same morphisms $\phiact{\Fs}\to 
\psiact{\Fs}$.

We will elaborate more on this viewpoint
in \cref{tate-module-functor} below.

\subsection{Separability}
Regarding isogenies of elliptic curves, there is an important distinction between separable and nonseparable ones: the former correspond to finite subgroups, whereas the latter are typically modelled by the Frobenius morphism. Such a dichotomy is also relevant in the case of Drinfeld modules, as we will discuss now.

\subsubsection{Separable isogenies and their kernels}
We start by defining the notion of separability in the 
context of Drinfeld modules.
\begin{definition}
\label{def:separable}
An isogeny $u : \phi \to \psi$ is \emph{separable} if its 
$\tau$-valuation is zero, \ie its constant coefficient does not vanish 
when viewed as an Ore polynomial.
\end{definition}

We recall from \cref{prop:Orebijection}
that we have defined the kernel of a morphism $u$ by
$$\ker u = \big\{\, x \in \phiact\Fs \mid u(x) = 0 \,\big\}.$$
It is an $A$-submodule of $\phiact \Fs$, which is finite if, and
only if, $u$ is an isogeny. Besides, as a subset of $\Fs$, it is
equipped with an action of $\Gal(\Fs/F)$.

\begin{proposition}
\label{prop:velu}
Let $\phi : A\to F\{\tau\}$ be a Drinfeld module.
Let $G$ be an $A$-submodule of $\phiact\Fs$ which is finite and stable by 
the action of $\Gal(\Fs/F)$. Then, there exists a Drinfeld module $\psi :
A \to F\{\tau\}$ and a separable isogeny $u : \phi \to \psi$ such that 
$\ker u = G$.
\end{proposition}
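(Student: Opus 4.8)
The plan is to construct $\psi$ and $u$ explicitly from the subgroup $G$ using the bijection of \cref{prop:Orebijection}, then verify the isogeny condition via \cref{prop:velu-ore}. First I would apply \cref{prop:Orebijection} to the finite, $\Gal(\Fs/F)$-stable $\Fq$-linear subspace $G \subset \Fs$: this produces a unique Ore polynomial $u \in F\{\tau\}$ with constant term $1$ such that $\ker u = G$. (Here one uses that $G$, being an $A$-submodule of $\phiact{\Fs}$, is in particular an $\Fq$-subspace, since $\Fq \subset A$ acts by scalar multiplication.) By construction $u$ has nonzero constant term, hence is separable in the sense of \cref{def:separable}.

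The next step is to show that $u$ defines an isogeny with domain $\phi$, for which \cref{prop:velu-ore} tells us it suffices to prove that $u$ right-divides $u\phi_T$. I would argue via kernels: by \cref{rem:Orebijection}, $u$ right-divides $u\phi_T$ if and only if $\ker u \subset \ker(u\phi_T)$. Now $\ker(u\phi_T)$ is the set of $x \in \Fs$ with $u(\phi_T(x)) = 0$, i.e. the preimage $\phi_T^{-1}(\ker u) = \phi_T^{-1}(G)$ under the map $\phi_T : \phiact{\Fs} \to \phiact{\Fs}$. Since $G$ is an $A$-submodule of $\phiact{\Fs}$, it is stable under the action of $T$, meaning $\phi_T(G) \subset G$, hence $G \subset \phi_T^{-1}(G) = \ker(u\phi_T)$. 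This gives exactly the required inclusion $\ker u \subset \ker(u\phi_T)$. Applying \cref{prop:velu-ore}, we obtain a Drinfeld module $\psi : A \to F\{\tau\}$ with $u\phi_T = \psi_T u$, and $u$ is an isogeny $\phi \to \psi$.

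Finally, $\ker u = G$ holds by the very construction of $u$, and separability was noted above, so $u$ has all the desired properties. I expect the main subtlety to be the bookkeeping around which structure one invokes \cref{prop:Orebijection} for: one must be careful that the hypothesis ``$A$-submodule of $\phiact{\Fs}$'' delivers both the $\Fq$-linearity needed to apply the bijection \emph{and} the $\phi_T$-stability needed for the divisibility argument — these are the two places the module structure (as opposed to mere subgroup structure) is used. Everything else is a direct translation through the dictionary of \cref{prop:Orebijection} and \cref{rem:Orebijection}. One could alternatively give the formula for $u$ explicitly as $u(x) = x\prod_{g \in G \setminus \{0\}}(1 - x/g)$ and check $\Fq$-linearity and the functional equation by hand, but routing everything through the already-established propositions is cleaner.
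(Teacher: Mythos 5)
Your proof is correct and follows essentially the same route as the paper's: apply \cref{prop:Orebijection} to obtain $u$ with constant term $1$ and $\ker u = G$, deduce $\ker u \subset \ker(u\phi_T)$ from the $\phi_T$-stability of $G$, and conclude right-divisibility via \cref{rem:Orebijection}. The only cosmetic difference is that you route the final step through \cref{prop:velu-ore}, whereas the paper defines $\psi_T$ directly from the divisibility relation; the substance is identical.
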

\begin{proof}
Applying \cref{prop:Orebijection}, we find an Ore polynomial 
$u \in F\{\tau\}$ with constant term~$1$ such that $\ker u = G$.
Moreover, the fact that $G$ is a $A$-submodule ensures that 
$\phi_T(G) \subset G$, from what we derive $\ker u 
\subset \ker(u \phi_T)$. Thus, it follows from 
Remark~\ref{rem:Orebijection} that $u$ right-divides $u \phi_T$, 
\ie there exists an Ore polynomial $\psi_T \in F\{\tau\}$ such 
that $u \phi_T = \psi_T u$. This Ore polynomial $\psi_T$ defines a 
Drinfeld module $\psi$ for which $u$ becomes an isogeny $\phi \to \psi$.
\end{proof}

\begin{remark}
The isogeny $\psi$ of \cref{prop:velu} is uniquely determined up to an isomorphism of $\psi$, \ie multiplication
by a nonzero element of $\Fq$ (see \cref{ssec:drinfeld:isomorphisms}).
We can then normalise it by requiring either that it has a constant
coefficient $1$ (as in \cref{prop:Orebijection}) or that
it is monic.
\end{remark}

It is interesting to reinterpret 
\cref{proposition:rgcd-isogeny} in light of what precedes. Indeed, we 
recall from \cref{rem:Orebijection} that right-gcds (resp.~left-lcms) 
correspond to taking intersections (resp.~sums). Therefore, when the 
isogenies $u_i$ are separable, the Ore polynomials $\rgcd(u_1, \dots, 
u_n)$ and $\llcm(u_1, \ldots, u_n)$ define the isogenies having 
$\ker(u_1) \cap \cdots \cap \ker(u_n)$ and $\ker(u_1) + \cdots + 
\ker(u_n)$ as kernel, respectively.

\subsubsection{Nonseparable isogenies and the Frobenius morphism}
\label{sssec:Frobenius}

When the $A$-characteristic of $F$ is zero, one checks that all isogenies
are separable and thus correspond to finite $A$-submodules of $\phiact{F^s}$.

On the contrary, when the $A$-characteristic of $F$ is a maximal ideal
$\pp$, inseparable isogenies do exist. Indeed, from the fact that 
$\Acar(F) = \pp$, we derive that $z^{q^{\deg\pp}} = z$. Therefore, 
if we write $\phi_T = z + g_1 \tau + \cdots + g_r \tau^r$, the Ore polynomial
$\tau^{\deg\pp} \in \Ftau$ satisfies the following commutation relation:
\[
  \tau^{\deg\pp} \phi_T
  = z \tau^{\deg\pp} + \sum_{i = 1}^r g_i^{q^{\deg\pp}} \tau^{i + \deg\pp}
  = \phi'_T \tau^{\deg\pp}
\]
where
\begin{equation}
\label{eq:frobcodomain}
  \phi'_T = z + g_1^{q^{\deg\pp}} \tau + \cdots + g_r^{q^{\deg\pp}} \tau^r.
\end{equation}
In other words, $\tau^{\deg\pp}$ defines a nonseparable isogeny from 
$\phi$ to the Drinfeld module $\phi'$ defined by
Equation~\eqref{eq:frobcodomain}.

\begin{definition}
\label{def:relativeFrobenius}
  The isogeny $\tau^{\deg(\p)}:\phi\to\phi'$ is called the 
  \emph{relative Frobenius morphism} of~$\phi$.
\end{definition}

\begin{sageexplain}
In the example below, we have $\gamma(T) = 1$, so the relative Frobenius
is represented by the Ore polynomial $\tau$.

\begin{sagecolorbox}
\begin{sagecommandline}

  sage: phi = DrinfeldModule(A, [1, z, z])
  sage: phi.frobenius_relative()

\end{sagecommandline}
\end{sagecolorbox}

On the contrary, when $\gamma(T)$ generates $F$, the relative Frobenius
is an endomorphism. It is the so-called \emph{Frobenius endomorphism},
which will be discussed in more detail in \cref{sssec:frobendomorphism}.

\begin{sagecolorbox}
\begin{sagecommandline}

  sage: phi = DrinfeldModule(A, [z, z, 1])
  sage: phi.frobenius_relative()

\end{sagecommandline}
\end{sagecolorbox}

\end{sageexplain}

It turns out that the Frobenius morphism is the prototypical example of
nonseparable isogenies, as underlined by the following proposition.

\begin{proposition}
\label{prop:facto-separable-inseparable}
  Any nonseparable isogeny
  $u: \phi \to \psi$ between Drinfeld modules of $A$-characteristic $\pp$ factors through the relative Frobenius of $\phi$, \ie 
  there exists an isogeny $u' : \phi' \to \psi$ such that
  $u = u' \tau^{\deg \pp}$.
  
  Any isogeny $u: \phi \to \psi$ can be written $u = v 
\tau^{m \deg\pp}$ where $v$ is separable and $m$ is a nonnegative integer.
\end{proposition}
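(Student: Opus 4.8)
The statement has two parts: first, that every nonseparable isogeny factors through the relative Frobenius $\tau^{\deg\pp}$; second, the iterated version writing an arbitrary isogeny as $v\tau^{m\deg\pp}$ with $v$ separable. The plan is to establish the first part and then bootstrap to the second by an induction on the $\tau$-valuation.

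For the first part, let $u:\phi\to\psi$ be nonseparable, so its $\tau$-valuation is at least $1$; write $u = w\tau$ for some $w\in\Ftau$. I would like to ``divide'' the relation $u\phi_T = \psi_T u$ by $\tau^{\deg\pp}$ on the right. The key point is the commutation identity already computed before Definition~\ref{def:relativeFrobenius}: because $\Acar(F)=\pp$ forces $z^{q^{\deg\pp}}=z$, one has $\tau^{\deg\pp}\phi_T = \phi'_T\,\tau^{\deg\pp}$ where $\phi'$ is the Drinfeld module of Equation~\eqref{eq:frobcodomain}. So the real task is to show that $\tau$ (equivalently, enough powers of $\tau$) right-divides $u$. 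Here is where the hypothesis on the $A$-characteristic does the work: I would argue that the constant coefficient of $u$ must vanish (that is exactly nonseparability), and then use that $u\phi_T = \psi_T u$ together with $g_0 = z \neq 0$ being the constant coefficient of $\phi_T$ to propagate divisibility by $\tau$ all the way up to $\tau^{\deg\pp}$. Concretely, if $u = u_\nu\tau^\nu + (\text{higher order})$ with $u_\nu\neq 0$ and $1\le\nu<\deg\pp$, comparing $\tau$-valuations in $u\phi_T = \psi_T u$ gives $\nu = \nu$ (valuations match trivially since $\phi_T,\psi_T$ have valuation $0$), so that argument alone is not enough — one instead looks at the low-degree coefficients. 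The cleanest route is: the kernel of $u$ (a Galois-stable $\Fq$-subspace of $\Fs$) is purely inseparable as a scheme-theoretic object only through $\tau^{\deg\pp}$, but since we are staying with subsets of $\Fs$, I would instead use Proposition~\ref{prop:velu-ore} in reverse. Namely, write $u = v\tau^k$ with $v$ of $\tau$-valuation $0$ (i.e. $v$ separable) and $k\ge 1$ maximal. Then $v\tau^k\phi_T = \psi_T v\tau^k$; I claim $k$ is a multiple of $\deg\pp$ — if not, $\tau^k\phi_T$ would have coefficients forcing $\tau^{k'}$ with $k' < k$ to appear, contradicting maximality, because $g_0^{q^k} = z^{q^k}\neq z$ when $\deg\pp\nmid k$ means the constant term of $\tau^k\phi_T$ is nonzero and ``unravels'' one power of $\tau$. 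Once $k = m\deg\pp$, iterating the commutation relation $m$ times gives $u = v(\phi^{(m)}_T$-conjugate$)\cdots$, and in particular for $m=1$ we get the factorization $u = u'\tau^{\deg\pp}$ with $u' := v$ an isogeny $\phi'\to\psi$ (it is nonzero since $u$ is, and it intertwines $\phi'_T$ with $\psi_T$ by construction).

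For the second part, apply the first part repeatedly. If $u$ is separable we are done with $m=0$. Otherwise $u = u'\tau^{\deg\pp}$ with $u':\phi'\to\psi$; if $u'$ is separable, stop with $v = u'$ and $m=1$; otherwise recurse on $u'$, which has strictly smaller $\tau$-valuation (it dropped by $\deg\pp$). Since the $\tau$-valuation is a nonnegative integer, this terminates, yielding $u = v\tau^{m\deg\pp}$ with $v$ separable. One should note that the Drinfeld module appearing as the domain of $v$ is the $m$-fold iterate $\phi^{(m)}$ of the construction in Equation~\eqref{eq:frobcodomain}, but this is not needed for the statement as phrased. I would mention that each $\tau^{\deg\pp}$ is an isogeny (as recorded before Definition~\ref{def:relativeFrobenius}), so the factorization is genuinely a factorization in the category of Drinfeld modules.

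**Main obstacle.** The delicate point is the claim that the $\tau$-valuation of a nonseparable isogeny is automatically a multiple of $\deg\pp$ — i.e., one cannot have an isogeny of $\tau$-valuation strictly between $0$ and $\deg\pp$. The slick way to see this is to observe that $\ker u \subset \Fs$ is a Galois-stable $\Fq$-subspace, and the separable part of $u$ (in the sense of Proposition~\ref{prop:Orebijection}, the unique Ore polynomial with constant term $1$ and the same kernel) captures all of $\ker u$, while the remaining factor, being a power of $\tau$ alone, must be $\tau^k$; then the commutation constraint $u\phi_T = \psi_T u$ forces $z^{q^k} = z$, hence $\deg\pp \mid k$. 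I expect writing this cleanly — isolating ``$u = (\text{separable part})\cdot\tau^k$ and $k$ is forced to be a multiple of $\deg\pp$'' — to be the crux, and the rest to be bookkeeping with the already-established commutation identity and Proposition~\ref{prop:velu-ore}.
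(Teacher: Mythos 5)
Your proof is correct and follows essentially the same route as the paper's: one compares the lowest-degree coefficients in the commutation relation $u\phi_T = \psi_T u$ to get $z^{q^{\nu}} = z$ for $\nu$ the $\tau$-valuation of $u$, hence $\deg\pp \mid \nu$ (in particular $\nu \geq \deg\pp$) because $z$ generates $\Fpp$, and the second statement follows by iterating the first. The only blemish is your intermediate paragraph about a smaller power $\tau^{k'}$ being ``forced to appear'' — both sides of the relation have the same $\tau$-valuation, so that phrasing does not quite make sense — but your final paragraph replaces it with the correct coefficient comparison, which is exactly the paper's argument.
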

\begin{proof}
It is enough to show that the $\tau$-valuation of $u$ is at least 
$\deg \pp$. This follows by writing down the commutation relation
$u \phi_T = \psi_T u$ and comparing the coefficients of the smallest degree. The second statement follows by repeatedly applying the first one. 
\end{proof}

\begin{remark}
We stress that the factorisation in the opposite direction, namely
$u = \tau^{\deg\pp} u'$ may fail in full generality.
However, it does always exist when $F$ is a finite field.
\end{remark}

As in the case of elliptic curves, the Frobenius morphism plays a 
primary role in the study of Drinfeld modules over finite fields.
We will elaborate more on this in \cref{subsec:finite-fields}.

\subsection{Isomorphisms and  \texorpdfstring{$j$}{j}-invariants}
\label{ssec:drinfeld:isomorphisms}

An important family of morphisms is, of course, that of isomorphisms.
Coming back to the definition, we see that an isomorphism $u: \phi
\to \psi$ is encoded by an invertible Ore polynomial $u$. In virtue of the additivity of the degrees, it turns out that
invertible Ore polynomials are just nonzero constant ones. Hence, isomorphisms 
are simply given by elements in $F^\times$. That being said, it is possible to
consider isomorphisms over any extension $F'/F$: constant Ore polynomials $u$ of
$F'\{\tau\}$ such that $u \phi_T = \psi_T u$. 
In this subsection, we describe a procedure to test
isomorphism, and give invariants to encode isomorphism classes (over $\Fs$).

\subsubsection{Deciding whether two Drinfeld modules are isomorphic}
\label{sssec:drinfeld:decideisom}

Let us fix a Drinfeld module $\phi : A \to F\{\tau\}$ of rank
$r$ and write $$\phi_T = z + g_1 \tau + g_2 \tau^2 + \cdots + g_r \tau^r.$$

Any element $u \in (\Fs)^\times$ now defines an isogeny $u : \phi \to \psi$ where
$\psi$ is the Drinfeld module of rank $r$ which is explicitely defined by
$$\psi_T = z + u^{1-q} g_1 \tau + u^{1-q^2} g_2 \tau^2 + \cdots + u^{1-q^r} g_r
\tau^r.$$ Another consequence of this calculation is a procedure to decide if two Drinfeld modules $\phi$ and $\psi$ are isomorphic: it is the case if, and only if, the coefficients of $\psi_T$ can be deduced by those of $\phi_T$ by
multiplying by $u^{1-q^i}$ (where $i$ is the corresponding $\tau$-degree).
Although this criterion looks quite easy to check, one needs to be careful with
the possible vanishing of the coefficients. However, paying attention to
properly handling this somewhat annoying case, one ends up with an efficient
algorithm to check isomorphism (see also
\cite[\S~3.2.3.5]{leudiere_morphisms_2024} for additional details). 

\begin{sageexplain}
\label{sage:isom-1}
The isomorphism test we mentioned is implemented.
This method
even allows us to decide if the Drinfeld modules are isomorphic over the base
field, or over the separable closure, the former being the default.

\begin{sagecolorbox}
\begin{sagecommandline}

  sage: phi = DrinfeldModule(A, [z, 1, 1])
  sage: psi = DrinfeldModule(A, [z, 2*z + 1, 1])
  sage: phi.is_isomorphic(psi)

\end{sagecommandline}
\end{sagecolorbox}

Here, we can check by hand that the isomorphism is given by the 
constant Ore polynomial $z + 1$.

\begin{sagecolorbox}
\begin{sagecommandline}

  sage: u = phi.hom(z + 1)
  sage: u.codomain() is psi
  sage: u.is_isomorphism()

\end{sagecommandline}
\end{sagecolorbox}

Below is a second example where $\phi$ and $\psi$ are not isomorphic
over the ground $A$-field~$F$, but are over $\Fs$.

\begin{sagecolorbox}
\begin{sagecommandline}

  sage: psi = DrinfeldModule(A, [z, z, 3])
  sage: phi.is_isomorphic(psi)
  sage: phi.is_isomorphic(psi, absolutely=True)

\end{sagecommandline}
\end{sagecolorbox}
\end{sageexplain}

\subsubsection{\texorpdfstring{$j$}{j}-invariants}
\label{subsec:j-inv}

An important fact is that isomorphism classes over $\Fs$
can be captured by algebraic invariants.
In rank $1$, the situation is trivial: all Drinfeld modules of
rank $1$ are isomorphism to the Carlitz module $T \mapsto z+\tau$
over $\Fs$.

\paragraph{Rank-$2$ Drinfeld modules.}
It follows from what we did in Subsection~\ref{sssec:drinfeld:decideisom}
that two Drinfeld modules $\phi$ and $\psi$ of rank $2$ defined by
\begin{align*}
\phi_T & = z + g_1 \tau + g_2 \tau^2 \qquad (g_2 \neq 0), \\
\psi_T & = z + h_1 \tau + h_2 \tau^2 \qquad (h_2 \neq 0),
\end{align*}
are $\Fs$-isomorphic if, and only if, there exists $u \in (\Fs)^\times$ 
such that $h_1 = u^{q-1} g_1$ and $h_2 = u^{q^2-1} g_2$. Raising
the first equation to the power $q{+}1$, we get
$h_1^{q+1} = u^{q^2-1} g_1^{q+1}$,  and combining now with the second equation, we get the necessary condition
\begin{equation}
\label{eq:jinv2}
\frac{g_1^{q+1}}{g_2} = \frac{h_1^{q+1}}{h_2}.
\end{equation}
It turns out that this condition is also sufficient. Indeed, if $g_1 \neq 0$ (and so
$h_1 \neq 0$ as well), we solve the first equation $h_1 = 
u^{q-1} g_1$ and verify that any solution is also a solution of the second one. On the contrary, if $g_1 = h_1 = 0$, we just solve the second equation, the first one being automatically
fulfilled.
\begin{definition}\label{def:jinvariant}
The quantity $g_1^{q+1}/g_2$ appearing in
Equation~\eqref{eq:jinv2} is called the \emph{$j$-invariant}
of $\phi$ and is denoted by $j(\phi)$.
    
\end{definition}
It characterises the isomorphism class of $\phi$ over $\Fs$.

\begin{sageexplain}
We reuse the isomorphic (over the separable closure, but not the base field) Drinfeld modules of SageMath Example~\ref{sage:isom-1}, and verify that they have the same $j$-invariant.

\begin{sagecolorbox}
\begin{sagecommandline}

  sage: phi.j_invariant()
  sage: psi.j_invariant()

\end{sagecommandline}
\end{sagecolorbox}
\end{sageexplain}

\begin{remark}
\label{rem:j-inv-representative}
  
  For any $j$ in K, let $\phi_T = z + \tau^2$ if $j=0$ or $\phi_T = z + \tau +
  j^{-1}\tau^2$ otherwise. Then the Drinfeld module $A \to F\{\tau\}$ defined
  by $\phi_T$ has $j$-invariant $j$.

\end{remark}

\paragraph{Higher rank Drinfeld modules.}

Isomorphism invariants for Drinfeld modules of higher
ranks were introduced by Potemine \cite{Potemine_1998}.

\begin{definition}
\label{def:jinvariants}
Let $\phi:A\to F\{\tau\}$ be a Drinfeld module of rank $r$. Write $\phi_T = z + g_1\tau+\dots+ g_r \tau^r$ with $g_r\neq 0$. 
Let $\ell < r$ and $\bk = (k_1, \ldots, k_\ell)$ be a multi-index with $1\leq k_1\leq\dots\leq k_\ell\leq r-1$ 
and let $\bs = (s_1,\dots,s_\ell, s_r)$ be a $(\ell{+}1)$--tuple of integers such that the following hold:
\begin{itemize}
\item $0\leq s_i \leq \dfrac{q^r-1}{q^{\gcd(k_i,r)}-1}$ for every $i\in\{1,\dots,\ell\}$,
\item $s_1\left(q^{k_1}-1\right)+\dots + s_\ell \left(q^{k_\ell}-1\right) = s_r\left(q^r-1\right)$.
\end{itemize}
The \emph{Potemine $J$-invariant} of $\phi$ of index $(\bk, \bs)$ is
\[J_{\bk}^{\bs}(\phi) = 
  \frac{g_{k_1}^{s_1} \cdots g_{k_\ell}^{s_\ell}}{g_r^{s_r}}.\]
\end{definition}

There is \emph{a priori} an infinite number of acceptable pairs $(\bk, \bs)$.
Nevertheless, changing $\bs$ into $n \bs$ (with $n \in \N$) results in
raising the corresponding $J$-invariant to the power $n$. For this reason,
it is safe to restrict ourselves to the so-called \emph{basic} parameters,
that are the parameters $(\bk, \bs)$ satisfying the extra condition $\gcd(s_1, \ldots, s_\ell, s_r) = 1$.

Although the list of basic parameters gets rapidly very long when $r$
increases, it always remains finite. As an extreme case, when $r = 2$,
there is one unique basic parameter, namely $\bk=(1)$ and $\bs = (q{+}1, 1)$. 
The corresponding Potemine $J$-invariant is the $j$-invariant we 
have introduced in \cref{def:jinvariant}.

\begin{sageexplain}
SageMath has methods to build the list of basic parameters and to 
compute the corresponding $J$-invariants in any rank. We already see in the example below that the complete list of Potemine $J$-invariants can
be very long, even in rank~$3$.

\begin{sagecolorbox}
\begin{sagecommandline}

  sage: phi = DrinfeldModule(A, [z, z+1, z+2, z+3])
  sage: phi.basic_j_invariants()

\end{sagecommandline}
\end{sagecolorbox}
\end{sageexplain}

\begin{remark}
We warn the reader that different naming conventions may be employed for the invariants. We chose to follow that of Potemine, which is also used in SageMath. Papikian uses a different notation:
the $j$-invariants, respectively the  \emph{basic} $j$-invariants, of  \cite{Pap23} correspond to the basic $j$-invariants
of Potemine, respectively to the Potemine $J$-invariant associated to the parameters
$\bk = (k)$ and $\bs = \frac 1 {q^{\gcd(k, r) - 1}} \cdot (q^r - 1, q^k - 1)$,
namely
$$\frac{g_k^{(q^r - 1) / (q^{\gcd(k, r) - 1})}}{g_r^{(q^k - 1) / (q^{\gcd(k, r) - 1})}}.$$
\end{remark}

These invariants characterise the isomorphism classes of Drinfeld modules over the algebraic closure, as can be seen in the following theorem.
\begin{theorem}[\protect{\cite[Theorem 3.8.11]{Pap23}}]
Let $\phi,\psi: A \to F\{\tau\}$ be two Drinfeld modules of the same rank.
We assume that $F$ is separably closed.
Then $\phi$ and $\psi$ are isomorphic if and only if they have the same basic Potemine $J$-invariants.
\end{theorem}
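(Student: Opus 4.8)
The plan is to show that the finite collection of basic Potemine $J$-invariants completely determines a Drinfeld module up to $\Fs$-isomorphism when $F$ is separably closed. The forward direction — that $\Fs$-isomorphic modules share all $J$-invariants — follows at once from the transformation law established in \cref{sssec:drinfeld:decideisom}: an isomorphism $u \in F^\times$ sends $g_k$ to $u^{1-q^k} g_k$, so a monomial $g_{k_1}^{s_1}\cdots g_{k_\ell}^{s_\ell}/g_r^{s_r}$ picks up the factor $u^{(1-q^{k_1})s_1 + \cdots + (1-q^{k_\ell})s_\ell - (1-q^r)s_r}$, which is $u^0 = 1$ precisely because of the defining relation $s_1(q^{k_1}-1) + \cdots + s_\ell(q^{k_\ell}-1) = s_r(q^r-1)$ imposed on admissible parameters. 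So the real content is the converse.

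For the converse, suppose $\phi_T = z + g_1\tau + \cdots + g_r\tau^r$ and $\psi_T = z + h_1\tau + \cdots + h_r\tau^r$ have the same basic $J$-invariants; I want to produce $u \in \Fs^\times$ with $h_k = u^{q^k - 1} g_k$ for all $k$. First one reduces to the generic case: partition indices into those $k$ with $g_k \neq 0$. Using suitable $J$-invariants one checks that $g_k = 0$ forces $h_k = 0$ and conversely, so the two modules have the same support $S = \{k : g_k \neq 0\}$ (note $r \in S$ always). The key step is then to build $u$: since $F$ is separably closed and $g_r \neq 0 \neq h_r$, the equation $X^{q^r - 1} = h_r/g_r$ has a solution $u \in \Fs^\times$. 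It remains to verify $h_k = u^{q^k-1} g_k$ for the other $k \in S$. Setting $v_k := h_k/(u^{q^k-1} g_k)$, one shows $v_k = 1$ by exhibiting, for each such $k$, an admissible basic index $(\bk, \bs)$ whose $J$-invariant isolates $g_k$ — concretely, take $\bk = (k, k, \ldots, k)$ with $k$ repeated a suitable number $m$ of times, and $\bs = (s, \ldots, s, s_r)$ with $m s (q^k - 1) = s_r(q^r - 1)$; a natural choice is $s = (q^r-1)/(q^{\gcd(k,r)}-1)$ and $m s_r$ adjusted accordingly, then rescaled to be primitive. Equality of this $J$-invariant for $\phi$ and $\psi$, combined with $h_r = u^{q^r-1}g_r$, yields $v_k^{\text{(some positive exponent)}} = 1$ in $\Fs^\times$; since $\Fs$ has no nontrivial roots of unity beyond those in its finite subfields and the relevant exponent is coprime to $p$, one concludes $v_k = 1$. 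Finally $u$ defines an $\Fs$-isomorphism $\phi \to \psi$ by the explicit formula of \cref{sssec:drinfeld:decideisom}.

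The main obstacle I anticipate is the bookkeeping in the key step: one must check that for \emph{every} $k \in S \setminus \{r\}$ there genuinely exists an admissible (and, after normalisation, basic) parameter $(\bk,\bs)$ that separates the variable $g_k$ from the others, and that the exponent one extracts for $v_k$ is prime to $p$ so that $v_k^{\text{exp}} = 1$ forces $v_k = 1$. This is a finite but delicate combinatorial verification about the lattice of admissible exponent tuples — essentially reproving that Potemine's invariants generate the full ring of invariants for the torus action $g_k \mapsto u^{q^k-1} g_k$. One can streamline it by working with a single well-chosen non-basic parameter per index $k$ and only passing to basic parameters at the end via the observation (already in the text) that replacing $\bs$ by $n\bs$ raises $J$ to the $n$-th power; a clean way is to pick $\bk = (k)$ when $\gcd(k,r)$ cooperates, falling back to longer multi-indices otherwise. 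For the detailed argument we refer to \cite{Potemine_1998} and \cite[Theorem 3.8.11]{Pap23}.
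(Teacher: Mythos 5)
First, a remark on scope: the survey does not prove this statement; it is quoted directly from \cite[Theorem~3.8.11]{Pap23}. So your proposal has to stand on its own, and while the forward direction is correct (and is the standard weight computation), the converse contains a genuine gap at the step where you deduce $v_k = 1$ from $v_k^{ms}=1$. You justify this by saying that $\Fs$ ``has no nontrivial roots of unity beyond those in its finite subfields'' and that the exponent is prime to $p$ --- but $\Fs$ contains $\overline{\F}_q$ and hence \emph{all} $N$-th roots of unity for every $N$ prime to $p$, so $v_k^{ms}=1$ leaves $ms$ candidate values for $v_k$, not one. What the equality of your single-index invariants $g_k^{ms}/g_r^{s_r}$ really gives is that each coordinate matches only up to a root of unity depending on the chosen $u$. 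To close the argument one must (a) exploit the $\mu_{q^r-1}$-worth of freedom in the solution $u$ of $u^{q^r-1}=h_r/g_r$, and (b) invoke the genuinely mixed invariants $J_{\bk}^{\bs}$ with \emph{distinct} entries in $\bk$, which are exactly what forces a single $u$ to work for all coordinates simultaneously. A concrete illustration that parameters of the form $\bk=(k,\dots,k)$ do not suffice: for $q=2$, $r=3$ (weights $1,3,7$), the coefficient tuples $(1,1,1)$ and $(\alpha,\beta,1)$ with $\alpha^7=\beta^7=1$ share all invariants of the form $g_1^{7a}/g_3^{a}$ and $g_2^{7b}/g_3^{3b}$, yet they lie in the same $\Fs^\times$-orbit only when $\beta=\alpha^3$ --- a condition detected by the mixed invariant $g_1g_2^2/g_3$ (parameters $\bk=(1,2)$, $\bs=(1,2,1)$) and by none of your chosen ones.

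To your credit, you flag exactly this as the anticipated obstacle and defer to \cite{Potemine_1998}, and in rank $2$ your scheme does close (as in \cref{subsec:j-inv}, any solution of $h_1=u^{q-1}g_1$ automatically solves $h_2=u^{q^2-1}g_2$). But in higher rank, restricting to single-index parameters discards the information that makes the theorem true: the ``delicate combinatorial verification'' you postpone is not bookkeeping but the entire content of the converse, namely that the basic $J$-invariants separate the orbits of the action $g_k\mapsto u^{q^k-1}g_k$ of $\Fs^\times$. As written, the proposal is an honest but incomplete sketch, and the specific mechanism offered for finishing it (single-index invariants plus the roots-of-unity claim) does not work.
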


\subsection{Action on Anderson motives and consequences}
\label{tate-module-functor}

We have already said previously that a morphism $u: \phi\to\psi$
between Drinfeld modules can be realised as an actual $A$-linear map
$u : \phiact{\Fs} \to \psiact{\Fs}$. However, the structure of $A$-module of $\phiact{\Fs}$ is
not easy to describe and to work with; for example, it is usually not
of finite type (see~\cite{Poonen95}).
One can work around this issue by substituting the Tate module 
$\TT_{\mathfrak l}(\phi)$ to $\phiact{\Fs}$. Choosing correctly the 
prime ideal $\mathfrak l$, we then know that $\TT_{\mathfrak l}(\phi)$ 
is a free module over $A_{\mathfrak l}$ of rank $r := 
\text{rank}(\phi)$.
Although this modification definitely allows for many nice applications, 
it still has the unpleasant disadvantage of introducing an auxiliary
prime $\mathfrak l$ and the associated completion $A_{\mathfrak l}$.
In what follows, we will show that considering Anderson motives (introduced in \cref{sssec:motives}) instead of Tate modules elegantly resolves all these issues.

Given an isogeny $u : \phi \to \psi$, we define
$$\begin{array}{rcl}
\MM(u) : \quad \MM(\psi) & \longrightarrow & \MM(\phi) \\
m & \mapsto & mu
\end{array},$$
where the product $mu$ is computed in $F\{\tau\}$.
One readily checks that $\MM(u)$ is $F[T]$-linear and commutes with 
the $\tau$-action, \ie it satisfies
$$\MM(u) \circ \tau_{\MM(\psi)} = \tau_{\MM(\phi)} \circ \MM(u).$$
We emphasize that the construction is contravariant, \ie the direction of the arrows is reversed: if $u$ goes from $\phi$
to $\psi$, then $\MM(u)$ goes from $\MM(\psi)$ to $\MM(\phi)$.

Given that $\MM(u)$ is a linear map, one can consider its matrix
in the canonical bases of $\MM(\psi)$ and $\MM(\phi)$, respectively.
Concretely, the $i$th row of $\Mat(\MM(u))$ is formed by the
coefficients of $\tau^i u$ in $\MM(\phi)$.

\begin{sageexplain}
\label{sage:anderson-matrix}

\begin{sagecolorbox}
\begin{sagecommandline}

  sage: phi = DrinfeldModule(A, [z, z, z, 1])
  sage: u = phi.hom(tau + 5)
  sage: Mu = u.anderson_motive()
  sage: Mu
  sage: Mu.matrix()

\end{sagecommandline}
\end{sagecolorbox}
\end{sageexplain}

\subsubsection{Full faithfulness theorems}

The first important result about Anderson motives is the following
theorem, which tells that the functor $\MM$ is fully faithful. 

\begin{theorem}
\label{th:motive:fullyfaithful}
Let $\phi, \psi : A \to F\{\tau\}$ be two Drinfeld modules.
Then, there is a canonical bijection
$$\begin{array}{rcl}
\Hom(\phi,\psi) & \stackrel\sim\longrightarrow &
  \Hom_{F[T], \tau}\big(\MM(\psi),\,\MM(\phi)\big) \\
u & \mapsto & \MM(u)
\end{array},$$
where $\Hom_{F[T], \tau}$ consists of all
$F[T]$-linear maps commuting with the $\tau$-action.
\end{theorem}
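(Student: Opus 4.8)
The plan is to write down an explicit inverse to $u \mapsto \MM(u)$ and check that the two constructions are mutually inverse. The key structural fact is that $\MM(\psi) = F\{\tau\}$ is free of rank one as a left module over $F\{\tau\}$ (acting by multiplication), generated by the element $1$; equivalently, $\MM(\psi)$ is generated as an $F$-vector space equipped with the semilinear operator $\tau_{\MM(\psi)}$ by the single vector $1$, since the $F$-span of $\{\tau_{\MM(\psi)}^i(1)\}_{i \geq 0} = \{\tau^i\}_{i \geq 0}$ is all of $F\{\tau\}$. Consequently, any $F$-linear, $\tau$-equivariant map out of $\MM(\psi)$ is determined by its value at $1$, and the candidate inverse sends $f$ to the Ore polynomial $u := f(1)$.

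First I would record that $\MM(u)$ really lands in $\Hom_{F[T],\tau}(\MM(\psi),\MM(\phi))$: additivity and $F$-linearity of $m \mapsto mu$ are immediate; $T$-equivariance is the identity $(m\psi_T)u = (mu)\phi_T$, which holds because the morphism relation rewrites $\psi_T u = u\phi_T$; and $\tau$-equivariance is mere associativity, $(\tau m)u = \tau(mu)$. The construction is visibly additive and $A$-linear in $u$, so it is a map of $A$-modules. Injectivity is then trivial, since $\MM(u)(1) = u$ recovers $u$ from $\MM(u)$.

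For surjectivity, start from $f \in \Hom_{F[T],\tau}(\MM(\psi),\MM(\phi))$ and set $u := f(1) \in F\{\tau\} = \MM(\phi)$. Applying $\tau$-equivariance and induction gives $f(\tau^i) = \tau^i u$ for all $i \geq 0$, whence, by additivity and $F$-linearity, $f(m) = mu$ for every $m = \sum_i c_i\tau^i$ in $\MM(\psi)$. It remains to check that $u$ genuinely defines a morphism $\phi \to \psi$, i.e. $u\phi_a = \psi_a u$ for all $a \in A$; since $A = \Fq[T]$ it suffices to treat $a = T$, and this is exactly the $T$-equivariance of $f$ evaluated at $m = 1$: $\psi_T u = \psi_T f(1) = f(\psi_T) = f(1)\phi_T = u\phi_T$. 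Hence $u \in \Hom(\phi,\psi)$ and $\MM(u) = f$ by construction, so $\MM$ is surjective on Hom-sets and therefore bijective.

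I do not anticipate a real obstacle here: once one recognises that commuting with the $F$- and $\tau$-actions forces $f$ to be right multiplication by $f(1)$, the theorem collapses to bookkeeping, the only genuinely substantive point being that the extra $T$-equivariance is precisely what upgrades $f(1)$ from an arbitrary Ore polynomial to a morphism of Drinfeld modules. A more pedestrian route, going through the matrix $\Mat(\MM(u))$ described above together with the canonical basis of \cref{prop:basismotive}, is available but longer; I would prefer the coordinate-free argument.
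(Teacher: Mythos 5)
Your proposal is correct and follows exactly the route the paper indicates: the paper's proof simply states that the inverse map is $f \mapsto f(1)$ and calls the verification straightforward, and your argument is precisely that verification written out (well-definedness of $\MM(u)$, determination of $f$ by $f(1)$ via $F$-linearity and $\tau$-equivariance, and the observation that $T$-equivariance at $m=1$ is exactly the morphism condition $u\phi_T = \psi_T u$).
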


\begin{proof}
It suffices to prove that the inverse map is given by $f \mapsto
f(1)$, which is a straightforward verification.
\end{proof}

Using the fact that the Anderson motive determines the Tate
modules (see Equation~\eqref{eq:motive:Tate}), we deduce from \cref{th:motive:fullyfaithful} a full faithfulness result at the level of Tate modules, which can be seen as an analogue of Tate's theorem on abelian varieties.

\begin{theorem}
\label{theorem:fidelite-tate}
  Let $\mathfrak l$ be a maximal ideal of $A$ which is different
  from the $A$-characteristic of $(F, \gamma)$.
  Then, for all Drinfeld modules $\phi, \psi : A \to F\{\tau\}$,
  the canonical map
  \[
    A_{\mathfrak l} \otimes_A \Hom(\phi, \psi) \to 
      \Hom_{A_{\mathfrak l}}\big(\TT_{\mathfrak l}(\phi), \TT_{\mathfrak l}(\psi)\big)
  \]
  is injective and its image consists exactly of the morphisms
  $\TT_{\mathfrak l}(\phi) \to \TT_{\mathfrak l}(\psi)$ which are
  equivariant under the action of $\Gal(\Fs/F)$.
\end{theorem}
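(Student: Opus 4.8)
The plan is to deduce this from the full faithfulness at the level of Anderson motives (\cref{th:motive:fullyfaithful}) together with the comparison isomorphism \eqref{eq:motive:Tate} relating the Tate module to the dual motive. The strategy has two stages: first prove the statement after the faithfully flat base change $- \otimes_{\Fq} \Fs$ (equivalently $- \otimes_{A_{\mathfrak l}} (A_{\mathfrak l} \otimes_{\Fq} \Fs)$), where everything becomes a purely module-theoretic statement about $F[T]$-modules with semilinear $\tau$-action; then descend back down by taking $\tau$-invariants. Throughout, write $L = \Fs$ and note that $A_{\mathfrak l} \otimes_{\Fq} L$ carries the $\tau$-action coming from the Frobenius on $L$, and that the fixed subring is $A_{\mathfrak l}$ itself (this is the analogue of faithfully flat descent along $L/F$ for $\tau$-modules, \ie Katz's equivalence as mentioned in the remark after \cref{th:motive:torsion}).

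First I would combine \cref{th:motive:fullyfaithful} with \eqref{eq:motive:Tate}. Starting from the canonical bijection $\Hom(\phi,\psi) \xrightarrow{\sim} \Hom_{F[T],\tau}(\MM(\psi),\MM(\phi))$, tensor with $A_{\mathfrak l}$ over $A$; since $\MM(\phi)$ and $\MM(\psi)$ are free of finite rank over $F[T]$ (\cref{prop:basismotive}), the $\Hom$ module is finitely generated over $F[T]$, and $A_{\mathfrak l} \otimes_A -$ commutes with forming $\Hom$ and with taking $\tau$-invariants in the relevant sense. Passing to duals and using \eqref{eq:motive:Tate}, a $\tau$-equivariant $A_{\mathfrak l}$-linear map $\TT_{\mathfrak l}(\phi) \to \TT_{\mathfrak l}(\psi)$ corresponds, after tensoring up to $L$, to a $\tau$-equivariant $(A_{\mathfrak l} \otimes F[T])$-linear map $\MM(\psi)^\vee \otimes \cdots \to \MM(\phi)^\vee \otimes \cdots$ between the base-changed dual motives (with $T-z$ inverted). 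Dualizing back and using that dualization is an anti-equivalence on the category of finite free $F[T]$-modules with (possibly noneffective) $\tau$-structure, such a map is the same as a $\tau$-equivariant map $\MM(\psi)_L \to \MM(\phi)_L$, hence by the base-changed form of \cref{th:motive:fullyfaithful} comes from an element of $A_{\mathfrak l} \otimes_A \Hom(\phi_L, \psi_L)$.

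The injectivity half is the easier one: it follows immediately because $A_{\mathfrak l} \otimes_A \Hom(\phi,\psi) \to A_{\mathfrak l} \otimes_A \Hom_{F[T],\tau}(\MM(\psi),\MM(\phi))$ is injective ($A_{\mathfrak l}$ is flat over $A$ and the $\Hom$ module is torsion-free over $A$ since, e.g., it embeds into $F\{\tau\}$), and this composite factors through $\Hom_{A_{\mathfrak l}}(\TT_{\mathfrak l}(\phi),\TT_{\mathfrak l}(\psi))$ via the comparison map, which must therefore be injective as well. For surjectivity onto the Galois-equivariant maps, I would take a Galois-equivariant $A_{\mathfrak l}$-linear map $f : \TT_{\mathfrak l}(\phi) \to \TT_{\mathfrak l}(\psi)$, base change to $L$ to get a $\tau$-equivariant map of the extended motives as above, descend the Galois-equivariance to the statement that the resulting morphism of motives is already defined over $F[T]$ (this is exactly where $\Gal(\Fs/F)$-invariance turns into ``no $\tau$-twist'', i.e. descent from $L$ to $F$), invoke the already-established bijection over $F$ to produce an honest isogeny $u : \phi \to \psi$, and finally check that $u$ induces $f$ on Tate modules --- this last verification is a compatibility of the pairing \eqref{eq:motive:pairing} with the action of morphisms, which is routine.

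The main obstacle I expect is the bookkeeping around the \emph{noneffectivity} of the dual motive: \eqref{eq:motive:Tate} only holds after inverting $T-z$ on the right-hand side, so one must be careful that a $\tau$-equivariant map between the localized objects $\MM(\psi)^\vee[\tfrac 1{T-z}]$ and $\MM(\phi)^\vee[\tfrac 1{T-z}]$ (base changed to $A_{\mathfrak l} \otimes L$) actually restricts to a map of the un-localized motives, equivalently that the corresponding map of the original (effective) motives $\MM(\psi) \to \MM(\phi)$ has no denominators. This should come out of the fact that $\mathfrak l$ is coprime to the $A$-characteristic so that $T - z$ is a unit in $A_{\mathfrak l} \otimes_{\Fq} L$ (its image under $\gamma \otimes \mathrm{id}$ is $z \otimes 1 - 1 \otimes z$, which is invertible away from the characteristic), but making the localization harmless requires a short argument rather than a one-liner. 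A secondary, more technical point is justifying that taking $\tau$-fixed points commutes with the base change $- \otimes_A A_{\mathfrak l}$ and with $\Hom_{F[T]}(-,-)$ for the finite free modules at hand; this is standard (flatness plus finiteness) but should be stated explicitly.
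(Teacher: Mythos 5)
Your proposal follows exactly the route the paper intends: the paper gives no written proof of \cref{theorem:fidelite-tate} beyond the sentence preceding it, which says the result is deduced from \cref{th:motive:fullyfaithful} together with the comparison isomorphism \eqref{eq:motive:Tate}, and your argument is a careful execution of precisely that deduction (base change to $\Fs$, dualize, use that $T-z$ is invertible in $A_{\mathfrak l}\otimes\Fs$ since $\mathfrak l$ avoids the $A$-characteristic, descend by Galois/$\tau$-invariance). The technical points you flag — noneffectivity of the dual motive and commutation of $\tau$-fixed points with flat base change — are real but are exactly the ones implicitly absorbed into \eqref{eq:motive:Tate} and the proof of \cref{theorem:structure-homset}, so your write-up is consistent with the paper's approach.
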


\subsubsection{Characteristic polynomials}
\label{subsubsec:charpoly}

Anderson modules are also quite useful to import classical
constructions of linear algebra and attach meaningful invariants
to morphisms of Drinfeld modules.
A typical example in this line is the construction of the
characteristic polynomial of an endomorphism of a Drinfeld module,
which can be defined as follows.

\begin{definition}\label{def:charpoly}
  Let $u$ be an endomorphism of a Drinfeld module $\phi$.
  The \emph{characteristic polynomial} of $u$, denoted by $\chi_u$,
  is defined as the characteristic polynomial of the $F[T]$-linear
  map $\MM(u) : \MM(\phi) \to \MM(\phi)$, \ie
  $$\chi_u(X) = \det\big(X{\cdot}\text{id} - \MM(u)\big).$$
\end{definition}

It follows from the definition that $\chi_u(X)$ is a polynomial of
degree $r$, the rank of $\phi$. It has \emph{a priori} coefficients
in $F[T] \simeq A\otimes_{\Fq} F$, but one actually derives from the fact that $\MM(u)$ commutes with the $\tau$-action that $\chi_u(X)$ takes
coefficients in $A$. 
Besides, the Cayley--Hamilton theorem asserts that $\chi_u(\MM(u))=0$, 
which implies that $\chi_u(u) = 0$ by \cref{th:motive:fullyfaithful}.

\cref{def:charpoly}, coupled with \cref{prop:basismotive}, directly leads to
an algorithm to compute characteristic polynomials of endomorphisms.
This is currently the default method in SageMath.
More details, including complexity statements and various optimisations,
are provided in \cite{CL23}.

\begin{sageexplain}
\label{sageexplain:charpoly}

\begin{sagecolorbox}
\begin{sagecommandline}

  sage: phi = DrinfeldModule(A, [z, z, 1])
  sage: u = phi.hom(tau^4 + z*tau^3 + (z + 1)*tau^2)
  sage: chi = u.characteristic_polynomial()
  sage: chi

\end{sagecommandline}
\end{sagecolorbox}

Below, we decompose the computation of $\chi$ in two steps: first, we compute
the matrix of the Frobenius endomorphism acting on $\MM(\phi)$ and second, we
compute its characteristic polynomial.

\begin{sagecolorbox}
\begin{sagecommandline}

  sage: Mu = u.anderson_motive()
  sage: Mu.matrix()
  sage: Mu.matrix().charpoly()

\end{sagecommandline}
\end{sagecolorbox}
One can check on this example that the characteristic polynomial
$\chi$ indeed annihilates $u$.

\begin{sagecolorbox}
\begin{sagecommandline}

  sage: chi(u)
  sage: u^2 + (5*T^2 + 3*T + 5)*u + T^4 + T^3 + 2*T^2 + 5*T + 3
  sage: phiT = phi.gen()
  sage: mu = u.ore_polynomial()
  sage: mu^2 + (5*mu*phiT^2 + 3*mu*phiT + 5*mu) + (phiT^4 + phiT^3 + 2*phiT^2 + 5*phiT + 3)

\end{sagecommandline}
\end{sagecolorbox}
\end{sageexplain}

\begin{remark}
\label{remark:calcul-charpoly}
  Musleh and Schost also proposed a general method for computing characteristic
  polynomials of endomorphisms \cite{musleh_computing_2023}. Instead of relying
  on Anderson motives, they rely on the \emph{crystalline cohomology} of the
  Drinfeld module~\cite{angles_characteristic_1997} (see also
  \cite[Chapter~5]{musleh_algorithms_2023}).
  Both methods involve computing the characteristic polynomial as the classical characteristic polynomial of a polynomial matrix.
  These two families of algorithms were the first to achieve
  computation of characteristic polynomials of endomorphisms of Drinfeld
  modules in a relatively large context, whereas previous methods used to restrict to the sole  Frobenius endomorphism case, as we will discuss in
  \cref{sec:review-frobenius-charpoly}.
\end{remark}

After Equation~\eqref{eq:motive:Tate} comparing Anderson motives and
Tate modules, one also finds that, for any prime ideal $\mathfrak l$
different from the $A$-characteristic, $\chi_u(X)$ equals the 
characteristic polynomial of $\TT_{\mathfrak l}(u)$ acting on the 
Tate module $\TT_{\mathfrak l}(\phi)$.
This reinterpretation is actually the classical definition in the framework of elliptic curves, where the notion of Anderson motive is
missing. We then see clearly here the benefit of Anderson motives: they allow for a simpler definition which, on the one hand, does
not depend on an auxiliary prime $\mathfrak l$ (and so avoids 
proving independence results on $\mathfrak l$) and, on the other
hand, is much more suitable for computations. In particular, it 
does not involve the separable closure of $F$, which cannot be
easily handled on computers.

\subsubsection{Norms and duals of isogenies}
\label{sssec:norm-dual}

In a similar fashion, one can also define norms and dual isogenies.

\begin{definition}
\label{def:normisogeny}
Let $u : \phi \to \psi$ be a morphism of Drinfeld modules over
$(F,\gamma)$. The \emph{norm} of $u$ is defined by
$$\text{norm}(u) := \frac{\det \MM(u)}{\text{\text{lc}}(\det \MM(u))}$$
where the determinant is computed over $F[T]$ and $\text{lc}$ refers to
the leading coefficient.
\end{definition}

We underline that $\MM(u)$ goes from $\MM(\psi)$ to $\MM(\phi)$, so
it is not an endomorphism in general. Hence, taking its determinant
requires some precaution since its value depends \emph{a priori} on
the choices of bases of the domain and the codomain.
However, changing the bases only modifies the determinant by 
multiplication by an element of $F[T]^\times = F^\times$. Then the quotient of the determinant by its leading coefficient is well-defined.
In practice, we can carry out the computations by picking the
canonical bases of $\MM(\phi)$ and $\MM(\psi)$ given by
\cref{prop:basismotive}.

\begin{remark}
\label{rem:normideal}
When $A$ is not $\Fq[T]$, it is no longer possible to divide by the
leading coefficient of $\det \MM(u))$. Instead, one may consider the
ideal of $A$ generated by $\det \MM(u)$, which remains canonically
defined.
\end{remark}

As in the case of characteristic polynomials, one proves that
the norm of $u$ always lies in $A$.
Moreover, the norm is multiplicative with respect to the composition of isogenies.

Besides, in the same manner as for characteristic polynomials, 
\cref{def:normisogeny} directly leads to an algorithm for computing norms.
We again refer to \cite{CL23} for detailed complexity statements.

\begin{sageexplain}
  We compute the norm of $u$ defined previously and
  observe that it is the constant coefficient of $\chi_u$.
  We note that the function \texttt{norm} of SageMath returns
  the ideal, and not a generator (see Remark~\ref{rem:normideal}).

  \begin{sagecolorbox}
  \begin{sagecommandline}

    sage: u.norm()
    sage: chi.constant_coefficient()

  \end{sagecommandline}
  \end{sagecolorbox}

  Contrary to characteristic polynomials, the norm continues
  to make sense for isogenies between different Drinfeld modules.
  In the example below, we recover the computation we did by hand
  in the SageMath Example \ref{sage:dualexample}.

  \begin{sagecolorbox}
  \begin{sagecommandline}

    sage: phi = DrinfeldModule(A, [z, z, 1])
    sage: u = phi.hom(tau + 1)
    sage: u
    sage: u.norm()

  \end{sagecommandline}
  \end{sagecolorbox}
\end{sageexplain}

As in the case of elliptic curves,
the norm of an isogeny $u$ is related to its kernel 
\[
  \ker u = \big\{\, x \in \Fs \mid u(x) = 0 \,\big\}.
\]
and, more precisely, to its Fitting ideal $|\ker u|$ (see
\cref{subsection:setting-and-notations} for the definition).

\begin{proposition}
  Let $u$ be an isogeny of Drinfeld modules defined over $(F, \gamma)$.
  \begin{enumerate}
  \item If $u$ is separable, then we have an equality of ideals
  \[\mathrm{norm}(u) A = |\ker u|.\]
  \item If $u$ is not separable, then 
  \[\mathrm{norm}(u) A = \p^{h/\deg\pp} \cdot |\ker u|,\]
  where $h$ is the $\tau$-valuation of $u$, and $\p$ is the
  $A$-characteristic of $(F, \gamma)$.
  \end{enumerate}
\end{proposition}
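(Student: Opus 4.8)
The plan is to reduce the statement to a computation with the Anderson motive. Write $r$ for the common rank of $\phi$ and $\psi$ and recall that $\MM(u)\colon\MM(\psi)\to\MM(\phi)$ is an injective $F[T]$-linear map of free rank-$r$ modules intertwining the $\tau$-actions, so its cokernel $C_u:=F\{\tau\}/F\{\tau\}u$ is a finite-length $F[T]$-module carrying a semilinear operator $\tau$, whose $F[T]$-Fitting ideal is generated by $\det\MM(u)$, hence by $\mathrm{norm}(u)$ (which lies in $A$ by the remark preceding the statement). The first thing I would do is verify, by a direct manipulation of the evaluation pairing $(m,x)\mapsto m(x)$, that it induces a canonical isomorphism of $A$-modules $\ker u\xrightarrow{\sim}\Hom_{F,\tau}(C_u,\Fs)$, $x\mapsto(\overline m\mapsto m(x))$: well-definedness uses that $m\in F\{\tau\}u$ forces $m(x)=0$ on $\ker u$, $A$-linearity that $\phi_a(x)$ pairs with the class of $m\phi_a$, injectivity evaluation at $\overline 1$, and surjectivity that any such functional is recovered from $x:=\psi(\overline 1)$, which lies in $\ker u$ because $\overline u=0$ in $C_u$ forces $u(x)=0$.

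The next step is the Fitting-ideal comparison in the separable case. When $u$ is separable its constant term is nonzero, and one checks that $C_u$ is then \emph{étale} as a $\tau$-module over $F[T]$ (the linearised Frobenius being an isomorphism). I would then invoke, in the form of a size-preservation property of the equivalence underlying \cref{th:motive:torsion} (a manifestation of Katz's theorem, see the remark following it): for an étale $\tau$-module $C$ over $F[T]$, the $A$-module $\Hom_{F,\tau}(C,\Fs)$ has $A$-Fitting ideal equal to the descent to $A$ of $\mathrm{Fitt}_{F[T]}(C)$. Applied to $C_u$ this gives $|\ker u|=\mathrm{norm}(u)\,A$, which is statement (1). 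Making this descent statement precise — in particular the passage of Fitting ideals between $F[T]$ and $A$, and the étale/Frobenius-nilpotent dichotomy for finite $\tau$-modules over the possibly imperfect field $F$ — is the step I expect to be the main obstacle.

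For statement (2), \cref{prop:facto-separable-inseparable} lets us write $u=v\tau^{h}$ with $v$ separable and $h$ the $\tau$-valuation of $u$, a multiple of $\deg\pp$ (and $\pp\neq 0$ as soon as $h>0$). Contravariance of $\MM$ and multiplicativity of determinants make $\mathrm{norm}$ multiplicative under composition, so $\mathrm{norm}(u)\,A=\mathrm{norm}(v)\,A\cdot\bigl(\mathrm{norm}(\tau^{\deg\pp})\,A\bigr)^{h/\deg\pp}$. Moreover $\tau^{h}$, acting on $\Fs$-points by $x\mapsto x^{q^{h}}$, is $A$-linear, injective, and restricts to an isomorphism $\ker u\xrightarrow{\sim}\ker v$: surjectivity amounts to $\ker v\subseteq(\Fs)^{q^{h}}$, which holds because each torsion module $\phi^{(k)}[\mathfrak l^{n}]$ (with $\phi^{(k)}$ the $k$-fold relative-Frobenius twist, $k=h/\deg\pp$) is exactly the $\tau^{h}$-image of $\phi[\mathfrak l^{n}]$, the two having equal cardinality by \cref{prop:torsion} and \cref{rk:height}. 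Hence $|\ker u|=|\ker v|$, and statement (1) applied to $v$ reduces statement (2) to the single identity $\mathrm{norm}(\tau^{\deg\pp})\,A=\pp$.

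Finally, to prove $\mathrm{norm}(\tau^{\deg\pp})\,A=\pp$, let $P$ be the monic generator of $\pp$ and $\h=\h(\phi)$ the height, which satisfies $\h\geq 1$ since $\gamma$ factors through $A/\pp$ and so $P(z)=0$ makes $\phi_P$ have positive $\tau$-valuation; more precisely $\phi_P=\bar g\,\tau^{\h\deg\pp}$ with $\bar g$ separable. On one hand $\MM(\phi_P)=P\cdot\mathrm{id}$ on $\MM(\phi)$ gives $\mathrm{norm}(\phi_P)\,A=\pp^{r}$; on the other hand, multiplicativity together with statement (1) applied to $\bar g$ — whose kernel is $\tau^{\h\deg\pp}(\phi[\pp])\simeq(A/\pp)^{r-\h}$ by \cref{prop:torsion} and \cref{rk:height} — gives $\mathrm{norm}(\phi_P)\,A=\pp^{r-\h}\cdot\bigl(\mathrm{norm}(\tau^{\deg\pp})\,A\bigr)^{\h}$. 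Comparing these two expressions and using unique factorisation of ideals in $A$ yields $\mathrm{norm}(\tau^{\deg\pp})\,A=\pp$, which completes the proof of statement (2).
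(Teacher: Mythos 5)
Your argument follows the same route as the paper's (sketched) proof: the separable case via the motive/torsion duality of \cref{th:motive:torsion} (you phrase it with the cokernel $C_u = F\{\tau\}/F\{\tau\}u$ rather than with an annihilating ideal $\mathfrak a$, which is a harmless repackaging), and the inseparable case by factoring out the relative Frobenius and establishing $\mathrm{norm}(\tau^{\deg\pp})A=\pp$. The one step you flag as unproven --- that the Katz-type duality matches Fitting ideals for \'etale $\tau$-modules --- is exactly the point the paper also defers to \cite[Theorem~3.2]{CL23}, while your derivation of $\mathrm{norm}(\tau^{\deg\pp})A=\pp$ by comparing the two factorisations of $\mathrm{norm}(\phi_P)$ is a correct, self-contained supplement to what the paper leaves implicit.
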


\begin{proof}[Sketch of the proof]
When $u$ is separable, the formula follows from
\cref{th:motive:torsion} applied with an ideal $\a$ annihilating 
$\ker u$.
The general case is deduced from the previous one by factoring
out as many times as possible $\tau^{\deg \p}$ in $u$ (see also
\cref{prop:facto-separable-inseparable} in the next section) and
showing independently that $\mathrm{norm}(\tau^{\deg \p}) = \p$.
We refer to \cite[Theorem~3.2]{CL23} for more details.
\end{proof}

\begin{proposition}
\label{prop:isogenie-duale-norme}
  Let $\phi$ and $\psi$ be two Drinfeld modules over $(F, \gamma)$, and $u:
  \phi \to \psi$ be an isogeny with norm $a$. Then there exists an isogeny
  $\hat u: \psi \to \phi$ such that $\hat u u = \phi_a$ and $u \hat u = \psi_a$.
\end{proposition}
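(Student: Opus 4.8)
The plan is to construct $\hat u$ at the level of Anderson motives, where everything is linear algebra, and then transport the result back via the full faithfulness of $\MM$ (\cref{th:motive:fullyfaithful}). Write $a = \mathrm{norm}(u)$, so by \cref{def:normisogeny} we have $\det \MM(u) = c \cdot a$ for some $c \in F^\times$ (after normalising bases; the leading coefficient is absorbed into $c$). The key observation is the classical adjugate identity: if $N = \MM(u) : \MM(\psi) \to \MM(\phi)$ is the matrix of our map in the canonical bases, then $\mathrm{adj}(N)\, N = N\, \mathrm{adj}(N) = (\det N)\,\mathrm{id}$, where $\mathrm{adj}(N)$ is the adjugate (transpose of the cofactor matrix), which again has coefficients in $F[T]$. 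Rescaling, set $P := c^{-1}\,\mathrm{adj}(N) : \MM(\phi) \to \MM(\psi)$, so that $P N = N P = a\,\mathrm{id}$ as $F[T]$-linear maps (here $a \in A \subset F[T]$ acts by multiplication).

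Next I would check that $P$ commutes with the $\tau$-action, \ie $P \circ \tau_{\MM(\phi)} = \tau_{\MM(\psi)} \circ P$, so that $P$ lies in $\Hom_{F[T],\tau}(\MM(\phi), \MM(\psi))$ and therefore, by \cref{th:motive:fullyfaithful} (applied with the roles of $\phi$ and $\psi$ swapped), equals $\MM(\hat u)$ for a unique morphism $\hat u : \psi \to \phi$. The cleanest way to see this commutation is to note that $N = \MM(u)$ commutes with $\tau$ by construction, hence so does its adjugate: the semilinear operator $\tau_{\MM}$ on a free $F[T]$-module of rank $r$ is given (in a basis) by a matrix times the coordinatewise $q$-power Frobenius $\sigma$, and the identity ``$N$ intertwines the two $\tau$-structures'' translates into a matrix identity that is stable under taking adjugates because $\mathrm{adj}$ is a polynomial (with $\Fq$-coefficients, hence $\sigma$-equivariant) in the entries. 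Since $a \in A$ is fixed by $\sigma$, the rescaling by $c^{-1}$ does not break equivariance provided one is slightly careful; alternatively, one argues directly that $\mathrm{adj}(N)$ is $\tau$-equivariant up to a unit and fixes the unit using that $\det N$ is $\tau$-equivariant.

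It then remains to identify the composites. By functoriality and contravariance of $\MM$, the morphism $\hat u u : \phi \to \phi$ satisfies $\MM(\hat u u) = \MM(u) \circ \MM(\hat u) = N P = a\,\mathrm{id}_{\MM(\phi)}$. But multiplication by $a$ on $\MM(\phi)$ is, by \cref{def:anderson-motive}, exactly the map $m \mapsto m\phi_a$, which is $\MM(\phi_a)$ where $\phi_a$ is viewed as the endomorphism of $\phi$ given by that Ore polynomial. By the injectivity half of \cref{th:motive:fullyfaithful}, $\hat u u = \phi_a$. Symmetrically, $\MM(u \hat u) = \MM(\hat u) \circ \MM(u) = P N = a\,\mathrm{id}_{\MM(\psi)} = \MM(\psi_a)$, so $u \hat u = \psi_a$. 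This completes the argument.

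The main obstacle is the $\tau$-equivariance bookkeeping in the second paragraph: one must make sure that passing from $N$ to its adjugate and then rescaling by the scalar $c^{-1} \in F^\times$ really does land in $\Hom_{F[T],\tau}$ and not merely in $\Hom_{F[T]}$. The potential subtlety is that $c$ itself need not be $\tau$-fixed, so the naive rescaling could introduce a twist; the resolution is that the \emph{well-definedness} of $\mathrm{norm}(u)$ up to $F^\times$ already encodes the fact that $\det N$ transforms $\tau$-equivariantly up to a unit, and one pins down $c$ (or rather works with the ideal generated by $\det N$, \`a la \cref{rem:normideal}) so that the quotient is genuinely $\tau$-stable. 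Everything else — the adjugate identity, Cayley--Hamilton-flavoured manipulations, and the final identifications — is routine once full faithfulness is in hand.
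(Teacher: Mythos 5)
Your argument is correct and follows exactly the paper's route: the paper's (two-line) proof also defines $\MM(\hat u)$ as the adjugate of $\MM(u)$ and invokes \cref{th:motive:fullyfaithful}. Your version is simply a fleshed-out account of that sketch, and your attention to the rescaling by $c^{-1}$ needed to make the adjugate genuinely $\tau$-equivariant is a point the paper's sketch glosses over.
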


\begin{proof}[Sketch of the proof]
The isogeny $\hat u$ is defined using \cref{th:motive:fullyfaithful}
by letting $\MM(\hat u)$ be the adjoint (that is, the transpose of 
the matrix of cofactors) of $\MM(u)$.
\end{proof}

The isogeny $\hat u$ is called the \emph{dual isogeny} of $u$.
Again, computing it is easily performed: one computes the norm and recovers
the dual isogeny via Ore Euclidean division.

\begin{sageexplain}
  We compute the dual isogeny of the isogeny $u$ of SageMath Example \ref{sageexplain:charpoly}
  and we recognize the isogeny $v$ we already introduced in the SageMath Example~\ref{sage:dualexample}.

  \begin{sagecolorbox}
  \begin{sagecommandline}

    sage: u.dual_isogeny()

  \end{sagecommandline}
  \end{sagecolorbox}
\end{sageexplain}

\begin{remark}

  The dual isogeny is the analogue to the classical \emph{dual isogeny} in the
  context of elliptic curves: if $\varphi: E \to E'$ is an isogeny with norm
  $n$ between two elliptic curves, there exists an isogeny $\hat{\varphi}: E'
  \to E$ that has norm $n$ such that $\hat{\varphi}\varphi$ (resp.~$\varphi
  \hat\varphi$) is the endomorphism of multiplication by $n$ on $E$
  (resp.~$E')$.

\end{remark}

We conclude this subsection by noticing that the existence of dual isogenies ensures that ``being isogenous'' is an equivalence relation.

\subsubsection{Structure of Hom spaces and endomorphism rings}
\label{subsec:hom-spaces}

Anderson motives also help to describe the structure of the
spaces of morphisms between two Drinfeld modules.

\begin{theorem}[{\cite[Theorem~3.4.1]{Pap23}}]
\label{theorem:structure-homset}
  Let $\phi$ and $\psi$ be two Drinfeld modules over a field $F$, both
  of rank $r$. Then, $\Hom(\phi, \psi)$ is a free $A$-module of rank
  at most $r^2$.
\end{theorem}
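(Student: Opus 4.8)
The plan is to reduce the statement to a finiteness/freeness result about the module of $\tau$-equivariant $F[T]$-linear maps between the Anderson motives, using the full faithfulness established in \cref{th:motive:fullyfaithful}. By that theorem, $\Hom(\phi,\psi)$ is canonically identified with $\Hom_{F[T],\tau}\big(\MM(\psi),\MM(\phi)\big)$, so it suffices to prove that the latter is a free $A$-module of rank at most $r^2$. The first step is therefore to unwind what $\tau$-equivariance means concretely: choosing the canonical bases of $\MM(\phi)$ and $\MM(\psi)$ from \cref{prop:basismotive}, an $F[T]$-linear map is an $r\times r$ matrix $U$ over $F[T]$, and the condition $\MM(u)\circ\tau_{\MM(\psi)} = \tau_{\MM(\phi)}\circ\MM(u)$ becomes a semilinear matrix equation $N_\phi\cdot U = {}^{(q)}U\cdot N_\psi$ (up to bookkeeping of the semilinear twist), where $N_\phi$, $N_\psi$ are the companion-type matrices of Equation~\eqref{eq:motive:matrixtau} and ${}^{(q)}U$ denotes raising the entries to the $q$-th power. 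In particular $\Hom_{F[T],\tau}(\MM(\psi),\MM(\phi))$ is a sub-$A$-module of the $F[T]$-module $\Mat_{r\times r}(F[T])$.

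Next I would bound the degrees: because the $\tau$-action is, in the canonical basis, ``shift plus companion column'', comparing degrees in $T$ on both sides of $N_\phi U = {}^{(q)}U N_\psi$ forces each entry of $U$ to be a polynomial in $T$ of degree at most some explicit constant $d$ depending only on $r$ (morally $d = 1$ after a suitable normalisation, since a $\tau$-equivariant map is determined by a compatibility with a degree-one operator). Granting such a bound, the $A$-module $\Hom_{F[T],\tau}(\MM(\psi),\MM(\phi))$ embeds into the finite-dimensional $\Fq$-vector space of $r\times r$ matrices with entries in the degree-$\leq d$ part of $F[T]$ — wait, that space is $F$-finite-dimensional, not $\Fq$-finite-dimensional — so more precisely it embeds, as an $A$-module, into a \emph{finitely generated free $A$-module}: namely $\bigoplus$ of $r^2$ copies of (the degree-$\leq d$ polynomials in $T$ with coefficients in $F$), which is $A$-finitely generated provided $F$ is a finite extension of $\Fq$... but the theorem as quoted from \cite{Pap23} allows $F$ arbitrary. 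I would instead argue abstractly: $\Hom(\phi,\psi)$ is torsion-free over $A$ (an isogeny composed with multiplication by a nonzero $a\in A$ is again nonzero, since $\phi_a\neq 0$), so by the structure theorem recalled in \cref{subsection:setting-and-notations} it is free once we know it is finitely generated, and finite generation follows from the degree bound together with the fact that a $\tau$-fixed element of $F$ lies in $\Fq$ (so the ``constant'' directions are $\Fq$-lines, not $F$-lines).

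To pin down the rank bound $r^2$, I would tensor with the fraction field: after inverting a suitable prime or passing to $\overline{F}$, the equation $N_\phi U = {}^{(q)}U N_\psi$ over the algebraically closed field becomes, via Lang's theorem / the fact that $\tau-1$ is surjective on matrices over a separably closed field, a linear condition whose solution space has $\Fq$-dimension at most $r^2$; alternatively, invoke \cref{theorem:fidelite-tate}, which embeds $A_{\mathfrak l}\otimes_A\Hom(\phi,\psi)$ into $\Hom_{A_{\mathfrak l}}(\TT_{\mathfrak l}(\phi),\TT_{\mathfrak l}(\psi))$, a free $A_{\mathfrak l}$-module of rank exactly $r^2$; hence $\operatorname{rank}_A\Hom(\phi,\psi)\leq r^2$. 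The main obstacle is the finite-generation step: showing the degree bound on the entries of $U$ rigorously from the semilinear equation requires carefully tracking how the semilinearity interacts with degrees in $T$ (the twist acts on $F$ but not on $T$, so it does not change $T$-degrees, which is exactly what makes the bound work), and one must handle the normalisation subtlety that $\MM(\phi)^\vee$ is noneffective, i.e. the companion matrix's inverse has a pole at $T-z$; I would sidestep this by working directly with $N_\phi$, $N_\psi$ (which \emph{are} polynomial) rather than their duals. Once finite generation and the rank bound are in hand, freeness is immediate from torsion-freeness over the PID $A$, completing the proof.
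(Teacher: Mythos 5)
Your skeleton matches the paper's first move: reduce via \cref{th:motive:fullyfaithful} to $\Hom_{F[T],\tau}\big(\MM(\psi),\MM(\phi)\big)$, note that it sits inside the free rank-$r^2$ module $\Hom_{F[T]}\big(\MM(\psi),\MM(\phi)\big)$, and finish with torsion-freeness over the PID $A$. But your finite-generation step rests on a false claim: there is \emph{no} degree bound on the entries of $U$. Take $\phi=\psi$ and $U=a(T)\cdot I_r$ for any $a\in A$; this is $\Mat(\MM(\phi_a))$ and solves the semilinear equation \eqref{eq:computeHom} (a scalar matrix with coefficients in $\Fq$ is $\tau$-fixed and commutes with everything), so solutions of unbounded $T$-degree always exist. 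Comparing degrees in $N_\phi U=\tau(U)N_\psi$ gives nothing, since both sides gain exactly one $T$-degree from the companion matrices. Your fallback (``finite generation follows from the degree bound together with the fact that a $\tau$-fixed element of $F$ lies in $\Fq$'') therefore has no foundation, and the route through \cref{theorem:fidelite-tate} bounds the rank but does not supply finite generation either: a torsion-free $A$-module $M$ with $\dim_K(K\otimes_A M)\leq r^2$ need not be finitely generated ($M=K$ is already a counterexample), so freeness does not follow from what you have.

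The missing ingredient is exactly the one the paper invokes: the natural map
$F\otimes_{\Fq}\Hom_{F[T],\tau}\big(\MM(\psi),\MM(\phi)\big)\to\Hom_{F[T]}\big(\MM(\psi),\MM(\phi)\big)$
is \emph{injective} (\cite[Proposition~3.4.6]{Pap23}, an Artin-type independence statement for $\tau$-modules). Since $F\otimes_{\Fq}(\cdot)=F[T]\otimes_A(\cdot)$ here, this embeds $F[T]\otimes_A\Hom(\phi,\psi)$ into a free $F[T]$-module of rank $r^2$; a submodule of a finitely generated module over the PID $F[T]$ is finitely generated free of rank at most $r^2$, and descent along the faithfully flat extension $A\to F[T]$ then gives finite generation, freeness, and the rank bound for $\Hom(\phi,\psi)$ over $A$ all at once. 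If you prefer your matrix formulation, the statement you must prove is that solutions of the semilinear equation that are linearly independent over $\Fq[T]$ remain linearly independent over $F[T]$ — not that their degrees are bounded.
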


\begin{proof}[Sketch of the proof]
We know from \cref{th:motive:fullyfaithful} that 
$\Hom(\phi, \psi)$ is isomorphic to
$\Hom_{F[T], \tau}\big(\MM(\psi),\,\MM(\phi)\big)$, which itself
lives in $\Hom_{F[T]}\big(\MM(\psi),\,\MM(\phi)\big)$ which is a
free module of rank $r^2$ over $F[T]$.
In order to descend from $F[T]$ to $A = \Fq[T]$, the key observation is that the natural morphism
$$F \otimes_{\Fq} \Hom_{F[T], \tau}\big(\MM(\psi),\,\MM(\phi)\big)
\to \Hom_{F[T]}\big(\MM(\psi),\,\MM(\phi)\big)$$
is injective, which is a classical general result about
$\tau$-modules.
We refer to \cite[Proposition~3.4.6]{Pap23} for more details.
\end{proof}

\begin{remark}
\label{rem:computeHom}
Not only does the above proof elucidate the structure of $\Hom(\phi, \psi)$, but it also translates to efficient algorithmic methods for computing this hom space.
Indeed, representing $F[T]$-linear morphisms $\MM(\psi) \to \MM(\phi)$ by $r \times r$ matrices, we are reduced to solving the equation
\begin{equation}\label{eq:computeHom}
M \cdot \Mat\big(\tau_{\MM(\psi)}\big) 
= \Mat\big(\tau_{\MM(\phi)}\big) \cdot \tau(M),
\end{equation}
where $M \in F[T]^{r \times r}$ is the unknown and the two other
matrices are given by Equation~\eqref{eq:motive:matrixtau}. When $F$ is a finite field, it provides efficient algorithms for computing this Hom space.
We will give more details on this in \cref{computing_isogenies}.
\end{remark}
\begin{definition}
\label{def:algebra-of-endomorphisms}
Given a Drinfeld module $\phi$, we set
\begin{align*}
  \End^0(\phi) &= K \otimes_A \End(\phi),\\
  \End_{\Fs}^0(\phi) &= K \otimes_A \End_{\Fs}(\phi),
\end{align*}   
where we recall that $K = \Fq(T)$ is the fraction field of $A$.
\end{definition}

The existence of dual isogenies ensures that $\End^0(\phi)$ is a division
algebra; indeed, the inverse in $\End^0(\phi)$ of an isogeny $u : \phi \to
\phi$ having norm $a$ and dual $\hat u$ is simply $a^{-1} \hat u$. Moreover,
\cref{theorem:structure-homset} tells us that $\End^0(\phi)$ has
dimension at most $r^2$ over $K$, and that $\End(\phi)$ can be seen as an order
inside it. The same is true for $\End_{\Fs}^0(\phi)$. We will go in further
details on the structures of $\End^0(\phi)$ and $\End_{\Fs}^0(\phi)$ when $F$
is a finite field in \cref{subsection:endomorphism-ring}.

\section{Arithmetic aspects of Drinfeld modules}

\label{sec:arithmetics}

The study of elliptic curves follows quite different branches depending on the base we work on. Roughly speaking,
there are three main cases.
\begin{itemize}
\item Over $\C$, the main tools are the Weierstraß uniformisation theorem and analytic geometry.
\item Over finite fields, the Frobenius endomorphism plays a central role.
\item Over number fields, an elliptic curve can be regarded
as an elliptic curve over $\C$, but it can also
be reduced modulo primes to give elliptic curves over finite fields. This case thus takes advantage of the two previous ones.
\end{itemize}

In the theory of Drinfeld modules, a similar trichotomy occurs.
We have already discussed the analytic theory (see \cref{ssec:drinfeld:uniformization}), \ie Drinfeld modules over $\Cinf$.
In this section, we develop the theory in the two remaining cases: Drinfeld
modules over finite fields and Drinfeld modules
over function fields.

\subsection{Over finite fields}
\label{subsec:finite-fields}

We assume that $F$ is a finite extension of the finite field $\Fq$.
This implies that the morphism $\gamma: A \to F$ associated to the $A$-characteristic (see \cref{def:Afield}) is not
injective. We write $\pp=\Acar(F)$, which is a prime ideal. 
We denote by $\Fpp$ the quotient $A/\pp$, which is a finite field with $q^{\deg
\pp}$ elements, and we have a tower of extensions
\[\begin{tikzcd}[ampersand replacement=\&]
	\Fq \& \Fpp \& F \& \Fs
	\arrow["{\pi_\pp}", from=1-1, to=1-2]
	\arrow["\gamma"', curve={height=18pt}, from=1-1, to=1-3]
	\arrow["{\iota_\pp}", from=1-2, to=1-3]
	\arrow["{\iota_F}", from=1-3, to=1-4]
\end{tikzcd},\]
where $\iota_\pp$ and $\iota_F$ are injections. 
We also let $d$ denote the degree of $F$ over $\Fq$. It is then a multiple of $\deg \pp$.
Throughout this subsection, we also fix a rank-$r$ Drinfeld module 
$\phi: A \to F\{\tau\}$ defined by $\phi_T = z + g_1\tau + \dots + g_r \tau^r$
with $g_i \in F$.

\subsubsection{The Frobenius endomorphism and its characteristic polynomial}
\label{sssec:frobendomorphism}

We recall that the Ore polynomial $\tau^{\deg\pp}$ defines an isogeny with domain $\phi$, which is called the relative Frobenius of $\phi$ (see
\cref{def:relativeFrobenius}).
\begin{proposition}
    The isogeny $\tau^d$ is an endomorphism of $\phi$, which is called the \emph{Frobenius endomorphism} of $\phi$.
\end{proposition}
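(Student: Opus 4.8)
The plan is to use the reduction noted right after \cref{def:morphism}: an Ore polynomial $u\in F\{\tau\}$ defines an endomorphism of $\phi$ exactly when $u\phi_T=\phi_T u$, since checking the defining relation for $a=T$ suffices. So it is enough to verify that $\tau^d$ commutes with $\phi_T$. To do this I would expand $\tau^d\phi_T$ using the commutation rule $\tau c=c^q\tau$ of $F\{\tau\}$, which iterates to $\tau^d c=c^{q^d}\tau^d$ for every $c\in F$. Writing $\phi_T=z+g_1\tau+\cdots+g_r\tau^r$, this yields
\[
  \tau^d\phi_T=\bigl(z^{q^d}+g_1^{q^d}\tau+\cdots+g_r^{q^d}\tau^r\bigr)\tau^d.
\]

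Next I would invoke the standing hypothesis that $F$ is an extension of $\Fq$ of degree $d$, hence a finite field with $q^d$ elements, so that $c^{q^d}=c$ for every $c\in F$. Applying this to $z$ and to each $g_i$ (all of which lie in $F$), the parenthesised Ore polynomial above is simply $\phi_T$ again, whence $\tau^d\phi_T=\phi_T\tau^d$. By the reduction of the previous step, $\tau^d$ then commutes with $\phi_a$ for all $a\in A$, so it is an endomorphism of $\phi$. Equivalently, one may argue that $\tau^d=(\tau^{\deg\pp})^{d/\deg\pp}$ is an iterate of the relative Frobenius of \cref{def:relativeFrobenius}: by Equation~\eqref{eq:frobcodomain} each application of $\tau^{\deg\pp}$ raises the coefficients of the current Drinfeld module to the power $q^{\deg\pp}$, so after $d/\deg\pp$ steps they have been raised to the power $q^d$, which leaves them fixed; the codomain is therefore $\phi$ itself.

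There is no genuine obstacle here: the statement follows at once from the commutation rule in $F\{\tau\}$ together with the finiteness of $F$. The only point that calls for a little care is keeping track of the exponents produced by the noncommutative multiplication, i.e.\ that $\tau^d$ conjugates a scalar into its $q^d$-th power.
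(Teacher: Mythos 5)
Your proof is correct and follows exactly the route the paper has already set up: the commutation $\tau^d c = c^{q^d}\tau^d$ (equivalently, iterating the relative-Frobenius computation of Equation~\eqref{eq:frobcodomain}) combined with $c^{q^d}=c$ for all $c\in F=\F_{q^d}$, plus the observation that checking $u\phi_T=\phi_T u$ suffices since $A=\Fq[T]$. The paper leaves this verification implicit, and your write-up supplies precisely the intended argument.
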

The endomorphism $\tau^d$ is generally denoted by $\pi$.

As we shall see, one invariant of primary importance attached to $\phi$ is the characteristic polynomial of $\pi$. With a slight abuse of notation, due to its central role, we will denote it by $\chi_{\phi}(X)$ instead of $\chi_{\pi}(X)$ (see \cref{def:charpoly}).

\begin{example}
\label{ex:chiFrob:carlitz}
We recall that the Carlitz module $c$ over $(F, \gamma)$ is defined by
$c_T = z + \tau$ where, as usual, $z = \gamma(T)$.
We have seen in \cref{ex:motive:carlitz} that the Anderson motive of 
$c$ is one-dimensional over $F[T]$, namely $\MM(c) = F[T]{\cdot}\ee$,
with the $\tau$-action explicitly given by the formula
$\tau_{\MM(c)} (\ee) = (T{-}z) \ee$.
Using the semi-linearity of $\tau_{\MM(c)}$, we deduce by induction
that
\[
  \tau_{\MM(c)}^n (\ee) = (T-z)(T-z^q) \cdots (T-z^{q^{n-1}}) \cdot \ee
\]
for all positive integers $n$. Therefore
\[
  \chi_{c}(X) = X - (T-z)(T-z^q) \cdots (T-z^{q^{d-1}}),
\]
where $d=[F:\Fq]$.
In particular, if $z$ generates $F$ over $\Fq$, we find that
$\chi_{c}(X) = X - \p$ where $\p$ is a generator of the 
$A$-characteristic of $(F, \gamma)$. More generally, we always
have $\chi_c(X) = X - \p^m$ with $m = [F : \Fq(z)]$.

We observe, in particular, that $\chi_c(1)$ is somehow related to 
the local factors defining the Carlitz zeta function (see 
Equation~\eqref{eq:zetaCEuler}). We will come back to this observation later on, when we will present the theory of $L$-series (see
\cref{sssec:arith:Lseries}).
\end{example}

\begin{sageexplain}
  We compute the characteristic polynomial of the Frobenius of the
  Carlitz module and observe that we indeed find the formula found in Example~\ref{ex:chiFrob:carlitz} above.

  \begin{sagecolorbox}
  \begin{sagecommandline}

    sage: c = CarlitzModule(A, F)
    sage: c.characteristic()
    sage: c.frobenius_charpoly()

  \end{sagecommandline}
  \end{sagecolorbox}

  Below is another example with a Drinfeld module of higher rank.
  We observe that the $T$-degrees of the coefficients increase
  slowly; this is a general phenomenon, as stated below in
  \cref{th:bounds-chipi}.

  \begin{sagecolorbox}
  \begin{sagecommandline}

    sage: phi = DrinfeldModule(A, [z, z^2, z^3, z^4, z^5, z^6])
    sage: chi = phi.frobenius_charpoly()
    sage: chi

  \end{sagecommandline}
  \end{sagecolorbox}
\end{sageexplain}

The three following theorems underline the importance of
$\chi_\phi$.
The first one is an analogue of a famous theorem on elliptic curves 
stating that the number of rational points is obtained by evaluating the 
characteristic polynomial of the Frobenius endomorphism at $1$ 
\cite[Chapter V, Theorem~2.3.1]{silverman_arithmetic_2009}.
In the case of Drinfeld modules, the role of the group of rational points is
played by the $A$-module $\phiact F$. Contrary to abelian varieties, the
underlying subset is always the same, that is, $F$ itself. However, its
structure as an $A$-module can vary.

\begin{theorem}[\protect{\cite[Theorem~5.1]{Gekele_1991}}]
\label{th:number-points}
  The Euler-Poincaré characteristic $|\phiact F|$ is the
  principal ideal generated by $\chi_\phi(1)$.
\end{theorem}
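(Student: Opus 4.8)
The plan is to translate the statement into linear algebra over $F[T]$ via the Anderson motive, where $\phiact F$ becomes the cokernel of an explicit operator. First I would observe that $\phiact F$ is, by definition, $F$ endowed with the $A$-action $a \star x = \phi_a(x)$, and that the Frobenius endomorphism $\pi = \tau^d$ acts on $F$ as the $q^d$-power map, which is the identity since $F$ has $q^d$ elements. Therefore $\phiact F = \ker(\tau^d - 1 \text{ acting on } \phiact{\Fs})^{\Gal}$, and more to the point one can realise $\phiact F$ directly as a quotient of the motive: there is a natural $A$-module isomorphism
\[
  \phiact F \;\simeq\; \MM(\phi) \,/\, (\tau_{\MM(\phi)} - 1)\MM(\phi),
\]
coming from the evaluation pairing $\MM(\phi) \times \Fs \to \Fs$, $(m,z)\mapsto m(z)$, of \cref{sssec:AndersonTate} combined with the fact that $x \in F$ iff $x^{q^d} = x$. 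Establishing this identification cleanly is the conceptual heart of the argument; it is essentially \cref{th:motive:torsion} in a limiting/dual form, with the ideal $\mathfrak a$ replaced by the operator $\tau^d - 1$, so I would either cite the relevant computation in \cite{Gekele_1991} or \cite{CL23} or reprove it from the perfect duality already stated.

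Next I would make the operator explicit. Using the canonical basis $(1, \tau, \ldots, \tau^{r-1})$ of $\MM(\phi)$ from \cref{prop:basismotive} and the matrix $\Mat(\tau_{\MM(\phi)})$ given in Equation~\eqref{eq:motive:matrixtau}, the semilinear operator $\tau_{\MM(\phi)}$ has, with respect to this basis, the matrix $B := \Mat(\tau_{\MM(\phi)}) \in F[T]^{r\times r}$ acting as $m \mapsto B\cdot\tau(m)$ where $\tau$ raises coefficients to the $q$-th power. Iterating $d$ times, the operator $\tau_{\MM(\phi)}^d$ is $F[T]$-linear (since $q^d$-power is trivial on $F$, hence $\tau^d$ is trivial on $F[T]$) and is represented by the product matrix
\[
  B_d := B \cdot \tau(B) \cdot \tau^2(B) \cdots \tau^{d-1}(B) \in F[T]^{r\times r}.
\]
Now $\phiact F \simeq \MM(\phi)/(\tau_{\MM(\phi)}^d - 1)\MM(\phi) = F[T]^r / (B_d - I)F[T]^r$ as an $A$-module. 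The Euler–Poincaré characteristic (Fitting ideal) of a finitely presented torsion $A$-module presented by a square matrix $N$ over $A$ is the ideal generated by $\det N$; here, after descending the scalars from $F[T]$ to $A = \Fq[T]$ — which one justifies by noting that the matrix $B_d - I$ is actually conjugate over $F[T]$ to a matrix over $A$, or simply by the fact that $\det(B_d - I) \in A$ already and the module is a base-changed presentation — we get
\[
  |\phiact F| \;=\; \big(\det(B_d - I)\big)\, A \;=\; \big(\det(\MM(\pi) - \mathrm{id})\big)\,A.
\]
Finally, by \cref{def:charpoly} applied to the endomorphism $\pi = \tau^d$, we have $\chi_\phi(X) = \det(X\cdot\mathrm{id} - \MM(\pi))$, so $\det(\MM(\pi) - \mathrm{id}) = (-1)^r\chi_\phi(1)$, and since the Fitting ideal only sees the polynomial up to a unit of $A$, we conclude $|\phiact F| = \chi_\phi(1)\,A$.

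The step I expect to be the main obstacle is the first one: identifying $\phiact F$ with the cokernel of $\tau_{\MM(\phi)}^d - 1$ on the motive in an $A$-equivariant way, and in particular checking that this identification is compatible with the $A$-module structures (the exotic one on the $\phi$-side versus the $F[T]$-module structure on the motive side) and that nothing is lost when $\pp$ is the $A$-characteristic — i.e. that the statement holds regardless of the Frobenius height of $\phi$. The rest is a standard Fitting-ideal computation, modulo the bookkeeping needed to descend from $F[T]$ to $A$, which one handles exactly as in the proof of the norm formula sketched after \cref{def:normisogeny} (reducing, via \cref{th:motive:fullyfaithful}, statements about determinants of $\tau$-equivariant $F[T]$-linear maps to statements with $A$-coefficients).
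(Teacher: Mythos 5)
The survey does not prove this theorem (it only cites Gekeler), so I am assessing your argument on its own terms. Your overall strategy --- realise $\phiact F$ motivically and reduce to a determinant --- is the right one, and your first displayed isomorphism $\phiact F \simeq \MM(\phi)/(\tau_{\MM(\phi)}-1)\MM(\phi)$ is the correct starting point. The gap appears immediately afterwards, when you ``iterate $d$ times'' and replace this quotient by $\MM(\phi)/(\tau_{\MM(\phi)}^d-1)\MM(\phi) = F[T]^r/(B_d-I)F[T]^r$. These two quotients are not isomorphic. The first is only an $A$-module (the image of $\tau_{\MM(\phi)}-1$ is not an $F[T]$-submodule, since the operator is merely $\tau$-semilinear over $F$) and has $q^d$ elements; the second is a torsion $F[T]$-module of $F$-dimension $\deg_T\det(B_d-I)=\deg_T\chi_\phi(1)=d$, hence has $q^{d^2}$ elements. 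The final Fitting-ideal step inherits the same defect: the $A$-Fitting ideal of an $F[T]$-module $F[T]^r/NF[T]^r$ is generated by $N_{F[T]/A}(\det N)$, which for $\det N=\pm\chi_\phi(1)\in A$ gives $\chi_\phi(1)^d$, not $\chi_\phi(1)$. No descent of scalars can repair this, because the module you wrote down is genuinely $d$ times too big.

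The missing ingredient is the \emph{semilinear determinant lemma}, which is precisely where the degree collapses from $d\cdot\deg\chi_\phi(1)$ to $\deg\chi_\phi(1)$: for a $\tau$-semilinear endomorphism $\sigma$ of a free $F[T]$-module of rank $r$ with matrix $B$, viewing $1-\sigma$ as an $A$-linear endomorphism of a free $A$-module of rank $rd$, one has
\[
\det{}_{A}(1-\sigma) \;=\; \det{}_{F[T]}\big(1 - B\,\tau(B)\cdots\tau^{d-1}(B)\big)
\]
up to a unit. (Check it in rank $1$, where it reduces to the classical identity $\det_{\Fq}(x\mapsto x-bx^q)=1-N_{F/\Fq}(b)$ on $F$.) Applied to $\sigma=\tau_{\MM(\phi)}$, the right-hand side is $\det_{F[T]}\big(1-\Mat(\MM(\pi))\big)=\pm\chi_\phi(1)$, while the left-hand side generates the Fitting ideal of the cokernel of $1-\tau_{\MM(\phi)}$, which is $|\phiact F|$ by your first isomorphism. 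So the correct route keeps the exact sequence $0\to\MM(\phi)\to\MM(\phi)\to\phiact F\to 0$ given by $1-\tau_{\MM(\phi)}$ --- which, as you rightly flag, is the conceptual heart and does require proof (it is the finite-field instance of the sequence underlying Taelman's class module and Anderson's trace formula in \cref{sssec:arith:Lseries}) --- and then invokes the lemma above instead of passing to $\tau_{\MM(\phi)}^d$.
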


\begin{sageexplain}
  \cref{th:number-points} implies in particular that all
  elements in $\phiact F$ are of $\chi_\phi(1)$-torsion.
  We check it below with an example.

  \begin{sagecolorbox}
  \begin{sagecommandline}

    sage: a = chi(1)
    sage: a
    sage: [phi(a)(x) for x in F]

  \end{sagecommandline}
  \end{sagecolorbox}
\end{sageexplain}

\noindent
The second theorem can be interpreted as an analogue of Hasse's theorem
for elliptic curves \cite[Chapter V, Theorem~1.1]{silverman_arithmetic_2009}.

\begin{theorem}
\label{th:bounds-chipi}
  Write $\displaystyle{\chi_\phi(X) = \sum_{i=0}^r a_i X^r}$, with $a_0, \dots, a_r\in A=\Fq[T]$. For any positive integer $i < r$, we have
  \[
    \deg(a_i) \leqslant \frac{r-i}{r}\cdot d.
  \]
  In the particular case of the \emph{Frobenius norm}, that is, the coefficient
  $a_0$, we have an explicit formula: if $p \in A$ denotes the monic generator 
  of $\pp$, we have
  \[
    a_0 = (-1)^{rd - d - r} \frac{p^{d/\deg(\pp)}}{\mathrm{N}_{F/\Fq}(g_r)}.
  \]
\end{theorem}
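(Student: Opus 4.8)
The plan is to work entirely with the Anderson motive $\MM(\phi)$ and the matrix $\Theta := \Mat(\tau_{\MM(\phi)})$ from Equation~\eqref{eq:motive:matrixtau}, together with the description of the Frobenius endomorphism as $\pi = \tau^d$. First I would record that, since $\pi$ acts on $\MM(\phi)$ by $m \mapsto m\pi = m\tau^d$ and $\tau_{\MM(\phi)}$ is $q$-semilinear, the matrix of $\MM(\pi)$ in the canonical basis is the ``semilinear $d$-fold product''
\[
  \Mat(\MM(\pi)) = \Theta \cdot {}^{(q)}\Theta \cdot {}^{(q^2)}\Theta \cdots {}^{(q^{d-1})}\Theta,
\]
where ${}^{(q^k)}\Theta$ means raising every entry of $\Theta$ to the $q^k$-th power (this is exactly the induction used in \cref{ex:chiFrob:carlitz}). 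Then $\chi_\phi(X) = \det\big(X\cdot\mathrm{id} - \Mat(\MM(\pi))\big)$ by \cref{def:charpoly}.

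For the degree bound, the key point is that $\Theta$ has a very controlled shape: it is a companion-type matrix with ones on the superdiagonal and last row $\big(\tfrac{T-z}{g_r}, \tfrac{-g_1}{g_r}, \dots, \tfrac{-g_{r-1}}{g_r}\big)$, so its only entry of positive $T$-degree is the $(r,1)$-entry, which is linear in $T$. Each semilinear conjugate ${}^{(q^k)}\Theta$ has the same shape (the Frobenius fixes $T$). I would then argue that in the product of $d$ such matrices, the $(i,0)$-coefficient $a_i$ of the characteristic polynomial is a sum of products of $r-i$ ``generalised principal minors'' picked out along the semilinear orbit, and each of these minors is a polynomial in $T$ whose degree is at most the number of times the distinguished linear entry $T-z$ (or its Frobenius twists) can appear — which, by a counting argument on how many superdiagonal-one transitions are forced, is at most $\lceil (r-i)d/r\rceil$, or more precisely $\le (r-i)d/r$ after checking the edge cases. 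A clean way to make this precise is to use the Newton-polygon / valuation-at-$\infty$ estimate: the $T$-adic behaviour of $\det(X\cdot\mathrm{id}-\Theta^{(d)})$ is governed by the single linear factor per ``wrap'' around the cyclic structure, and there are $d$ wraps spread over the $r$-dimensional space; I would phrase this via the standard slope estimate for characteristic polynomials of Drinfeld–Frobenius, referencing the structure of $\Theta$ directly rather than a black box.

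For the explicit formula for $a_0 = (-1)^r\det\Mat(\MM(\pi))$, I would compute $\det$ of the semilinear product directly: $\det\Mat(\MM(\pi)) = \prod_{k=0}^{d-1} \det\big({}^{(q^k)}\Theta\big) = \prod_{k=0}^{d-1} \big(\det\Theta\big)^{q^k}$. From the companion shape, $\det\Theta = (-1)^{r-1}\,\tfrac{T-z}{g_r}$ (expanding along the first column). Hence $\det\Mat(\MM(\pi)) = (-1)^{(r-1)d}\,\prod_{k=0}^{d-1}\tfrac{(T-z)^{q^k}}{g_r^{q^k}} = (-1)^{(r-1)d}\,\dfrac{\prod_{k=0}^{d-1}(T-z^{q^k})}{\mathrm{N}_{F/\Fq}(g_r)}$. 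The numerator $\prod_{k=0}^{d-1}(T - z^{q^k})$ is the characteristic polynomial of $z$ over $\Fq$ as computed in \cref{ex:chiFrob:carlitz}, namely $p^{d/\deg\pp}$ where $p$ is the monic generator of $\pp$. Collecting the sign $a_0 = (-1)^r \det\Mat(\MM(\pi)) = (-1)^{r}(-1)^{(r-1)d}\,\dfrac{p^{d/\deg\pp}}{\mathrm{N}_{F/\Fq}(g_r)}$, and simplifying $(-1)^{r + (r-1)d} = (-1)^{rd - d - r}$ modulo~2 (since $r + (r-1)d \equiv rd - d - r \pmod 2$), gives exactly the stated formula.

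I expect the degree inequality to be the real obstacle: the determinant expansion of a $d$-fold semilinear product is combinatorially intricate, and one must argue carefully that the linear entries $T-z^{q^k}$ cannot ``accumulate'' faster than one per cyclic wrap. The norm formula, by contrast, reduces to a clean determinant-of-a-companion-matrix computation together with \cref{ex:chiFrob:carlitz}, so it should be essentially routine once the semilinear-product description of $\Mat(\MM(\pi))$ is in place.
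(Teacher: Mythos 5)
Your proposal follows essentially the same route as the paper, which establishes the theorem simply by ``writing down explicitly the matrix of $\MM(\tau^d)$ acting on $\MM(\phi)$ and estimating the degrees of its coefficients'': your identification of that matrix as a $d$-fold semilinear product of the companion-type matrix $\Theta$, and your computation of $a_0$ (determinant of each twisted factor equal to $(-1)^{r-1}(T-z^{q^k})/g_r^{q^k}$, product of the numerators equal to $p^{d/\deg\pp}$, sign $(-1)^{r+(r-1)d}=(-1)^{rd-d-r}$) are all correct. For the degree bound, the clean way to close the counting argument you sketch is to observe that each term of a principal $(r{-}i)\times(r{-}i)$ minor of the $d$-fold product corresponds to a disjoint union of \emph{closed} walks of total length $(r{-}i)d$ in the graph underlying $\Theta$, and a closed walk must spend at least $r$ steps between consecutive uses of the unique degree-one edge $r\to 1$, so the edge is used at most $(r{-}i)d/r$ times in total --- the closedness is what removes the boundary correction that would otherwise give only $(r{-}i)(1+(d{-}1)/r)$.
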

It should be noted that, contrary to the case of elliptic curves, 
\cref{th:bounds-chipi} is quite easy to establish in the framework of 
Drinfeld modules, since it follows directly from writing down explicitly 
the matrix of $\MM(\tau^d)$ acting on $\MM(\phi)$ and estimating the 
degrees of its coefficients.
Besides, combining \cref{th:number-points} 
and \cref{th:bounds-chipi} gives the bound
\[
  \deg\big(|\phiact F| - \p\big) \leq \frac {\deg \p} 2
\]
for any Drinfeld module $\phi$ of rank $2$ of the form
$\phi_T = z + g_1 \tau + \tau^2$ with $F = \Fq(z)$.
This is the analogue of Weil's
bound on the number of rational points of a curve of genus $1$.

\begin{theorem}[{\cite[Theorem~4.3.2]{Pap23}}]
\label{th:isogenous-charpoly}
  Two Drinfeld modules over $(F, \gamma)$ are $F$-isogenous if, and only if, they
  have the same characteristic polynomial of the Frobenius endomorphism.
\end{theorem}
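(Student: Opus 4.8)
The plan is to follow the classical blueprint of Tate's isogeny theorem, treating the two implications separately; one of them is formal, while the other rests entirely on the Tate-module full faithfulness theorem (\cref{theorem:fidelite-tate}).

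For the formal implication (isogenous $\Rightarrow$ same $\chi_\phi$), suppose $u : \phi \to \psi$ is an isogeny. Since $F = \Fqd$, the Ore polynomial $\tau^d$ is central in $F\{\tau\}$ (one has $\tau^d c = c^{q^d}\tau^d = c\tau^d$ for every $c \in F$), so $u\tau^d = \tau^d u$; equivalently, $u$ intertwines the Frobenius endomorphisms $\pi_\phi$ and $\pi_\psi$, both given by the Ore polynomial $\tau^d$. Unwinding the definition of $\MM$ on morphisms, this yields $\MM(u)\circ\MM(\pi_\psi) = \MM(\pi_\phi)\circ\MM(u)$ as $F[T]$-linear maps $\MM(\psi)\to\MM(\phi)$. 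As $F\{\tau\}$ is a domain and $u \neq 0$, the map $m \mapsto mu$ is injective, so $\MM(u)$ is an injective homomorphism between free $F[T]$-modules of the same rank $r$ (\cref{prop:basismotive}); tensoring with the fraction field $F(T)$ makes it an isomorphism conjugating $\MM(\pi_\psi)$ onto $\MM(\pi_\phi)$. Hence $\chi_\psi = \chi_\phi$ in $F(T)[X]$, and therefore in $A[X]$ since both polynomials are known to have coefficients in $A$.

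For the converse, assume $\chi_\phi = \chi_\psi$; then $\phi$ and $\psi$ share the rank $r = \deg\chi_\phi$. Fix a maximal ideal $\fll$ of $A$ distinct from $\pp := \Acar(F)$, so that $\TT_\fll(\phi)$ and $\TT_\fll(\psi)$ are free of rank $r$ over $A_\fll$ and the Frobenius $\pi = \tau^d$ acts on each with characteristic polynomial $\chi_\phi = \chi_\psi$. It then suffices to produce a nonzero $A_\fll$-linear, $\Gal(\Fs/F)$-equivariant map $\TT_\fll(\phi)\to\TT_\fll(\psi)$: indeed, \cref{theorem:fidelite-tate} would give $A_\fll\otimes_A\Hom(\phi,\psi) \neq 0$, whence $\Hom(\phi,\psi) \neq 0$, and any nonzero morphism of Drinfeld modules is an isogeny (\cref{def:morphism}). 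Since $F$ is finite, $\Gal(\Fs/F)$ is topologically generated by the Frobenius substitution $x \mapsto x^{q^d}$, whose action on the $\fll$-power torsion inside $\Fs$ coincides with that of $\pi$; so a Galois-equivariant map is the same thing as an $A_\fll$-linear map commuting with $\pi$. The remaining task --- pure linear algebra --- is then to intertwine, after passing to the fraction field $K_\fll$ of $A_\fll$, two endomorphisms of $r$-dimensional $K_\fll$-vector spaces having equal characteristic polynomial: decomposing over $\overline{K_\fll}$ into generalized eigenspaces and mapping a Jordan block of the source into an aligned Jordan block of the target on a shared eigenvalue produces a nonzero intertwiner over $\overline{K_\fll}$, hence already over $K_\fll$; rescaling it by a power of a uniformizer of $A_\fll$ then brings $\TT_\fll(\phi)$ inside $\TT_\fll(\psi)$ without destroying $\pi$-equivariance. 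The one genuinely substantial ingredient is thus \cref{theorem:fidelite-tate} (the function-field form of Tate's theorem, already at our disposal); everything else is bookkeeping, the only delicate points being the descents from $K_\fll$-linear to $A_\fll$-linear and from the Frobenius to the full Galois action. A reference for the complete argument is \cite[Theorem~4.3.2]{Pap23}.
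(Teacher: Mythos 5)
The paper offers no proof of its own here --- it simply cites \cite[Theorem~4.3.2]{Pap23} --- and your argument is a correct rendition of that standard proof: the forward direction via the centrality of $\tau^d$ in $F\{\tau\}$ and the induced conjugacy of the Frobenius actions on the (rationalised) Anderson motives, and the converse via \cref{theorem:fidelite-tate}, the common-factor argument for two endomorphisms with equal characteristic polynomial, and the identification of Galois-equivariance with $\pi$-equivariance over a finite field. One small merit of your write-up: by demanding only a \emph{nonzero} intertwiner of the rational Tate modules rather than an isomorphism, you avoid the semisimplicity of the Frobenius action that some treatments invoke, which is legitimate since any nonzero element of $\Hom(\phi,\psi)$ is by definition an isogeny.
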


\subsubsection{Review of methods for computing the characteristic polynomial of
the Frobenius endomorphism}
\label{sec:review-frobenius-charpoly}

We now reflect on the computation of characteristic polynomials of
endomorphisms, with a focus on the Frobenius case. This problem is a staple of elliptic
curve and isogeny-based cryptography. For elliptic curves over a finite field,
the number of rational points is given by the trace of the Frobenius
endomorphism. The first deterministic algorithm to compute this
quantity was proposed by Schoof~\cite{schoof_elliptic_1985}. Schoof's algorithm works by computing
the trace modulo various distinct prime numbers with division polynomials and
recovering it via the Chinese remainder theorem.

A Drinfeld module analogue of this method was proposed, among others, by Musleh
and Schost~\cite{musleh_computing_2019, musleh_fast_2018, musleh_algorithms_2023}. They describe two versions: a deterministic method, as well as a Monte--Carlo method
that is more efficient. We stress that these only work for rank-$2$ Drinfeld
modules. This restriction allows us to directly transpose algorithms from the
classical case of elliptic curves.

Previous methods also include one by Gekeler, which involved solving a linear
system directly coming from the identity $\chi_\phi(\pi) = 0$
\cite{gekeler_frobenius_2008}, where $\pi$ is the Frobenius endomorphism of $\phi$. It was generalised and implemented by Musleh for
higher ranks in \cite[\S~4.4.1]{musleh_algorithms_2023}. On the other hand,
Narayanan proposed a method based on the minimal polynomial of sequences
\cite{narayanan_polynomial_2018}. Musleh and Schost expressed doubts on the
validity of some of the assumptions made by Narayanan, and suggested a
workaround in \cite[\S~5]{musleh_computing_2019}.

In \cite[\S~4]{CL23}, the authors proposed a method for the Frobenius
endomorphism in arbitrary rank. It is based on the observation
that the Frobenius
endomorphism is not only a remarkable element of $\phi$, but one of
$F\{\tau\}$ as well: setting $d = [F: \Fq]$, the element $\tau^d$ is central in $F\{\tau\}$, and
the center of $F\{\tau\}$ is given by $\Fq\{\tau\}$. One can then show that the
characteristic polynomial of $\pi$ is the reduced characteristic polynomial of
$\phi_T \in F\{\tau\}$, where $F\{\tau\}$ is embedded in a central simple algebra
of degree $d^2$ over its centre.

The methods mentioned in \cref{remark:calcul-charpoly} also provide a way to
compute the characteristic polynomial of the Frobenius endomorphism using Anderson motives.
Interestingly, with their optimisations and subsequent variants, all the methods described above have their own benefits, depending on the relative sizes of the input
parameters. They are compared in \cite[Figure~1]{CL23}.

One takeaway message from these computations is that while elliptic curves
shed a familiar light on Drinfeld modules, understanding Drinfeld modules as a theory on
their own was the key to developing algorithms in much greater generality than the
restricted case of the Frobenius endomorphism in rank $2$. 

Furthermore, those same Ore polynomials give a structure of a central simple algebra, allowing for the computation of the characteristic polynomial of the
Frobenius. These objects bypass the need for the computation of torsion elements
and Tate modules, thanks to \cref{th:motive:torsion}.

For more extensive studies on the subject, we refer to Section 4.4 (Frobenius
endomorphism in the rank-$2$ case only) or Section 6.4 (general case) of
\cite{musleh_algorithms_2023} or \cite[\S~4]{musleh_computing_2019}. We refer
to \cite[Appendix A]{CL23} for direct comparison between asymptotic
complexities, and to \cite[\S~4.4.2, 5.4.2, 6.4.1]{leudiere_computing_2024} and
\cite{ayotte_drinfeld_2023} for reproducible benchmarks.

\subsubsection{Computing isogenies}
\label{computing_isogenies}

In contrast with the situation of elliptic curves, isogenies of Drinfeld modules over a finite field can be computed in polynomial time. Indeed, as highlighted by Equation~\eqref{eq:computeHom}, computing $\Hom(\phi,\psi)$ reduces to solving a linear system over $A$, which is finite dimensional when $F$ is a finite field. Wesolowski \cite{W22} and Musleh \cite[\S~7.3]{musleh_algorithms_2023} develop this idea, leading to the following theorem.

\begin{theorem}[\protect{\cite[Theorem 7.3.1]{musleh_algorithms_2023}}]
    Let $\phi, \psi$ be two Drinfeld modules of rank $r$. There exist algorithms for computing
    \begin{enumerate}
        \item an $\mathbb{F}_q$-basis of isogenies of degree at most $N$ in $\Hom(\phi,\psi)$ with quasi-cubic complexity in $N$ and in $d$, and quasi-linear complexity in $r$,
        \item an $\mathbb{F}_q[\tau^d]$-basis of $\Hom(\phi,\psi)$ with quasi-sextic complexity in $d$, and quasi-linear complexity in $r$,
        \item an $A$-basis of $\Hom(\phi,\psi)$ with quasi-cubic complexity in $d$ and in $r$.
    \end{enumerate} 
\end{theorem}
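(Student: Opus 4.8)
The plan is to prove the three assertions in parallel, since in each case computing $\Hom(\phi,\psi)$ reduces, by \cref{th:motive:fullyfaithful}, to computing the $F[T]$-linear maps $\MM(\psi)\to\MM(\phi)$ that commute with $\tau$, \ie the matrices $M\in F[T]^{r\times r}$ solving Equation~\eqref{eq:computeHom}; equivalently, unwinding through the canonical basis, the Ore polynomials $u\in\Ftau$ with $u\phi_T=\psi_T u$. Because $\tau$ (the $q$-power map on $F$) is only $\Fq$-linear, this is an $\Fq$-linear constraint, so each algorithm breaks into three pieces: (a) truncate to a finite-dimensional $\Fq$-space of candidate solutions by imposing a degree bound; (b) solve the resulting $\Fq$-linear system; (c) turn the solution space into a basis over the requested coefficient ring. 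Existence of the bases is then automatic; all the content lies in making (a)--(c) quantitative, following Wesolowski~\cite{W22} and Musleh~\cite{musleh_algorithms_2023}.

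For part~1 I would work directly with Ore polynomials: the $u$ with $\deg_\tau u\leq N$ form an $\Fq$-space of dimension $(N{+}1)d$, and $u\mapsto u\phi_T-\psi_T u$ is $\Fq$-linear into the $(N{+}r{+}1)d$-dimensional space of Ore polynomials of $\tau$-degree $\leq N{+}r$; the desired basis is its kernel, each nonzero element of which is a genuine isogeny with domain $\phi$ by \cref{prop:velu-ore}. The structural key is that the matrix of this map is block-banded of bandwidth $r$: the coefficient of $\tau^j$ in $u\phi_T-\psi_T u$ involves only $c_{j-r},\dots,c_j$, up to the appropriate Frobenius twists. A Gaussian elimination that respects this band structure --- sweeping through the $N$ coordinate blocks, each step a $d\times d$ solve interacting with an $r$-wide window --- gives the quasi-cubic dependence in $N$ and in $d$ together with the quasi-linear dependence in $r$.

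For parts~2 and 3 I would combine part~1 with a degree bound on a generating set and a normal-form computation over the relevant ring. For part~2: since $\tau^d$ is central in $\Ftau$, the $A$-module $\Hom(\phi,\psi)$ is also a module over $\Fq[\tau^d]\cong\Fq[X]$; it is torsion-free (it lies inside $\Ftau$) and finitely generated (because $\Ftau$ is free of rank $d^2$ over its centre $\Fq[\tau^d]$), hence free over $\Fq[\tau^d]$ of finite rank. Using the Anderson-motive description together with the degree estimates of \cref{th:bounds-chipi}, one shows that a family of $\Fq[\tau^d]$-generators can be extracted from the isogenies of $\tau$-degree $O(d)$; part~1 with that value of $N$ produces an $\Fq$-spanning set, and a Hermite-normal-form computation over the principal ideal domain $\Fq[\tau^d]$ --- on a polynomial matrix of size $O(d^2)$ and bounded degree --- yields an $\Fq[\tau^d]$-basis at quasi-sextic cost in $d$ (and quasi-linear in $r$). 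For part~3: by \cref{theorem:structure-homset} the module $\Hom(\phi,\psi)$ is free of rank $\leq r^2$ over $A=\Fq[T]$; bounding (via the matrix~\eqref{eq:motive:matrixtau}, or equivalently via the norm) the degrees of a minimal $A$-generating set, one truncates Equation~\eqref{eq:computeHom} accordingly, solves the resulting $\Fq$-linear system, and finishes with an $A$-Hermite-form reduction, which in total runs in time quasi-cubic in $d$ and in $r$.

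The existence part of the statement is soft, coming straight from \cref{theorem:structure-homset} and the divisibility theory behind \cref{prop:velu-ore}. The real obstacle is quantitative and twofold. First, one must establish sharp degree bounds for generating families --- in particular that isogenies of $\tau$-degree $O(d)$ already generate $\Hom(\phi,\psi)$ over $\Fq[\tau^d]$ and, with the right scaling, over $A$ --- and this is exactly where the Anderson-motive picture and the Frobenius characteristic-polynomial bounds of \cref{th:bounds-chipi} carry the argument. Second, one must organise the linear algebra so that the dependencies on $N$, $d$ and $r$ decouple as claimed; this is what forces the use of the band structure in part~1, of the central subring $\Fq[\tau^d]$ in part~2, and of fast polynomial and structured-matrix arithmetic throughout. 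I expect the degree-bound analysis to be the genuine crux, with the remaining bookkeeping careful but routine.
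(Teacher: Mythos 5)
The paper does not actually prove this theorem: it is quoted verbatim from Musleh's thesis (with Wesolowski's work as the other source), and the only in-text justification is the observation, already made in Remark~\ref{rem:computeHom}, that computing $\Hom(\phi,\psi)$ reduces to solving the commutation equation~\eqref{eq:computeHom}, equivalently $u\phi_T=\psi_T u$, as an $\Fq$-linear system. Your reduction is therefore exactly the one the paper and the cited references use, and your overall architecture (truncate by degree, solve over $\Fq$, then extract a basis over the requested ring, with $\Fq[\tau^d]$ entering as the centre of $F\{\tau\}$ and the $d^2$-rank of $F\{\tau\}$ over it explaining the $d^{2\omega}$-type cost of part~2) is the right one.

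That said, as a proof the proposal has two genuine gaps, both of which you partly acknowledge. First, the complexity claim in part~1 does not follow from the elimination you describe: a band-respecting elimination on a system with $(N{+}1)d$ unknowns and band width $rd$ would cost on the order of $Nd\cdot(rd)^2$ operations, which is \emph{linear} in $N$ and \emph{quadratic} in $r$, not quasi-cubic in $N$ and quasi-linear in $r$; the stated exponents actually come from a dense kernel computation of cost $\bigl((N{+}r)d\bigr)^{\omega}$ (whence ``quasi-cubic'' in $N$ and $d$, as the Remark following the theorem indicates), with the $r$-dependence entering through the cost of writing down the matrix. So the structural claim you lean on does not produce the bound you assert. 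Second, the degree bounds for generating families — that morphisms of $\tau$-degree $O(d)$ generate $\Hom(\phi,\psi)$ over $\Fq[\tau^d]$, and the analogous bound over $A$ for part~3 — are the entire quantitative content of parts~2 and~3, and they are asserted rather than proved; invoking \cref{th:bounds-chipi} gestures in the right direction but does not by itself bound the degrees of a minimal generating set of the Hom module. (A minor point: once $\psi$ is fixed, a kernel element of $u\mapsto u\phi_T-\psi_T u$ is a morphism $\phi\to\psi$ directly from \cref{def:morphism}; \cref{prop:velu-ore} is only needed when the codomain is unknown.) In short, the skeleton matches the cited proofs, but the two loads-bearing quantitative steps remain unestablished.
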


\begin{remark}
     Musleh \cite{musleh_algorithms_2023} gives, in fact, slightly better complexity involving the exponent of matrix multiplication.
\end{remark}

\begin{sageexplain}
  We compute the hom space between two Drinfeld modules $\phi$
  and $\psi$. In this case, we further observe that the hom space
  contains an isogeny defined by a constant Ore polynomial, so $\phi$
  and $\psi$ are isomorphic.

  \begin{sagecolorbox}
  \begin{sagecommandline}

    sage: phi = DrinfeldModule(A, [z, z, 1])
    sage: psi = DrinfeldModule(A, [z, 1-z, 1])
    sage: H = Hom(phi, psi)
    sage: H.basis()

  \end{sagecommandline}
  \end{sagecolorbox}

  \begin{sagecolorbox}
  \begin{sagecommandline}

    sage: H.basis_over_frobenius()

  \end{sagecommandline}
  \end{sagecolorbox}

  \begin{sagecolorbox}
  \begin{sagecommandline}

    sage: H.basis(degree=5)

  \end{sagecommandline}
  \end{sagecolorbox}

\end{sageexplain}

\subsubsection{Supersingularity}
\label{subsubsec:super-ordi}

Let $p$ again denote the monic generator of $\pp=\ker \gamma$. Recall from \cref{rk:height}  and \cref{definition:tate-module} that the Tate module of $\phi$ at $\pp$ has rank $r -
\h(\phi)$ over $A_\pp$, where $r$ is the rank of $\phi$, and $\h(\phi)$ is the Frobenius height.

\begin{definition}\label{Frobenius height}
  One says that $\phi$ is \emph{ordinary} when $\h(\phi) = 1$, and that $\phi$
  is \emph{supersingular} when $\h(\phi)=r$.
\end{definition}

\begin{theorem}[{\cite[Corollary~4.2.14, Theorem~4.4.1]{Pap23}}]
\label{prop_ordin_frobenius_trace}
  The following properties are all equivalent:
  \begin{itemize}
  \setlength\itemsep{0em}
    \item $\phi$ is supersingular;
    \item $\phi[\p]$ is trivial;
    \item $\phi_p$ is purely inseparable;
    \item $\End_{\Fs}(\phi)$ has maximal $A$-rank, \ie $r^2$;
    \item $\chi_\phi \equiv X^r \pmod {\p}$.
  \end{itemize}
\end{theorem}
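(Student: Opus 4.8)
The plan is to group the five conditions and prove the equivalences in three stages: first the elementary trio (1)$\Leftrightarrow$(2)$\Leftrightarrow$(3), which only unwinds the definition of the Frobenius height; then the link between this common condition and (5), via the Anderson motive of $\phi$ completed at $\p$; and finally the link with (4). I expect the last stage to be the real obstacle, since it genuinely requires the structure theory of endomorphism algebras over finite fields.

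For (1)$\Leftrightarrow$(2)$\Leftrightarrow$(3) I would write $p$ for the monic generator of $\p$, so that $\phi_p$ has $\tau$-degree $r\deg\p$. By \cref{rk:height} one has $\h(\phi) = r - \rk_{A/\p}\big(\phi[\p]\big)$, and also $\h(\phi)\cdot\deg\p$ equals the $\tau$-valuation of $\phi_p$. Reading the first identity, $\h(\phi) = r$ holds exactly when $\rk_{A/\p}\phi[\p] = 0$, i.e. $\phi[\p] = 0$; reading the second, $\h(\phi) = r$ holds exactly when the $\tau$-valuation of $\phi_p$ equals its $\tau$-degree, i.e. when $\phi_p$ is a single monomial $c\,\tau^{r\deg\p}$, which as an additive polynomial reads $\phi_p(x) = c\,x^{q^{r\deg\p}}$ --- precisely the condition that $\phi_p$ be purely inseparable. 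Thus (1), (2) and (3) are equivalent with no extra effort.

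For (1)$\Leftrightarrow$(5) I would pass to the motive. Because the $q^d$-power map is the identity on $F$, the iterate $\tau_{\MM(\phi)}^d$ of the semilinear operator $\tau_{\MM(\phi)}$ is in fact $F[T]$-linear, and by \cref{def:charpoly} its characteristic polynomial is $\chi_\phi$. Completing $\MM(\phi)$ at the maximal ideal $(T-z)$ of $F[T]$ --- which lies above $\p$, since $\gamma(p)=0$ forces $p(z)=0$ --- produces a free module of rank $r$ which, as a module equipped with $\tau_{\MM(\phi)}$, splits into an \emph{étale} part on which $\tau_{\MM(\phi)}$ is bijective and a \emph{connected} part on which it is topologically nilpotent; comparing with the Tate module $\TT_\p(\phi)$, which has rank $r-\h(\phi)$, the étale part has rank $r-\h(\phi)$ and the connected part has rank $\h(\phi)$. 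The Frobenius $\pi = \tau^d$ stays bijective on the étale part and topologically nilpotent on the connected one, so reducing $\chi_\phi$ modulo $(T-z)$ --- equivalently modulo $\p$ --- one gets $\chi_\phi \equiv X^{\h(\phi)}\,g(X)$ with $\deg g = r-\h(\phi)$ and $g(0)\neq 0$. Hence $\chi_\phi \equiv X^r \pmod\p$ if and only if $\h(\phi)=r$.

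The hard part will be (1)$\Leftrightarrow$(4). The guiding idea is Tate's theorem for Drinfeld modules (the analogue discussed around \cref{theorem:fidelite-tate}): for any prime $\ell\neq\p$ it identifies $\End_{\Fs}(\phi)\otimes_A A_\ell$ with the centralizer, inside $\End_{A_\ell}\big(\TT_\ell(\phi)\big)$, of a sufficiently large power of the Frobenius $\pi$, so the bound $r^2$ of \cref{theorem:structure-homset} is reached exactly when some power of $\pi$ acts as a scalar. Combining this with the explicit classification of the characteristic polynomials $\chi_\phi$ occurring over finite fields, and with the valuations of their roots (which are units away from $\p$ and $\infty$), one shows that the maximal-rank condition is equivalent to all roots of $\chi_\phi$ being non-units at $\p$, i.e. to $\chi_\phi\equiv X^r\pmod\p$. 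This step is considerably heavier than the preceding ones and rests on the full structure theory of endomorphism algebras, so in a survey I would present only this outline and refer to \cite[Corollary~4.2.14, Theorem~4.4.1]{Pap23} for the complete arguments.
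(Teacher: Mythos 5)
The paper gives no proof of this theorem at all: it is stated as a bare citation to \cite{Pap23}, so there is no internal argument to compare yours against and I assess the proposal on its own terms. Your first stage is complete and correct: both (1)$\Leftrightarrow$(2) and (1)$\Leftrightarrow$(3) fall out of the two descriptions of $\h(\phi)$ in \cref{rk:height}, since $\phi_p$ is purely inseparable exactly when its $\tau$-valuation $\h(\phi)\deg\p$ equals its $\tau$-degree $r\deg\p$. Your second stage is the right idea but contains one technical slip: $\tau_{\MM(\phi)}$ is only $q$-semilinear, and since $\tau(T-z)=T-z^q$ it does \emph{not} preserve the ideal $(T-z)F[T]$ when $z\notin\Fq$, so one cannot form the étale--connected decomposition of ``the completion at $(T-z)$''; the decomposition lives on the completion at the $\tau$-stable ideal $\p F[T]$ (the local shtuka at $\p$), whose factors $\tau$ permutes cyclically. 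Alternatively one may work with the genuinely $F[T]$-linear iterate $\tau_{\MM(\phi)}^d$, which does descend to $\MM(\phi)/(T-z)\MM(\phi)$, and show that its generalized kernel there has dimension $\h(\phi)$. Either way, the identification of the étale rank with $r-\h(\phi)$ is the substantive input and is asserted rather than proved; it is standard but not available elsewhere in this survey (note that \cref{th:motive:torsion} explicitly excludes ideals meeting the characteristic). Your third stage correctly identifies the mechanism for (1)$\Leftrightarrow$(4) --- Tate's theorem realizes $\End_{\Fs}(\phi)\otimes_A A_\fll$ as the centralizer of a power of $\pi$, which has rank $r^2$ exactly when that power is scalar, and the scalar condition is converted into a statement about the $\p$-valuations of the roots of $\chi_\phi$ via the Frobenius-norm formula of \cref{th:bounds-chipi} --- but this remains an outline deferred to \cite{Pap23}, exactly as the paper does. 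Net effect: a correct and genuinely informative sketch of most of the equivalences, with the hardest one left to the same reference the paper cites.
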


\begin{sageexplain}
\label{sage:supersingular-end}
We illustrate the fact that 
\cref{prop_ordin_frobenius_trace} does not hold if one replaces 
$\End_{\Fs}(\phi)$ by $\End(\phi)$ in the fourth item.

  \begin{sagecolorbox}
  \begin{sagecommandline}

    sage: phi = DrinfeldModule(A, [1, 0, z])
    sage: phi.is_supersingular()

  \end{sagecommandline}
  \end{sagecolorbox}

We first compute $\End(\phi)$ and see that it has dimension $2$.

  \begin{sagecolorbox}
  \begin{sagecommandline}

    sage: End(phi).basis()

  \end{sagecommandline}
  \end{sagecolorbox}

However, more isogenies do exist over an extension. In our example,
they show up over $L = F[\sqrt[8] z] = \mathbb F_{7^{16}}$.

  \begin{sagecolorbox}
  \begin{sagecommandline}

    sage: L = F.extension(8)
    sage: phi_L = phi.change_A_field(L)
    sage: End(phi_L).basis()

  \end{sagecommandline}
  \end{sagecolorbox}

\end{sageexplain}

\begin{remark}
\label{rem:hasseinvariant}
In rank two, the third item of \cref{prop_ordin_frobenius_trace}
shows that $\phi$ is supersingular if, and only if, the coefficient of
$\tau^{\deg(p)}$ in $\phi_p$ vanishes.
By analogy with the theory of elliptic curves, this coefficient is sometimes called the \emph{Hasse invariant} of $\phi$.
In \cite[Equation~3.6 and Proposition~3.7]{gekeler_frobenius_2008}, Gekeler observed that the Hasse invariant can be computed
\emph{via} a recursive procedure as follows. Writing $\phi_T = z + g \tau + \Delta \tau^2$, we set
$r_{\phi, 0} = 1$, $r_{\phi, 1} = g$, and for any integer $k \geqslant 2$, we recursively define
\[
  r_{\phi, k+2} =   g^{q^{k+1}} r_{\phi, k+1}
                  - \big(z^{q^{k+1}} - z\big) \Delta^{q^k} r_{\phi, k}.
\]
Then the Hasse invariant of $\phi$ is $r_{\phi, \deg(p)}$.
\end{remark}

In any rank, the characteristic polynomial of a supersingular Drinfeld 
module $\phi$ has a very particular shape: over $\overline{\F}_q[T,X]$, 
we have the factorization
$$\chi_\phi(X) = \prod_{i=1}^{r/a} \big(X^a - c_i p^b\big)$$
where $\frac a b$ is the irreducible form of the fraction $\frac r {d \deg(\p)}$
and the $c_i$ lie in $\overline{\F}_q$.
After \cref{th:isogenous-charpoly}, this implies in particular
that two supersingular Drinfeld modules of the same rank become isogenous
over $\Fs$.

\begin{sageexplain}
We continue the previous example and check that the characteristic
polynomial of $\phi$ has the expected form (here $a = b = 1$, 
$p = T - 1$, $c_1 = 5z$ and $c_2 = 2z + 5$).

  \begin{sagecolorbox}
  \begin{sagecommandline}

    sage: chi = phi.frobenius_charpoly()
    sage: chi = chi.change_ring(F['T'])  # we extend scalars to $F$
    sage: chi.factor()

  \end{sagecommandline}
  \end{sagecolorbox}

We pick another supersingular Drinfeld module which is not
isogenous to $\phi$.

  \begin{sagecolorbox}
  \begin{sagecommandline}

    sage: psi = DrinfeldModule(A, [1, 0, z^2])
    sage: phi.is_isogenous(psi)

  \end{sagecommandline}
  \end{sagecolorbox}

But we check that they become isogenous over the extension of $F$ of
degree $q^2 - 1 = 48$.

  \begin{sagecolorbox}
  \begin{sagecommandline}

    sage: L = F.extension(48)
    sage: phi_L = phi.change_A_field(L)
    sage: psi_L = psi.change_A_field(L)
    sage: phi_L.is_isogenous(psi_L)

  \end{sagecommandline}
  \end{sagecolorbox}

\end{sageexplain}

To conclude, we mention the following proposition, which implies in particular that there are only finitely many supersingular Drinfeld modules of a given rank over $\Fs$.

\begin{proposition}[{\cite[Proposition~4.2]{Gekele_1991}}]
  Let $\phi : A \to \Fs\{\tau\}$ be a supersingular Drinfeld module of
  rank $r$. Then $\phi$ is $\Fs$-isomorphic to a Drinfeld module defined
  over a degree $r$-extension of $\Fpp$.
\end{proposition}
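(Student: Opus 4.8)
The plan is to extract the essential arithmetic input from the preceding discussion—namely the explicit factored form of the characteristic polynomial of a supersingular Drinfeld module—and combine it with the shape of the Potemine $J$-invariants to control the field of definition up to $\Fs$-isomorphism. First I would recall that, since $\phi$ is supersingular of rank $r$ over $\Fs$, it is already defined over some finite subfield $F_0 \subset \Fs$; after enlarging $F_0$ if necessary we may assume it has degree $d$ over $\Fq$ with $\deg\pp \mid d$, where $\pp = \Acar(F_0)$, and we fix a uniformiser $p$ of $\pp$. From the factorisation stated just above the proposition, namely $\chi_\phi(X) = \prod_{i=1}^{r/a}(X^a - c_i p^b)$ with $\frac ab$ the irreducible form of $\frac r{d\deg\pp}$ and $c_i \in \overline{\F}_q$, one reads off that the constant coefficient $a_0$ of $\chi_\phi$ equals (up to sign) a power of $p$ times a constant. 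Comparing with the explicit formula for the Frobenius norm in \cref{th:bounds-chipi}, $a_0 = (-1)^{rd-d-r}\, p^{d/\deg\pp}/\mathrm{N}_{F_0/\Fq}(g_r)$, we obtain that $\mathrm{N}_{F_0/\Fq}(g_r)$ lies in $\Fq^\times$ and that $d/\deg\pp = rb/a$, i.e. $d = r\deg\pp\cdot b/a$ with $\gcd(a,b)=1$, so $a \mid r$ and $a \mid b d$; since $\gcd(a,b)=1$ this forces $a \mid d/\deg\pp$ as well.

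Next I would produce the normalised model. Using the isomorphism procedure of \cref{sssec:drinfeld:decideisom}: for $u \in (\Fs)^\times$ the twist $\psi_T = z + u^{1-q}g_1\tau + \cdots + u^{1-q^r}g_r\tau^r$ is $\Fs$-isomorphic to $\phi$, and choosing $u$ with $u^{q^r-1} = g_r$—possible because $\Fs$ is separably closed, hence contains all $(q^r-1)$-th roots of any nonzero element—we may assume from the start that $g_r = 1$. Now the key point is that the coefficients $g_1, \ldots, g_{r-1}$ of this normalised $\phi_T$ are algebraic over $\Fpp$ of controlled degree. Here I invoke the supersingularity: by \cref{prop_ordin_frobenius_trace}, $\chi_\phi \equiv X^r \pmod\pp$, so all non-leading coefficients $a_1, \ldots, a_{r-1}$ of $\chi_\phi$ are divisible by $p$; feeding this into the degree bounds $\deg a_i \le \frac{r-i}r d$ and the explicit matrix $\Mat(\tau^d_{\MM(\phi)})$ (the $d$-th power of Equation~\eqref{eq:motive:matrixtau}) expresses each $g_k$—via the relations among the $a_i$ and the entries of that matrix—as a solution of polynomial equations over $\Fpp$ whose Galois conjugates are again supersingular twists with the same $J$-invariants. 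Since the Potemine $J$-invariants $J_{\bk}^{\bs}(\phi) = g_{k_1}^{s_1}\cdots g_{k_\ell}^{s_\ell}/g_r^{s_r}$ are Galois-invariant quantities built from $\chi_\phi$ (they lie in $\F_{q^{d}}$ and in fact, by supersingularity and the $\pmod\pp$ congruence, land in a degree-$r$ extension of $\Fpp$), the whole isomorphism class over $\Fs$ is pinned down by finitely many elements of $\F_{q^{r\deg\pp}}$.

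To finish, I would invoke the converse direction of the Potemine classification (\cref{th:motive:fullyfaithful}'s consequence, the theorem after \cref{def:jinvariants}): over a separably closed field, two rank-$r$ Drinfeld modules with equal basic $J$-invariants are isomorphic. So it suffices to exhibit \emph{one} Drinfeld module $\phi_1$ defined over the degree-$r$ extension $E := \F_{q^{r\deg\pp}}$ of $\Fpp$ whose basic $J$-invariants match those of $\phi$. Since each basic $J$-invariant of $\phi$ lies in $E$ (this is the content of the previous paragraph, and is exactly the rank-$r$ analogue of the classical fact that supersingular $j$-invariants of elliptic curves lie in $\F_{p^2}$), one constructs such a $\phi_1$ by the explicit recipe analogous to \cref{rem:j-inv-representative}—choosing coefficients $g_k \in E$ realising the prescribed ratios—and checks it is supersingular (e.g. via the congruence $\chi_{\phi_1} \equiv X^r \pmod\pp$, which holds automatically for a twist of $\phi$ with the same $J$-invariants). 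Then $\phi \simeq_{\Fs} \phi_1$ and $\phi_1$ is defined over $E$, as required.

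\textbf{Main obstacle.} The delicate step is the middle paragraph: showing that the basic Potemine $J$-invariants of a supersingular $\phi$ actually lie in the degree-$r$ extension of $\Fpp$, rather than merely in the (possibly much larger) field $F_0$ where $\phi$ is defined. This is where supersingularity must be used in an essential way—through the congruence $\chi_\phi \equiv X^r \pmod\pp$ together with the degree bounds on $\chi_\phi$—to bound the arithmetic complexity of the $g_k$'s. Making this bound precise requires carefully unwinding how the $g_k$ enter $\Mat(\tau^d_{\MM(\phi)})$ and its characteristic polynomial; this is exactly the computation hinted at in \cite[Proposition~4.2]{Gekele_1991}, to which I would ultimately defer for the full details.
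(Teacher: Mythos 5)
The paper itself gives no proof here---it simply cites Gekeler---so your proposal has to stand on its own, and it does not: the step you yourself flag as the ``main obstacle'' is the entire content of the proposition, and you resolve it by deferring to the very reference being cited. Worse, the route you sketch towards it is shaky. The Potemine $J$-invariants are \emph{not} ``built from $\chi_\phi$'': isogenous Drinfeld modules share $\chi_\phi$ (\cref{th:isogenous-charpoly}) without being isomorphic, so no amount of information extracted from $\chi_\phi$, its congruence $\chi_\phi\equiv X^r\pmod\pp$, or the degree bounds of \cref{th:bounds-chipi} can pin down the $J$-invariants. The final descent step is also unjustified in rank $>2$: knowing that the basic $J$-invariants lie in $E=\F_{q^{r\deg\pp}}$ does not immediately produce a model over $E$, since realising prescribed values $g_{k_1}^{s_1}\cdots g_{k_\ell}^{s_\ell}/g_r^{s_r}$ generally requires extracting roots, and \cref{rem:j-inv-representative} only covers rank~$2$.

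The missing idea is to use supersingularity in its third formulation from \cref{prop_ordin_frobenius_trace}: $\phi_p$ is purely inseparable, i.e.\ (since $\h(\phi)=r$) it is the \emph{monomial} $\phi_p = g\,\tau^{r\deg\pp}$ for some $g\in(\Fs)^\times$. Twisting by a constant $u\in(\Fs)^\times$ with $u^{q^{r\deg\pp}-1}=g^{-1}$ (which exists since $\Fs$ is separably closed and the relevant polynomial is separable) gives an $\Fs$-isomorphic module $\psi$ with $\psi_p=\tau^{r\deg\pp}$. Since $\psi_p$ commutes with $\psi_T$, the relation $\tau^{r\deg\pp}\psi_T=\psi_T\tau^{r\deg\pp}$ forces every coefficient of $\psi_T$ to be fixed by $x\mapsto x^{q^{r\deg\pp}}$, hence to lie in $\F_{q^{r\deg\pp}}$, the degree-$r$ extension of $\Fpp$. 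This two-line argument replaces your entire middle section and avoids both the characteristic-polynomial detour and the field-of-moduli issue.
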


\subsubsection{The endomorphism ring in rank two}
\label{subsection:endomorphism-ring}

Assuming rank $2$, information on supersingularity can also be read on
the structure of the endomorphism ring. An elliptic curve over a finite field
is ordinary if, and only if, its endomorphism ring over the algebraic closure is commutative
\cite[Chapter V, \S~3]{silverman_arithmetic_2009}. In that case, the endomorphism ring of
the elliptic curve is an order in a quadratic number field. Otherwise,
it is a maximal order in a quaternion algebra. This is also true for rank-$2$ Drinfeld modules.
Recall that $K=\mathrm{Frac}(A)$ and $\End^0(\phi)=K \otimes_A
\End(\phi)$ (\cref{def:algebra-of-endomorphisms}).

\begin{theorem}[\protect{\cite[Theorem~6.4.2]{caranay_computing_2018}}]
\label{theo:endordinary}
Let $\phi : A \to F\{\tau\}$ be an ordinary Drinfeld module of rank $2$.
Then $\End^0(\phi)$ is an imaginary quadratic function field generated by
the Frobenius endomorphism of $\phi$. In other words, we have an
isomorphism of $A$-algebras
    \[
      \End^0(\phi) \simeq K[X]/(\chi_\phi(X)).
    \]
\end{theorem}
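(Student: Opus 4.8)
The plan is to identify $\End^0(\phi)$ with $K[\pi]$, where $\pi=\tau^d$ is the Frobenius endomorphism, by first showing that its characteristic polynomial $\chi_\phi(X)=X^2+a_1X+a_0\in A[X]$ (of degree $r=2$) is irreducible over $K$, and then pinning down the $K$-dimension of $\End^0(\phi)$. I would start from the Cayley--Hamilton relation $\chi_\phi(\pi)=0$ (recorded after \cref{def:charpoly}), which exhibits $K[\pi]$ as a quotient of $K[X]/(\chi_\phi(X))$. Since $\End^0(\phi)$ is a division algebra — a consequence of the existence of dual isogenies recorded after \cref{def:algebra-of-endomorphisms} and resting on \cref{prop:isogenie-duale-norme} — the minimal polynomial of $\pi$ over $K$ is irreducible and divides $\chi_\phi$; hence $K[\pi]$ is either a quadratic field or equals $K$, the latter meaning $\pi=\phi_\alpha$ for some $\alpha\in A$ (a root of the monic polynomial $\chi_\phi$ is integral over the integrally closed ring $A$).

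The crux is ruling out $\pi\in K$, and this is where ordinariness enters. Suppose $\chi_\phi$ is reducible, say $\chi_\phi(X)=(X-\alpha)(X-\beta)$ with $\alpha,\beta\in A$ by Gauss's lemma, so $\alpha\beta=a_0$ and $\alpha+\beta=-a_1$. By \cref{th:bounds-chipi} the constant term $a_0$ equals, up to a scalar in $\Fq^\times$, the power $p^{d/\deg\pp}$ of the monic generator $p$ of $\pp$, so $\pp\mid a_0$ with $v_\pp(a_0)=d/\deg\pp\geq 1$; and since $\phi$ is ordinary it is not supersingular, so $\chi_\phi\not\equiv X^2\pmod\pp$ by \cref{prop_ordin_frobenius_trace}, which together with $\pp\mid a_0$ forces $\pp\nmid a_1$. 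Thus, after swapping $\alpha$ and $\beta$ if needed, $\pp\mid\alpha$, $\pp\nmid\beta$, and $v_\pp(\alpha)=v_\pp(a_0)=d/\deg\pp$. Now $\pi=\tau^d$ is a root of $\chi_\phi$ in the division algebra $\End^0(\phi)$, hence $\pi=\alpha$ or $\pi=\beta$ inside $\End(\phi)$, i.e. $\tau^d=\phi_\alpha$ or $\tau^d=\phi_\beta$ as Ore polynomials. The case $\pi=\beta$ is impossible because $\phi_\beta$ has nonzero constant coefficient $\gamma(\beta)$ whereas $\tau^d$ has zero constant coefficient; and the case $\pi=\alpha$ is impossible by comparing $\tau$-degrees, since $\deg_\tau\phi_\alpha=r\deg\alpha=2\deg\alpha\geq 2\,v_\pp(\alpha)\deg\pp=2d>d=\deg_\tau\tau^d$ (using $p^{v_\pp(\alpha)}\mid\alpha$). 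Therefore $\chi_\phi$ is irreducible over $K$ and $K[\pi]\simeq K[X]/(\chi_\phi(X))$ is a quadratic field extension of $K$. I expect this step to be the main obstacle: the analogous statement fails for supersingular modules (where $\pi$ can genuinely lie in $A$), so one must invoke ordinariness in exactly the right place — here, through the reduction of $\chi_\phi$ modulo $\pp$.

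Next I would show $\End^0(\phi)=K[\pi]$ by a dimension count. Since $\phi$ is ordinary it is not supersingular, so $\End_{\Fs}(\phi)$ does not have the maximal $A$-rank $r^2=4$ (\cref{prop_ordin_frobenius_trace}); combined with \cref{theorem:structure-homset} this gives $\dim_K\End_{\Fs}^0(\phi)\leq 3$, and hence $\dim_K\End^0(\phi)\leq 3$ because $\End(\phi)\subseteq\End_{\Fs}(\phi)$. But $\End^0(\phi)$ is a division algebra containing the quadratic field $K[\pi]$, and a $3$-dimensional division algebra over $K$ is necessarily a cubic field (a central division algebra has square dimension over its centre), which cannot contain $K[\pi]$ by multiplicativity of field degrees in a tower. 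Hence $\dim_K\End^0(\phi)=2=\dim_K K[\pi]$, and since $K[\pi]\subseteq\End^0(\phi)$ this forces $\End^0(\phi)=K[\pi]\simeq K[X]/(\chi_\phi(X))$. One also notes that $\pi=\tau^d$ is central in $F\{\tau\}$, so $K[\pi]$ is in fact the centre of $\End^0(\phi)$, consistent with $\End^0(\phi)$ being commutative.

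It remains to justify the word ``imaginary'', i.e. that the place $\infty$ of $K$ does not split in $L:=\End^0(\phi)$. I would argue by contradiction: if $\infty$ split in $L$, the function-field analogue of Dirichlet's unit theorem would force the unit group of every $A$-order in $L$ to have positive rank. But $\End(\phi)$ is an $A$-order in $L$ of full rank — it is a finitely generated $A$-module by \cref{theorem:structure-homset}, free of rank $2$ by the previous paragraph — and its group of units consists exactly of the invertible Ore polynomials it contains, namely the nonzero constants lying in $F^\times$, which form a finite group. This contradiction shows that $\infty$ is inert or ramified in $L$, so $L$ is an imaginary quadratic function field, completing the argument. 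This last point, relying on Dirichlet's unit theorem, is the only ingredient not already available in the text.
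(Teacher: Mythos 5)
The paper does not actually prove this theorem: it only states it with a citation to Caranay--Scheidler, so there is no in-paper argument to compare yours against. Your proof is, as far as I can tell, complete and correct, and it follows the standard route (essentially the one in the cited reference and in Gekeler's and Papikian's treatments): Cayley--Hamilton plus the division-algebra property reduce everything to the irreducibility of $\chi_\phi$ over $K$; ordinarity enters exactly where it must, via $\chi_\phi \not\equiv X^2 \pmod{\pp}$ and the explicit formula for $a_0$ from \cref{th:bounds-chipi}; and your two contradictions (constant coefficient of $\phi_\beta$ versus $\tau$-degree of $\phi_\alpha$) correctly dispose of both putative rational roots. The dimension count ruling out a $3$-dimensional $\End^0(\phi)$ and the use of the finiteness of $\End(\phi)^\times \subseteq F^\times$ to force $\infty$ non-split are both sound. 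Two small points worth making explicit if you write this up: (i) in the final step, Dirichlet's unit theorem is stated for the maximal order, so you should add the (standard, but not in the survey) fact that the unit group of a full-rank $A$-order has finite index in that of the maximal order; (ii) when you assert that $\End^0(\phi)$ is a division algebra \emph{over $K$}, you are implicitly using that the $\phi_a$ are central in $\End(\phi)$ — true by the very definition of an endomorphism, but worth a word since the central-simple-algebra classification you invoke for the $3$-dimensional case depends on it.
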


After Theorem~\ref{theo:endordinary}, we see that, in the ordinary case,
$\End(\phi)$ appears as an order in an imaginary quadratic function field.
We recall that such fields contain a unique maximal order: their
ring of integers, which is also the unique order that is a Dedekind domain.
If we denote it by $\mathcal O_{\End^0(\phi)}$, the other orders are exactly
the sets of the form $\mathcal O = A + f\mathcal{O}_{\End^0(\phi)}$ for $f$ in $A$.
The polynomial $f$ is called the \emph{conductor} of $\mathcal O$. It is such that
$\mathcal O$ has index $q^{\deg(f)}$ in $\mathcal{O}_{\End^0(\phi)}$.

\begin{sageexplain}
\label{sage:endordinary}
We consider the following ordinary Drinfeld module of rank $2$

  \begin{sagecolorbox}
  \begin{sagecommandline}

    sage: phi = DrinfeldModule(A, [z, z, 1])
    sage: phi
    sage: phi.is_ordinary()

  \end{sagecommandline}
  \end{sagecolorbox}

and compute its endomorphism ring

  \begin{sagecolorbox}
  \begin{sagecommandline}

    sage: End(phi).basis()

  \end{sagecommandline}
  \end{sagecolorbox}

Noticing that the second endomorphism of the outputted basis is $T - \pi$,
we conclude that $\End(\phi)$ is generated by the Frobenius, that is
$$\End(\phi) \simeq A[X]/(\chi_\phi(X)).$$
This is a more precise statement than~\cref{theo:endordinary}
which only gives this isomorphism after scalar extension to~$K$.
\medskip

In our example, one can check in addition that the discriminant
of $\chi_\phi(X)$ is squarefree, implying that $\End(\phi)$ is the
maximal order of $\End^0(\phi)$.

  \begin{sagecolorbox}
  \begin{sagecommandline}

    sage: chi = phi.frobenius_charpoly()
    sage: chi.discriminant()

  \end{sagecommandline}
  \end{sagecolorbox}

\end{sageexplain}

On the contrary, when $\phi$ is supersingular, 
$\End^0(\phi)$ continues to contain the quadratic field generated by the Frobenius endomorphism, but it might be strictly larger.
It actually has degree $2$ or $4$
In the latter case, it is a quaternion algebra over $K$, \ie a central
simple $K$-algebra of dimension~$4$, and $\End(\phi)$ is an order
in $\End^0(\phi)$.
SageMath Example~\ref{sage:supersingular-end} illustrates this situation.

In any case, while $\End(\phi)$ might be commutative and $\phi$ be supersingular, the following holds as a consequence of~\cref{prop_ordin_frobenius_trace} and~\cref{theo:endordinary}.

\begin{proposition}
  A rank-$2$ Drinfeld module $\phi$ is supersingular if, and only if,
  $\End_{\Fs}(\phi)$ is noncommutative.
\end{proposition}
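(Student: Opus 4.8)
The plan is to prove the equivalence by combining the characterisations of supersingularity established earlier with the structure theorem for the endomorphism algebra in the ordinary case. For the forward direction, suppose $\phi$ is supersingular. By \cref{prop_ordin_frobenius_trace}, $\End_{\Fs}(\phi)$ has maximal $A$-rank, namely $r^2 = 4$. If $\End_{\Fs}(\phi)$ were commutative, then $\End^0_{\Fs}(\phi) = K \otimes_A \End_{\Fs}(\phi)$ would be a commutative division algebra over $K$ of dimension $4$, hence a degree-$4$ field extension of $K$. The hard part — and the main obstacle — is to rule this out: one must show that a commutative $\End^0_{\Fs}(\phi)$ is forced to have dimension at most $2$ over $K$. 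The cleanest way is to invoke that $\End^0_{\Fs}(\phi)$ always contains the Frobenius endomorphism $\pi$ (after base change to a finite field where it is defined), and that the subfield $K(\pi)$ already has degree equal to the degree of the minimal polynomial of $\pi$, which divides $\chi_\phi$ of degree $2$; if $\End^0_{\Fs}(\phi)$ is commutative it must be a field containing $K(\pi)$, and one shows using \cref{theorem:structure-homset} together with the classification of division algebras of degree dividing $r=2$ over $K$ that a commutative such algebra is exactly $K(\pi)$, of degree $\le 2$, contradicting dimension $4$. Thus $\End_{\Fs}(\phi)$ is noncommutative.

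For the converse, suppose $\phi$ is not supersingular. Since $r = 2$ and the Frobenius height $\h(\phi)$ lies in $\{1,2\}$ with $\h(\phi) = 2$ being exactly the supersingular case (\cref{Frobenius height}), not being supersingular forces $\h(\phi) = 1$, i.e.\ $\phi$ is ordinary. Now I would apply \cref{theo:endordinary}: for an ordinary rank-$2$ Drinfeld module, $\End^0(\phi) \simeq K[X]/(\chi_\phi(X))$, an imaginary quadratic function field, in particular commutative. One then needs the (standard) fact that in the ordinary case the endomorphism algebra does not grow under base change to $\Fs$, so that $\End^0_{\Fs}(\phi)$ is still this same quadratic field; equivalently $\End_{\Fs}(\phi)$ is an order in it, hence commutative. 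This gives the contrapositive of the ``only if'' direction.

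Putting the two implications together yields the stated equivalence. I expect the routine parts — reducing ``not supersingular'' to ``ordinary'', and reading off commutativity from \cref{theo:endordinary} — to be immediate, while the genuinely substantive point is the dimension count in the supersingular direction: one must know that a maximal-rank endomorphism algebra ($\dim_K = 4$) over $K$ in rank $2$ cannot be a field, which is precisely the function-field analogue of the classical dichotomy (quadratic imaginary field versus quaternion algebra) for elliptic curves over finite fields, and which the excerpt attributes to the surrounding discussion citing \cite{caranay_computing_2018} and \cite{Pap23}. I would cite those references for the fact that $\dim_K \End^0_{\Fs}(\phi) \in \{2, 4\}$ with the value $4$ occurring only for a quaternion algebra, and let the proposition follow formally.
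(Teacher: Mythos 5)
Your proposal is correct and follows essentially the same route as the paper, which derives the proposition directly from \cref{prop_ordin_frobenius_trace} (supersingular $\Leftrightarrow$ $\End_{\Fs}(\phi)$ has $A$-rank $r^2=4$) together with \cref{theo:endordinary} and the dichotomy, asserted in the surrounding discussion, that $\End^0_{\Fs}(\phi)$ has $K$-dimension $2$ or $4$ and is a quaternion algebra in the latter case. Your one substantive step --- that a commutative endomorphism algebra cannot have dimension $4$ over $K$ --- is exactly the point the paper delegates to \cite{caranay_computing_2018} and \cite{Pap23}, so citing those is consistent with the paper's level of detail.
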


This study is very theoretical. For explicit computations, we have to turn to a
completely different point of view, by computing bases with linear algebra
methods (see \cref{computing_isogenies}).

\subsubsection{The action of the Picard group} 
\label{computing-group-action}

We build on the classification established in
\cref{subsection:endomorphism-ring} to present a situation in which the
endomorphism and its class group are described in terms of hyperelliptic
curves.
Ordinary elliptic curves over finite fields and (some) ordinary rank-$2$ Drinfeld
modules over finite fields share a common property: the class group of the
endomorphism ring acts freely and transitively on a set of isomorphism classes. Such
actions allow us to realise isogenies of Drinfeld modules as ideal classes, whose
description can be made very explicit. We now explain how to compute a
particular instance of this group action, using imaginary hyperelliptic curves.

As previously, let us still assume that $F$ is a finite field, and that $\phi$ is a rank-$2$ Drinfeld module over
$(F, \gamma)$. Let $\chi_\phi \in A[X]$ be the characteristic
 polynomial of the Frobenius endomorphism. We have $\chi_{\phi}(X) = X^2 + aX + b$
where, thanks to \cref{th:bounds-chipi}, we know that the polynomials 
$a, \: b \in A=\Fq[T]$ verify $\deg(a) \leqslant d/2$ and $\deg(b) = d$.
As $a$ and $b$ both lie in $\Fq[T]$, the polynomial $\chi_{\phi}$ can be seen as a bivariate
polynomial in $T$ and $X$. As such, it defines an algebraic variety. We make the
following assumptions.

\begin{itemize}
  \item We assume that $\phi$ is ordinary. This implies that $a$ is nonzero
    (see \cref{prop_ordin_frobenius_trace}), so that $\chi_{\phi}$
    defines a hyperelliptic curve that we denote by $\H$
  \item We assume $\H$ to be smooth, and $d$ to be odd. In that case, $\H$ is
    \emph{imaginary}, with genus $g = \frac{d-1}{2}$.
\end{itemize}

It turns out that these hypotheses fully determine the $F$-endomorphism ring of
$\phi$, and that of all Drinfeld modules $F$-isogenous to $\phi$.
Indeed, letting $A_\H = A[X]/(\chi_\phi(X))$ denote the coordinate ring of $\H$,
the map
\begin{align*}
  A[X] &\to \End(\phi) \\
  P(T, X) &\mapsto P(\phi_T, \pi)
\end{align*}
factors through $A_\H$, giving rise to an injective morphism $A_\H \to \End(\phi)$. 
Since $A_\H$ is a Dedekind domain, and as such, it is the maximal order 
of the quadratic function field it lives in, we conclude that
$\End(\phi) \simeq A_\H$. 
Crucially, this explicit ring isomorphism yields an
explicit group isomorphism
\[
  \Pic^0(\H) \simeq \Cl(\End(\phi)),
\]
where $\Pic^0(\H)$ is the degree--$0$ Picard group of $\H$.

\paragraph{Abstract definition of the action.}

For an ideal $\mathfrak a$ of the endomorphism ring $\End(\phi)$, we
define $u_{\mathfrak a} = \mathrm{rgcd}(\{a \in \mathfrak a\})$,
the right-gcd of the Ore polynomials that live in
$\mathfrak a$. 
Let $\psi$ be a Drinfeld module $F$-isogenous to $\phi$.
By \cref{proposition:rgcd-isogeny}, $u_{\mathfrak a}$ defines an isogeny
from $\psi$ to a Drinfeld module that we denote by $\psi^{\mathfrak a}$, and we write
\[
  \mathfrak a \ast \psi = \psi^{\mathfrak a}.
\]
If $\mathfrak a$ is a principal ideal, meaning that it is generated by an
endomorphism of $\psi$, then $u_{\mathfrak a}$ is one of the generators of
$\mathfrak a$, and $\psi$ equals $\mathfrak a \ast \psi$.

\begin{sageexplain}
\label{sage:action}
We illustrate the above constructions with the Drinfeld module
$\phi$ of SageMath Example~\ref{sage:endordinary}.
Unfortunately, manipulations of ideals in $\End(\phi)$ are not yet implemented in SageMath.
We overcome this issue as follows: we reconstruct the ring $\End(\phi)$ 
as the quotient $E = A[X] / (\chi_\phi(X))$ and define a function
\texttt{endomorphism} to convert elements of $E$ into actual endomorphisms.

  \begin{sagecolorbox}
  \begin{sagecommandline}

    sage: E.<X> = A.extension(chi)
    sage: def endomorphism(elt, psi):  # $\psi$ is a Drinfeld module isogenous to $\phi$
    ....:     pi = psi.frobenius_endomorphism()
    ....:     return E(elt).lift()(pi)

  \end{sagecommandline}
  \end{sagecolorbox}

Now, we can write a function \texttt{action} by simply following the definition.

  \begin{sagecolorbox}
  \begin{sagecommandline}

    sage: def action(a, psi):
    ....:     u = psi.hom(0)
    ....:     for elt in a.gens():
    ....:         v = endomorphism(elt, psi)
    ....:         u = u.right_gcd(v)
    ....:     return u.codomain()

  \end{sagecommandline}
  \end{sagecolorbox}

We consider the ideal $\a = \left<T + 4, X + 3\right>$ and compute
$\psi = \a \ast \phi$.

  \begin{sagecolorbox}
  \begin{sagecommandline}

    sage: a = E.ideal([T+4, X+3])
    sage: psi = action(a, phi)
    sage: psi

  \end{sagecommandline}
  \end{sagecolorbox}

We notice that $\a^2$ is the principal ideal generated by $T + 4$.
Therefore it acts trivially on $\phi$, meaning that $\a \ast \psi
= \a^2 \ast \phi = \phi$. We check this below.

  \begin{sagecolorbox}
  \begin{sagecommandline}

    sage: action(a, psi)
    sage: action(a, psi) == phi

  \end{sagecommandline}
  \end{sagecolorbox}

\end{sageexplain}

It follows from what precedes that the map $\ast$ defines a group action 
of $\Cl(\End(\phi))$ on the set of isomorphism classes of Drinfeld modules
that are isogenous to $\phi$. In a slight abuse of notation, the group
action of $\Cl(\End(\phi))$ will still be denoted by $\ast$.
The latter is free and transitive. This can be proven directly by
means of tedious computations. A more conceptual approach, which is out of the
scope of this document, consists in ``reducing modulo $\mathfrak p$'' a similar
group action (known to be free and transitive) that exists for Drinfeld modules
over $\Cinf$ and in the context of the class field theory of function fields (see
\cite[Theorem~9.3]{hayes_brief_2011} or \cite[\S~2.3]{leudiere_computing_2024}). 

\paragraph{Practical computation of the group action.}

We saw in SageMath Example~\ref{sage:action} that computing the action
of ideals of $\End(\phi)$ can be easily implemented.
To go further and compute the action of $\Cl(\End(\phi))$, it 
only remains to find an efficient representation for the elements of
$\Cl(\End(\phi))$. Given that $\End(\phi)$ is isomorphic to $A_\H$, and
that $A_\H$ is the coordinate ring of an imaginary hyperelliptic curve, we have an explicit group isomorphism
\[
  \Pic^0(\H) \simeq \Cl(\End(\phi)),
\]
where $\Pic^0(\H)$ is the group of divisors of $\H$ of degree $0$ up to
rational equivalence. Besides, the elements of $\Pic^0(\H)$ can be represented
by \emph{Mumford coordinates} \cite[Theorem~14.5]{cohen_handbook_2012},
\ie pairs of polynomials $(u, v) \in A^2$ that verify
\[
  \deg(u) < \deg(v) \leqslant g,
\]
where $g=\frac{d-1}{2}$ is the genus of the curve $\mathcal{H}$.
The element represented by $(u, v)$ is the ideal class of $\langle u,
X-v\rangle$. In terms of $\End(\phi)$, this means that $(u, v)$ represents
the ideal class generated by the endomorphisms $\phi_u$ and $\pi - \phi_v$.

Going back to the definition of the group action $\ast$, Mumford coordinates
allow us to represent elements of the acting group by two explicit generators.
If $(u, v)$ are the Mumford coordinates representing the class of an ideal
$\a$, computing $\a \ast \psi$ simply
amounts to computing the right-gcd of the Ore polynomials $\psi_u$ and $\pi -
\psi_v$. This can be done, for example, with a variant of the Euclidean algorithm,
as discussed in \S\ref{ssec:drinfeld:def}. This means that computing the group
action merely amounts to computing a right-gcd of two Ore polynomials and a
right-Euclidean division of Ore polynomials. Explicit complexity statements, as
well as an implementation, are presented in \cite{leudiere_computing_2024}.

\paragraph{Comparison with elliptic curves.}
The ease with which the group action $\ast$ is computed suggests a profound
fracture with what happens for elliptic curves. In the case of elliptic curves,
the group action is defined in terms of kernels of isogenies. If $E$ is an ordinary elliptic
curve over the finite field $\Fq$, the isogenies for the group action are
defined as follows. Let $I$ be an ideal in $\End(E)$, and  consider
\[
  V_I = \bigcap_{f \in I}\ker(f).
\]
The kernels are not restricted to rational points. Therefore, $V_I$ is a
subgroup of the group of points of $E$ (but not necessarily $E(\Fq)$). Consequently, there
exists an elliptic curve $E^I$, as well as an isogeny $\iota_I$ from $E$ to $E^I$,
whose kernel is exactly $V_I$.

In the end, computing the group action is, as of the time of writing this
survey, rather complex~\cite{de_feo_towards_2018}. This is because the computation requires manipulating torsion points that live in possibly
large extensions of the base field. With this in mind, we suggest two reasons to explain why the computation of the group action is easy in the case
of Drinfeld modules, but not in the case of elliptic curves.

\begin{enumerate}

  \item The first reason is the manipulation of Ore polynomials (objects
    defined with information contained in $F$), rather than their kernels
    (objects that may live in large extensions of $F$). The existence of Ore
    polynomials as a latent space to the theory of Drinfeld modules allows us to take advantage of efficient Ore polynomial arithmetics in virtually all
    computational aspects of Drinfeld modules. In the case of manipulating
    kernels of isogenies of Drinfeld modules, this is compatible with the
    arithmetic of Ore polynomials, in the sense that the intersection of
    kernels of a family of Ore polynomials is the kernel of the right-gcd of
    the family.

  \item The second reason is related to the arithmetic of function fields. As
    we have seen, the characteristic polynomial of the Frobenius endomorphism,
    in the case of Drinfeld modules, defines an algebraic variety. Under our
    assumptions, this variety was an imaginary hyperelliptic curve, which
    allowed us to manipulate the class group of the endomorphism ring of a
    Drinfeld module as the degree-$0$ Picard group of the curve. This is a
    direct example of the benefits of using function field arithmetics.

\end{enumerate}

\begin{remark}

  We now explain how the group action presented here is an instance of a 
  more general construction. In the introduction, we stated that Drinfeld
  modules can be defined over a given ring $A'$ of functions on a curve over a
  finite field. We also add hypotheses on the curve to make $A'$ a Dedekind
  ring. Our case $A = \Fq[T]$ corresponds to picking the curve $\mathbb
  P^1_{\Fq}$. But more generally, one can study Drinfeld modules over $A'$,
  which are called \emph{Drinfeld $A'$-modules}. In that framework, it is a
  classical result that the class group of $A'$ acts freely and transitively on
  the set of isomorphism classes of rank-$1$ Drinfeld $A'$-modules defined over
  $\Cinf$. As shorthand, we are going to call that group action
  $\mathrm{GA}(A')$.

  In our case, the class group of $\Fq[T]$ is trivial, and all Drinfeld
  $A$-modules with rank one are isomorphic. One can prove that there exists an
  equivalence of categories between rank-$1$ Drinfeld $A_\H$-modules over
  $(F,\gamma)$, and rank-$2$ Drinfeld $A$-modules on $(F,\gamma)$ whose endomorphism ring is $A_\H$. Using this correspondence, we recover the group
  action computed in this section (the class group of $A_\H$ acts on the
  isomorphism classes of rank-$2$ Drinfeld modules whose endomorphism ring is
  $A_\H$) by ``reducing'' $\mathrm{GA}(A_\H)$ modulo a prime ideal.

  The general group action is very explicit: for an ideal $\mathfrak a$ of $A'$
  and a Drinfeld $A$'-module $\phi$, the corresponding isogeny is
  $\rgcd(\{\phi_a: a \in \mathfrak a\})$.

\end{remark}

\subsection{Over global function fields}
\label{ssec:over-function-fields}

We now consider the case where the base field is $F = \Fq(z)$, where $z$
is a formal variable, and $\gamma : A \to F$ is the ring homomorphism
taking $T$ to $z$.
Although $F$ is of course isomorphic to $\Fq(T)$ itself, it will be 
quite important in what follows to use a different variable name on 
$F$ to avoid confusion in notation, especially when we will consider
Anderson motives.

This case is undoubtedly closely related to that of finite fields,
since any Drinfeld module over $K$ can be reduced modulo almost all places $\p$ of 
$K$, giving rise to a Drinfeld module over a finite field. On the 
other hand, it is also related to the analytic theory (see \cref{ssec:drinfeld:uniformization})
since, given that 
$F$ embeds into $\Cinf$, a Drinfeld module over $F$ can be viewed as a 
Drinfeld module over $\Cinf$.

\subsubsection{\texorpdfstring{$L$}{L}-series}
\label{sssec:arith:Lseries}

We start with a Drinfeld module $\phi : A \to F\{\tau\}$
and write
$$\phi_T = z + g_1 \tau + g_2 \tau^2 + \cdots + g_r \tau^r$$
with $g_i \in F$ for all $i$. For almost all places $\p$ of $F$, it
makes sense to reduce all the $g_i$, and thus $\phi$, modulo $\p$.
We let $\phi \bmod \p : A \to \F_\p\{\tau\}$ be this reduction. Here 
$\F_\p$ denotes the residue field at $\p$, which is a finite extension of $\Fq$, hence a finite field.
One can therefore consider the characteristic polynomial of its
Frobenius endomorphism: we denote it by $\chi_{\phi \bmod \p}(X)$.

The $L$-series of $\phi$ is built by putting together all these 
characteristic polynomials. For simplicity, we assume that all the 
$g_i$ lie in $A$. Under this extra 
assumption, the reduction $\phi \bmod \p$ is well defined for 
all places $\p$ and so $\chi_{\phi \bmod \p}(X)$ is.
The $L$-series is then given by the following infinite product:
$$L(\phi; X) = \prod_{\p} 
  \frac{\chi_{\phi \bmod \p}(0)}{\chi_{\phi \bmod \p}(X^{\deg \p})}.$$
The numerator in the fraction above is a normalisation factor: it
ensures that all the factors have constant coefficient $1$. The
convergence of the series holds because there is 
only a finite number of places $\p$ of degree bounded by any given
constant. Indeed, the two previous observations imply that 
$L(\phi; X) \bmod X^n$ is given by a \emph{finite} product for
all $n$; hence the convergence in $K[[X]]$.

One proves that $L(\phi; X)$ is not merely a formal series, but that it moreover converges on $\Cinf$; in other words, it defines an analytic entire function. It turns out that, similarly to
the number field case, the value $L(\phi; 1)$ and, more generally
the Taylor expansion of $L(\phi; X)$ at $X = 1$, encode a lot of
arithmetic information.
The example below is a first striking illustration of this yoga.

\begin{example}
\label{ex:Lseries:carlitz}
We compute the $L$-series of the Carlitz module $c$ defined by
$c_T = z + \tau$.
By Example~\ref{ex:chiFrob:carlitz}, we know that
$\chi_{c \bmod \p}(X) = X - \p$. Plugging this into the definition
of the $L$-series, we find
$$L(c; X) = \prod_{\p} \frac{\p}{X^{\deg \p} - \p}
  = \prod_{\p} \frac{1}{1 - X^{\deg \p} \p^{-1}}$$
where the product runs over all places of $F$, that are all
irreducible monic polynomials over $F$.
We now observe that the above formula looks very similar to the Euler product of Equation~\eqref{eq:zetaCEuler} defining the Carlitz zeta function, and indeed, we have the relation
$L(c; 1) = \zeta_C(1)$.
\end{example}

The previous example suggests incorporating the variable $s$ in the general definition of the $L$-series as follows:
\begin{equation}
\label{eq:arith:defLseries}
L(\phi; X, s) = \prod_{\p} 
  \frac{\chi_{\phi \bmod \p}(0)}{\chi_{\phi \bmod \p}(X^{\deg \p}\p^{-s})}.
\end{equation}
Indeed, with this definition, the relation $L(c; 1, s) = \zeta_C(s+1)$
now holds for all $s \in \Z$.

\paragraph{Taelman's class formula.}

Beyond the example of the Carlitz module, Taelman showed 
in~\cite{Taelman12} that, for any Drinfeld module $\phi : A \to 
F\{\tau\}$, the value $L(\phi; 1)$ has a wonderful arithmetic 
interpretation in the spirit of the classical class number formula.
We recall briefly that this formula relates the special value of the Dedekind zeta 
function of a number field to several of its invariants of arithmetical nature:
the discriminant, the group of roots of unity, the class number and the regulator.
We refer to \cite[Chapter~VII, Corollary~5.11]{Neukirch99} for a precise statement and a proof of this formula.

Taelman's formula involves function field analogues of the class number
and the regulator.
Both of them are defined by means of the exponential function $e_\phi$
we have introduced in \cref{ssec:drinfeld:uniformization}. 
Let us recall briefly that $e_\phi$ is an Ore series in 
$F\{\!\{\tau\}\!\}$. Moreover, letting $F_\infty \coloneqq \Fq(\!(1/z)\!)$, the restriction of $e_\phi$ to $K_\infty$ still defines
an analytic function $e_\phi : F_\infty \to F_\infty$ satisfying the 
functional equation
$$\forall a \in A, \quad \forall x \in F_\infty, \quad e_\phi(\gamma(a)x) = 
\phi_a(e_\phi(x)).$$ 
In other words, it defines an $A$-linear map $F_\infty \to 
\phiact{F_\infty}$ where the structure of $A$-module on the 
domain is induced by $\gamma$ and we recall that $\phiact{F_\infty}$ 
is $F_\infty$ endowed with its structure of $A$-module coming from
$\phi$.
The \emph{class module} of~$\phi$ is defined as the quotient
$A$-module
$$H_{\phi} := \frac{\phiact{F_\infty}}{\phiact{R} + e_\phi(F_\infty)}$$
where we have set $R := \Fq[z] \subset F$. Taelman~\cite[Proposition~5]{Taelman12} proves that $H_{\phi}$ is a finite 
$A$-module, and thus we can consider its Euler--Poincaré characteristic
$|H_{\phi}|$ (see \cref{subsection:setting-and-notations}).

To define the analogue of the regulator, we introduce the
module of \emph{Taelman units}:
$$U_{\phi} := e_\phi^{-1}(R) = \left\{ x \in F_\infty \mid e_\phi(x) \in R \right\}.$$
Taelman~\cite[Theorem~1]{Taelman12} proves that $U_{\phi}$ is an $R$-line in $F_\infty$
and we denote by $u_{\phi} \in K_\infty$ an element such that $\gamma(u_{\phi})$ generates
$U_{\phi}$. Taelman's theorem now reads as follows.

\begin{theorem}[Taelman's class formula]
\label{theo:taelman}
For all Drinfeld modules $\phi : A \to F\{\tau\}$, we have
$$L(\phi; 1) A = u_{\phi} \cdot |H_{\phi}|.$$
\end{theorem}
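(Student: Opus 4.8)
The plan is to follow Taelman's original argument \cite{Taelman12}. Since both sides are fractional ideals of $A$, and since clearing denominators in the $g_i$ affects only finitely many Euler factors (and changes neither $H_\phi$ nor $U_\phi$), I would first reduce to the case $g_1, \dots, g_r \in R = \Fq[z]$, so that $\phi$ has good reduction at every finite place of $F$. The key preliminary move is to rewrite $L(\phi;1)$ as an Euler product of \emph{local Euler--Poincaré characteristics}: for a finite place $\p$ of $F$, \cref{th:number-points} applied to $\phi \bmod \p$ gives $\chi_{\phi\bmod\p}(1)\,A = |{\phiact{(R/\p)}}|$, and the Frobenius-norm formula of \cref{th:bounds-chipi} identifies $\chi_{\phi\bmod\p}(0)\,A$, away from the finitely many $\p$ dividing $g_r$, with the Euler--Poincaré characteristic $|R/\p|$ of $R/\p$ equipped with the $A$-module structure coming from $\gamma$. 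This yields
\[
  L(\phi;1)\,A \;=\; \prod_{\p}\; \frac{|R/\p|}{\bigl|{\phiact{(R/\p)}}\bigr|},
\]
the product over finite places of $F$; the two modules in each factor have the same cardinality but different $A$-structures, which is exactly why the product is not trivially $1$.

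Next I would interpret this product $\infty$-adically. Working inside the locally compact group $F_\infty = \Fq((1/z))$ with a fixed Haar measure, one sets up (this is the technical core of \cite{Taelman12}) a calculus of ``$A$-lattices'' in $K_\infty$-Banach spaces and of their regularized relative covolumes, valued in $K_\infty^\times/\Fq^\times$. Truncating the Euler product at places of bounded degree, identifying the partial products with genuine finite $A$-module indices via the Chinese remainder theorem, and letting the degree bound tend to infinity (convergence being controlled by the degree estimates of \cref{th:bounds-chipi}), one recognizes the product above as the covolume of the ``$\phi$-lattice'' $\phiact R$ relative to the ``$\gamma$-lattice'' $R$ inside their common completion $F_\infty$. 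Everything hinges on two compatibility properties of this calculus: multiplicativity along short exact sequences of $A$-modules, and invariance under unipotent $\Fq$-linear operators.

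The third step is to bring in the exponential $e_\phi$. The functional equation $e_\phi\,\gamma(a) = \phi_a\,e_\phi$ exhibits $e_\phi$ as an $A$-linear map from $F_\infty$ with the $\gamma$-structure to $F_\infty$ with the $\phi$-structure, and since $e_\phi = 1 + \sum_{n\ge 1} c_n \tau^n$ with $\infty$-adically small coefficients, it is unipotent at $\infty$, hence covolume-preserving on a small enough open $R$-submodule. Feeding this --- together with the $A$-module exact sequences relating $R$, the unit module $U_\phi = e_\phi^{-1}(R)$, $\phiact R$, $e_\phi(F_\infty)$ and the class module $H_\phi$ --- into the multiplicativity of covolumes, the covolume ratio of the previous step decomposes into the contribution of the unit lattice and the contribution of the cokernel: since $U_\phi$ is an $R$-line \cite[Theorem~1]{Taelman12} generated by $\gamma(u_\phi)$, its covolume in $F_\infty/R$ is $u_\phi$ up to $\Fq^\times$, while the finite quotient $H_\phi$ \cite[Proposition~5]{Taelman12} contributes exactly $|H_\phi|$. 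Assembling the identities gives $L(\phi;1)\,A = u_\phi\cdot|H_\phi|$.

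I expect the main obstacle to be the second step: rigorously constructing the regularized covolume (``nuclear operator'') formalism and proving its multiplicativity and unipotence-invariance, together with the $\infty$-adic convergence of the Euler product. A secondary difficulty is the precise analytic study of $e_\phi$ near $\infty$ --- its radius of convergence, the triviality of its ``determinant'', and the exactness of the sequences tying it to $U_\phi$ and $H_\phi$ --- although the finiteness of $H_\phi$ and the rank-one-ness of $U_\phi$ may be taken as inputs here. One could alternatively recast the comparison in terms of the Anderson motive $\MM(\phi)$, viewing the Euler product as a characteristic-$p$ zeta value attached to $\MM(\phi)$ and $e_\phi$ as a period pairing, but the analytic estimates seem unavoidable either way.
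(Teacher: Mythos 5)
The survey does not actually prove \cref{theo:taelman} --- it is quoted from \cite{Taelman12} --- and your proposal is, in substance, a faithful outline of the proof given in that reference. Your identification of the Euler factors is correct and is the right first move: by \cref{th:number-points} and the Frobenius-norm formula of \cref{th:bounds-chipi}, the factor at a finite place $\p$ of $F$ is, as an ideal, the ratio $|R/\p|\,/\,|\phiact{(R/\p)}|$ of the Fitting ideals of the ``Lie algebra'' and of the ``points'' of the reduction of $\phi$ at $\p$; this is exactly Taelman's starting point, and your remark that the two modules have the same cardinality but different $A$-structures is the right way to see why the product is nontrivial. The endgame --- using $e_\phi$ as an $A$-linear, $\infty$-adically unipotent map to split the lattice index into the regulator $u_\phi$ and the class contribution $|H_\phi|$ --- is also the correct architecture.

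The genuine gap, which you flag yourself, is that the whole content of the theorem sits inside your second step: the construction of the index calculus for pairs of $A$-lattices in $F_\infty$-vector spaces (multiplicativity in exact sequences, invariance under infinitesimal automorphisms) and, above all, the trace formula --- Taelman's nuclear-determinant identity, a close relative of Anderson's formula~\eqref{eq:andersonformula} --- which converts the $\infty$-adically convergent Euler product into the index of the lattice $\phiact{R}$ against the lattice $R$ inside $F_\infty$. Without it, your displayed infinite product of ideals is only formal (an infinite product of ideals of $A$ does not converge as an ideal; only the product of monic generators converges in $K_\infty$), and asserting the formalism together with its two compatibility properties is not materially different from citing the theorem itself. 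A secondary imprecision: the opening reduction ``clearing denominators in the $g_i$ changes neither $H_\phi$ nor $U_\phi$'' is not right as stated, since both objects are defined relative to the integral model $\phiact{R}$ and do change when the model does; in the survey's formulation the coefficients are already assumed integral, so this step should simply be dropped. Your closing remark about recasting the argument through $\MM(\phi)$ is the route the survey itself gestures at via \cite{A20} and \cite{CG24}, but there too the $\infty$-adic trace formula is the irreducible input.
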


\begin{example}
When $\phi$ is the Carlitz module, 
the class module $H_{\phi}$ is trivial, so that we have $|H_{\phi}| = A$.
We deduce from \cref{theo:taelman} that the Taelman lattice 
$U_{\phi}$ is generated by $L(\phi; 1) = \zeta_C(1)$. 
Coming back to the definition, we conclude that the image of
$\zeta_C(1)$ under the exponential map $e_\phi$ is a polynomial;
it is actually the constant polynomial~$1$, \ie we have
$e_\phi\big(\zeta_C(1)\big) = 1$.
Taking care of domains of convergence, we deduce that
$\zeta_C(1) = \ell_\phi(1)$ where $\ell_\phi$ is the logarithm of
the Carlitz module.

\begin{sageexplain} 
We can check what precedes in SageMath as follows.
\begin{sagecolorbox}
\begin{sagecommandline}

  sage: phi = CarlitzModule(A)
  sage: phi.class_polynomial()

\end{sagecommandline}
\end{sagecolorbox}

\begin{sagecolorbox}
\begin{sagecommandline}

  sage: lC = phi.logarithm(prec=100)
  sage: lC
  sage: lC = lC.polynomial()  # we remove the $O(\cdot)$ to be able to evaluate
  sage: lC(1) + O(1/T^40)

\end{sagecommandline}
\end{sagecolorbox}

According to Taelman's theorem,
we observe that the previous value agrees with the value at $s = 1$
of the Carlitz zeta function we computed in the SageMath Example~\ref{sage:carlitzZeta}.
\end{sageexplain}

\end{example}

\paragraph{Anderson's formula.}

Taelman's formula and many achievements on $L$-series of Drinfeld
modules are corollaries of a beautiful formula due to 
Anderson~\cite{A20} which, roughly speaking, expresses $L(\phi;X)$ 
as the cocharacteristic polynomial of a \emph{single}  operator 
$\sigma$ as follows:
\begin{equation}
\label{eq:andersonformula}
  L(\phi; X) = \det{}_{K[X]} \big(\text{id} - X \sigma\big).
\end{equation}
The drawback is that the space on which $\sigma$ acts is not finite-dimensional; hence, defining its cocharacteristic polynomial
requires some caution and produces a series and not a polynomial.
There actually exist several variations of Anderson's formula.
In what follows, we briefly present the motivic version where the 
operator $\sigma$ is cooked up from the Anderson motive $\MM(\phi)$ 
attached to $\phi$. 

We recall that $\MM(\phi)$ is a module over $F[T]$. 
We consider the dual motive $M := \MM(\phi)^\vee$ (see
\cref{sssec:AndersonTate}).
We further introduce the space $\Omega^1_{F/\Fq}$ of \emph{Kähler 
differentials} and the \emph{$q$-Cartier operator} $S$ acting on it.
Since $F = \Fq(z)$, we simply have 
$\Omega^1_{F/\Fq} = F{\cdot} dz$. The $q$-Cartier operator also has a very explicit description: on polynomials, it is given by
\[
  S\Big(\sum_i a_i z^i dz \Big) = \sum_i a_{qi+q-1} z^i dz
  \qquad (a_i \in \Fq)
\]
and it is extended to $\Omega^1_{F/\Fq}$ thanks to the formula 
$S(a b^{-1} dz) = S(ab^{q-1}) b^{-1} dz$ with $a, b \in 
\Fq[z]$.
We also consider the space $F[T]{\cdot}dz$ and extend $S$ to it
by $T$-linearity\footnote{For a general $A$, we should instead consider
$\Omega^1_{A \otimes F / A} = A \otimes \Omega^1_{F/\Fq}$, to which
$S$ can be extended by $A$-linearity in a similar fashion.}.

We are now ready to define the operator $\sigma$ appearing in
Equation~\eqref{eq:andersonformula}. It acts on the space
\[
  M^\star = \Hom_{F[T]} \big(M, \, F[T]{\cdot}dz\big)
\]
by
\[ 
  \sigma : f \mapsto S \circ f \circ \tau_M \quad (f \in M^\star).
\]
Like $S$, the operator $\sigma$ is linear with respect to the variable 
$T$, but semi-linear with respect to $z$ in the sense that it 
satisfies $\sigma(z^q f) = z{\cdot} \sigma(f)$ for any 
$f \in M^\star$.
The latter property is the key fact which enables us to define the 
cocharacteristic polynomial of $\sigma$.
Roughly speaking, the fact that $\sigma$ decreases the powers on the
variable $z$ implies that the contribution of large positive and negative powers of $z$ will be negligible. Hence, one can approximate the cocharacteristic polynomial of $\sigma$ by 
looking at its action on larger and larger finite-dimensional spaces
of the form $\left<z^i\right>_{|i| \leq N}$; passing to the limit on $N$, we finally get the desired result.

In~\cite{A20}, Anderson managed to give solid foundations to this
rough idea and paved the road towards a complete proof of the 
Equality~\eqref{eq:andersonformula}.

It is worth noticing that Anderson's formula is also quite explicit and 
very well-suited for implementation on computers: in \cite{CG24}, the 
authors turn it into an actual fast algorithm for computing the 
$L$-series $L(\phi; X)$--and more generally $L(\phi; X, s)$--attached 
to a Drinfeld module $\phi$.

\begin{sageexplain}
This algorithm is implemented in SageMath and performs very well in
practice, being able to compute in a couple of seconds thousands of
terms of any $L$-series.

\begin{sagecolorbox}
\begin{sagecommandline}

    sage: phi = DrinfeldModule(A, [T, T^2, 1])
    sage: phi.Lseries(prec=30)
    sage: phi.Lseries(1, prec=40)  # value at $1$

\end{sagecommandline}
\end{sagecolorbox}

\end{sageexplain}

\subsubsection{Triptych: rank-$2$ Drinfeld modules, elliptic curves over number fields, elliptic curves over function fields}
\label{sssec:triptych}

In this paragraph, we gather and compare upper bounds on the minimal degree of isogenies in three classical settings: elliptic curves over number fields, Drinfeld modules of rank 2, and elliptic curves over function fields. We hope the reader will get a feeling for why, at least in this particular case, the best analogue in the function field setting for elliptic curves over number fields, are in fact Drinfeld modules of rank~2, and not elliptic curves over function fields.

The estimates we will study are of two types: upper bounds on the minimal degree of an isogeny between two objects, and an estimate on the height of the relevant $j$-invariant in any isogeny class.

Let $L$ be a number field. We will use $h$ to denote the \emph{logarithmic Weil height}, which for an element $\alpha\in L$ is defined as
\[
h(\alpha)=\frac{1}{[L:\Q]}\sum_{v\in M_L} [L_v:\Q_v]\log\max\{ 1, \Vert\alpha\Vert_v\},
\]
where $M_L$ is the set of places of $L$, and for $v\in M_L$ we write $\Vert \cdot \Vert_v$ for the associated absolute value and $L_v$ for the completion of $L$ at $v$.

\begin{theorem}[\cite{GR14}]\label{minimal isog}
    Let $E $ and $E'$ be two $\overline{L}$-isogenous elliptic curves. Then, there exists an  $\overline{L}$-isogeny $f : E \rightarrow E'$ such that
    \[\deg(f)\leq c_0\cdot [L:\Q]\cdot h(j)^2,\]
    where $j$ is the j-invariant of the elliptic curve $E$.
\end{theorem}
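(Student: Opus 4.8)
This is a deep theorem of Gaudron and Rémond; in a survey one would invoke \cite{GR14} directly, but here is the shape of the argument one would follow. The plan is to reduce the statement to the quantitative isogeny theorem for abelian varieties, applied in dimension one, and then to translate the resulting bound — which is naturally expressed through the stable Faltings height — into a bound involving $h(j)$.

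First I would pick, among all $\overline{L}$-isogenies $E \to E'$, one of minimal degree, call it $f$. After replacing $L$ by a finite extension (which only helps, since $[L:\Q]$ enters with a positive exponent) I may assume that $E$, $E'$ and $f$ are defined over $L$ and that $E$ is semistable everywhere, so that its Faltings height $h_{\mathrm F}(E)$ coincides with the stable one. The crucial input is then the isogeny theorem of \cite{GR14}, which refines Masser--Wüstholz: for an abelian variety $A/L$ and any $A'/L$ that is $\overline{L}$-isogenous to $A$, there exists an $L$-isogeny $A \to A'$ whose degree is bounded polynomially in $[L:\Q]$, in the stable Faltings height $h_{\mathrm F}(A)$ (and in $\log[L:\Q]$), with explicit small exponents. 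In dimension one their bound is linear in $[L:\Q]$ and quadratic in $\max(1, h_{\mathrm F}(E))$, that is, $\deg f \le c_1\,[L:\Q]\,\max(1, h_{\mathrm F}(E))^2$.

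The genuinely hard part is the proof of this isogeny theorem, which is a substantial piece of transcendence theory and which I would only sketch. One works on a suitable power of $A \times A'$, constructs by Siegel's lemma an auxiliary section of an ample line bundle vanishing to high order along the graph of a hypothetical isogeny of large degree, extrapolates this vanishing using the group law together with a zero estimate, and then feeds the resulting analytic inequalities into a period (``matrix'') lemma. The upshot is that the complex period lattices of $A$ and $A'$ are related by an integer matrix of controlled size, which forces the existence of an isogeny of controlled degree; making every height estimate uniform and pushing the exponents down to $1$ in $[L:\Q]$ and $2$ in the height in the elliptic case is exactly where the work of \cite{GR14} lies, and is the main obstacle.

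It then remains to pass from $h_{\mathrm F}(E)$ to $h(j)$. For elliptic curves there are classical comparison estimates — via the Fourier expansion of the Faltings height, or via Silverman's work on heights — giving $h_{\mathrm F}(E) \le \tfrac{1}{12}\,h(j) + c_2$, so that $\max(1, h_{\mathrm F}(E)) \le c_3\,\max(1, h(j))$; only this direction is needed. Substituting into the bound above and absorbing all absolute constants into a single $c_0$ yields $\deg f \le c_0\,[L:\Q]\,h(j)^2$, as claimed, with the usual understanding that $h(j)$ is read as $\max(1,h(j))$ wherever a nontrivial lower bound on the height is needed.
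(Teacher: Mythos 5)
The paper does not actually prove this statement: it is quoted from \cite{GR14} and used as a black box, so there is no internal argument to compare yours against. Your sketch is a fair description of how the result is obtained in \cite{GR14}: the quantitative isogeny theorem proved by transcendence methods (auxiliary section, zero estimate, period/matrix lemma), specialised to dimension one, followed by the classical comparison $h_{\mathrm F}(E) \le \tfrac{1}{12}\,h(j) + O\bigl(\log(2+h(j))\bigr)$ to convert the Faltings-height bound into one in $h(j)$. As a proof, however, it has the status of an annotated citation: the entire content of the theorem sits in the isogeny estimate, which you describe but do not establish, so it cannot stand on its own any more than the survey's bare reference does.

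Two concrete points in your reduction step need repair. First, the parenthetical ``replacing $L$ by a finite extension only helps, since $[L:\Q]$ enters with a positive exponent'' is backwards: enlarging $L$ enlarges the right-hand side, so the statement over the extension is \emph{weaker}, and does not imply the bound for the original $L$. The reduction is legitimate only because an elliptic curve acquires everywhere semistable reduction over an extension of absolutely bounded degree (for instance $L(E[3])$, of degree at most $48$ over $L$), so the extra factor is absorbed into $c_0$; the bounded degree is the point, not the sign of the exponent. Second, you cannot assume that a minimal-degree isogeny $f$ is defined over $L$ (or over a bounded-degree extension): its field of definition sits inside $L(E[N])$ with $N = \deg f$ a priori unbounded, so its degree over $L$ is not controlled. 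The theorem of \cite{GR14} is stated for $\overline{L}$-isogenies precisely to sidestep this, and the field-of-definition bookkeeping is handled inside their transcendence argument rather than by a preliminary base change.
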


We underline that
there are many isogenies between $E$ and $E'$ which do not satisfy the degree bound from \cref{minimal isog}. In contrast, the following theorem, which addresses a different but related question, is valid for any isogeny, with a bound that is sharper for minimal degree isogenies.

\begin{theorem}[\protect{\cite[Theorem 1.1]{P19}}]\label{ECNF}
    Let $f : E \rightarrow E'$ be an $\overline{L}$-isogeny between two elliptic curves $E$ and $E'$ of $j$-invariant $j$ and $j'$, respectively. Then
    \[|h(j)-h(j')|\leq 10+12\log(\deg f).\]
\end{theorem}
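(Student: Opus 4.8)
The plan is to control the height of the $j$-invariant place by place, using the fact that an isogeny $f : E \to E'$ of degree $n$ gives, for each place $v$ of $L$, a relation between the reduction types of $E$ and $E'$ at $v$ that constrains $\|j\|_v$ and $\|j'\|_v$ simultaneously. Recall that for a place $v$ not dividing the product of the primes dividing $n$, an isogeny of degree prime to the residue characteristic extends to an isogeny of the N\'eron models over the local ring; since $E$ has bad reduction at $v$ if and only if $\|j\|_v > 1$ (potentially multiplicative reduction), and similarly for $E'$, one gets that $\|j\|_v > 1 \iff \|j'\|_v > 1$ at such $v$. Moreover, at a place of potentially multiplicative reduction, the Tate parametrisation shows that $v(j) = -v(q_E)$ where $q_E$ is the Tate period, and an isogeny of degree $n$ relates $q_E$ and $q_{E'}$ by $q_{E'} = q_E^{d}$ or $q_E = q_{E'}^{d'}$ with $dd' \mid n^2$ (up to the subtlety of unramified twists), hence $\big|\,v(j) - v(j')\,\big| \leq \log_v(\deg f)$ in suitable normalisation.

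Concretely, I would split $M_L$ into three parts: the archimedean places, the finite places dividing $6\deg f$, and the remaining finite places. For the last (and largest) class, the argument above gives $\big|\log\max\{1,\|j\|_v\} - \log\max\{1,\|j'\|_v\}\big| = 0$ whenever both curves have good reduction, and otherwise the Tate-curve comparison yields the pointwise bound $\big|\log^+\|j\|_v - \log^+\|j'\|_v\big| \le \log\|\deg f\|_v^{-1}$ — note that summing the right-hand side over all finite $v$ (weighted by local degrees and divided by $[L:\Q]$) telescopes to a multiple of $\log\deg f$ by the product formula. For the finite places dividing $6\deg f$ one uses a cruder estimate: the difference of the two local heights is still bounded by something like $\log\|6\deg f\|_v^{-1}$ plus a bounded error coming from the failure of good-reduction criteria at small residue characteristic, and again these sum to $O(\log\deg f)$. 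For the archimedean places, one invokes a comparison of the two complex (or complex-analytic) $j$-invariants via the analytic isogeny: on the upper half plane, if $E_\tau \to E_{\tau'}$ corresponds to $\tau' = \frac{a\tau+b}{c\tau+d}$ with $\left(\begin{smallmatrix} a & b \\ c & d\end{smallmatrix}\right)$ of determinant $n$, then the standard estimate $\big|\log|j(\tau)| - \log|j(\tau')|\big| \le 2\pi|\mathrm{Im}\,\tau - \mathrm{Im}\,\tau'| + O(1)$ together with $\mathrm{Im}\,\tau' = \frac{n\,\mathrm{Im}\,\tau}{|c\tau+d|^2}$ bounds the archimedean contribution by $O(\log n)$ with explicit constants; this is where the additive constant $10$ and most of the coefficient $12$ in the statement come from.

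Assembling the three pieces, each contributing either $0$ or a term of the form $c\log\deg f$ with a small explicit $c$, and tracking the constants carefully, yields $|h(j) - h(j')| \le 10 + 12\log(\deg f)$. The main obstacle — and the place where the explicit constants are genuinely delicate — is the archimedean estimate combined with the places of small residue characteristic: one must choose the matrix representing the isogeny on the upper half plane to have small entries (reduction theory for $\Gamma_0(n)$-type double cosets), control $|c\tau + d|$ from below using that $\tau$ lies in a fundamental domain, and handle the potentially supersingular or additive-reduction fibres at $p \mid 6$ where the clean Tate-curve dictionary fails. Everything else is a careful but routine bookkeeping exercise in the theory of heights and N\'eron models; the cited reference \cite{P19} carries this out in full, and I would follow that blueprint, paying particular attention to optimising the archimedean bound since that term dominates the final constant.
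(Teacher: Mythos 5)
This theorem is quoted in the survey from \cite{P19} without proof, so the only question is whether your sketch is a viable route; it is not. The central step of your plan --- a \emph{pointwise} bound $\bigl|\log^+\|j\|_v-\log^+\|j'\|_v\bigr|\le\log\|\deg f\|_v^{-1}$ at each place --- is false, and the error occurs at exactly the two kinds of places you rely on. At a finite place of multiplicative reduction the Tate parametrisation gives $v(j')=d\cdot v(j)$ with $d\mid \deg f$ (or the reciprocal relation): the relation between the valuations is \emph{multiplicative}, not additive, so $|v(j)-v(j')|=(d-1)\,|v(j)|$ is unbounded in terms of $\deg f$ alone. Concretely, take $E$ over $\Q$ with $j_E=p^{-m}$ for an odd prime $p$ and quotient by the local $\mu_2$ to get a $2$-isogenous $E'$ with $v_p(j')=-2m$: the local difference at $p$ is $m\log p$, while your right-hand side is $0$. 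The same failure occurs at the archimedean places: $\log|j(\tau)|\sim 2\pi\,\mathrm{Im}\,\tau$ and $\mathrm{Im}\,\tau'=n\,\mathrm{Im}\,\tau/|c\tau+d|^2$ can equal $n\,\mathrm{Im}\,\tau$, so $\bigl|\mathrm{Im}\,\tau-\mathrm{Im}\,\tau'\bigr|$ is of size $(n-1)\mathrm{Im}\,\tau$, not $O(\log n)$. The theorem is nevertheless true because the local blow-ups compensate globally (the conjugates of $j'$ over $\Q(j)$ realise the reciprocal dilations, and the archimedean and non-archimedean excesses cancel); no argument that bounds each local term separately can see this cancellation.

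The actual proof in \cite{P19} is structured entirely differently: one first establishes an \emph{explicit} comparison between $\tfrac{1}{12}h(j_E)$ and the stable Faltings height $h_F(E)$, with a residual archimedean term essentially equal to $\tfrac{1}{2[L:\Q]}\sum_\sigma\log\mathrm{Im}\,\tau_\sigma$; one then applies Faltings' isogeny estimate $|h_F(E)-h_F(E')|\le\tfrac12\log\deg f$ (which encodes precisely the global cancellation, via the norm of the pullback of N\'eron differentials rather than via $j$); and finally one observes that $\log\mathrm{Im}\,\tau_\sigma$ changes by at most $\log\deg f$ under an isogeny. The two contributions of $\tfrac12\log\deg f$ each, multiplied by $12$, account for the coefficient $12$ in the statement. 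If you want to repair your write-up, you must replace the place-by-place comparison by this detour through the Faltings height (or an equivalent global mechanism); as written, the key inequalities in both the non-archimedean and archimedean parts of your argument do not hold.
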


The Northcott property for the Weil height says that a set of algebraic numbers with bounded height and bounded degree is finite. Thus, combined with the two previous theorems, we get that within an isogeny class, there are only finitely many $\overline{L}$-isomorphism classes of elliptic curves over a fixed number field.

The goal of the rest of this subsection is to present similar results for Drinfeld modules and for elliptic curves defined over function fields.  The results presented in this subsection are summed up in \cref{fig:tryptich}. 

Recall that $A=\Fq[T]$ and $K=\Fq(T)$. Let $M_K$ denote the set of places of $K$.
For a place $v\in M_K$ we denote by $\Vert \cdot \Vert_v$ the absolute value normalised as follows: if $\mathfrak{p}\in A$ is finite corresponding to $v$, then $\Vert x \Vert_v=q^{-\deg(p)\cdot \mathrm{val}_\mathfrak{p}(x)}$, if $\mathfrak{p}\in A$ is infinite corresponding to $v$, then $\Vert x \Vert_v=q^{deg(x)}$.
Let $F/K$ be a finite extension of $K$. For a place $v\in M_F$ extending a place $w\in{M_K}$, we denote by $\Vert \cdot \Vert_v$ the associated absolute value such that for any $x\in K$ we have $\Vert x \Vert_v=\Vert x \Vert_w$. For a $n$-tuple $x=(x_1,\dots,x_n)$ of elements in $F$ we define its logarithmic Weil height $h$ as
\begin{equation}\label{Weil height}
h(x)=\frac{1}{[F:K]}\sum_{v\in M_K} [F_v:K_v]\log\max\{\Vert x_1 \Vert_v,\dots,\Vert x_n \Vert_v\}.
\end{equation}

\begin{remark}
    The logarithmic Weil height of Equation~\eqref{Weil height} is a completely different concept from the Frobenius height used in \cref{Frobenius height}. These are both called ``height'' in the literature, and are both denoted $h(\cdot)$. The context is generally clear.
\end{remark}

\begin{theorem}[\protect{\cite[Theorem 1.3]{DD99}}]
    Let $ \phi $ and $ \phi'$ be two $\Fs$-isogenous Drinfeld modules of rank $r$ defined over $F$. Then, there exists an $\Fs$-isogeny $f : \phi \rightarrow \phi'$ such that
    \[\deg(f)\leq c_0 [F:K] \cdot h(\phi')^{10\cdot 3^7},\]
    where $h(\phi')$ is the height of the tuple of Potemine invariants of the Drinfeld module $\phi'$ (see \cref{def:jinvariants}).
\end{theorem}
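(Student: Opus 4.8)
The plan is to mimic, in the Drinfeld setting, the transcendence strategy underlying the Masser--Wüstholz isogeny theorem for abelian varieties. First I would translate the problem analytically: via the uniformisation of \cref{ssec:drinfeld:uniformization}, an $\Fs$-isogeny $\phi \to \phi'$ of degree $n$ amounts to an element $c \in \Cinf^\times$ with $c\Lambda \subseteq \Lambda'$, where $\Lambda, \Lambda' \subset \Cinf$ are the lattices attached to $\phi, \phi'$, the quotient $\Lambda'/c\Lambda$ being finite of Euler--Poincaré characteristic of degree $n$ (compatibly with the normalisation of \cref{def:normisogeny} and the relation between $\mathrm{norm}(u)$ and $|\ker u|$). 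Equivalently, \cref{th:motive:fullyfaithful} applied over $\Fs$ identifies $\Hom_{\Fs}(\phi, \phi')$ with $\Hom_{F[T], \tau}\big(\MM(\phi'), \MM(\phi)\big)$, a free $A$-module which is nonzero by hypothesis, and the goal is to produce a nonzero element of small $\tau$-degree. Since only an $\Fs$-isogeny is required, I would also replace $\phi'$ by an $\Fs$-isomorphic Drinfeld module whose coefficients $g_i$ have Weil height $\ll h(\phi')$, using that the Potemine $J$-invariants are (Laurent) monomials in the $g_i$ and that a representative with small coefficients exists in every $\Fs$-isomorphism class (\cref{rem:j-inv-representative} in rank $2$, and its analogue in higher rank).

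Next I would set up an auxiliary construction on the Anderson $t$-module $\phi \oplus \phi'$. For a carefully optimised choice of degree and multiplicity parameters, Siegel's lemma over $F$ produces a nonzero polynomial $P$ in the coordinate functions of $\phi \oplus \phi'$ whose coefficients have controlled height; this is exactly the step where $h(\phi')$ enters (through the coefficients of $\phi'_T$ and the exponential equation satisfied by $e_{\phi'}$) and where the factor $[F:K]$ appears, as the usual cost of a pigeonhole / Liouville argument over a finite extension of $K$. One arranges moreover that $P$, pulled back along the exponential map to a one-parameter analytic subgroup threading the graph of a hypothetical morphism $\phi \to \phi'$, vanishes to high order at the origin.

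The crux is then a zero (multiplicity) estimate for $t$-modules in positive characteristic, in the spirit of Denis and Brownawell--Masser: a nonzero $P$ of the prescribed degree cannot vanish to the imposed order unless the vanishing is forced by a proper algebraic sub-$t$-module of $\phi \oplus \phi'$, and the only relevant such subgroups for the argument are, up to isogeny, the graphs of morphisms $\phi \to \phi'$. Running the numerology of the zero estimate through a final extrapolation and a Liouville-type inequality then yields simultaneously an isogeny $f : \phi \to \phi'$ and the explicit bound $\deg(f) \leq c_0\,[F:K]\cdot h(\phi')^{10 \cdot 3^7}$; the enormous exponent $10 \cdot 3^7$ is precisely the accumulated cost of the dimension bookkeeping in the product-module zero estimate together with the nested parameter optimisations, which is why it dwarfs the exponent $2$ available for elliptic curves in \cref{minimal isog}. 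I expect the main obstacle to be the zero estimate itself, together with the meticulous propagation of Weil heights through Siegel's lemma and the extrapolation; the complete argument is carried out in \cite{DD99}.
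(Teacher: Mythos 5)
The paper does not prove this statement: it is quoted verbatim from \cite[Theorem~1.3]{DD99} inside the ``triptych'' of \cref{sssec:triptych}, so there is no in-paper argument to compare yours against. Judged on its own terms, your sketch is a faithful high-level description of what David and Denis actually do --- reduce to a representative of the $\Fs$-isomorphism class with coefficients of controlled height, build an auxiliary function on (a power of) $\phi\oplus\phi'$ via Siegel's lemma over $F$ (whence the factor $[F:K]$), force high-order vanishing along the graph of a putative morphism, and conclude with a multiplicity estimate for $t$-modules plus a Liouville inequality, the exponent $10\cdot 3^7$ being the residue of the parameter optimisation. I see no wrong step in the outline, and the translation of ``isogeny of $\tau$-degree $n$'' into ``$c\in\Cinf^\times$ with $c\Lambda\subseteq\Lambda'$ and $|\Lambda'/c\Lambda|$ of degree $n$'' is correct in $A$-characteristic zero, where all isogenies are separable. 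But you should be clear that this is a strategy summary, not a proof: every load-bearing ingredient (the zero estimate for sub-$t$-modules of the product, the height propagation through Siegel's lemma and extrapolation, and the derivation of the specific exponent) is asserted and deferred to \cite{DD99}, exactly as the survey itself does by citing the result without proof. One small caution: the theorem's height $h(\phi')$ is the Weil height of the tuple of Potemine invariants, so your reduction step must genuinely control the coefficients $g_i$ of a representative in terms of those invariants (and not the other way around); this is standard but is a step, not a triviality, in higher rank.
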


\begin{theorem}[\protect{\cite[Theorem 3.1]{BPR21}}]\label{DM}
    Let $f: \phi \rightarrow \phi'$ be an $\Fs$-isogeny between two Drinfeld modules of rank $r$. Suppose $\ker f\subseteq \phi[N]$. Then
\[|h(\phi')-h(\phi)|\leq \deg N+ \frac{q}{q-1}-\frac{q^r}{q^r-1},\]
where $h(\phi)$ and $h(\phi')$ are the height of the tuple of Potemine invariants of the Drinfeld modules $\phi$ and $\phi'$, respectively. 

    Moreover, if $r=2$, then we have
    \[|h(j')-h(j)|\leq q+ \frac{q^2-1}{2}\left( \log \deg f + \log\left(1+\frac{1}{q}h(j')\right)\right),\]
    where $j$ and $j'$ are the $j$-invariant of the Drinfeld module $\phi$ and $\phi'$, respectively.
\end{theorem}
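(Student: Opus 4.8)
\emph{Proof proposal.} The plan is to reduce everything to a local (place-by-place) comparison and to exploit the combinatorics of Ore polynomials. Write $\phi_T = z + g_1\tau + \cdots + g_r\tau^r$ and $\phi'_T = z + g'_1\tau + \cdots + g'_r\tau^r$, and write the isogeny as $f = \sum_i f_i\tau^i$ of $\tau$-degree $m = \deg f$. Since $F = \Fq(z)$ has $A$-characteristic zero, $f$ is separable, hence may be normalised to have constant term $1$; the hypothesis $\ker f \subseteq \phi[N] = \ker\phi_N$ then forces, by \cref{rem:Orebijection}, that $f$ right-divides $\phi_N$. In particular $m \leq r\deg N$, one may write $\phi_N = g f$ with $g : \phi' \to \phi$ an isogeny, and there is a dual isogeny $\hat f : \phi' \to \phi$ with $\hat f f = \phi_a$, $f\hat f = \phi'_a$, where $a = \mathrm{norm}(f)$ divides $N^r$. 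From here I would fix a minimal model of $\phi$ at each place $v$ of $F$ and track how the coefficients transform.

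For the first inequality the workhorse is the commutation relation $f\phi_T = \phi'_T f$. Expanding it and identifying the coefficients of $\tau^{m+r}, \tau^{m+r-1}, \ldots$ in descending order, one solves recursively for $g'_r, g'_{r-1}, \ldots, g'_1$: each $g'_k$ comes out as an explicit polynomial in the $g_j$, the $f_i$ and their $q$-power twists, divided by a power of the leading coefficient $f_m$. The key simplification is that the Potemine $J$-invariants of \cref{def:jinvariants} are insensitive to the rescaling $g_i \mapsto u^{1-q^i} g_i$, so the powers of $f_m$ disappear once one forms them; it therefore suffices to estimate, at each $v$, the quantity $\log\Vert g'_k\Vert_v$ relative to $\log\Vert f_m\Vert_v$ in terms of $\max_j \log\Vert g_j\Vert_v$ and of the spread of the $\log\Vert f_i\Vert_v$. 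At a place of good reduction for $\phi$ one can take $f$ integral with unit leading coefficient, so there is no contribution; at the finitely many remaining places the contribution is bounded by the local multiplicity of $f$, and summing over these places yields the term $\deg N$. The residual constant $\frac{q}{q-1} - \frac{q^r}{q^r-1} = \frac{1}{q-1} - \frac{1}{q^r-1}$ should come out of the normalisation of the $J$-invariants at the place $\infty$, where the geometric series $\sum_{i\geq 1} q^{-i}$ and $\sum_{i\geq 1} q^{-ri}$ arise naturally. I expect the delicate point to be pinning down this constant \emph{exactly}, rather than as a harmless $O(1)$: it requires being careful with the normalisation of the Potemine tuple and with how the $q$-weights $q^{k_i}-1$ of \cref{def:jinvariants} interact with the $\tau$-valuations of the Ore polynomials involved.

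The rank-$2$ refinement needs a finer input and should follow the pattern of the proof of \cref{ECNF} in the elliptic case. I would introduce the differential (Faltings-type) height $h_{\mathrm{F}}(\phi)$ of a rank-$2$ Drinfeld module over $F$, built from the module of Kähler differentials of a minimal model, and establish two facts: (i) an isogeny-invariance estimate $|h_{\mathrm{F}}(\phi') - h_{\mathrm{F}}(\phi)| \leq c_1 \log\deg f$, obtained by comparing the minimal models of $\phi$ and $\phi'$ through $f$ and $\hat f$ and bounding the local discrepancies; and (ii) a comparison between the $j$-invariant height and $h_{\mathrm{F}}$ of the shape $|h(j) - c_2 h_{\mathrm{F}}(\phi)| \leq c_3 + c_4 \log(1 + h(j))$, where, as for elliptic curves, the logarithmic error reflects the behaviour of the $j$-line near the cusp. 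Applying (ii) to both $\phi$ and $\phi'$, inserting (i), and using $1 + h(j') \leq q(1 + \tfrac1q h(j'))$ to turn a spurious $\log(1 + h(j'))$ into the form appearing in the statement, one should arrive at the claimed inequality, the constants $q$ and $\frac{q^2-1}{2}$ being the translations of the elliptic $\tfrac12$ (isogeny invariance) and of the weight $q^2-1$ carried by the $j$-line. The main obstacle here is establishing (i) and (ii) with sharp constants; a direct route through Drinfeld modular polynomials seems to lose the logarithmic dependence on $\deg f$, which is why the differential-height approach looks preferable. Once (i) and (ii) are in hand, the combination is a short computation.
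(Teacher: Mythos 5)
First, a point of comparison: the survey offers no proof of this statement — it is quoted directly from \cite{BPR21} — so there is no internal argument to measure your sketch against, only the proof in that reference.

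Your opening reductions are sound ($f$ is separable, right-divides $\phi_N$, hence $\phi_N = g f$ for an isogeny $g : \phi' \to \phi$ with $\deg_\tau f \leq r \deg N$), and your strategy for the rank-$2$ refinement — introduce a differential (Taguchi-type) height, prove an isogeny-variation estimate and a comparison with $h(j)$ carrying a logarithmic error, then combine — is essentially the strategy of \cite{BPR21}, which follows the elliptic-curve template of \cref{ECNF}. The genuine gap is in your treatment of the first inequality. The coefficient-chase through $f\phi_T = \phi'_T f$ does not deliver the bound: the recursion expressing $g'_k$ in terms of the $g_j$ and the $q$-power twists of the $f_i$ produces, at each place $v$, a maximum over many monomials whose $v$-adic sizes are not controlled by ``the local multiplicity of $f$''; the assertion that good-reduction places contribute nothing presupposes that the leading coefficient of $f$ is a $v$-adic unit at all such places simultaneously, which is not justified; and nothing in the sketch ties the sum of the remaining local defects to $\deg N$. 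In \cite{BPR21} the term $\deg N$ does not come from a local analysis of the coefficients of $f$ at all: it comes from the exact behaviour of the differential height under isogeny (Taguchi), namely $h_{\mathrm{diff}}(\phi') - h_{\mathrm{diff}}(\phi) \leq \frac{1}{r}\deg|\ker f|$, the symmetric estimate for $g$, and the identity $\deg|\ker f| + \deg|\ker g| = \deg|\ker \phi_N| = r\deg N$. The constant $\frac{q}{q-1}-\frac{q^r}{q^r-1}$ then falls out of the comparison between the differential height and the graded height of the coefficient tuple $(g_i)$ weighted by $\frac{1}{q^i-1}$ (these weights range over the interval $\bigl[\frac{1}{q^r-1}, \frac{1}{q-1}\bigr]$), not from a normalisation at $\infty$. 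In short: the differential-height machinery you reserve for part two is also the engine of part one (with a bounded, rather than logarithmic, comparison error); as written, the first half of your argument would not close.
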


Again, combining the two previous theorems with the Northcott property, we get that isogeny classes of Drinfeld modules of rank $2$ contain finitely many isomorphism classes.

\begin{remark}
In the case of isogenies in parallelogram configurations, there are finer results, see the recent article \cite{BGP_2025}. 
\end{remark}

Finally, let $F$ be the function field of a smooth projective and geometrically irreducible curve of genus $g$ defined over a perfect field. For elliptic curves defined over $F$, the following hold.

\begin{theorem}[\protect{\cite[Theorem B]{GP22}}]
    Let $ E $ and $ E'$ be two $\Fs$-isogenous elliptic curves with j-invariant $j$ and $j'$, respectively. Then, there exists an  $\Fs$-isogeny $f : E \rightarrow E'$ such that
    \[\deg(f)\leq 49 \cdot\max\{1,g\}\cdot\max\left\{\frac{\deg_{ins} j}{\deg_{ins}j'},\frac{\deg_{ins} j'}{\deg_{ins}j}\right\},\]
    where $\deg_{ins}$ denotes the inseparability degree.
\end{theorem}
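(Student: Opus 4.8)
The plan is to bound separately the inseparable (``Frobenius'') part and the separable part of an optimally chosen $\Fs$-isogeny, the latter \emph{via} the geometry of the classical modular curves $X_0(n)$. Since the target bound is symmetric in $(E,j)$ and $(E',j')$ and since dualising turns an $\Fs$-isogeny $E\to E'$ of degree $d$ into one of degree $d$ from $E'$ to $E$, I would first reduce to the case $\deg_{ins} j\leqslant\deg_{ins} j'$, in which the right-hand maximum equals $\deg_{ins} j'/\deg_{ins} j$; when $\car F=p>0$ this is a power of $p$, say $p^{k}$ (in characteristic zero the ratio is $1$ and only the last two steps below are needed). I would also set aside the isotrivial case ($j$, hence $j'$, constant), where the statement is vacuous or degenerate depending on the convention for $\deg_{ins}$ of a constant, and assume henceforth that $j$ is non-constant.

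The first step is to put the isogeny into normal form. Every $\Fs$-isogeny $E\to E'$ factors as a separable isogeny post-composed with a power of the relative $p$-Frobenius of its source. Tracking how the $j$-invariant transforms — the relative Frobenius sends $j$ to $j^{p}$ and so multiplies $\deg_{ins} j$ by $p$; a separable isogeny of degree prime to $p$ leaves $\deg_{ins} j$ unchanged, since $\Phi_n(j,Y)$ is then separable in both variables; and the étale $p$-power (``Verschiebung-type'') pieces divide $\deg_{ins} j$ by $p$, by the Kronecker congruence $\Phi_p(X,Y)\equiv(X^{p}-Y)(Y^{p}-X)\pmod p$ — and exploiting the relation $V\circ F=[p]$ to trade any étale $p$-power part against the Frobenius part so as to lower the total degree, I would arrange an $\Fs$-isogeny of the shape $f=g\circ F^{k}\colon E\to E'$, where $F^{k}\colon E\to E^{(p^{k})}$ is the $k$-fold relative Frobenius (the exponent $k$ being forced, since it is exactly the jump in $\deg_{ins} j$) and $g\colon E^{(p^{k})}\to E'$ is a \emph{cyclic separable} isogeny of some degree $n$ \emph{prime to~$p$}. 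Then $\deg f=n\,p^{k}$, so it remains to prove $n\leqslant 49\max\{1,g\}$.

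For that inequality I would use that the datum $\bigl(E^{(p^{k})},\ker g\bigr)$ is an $F$-rational point of the coarse modular curve $Y_0(n)$ parametrising elliptic curves equipped with a cyclic subgroup of order $n$. Because $C$ is a smooth projective curve over the perfect base field $k$ and $X_0(n)$ is proper, this point extends to a morphism $\varphi\colon C\to X_0(n)$, and $\varphi$ is non-constant because composing it with one of the two degeneracy maps $X_0(n)\to X(1)=\mathbb{P}^1$ recovers (a Frobenius twist of) the non-constant function $j$, respectively $j'$. Hence $\varphi$ is finite and surjective, and the Riemann--Hurwitz formula — whose ramification term is non-negative even when $\varphi$ is inseparable — gives $2g-2\geqslant(\deg\varphi)\,(2\,g_0(n)-2)$, where $g_0(n)$ denotes the genus of $X_0(n)$; in particular $g_0(n)\leqslant g$ whenever $g_0(n)\geqslant 1$. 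As $p\nmid n$, $X_0(n)$ has good reduction at $p$ and $g_0(n)$ equals its characteristic-zero genus, so I may feed in the classical genus formula for $X_0(n)$, the explicit lower bounds for $g_0(n)$ in terms of $n$, and the well-known finite list of $n$ with $g_0(n)\leqslant 1$ (the largest being $n=49$, with $g_0(49)=1$). A short case analysis over the ranges $g_0(n)=0$, $g_0(n)=1$ and $g_0(n)\geqslant 2$ then gives $n\leqslant 49\max\{1,g\}$, whence $\deg f=n\,p^{k}\leqslant 49\max\{1,g\}\cdot\max\bigl\{\deg_{ins} j/\deg_{ins} j',\ \deg_{ins} j'/\deg_{ins} j\bigr\}$.

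The conceptual core — a genus-$g$ curve admits no non-constant morphism onto $X_0(n)$ once $g_0(n)>g$, so a small cyclic separable isogeny is forced — is clean and robust. I expect the main obstacle to be the first step: the careful bookkeeping of inseparability degrees through Frobenius, Verschiebung and the prime-to-$p$ part, in particular justifying that one can always strip the isogeny down to the normal form $g\circ F^{k}$ with $p\nmid\deg g$ and $p^{k}=\deg_{ins} j'/\deg_{ins} j$, together with a clean treatment of the coarse-moduli subtleties of $Y_0(n)$ (elliptic points, the $\pm1$-ambiguity). Extracting the explicit constant $49$ from the genus data of the $X_0(n)$ is a secondary but still delicate computational point.
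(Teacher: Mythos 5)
The survey does not prove this statement: it is quoted verbatim from \cite{GP22} (Theorem~B), so there is no in-paper argument to compare against. Your reconstruction is, in substance, the strategy of the cited proof: strip the isogeny to a normal form (power of relative Frobenius composed with a cyclic separable isogeny of degree $n$ prime to $p$, using $V\circ F=[p]$ and minimality to kill any \'etale $p$-part), account for the Frobenius part exactly by the ratio of inseparability degrees of the $j$-invariants, and bound $n$ by producing a non-constant map from the base curve $C$ to $X_0(n)$ and invoking the genus inequality together with the explicit genus data of $X_0(n)$ (with $n=49$, $g_0(49)=1$ giving the constant). Two points deserve care. First, the pair $\bigl(E^{(p^k)},\ker g\bigr)$ need \emph{not} give an $F$-rational point of the coarse space $Y_0(n)$, because $\ker g$ lives over a finite separable extension and its Galois conjugates may be distinct subgroups; passing to that extension would inflate the genus and ruin the bound. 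The standard fix is to work instead with the relation $\Phi_n\bigl(j^{p^k},j'\bigr)=0$ in $F$: since $j$ is non-constant this gives a non-constant $F$-morphism from $C$ to the plane model $\{\Phi_n=0\}$, which lifts to the normalisation $X_0(n)$. Second, Riemann--Hurwitz as usually stated requires the morphism to be separable; for a possibly inseparable $\varphi$ one should factor it as a separable map after a power of Frobenius (which preserves genus over a perfect field) to conclude $g_0(n)\leq g$ whenever $g_0(n)\geq 1$. With these repairs your argument is correct and coincides with the approach of \cite{GP22}.
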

\begin{theorem}[\protect{\cite[Theorem A]{GP22}}]\label{ECFF}
    Let $f : E \rightarrow E'$ be an $\Fs$-isogeny between two elliptic curves $E$ and $E'$ with j-invariant $j$ and $j'$, respectively. Then, 
    \[h(j')= \frac{\deg_{ins}f}{\deg_{ins}\hat{f}} \cdot h(j),\]
    where $\deg_{ins}$ denotes the inseparability degree.
\end{theorem}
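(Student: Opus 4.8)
The plan is to read $h(j)$ off the geometry of the base curve and then reduce the identity to its behaviour under a handful of elementary isogenies, exploiting that both sides are homogeneous of degree one in the heights while the factor $\deg_{ins} f/\deg_{ins}\hat f$ is a power of $p$. Write $C$ for the smooth projective curve with $k(C)=F$ and, for an elliptic curve $E/F$, let $\mathfrak j_E\colon C\to X(1)=\PP^1_j$ be the morphism attached to $j(E)\in F$. I would record three facts at the outset: $h(j(E))$ is a fixed positive multiple of $\deg(\mathfrak j_E)$, the full degree, counting inseparability; $h$ is unchanged by a finite separable extension of $F$; and $h(x^m)=m\,h(x)$ for all $x\in F$ and $m\geq 1$. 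Then one disposes of the isotrivial case: if $j(E)\in k$ then $h(j(E))=0$, and since $j(E')$ is algebraic over $k$ — because $j(E)$ and $j(E')$ are linked by a modular equation with coefficients in the prime field and $C$ is geometrically irreducible — it also lies in $k$, so $h(j(E'))=0$ and both sides of the asserted identity vanish. Hence we may assume $j(E)\notin k$; then $E$ is ordinary, since a supersingular elliptic curve has $j$-invariant algebraic over $\F_p$, hence constant. In particular every curve isogenous to $E$ is ordinary.

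Next I would rewrite the assertion in the homogeneous form $\deg_{ins}(\hat f)\cdot h(j(E'))=\deg_{ins}(f)\cdot h(j(E))$. In this form it is symmetric under $f\mapsto\hat f$ (swapping $E,E'$) and multiplicative under composition: if it holds for $f$ and for $g$ it holds for $g\circ f$, using $\deg_{ins}(g\circ f)=\deg_{ins}(g)\deg_{ins}(f)$ and $\widehat{g\circ f}=\hat f\circ\hat g$. Replacing $F$ by a finite separable extension over which $f$ and the relevant kernels are rational — which affects neither side — every isogeny $f\colon E\to E'$ is, up to isomorphism, a composition of relative Frobenii $E_i\to E_i^{(p)}$, of Verschiebungs $E_i^{(p)}\to E_i$ (separable because the $E_i$ are ordinary), and of separable isogenies of degree prime to $p$: indeed $f$ is a separable isogeny composed with an iterated Frobenius, and the separable part has étale kernel whose $p$-part is cyclic — the étale part of $E^{(p^s)}[p^k]$ being cyclic of order $p^k$ — hence realised by an iterated Verschiebung, while its prime-to-$p$ part splits into cyclic isogenies of prime degree. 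By multiplicativity together with the duality symmetry, it therefore suffices to prove the identity for (a) a relative Frobenius and (b) a separable cyclic isogeny of prime degree $\ell\neq p$.

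For (a), if $\Phi\colon E\to E^{(p)}$ is the relative Frobenius then $\deg_{ins}\Phi=p$ while $\deg_{ins}\hat\Phi=1$ (from $\hat\Phi\circ\Phi=[p]$ and $E$ ordinary), and $h(j(E^{(p)}))=h(j(E)^p)=p\,h(j(E))$, which is precisely the required identity. For (b), with $g\colon E_1\to E_2$ separable cyclic of prime degree $\ell\neq p$ one has $\deg_{ins} g=\deg_{ins}\hat g=1$, so the identity becomes $h(j(E_1))=h(j(E_2))$. Here $(E_1,\ker g)$ is an $F$-point of the modular curve $X_0(\ell)$, and since $j(E_1)\notin k$ it defines a nonconstant morphism $\iota\colon C\to X_0(\ell)$ with $\mathfrak j_{E_1}=\pi_1\circ\iota$ and $\mathfrak j_{E_2}=\pi_2\circ\iota$, where $\pi_1,\pi_2\colon X_0(\ell)\to X(1)$ are the two forgetful maps. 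Since these are interchanged by the Atkin--Lehner involution — an automorphism of $X_0(\ell)$ defined over $\Z[1/\ell]$ — they have the same degree, so $h(j(E_1))$ and $h(j(E_2))$ are the same multiple of $\deg(\pi_1)\deg(\iota)$. Reassembling the pieces via multiplicativity proves the theorem.

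The hard part is step (b): one needs the equality of the degrees of the two forgetful maps from $X_0(\ell)$ and the Atkin--Lehner symmetry, together with their persistence in characteristic $p$ (valid since $p\nmid\ell$); a proof arguing directly from the modular polynomial $\Phi_\ell(X,Y)$ would instead require pinning down its symmetry and its degree in each variable. The remainder is bookkeeping to be done with some care: confirming that the chosen normalisation makes $h(j)$ a fixed multiple of $\deg(\mathfrak j_E)$ and is extension-invariant; the decomposition of an arbitrary isogeny into the three building blocks, in particular that the $p$-part of the kernel of a separable isogeny of an ordinary curve is cyclic; the passage between the coarse modular curve $X_0(\ell)$ and an honest morphism $C\to X_0(\ell)$; and the inseparability accounting $\deg_{ins}\hat\Phi=1$ and $\deg_{ins}\hat g=1$ invoked above.
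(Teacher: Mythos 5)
The survey does not actually prove this statement: it is quoted verbatim from \cite[Theorem A]{GP22}, so there is no in-paper argument to compare yours against. Your proof is correct in outline and follows essentially the same route as the cited source: after disposing of the isotrivial case, put the identity in its multiplicative, self-dual form, factor an isogeny of ordinary curves into relative Frobenii, Verschiebungs and separable prime-degree pieces, and check the two generating cases --- Frobenius, where $h(j^p)=p\,h(j)$, and a separable $\ell$-isogeny ($\ell\neq p$), where equality of heights follows from $h(j(E))$ being proportional to $\deg(\mathfrak j_E)$ together with the equal degrees of the two degeneracy maps $X_0(\ell)\to X(1)$, interchanged by Atkin--Lehner (valid in characteristic $p$ by good reduction of $X_0(\ell)$ away from $\ell$). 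The items you flag as bookkeeping are indeed the only places needing care, and none of them is an obstruction.
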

\smallskip

The situation of isogeny classes of elliptic curves over function fields is very different from the two previous settings. These isogeny classes contain infinitely many isomorphism classes! We refer the reader to \cite{GP22} for more details. Taking a look at \cref{ECFF}, \cref{DM}, and \cref{ECNF}, it appears that isogeny classes of elliptic curves over number fields behave more like isogeny classes of Drinfeld modules of rank $2$, than like isogeny classes of elliptic curves over function fields!  

\renewcommand{\arraystretch}{1.4}
\begin{figure}
    \centering  
\begin{tabular}{@{}|C{0.3\textwidth}|C{0.24\textwidth}|C{0.36\textwidth}|@{}}
	\hline
	\textbf{Drinfeld modules of rank $2$} & \textbf{Elliptic curves over a number field} & \textbf{Elliptic curves over a function field} \\\hline
Let $f : \phi \rightarrow \phi'$ be an isogeny between two \textit{Drinfeld modules}  $\phi$ and $\phi'$ \emph{of rank $2$} on $A=\Fq[t]$. & Let $f : E \rightarrow E'$ be a $\overline{K}$--isogeny between two elliptic curves $E$ and $E'$ over the \emph{number field} $K$.& Let $f : E \rightarrow E'$ be a $\overline{K}$--isogeny between two elliptic curves $E$ and $E'$ over a \emph{function field} $K$ of a smooth projective and geometrically irreducible curve of  genus $g$ over a perfect field. \\\hline
\multicolumn{3}{|c|}{Then we can assume that the degree of the isogeny $f$ is at most...} \\\hline
$c_0(q)(h(j) [K : F])^{10 \cdot 3^7}$ &   $c_0 \cdot [K : \Q]\cdot h(j).$ 	& \(\displaystyle 49 \cdot\max\{1,g\}\cdot\max\left\{\frac{\deg_{ins} j}{\deg_{ins}j'},\frac{\deg_{ins} j'}{\deg_{ins}j}\right\}.\)	\\
\cite[Theorem 1.3]{DD99}  & \cite{GR14} &  \cite[Theorem B]{GP22}  \\\hline
\multicolumn{2}{|c|}{Then $|h(j)-h(j')|$ is bounded from above by} &  \\\cline{1-2}
$\begin{gathered}q+ \frac{q^2-1}{2}\left( \log \deg f \right.+ \\\ \left.\log(1+\frac{1}{q}h(j'))\right)\end{gathered}$& $10+12\log(\deg f)$ & \multicolumn{-1}{C{0.36\textwidth}|}{$\displaystyle h(j')= \frac{\deg_{ins}f}{\deg_{ins}\hat{f}} \cdot h(j)$.} \\
 \cite{BPR21} &  \cite[Theorem 1.1]{P19}  &\cite[Theorem A]{GP22} \\\hline
\end{tabular}
\caption{Isogenies of small degree and height of invariants for elliptic curves and Drinfeld modules of rank $2$.}
    \label{fig:tryptich}
\end{figure}

\section{Some applications of Drinfeld modules}

\label{sec:applications}

This section aims to give a glimpse of various applications of Drinfeld modules. We first discuss their use in polynomial factorisation in \cref{sub:facto}.  Some attempts of cryptosystems based on computational problems involving Drinfeld modules, which were believed to be hard enough to ensure security, are presented in \cref{sub:crypto}. Both of these applications leverage the analogies of Drinfeld modules with elliptic curves. Finally, in \cref{sub:codes}, we present the links between Drinfeld modules and coding theory.
    
\subsection{Drinfeld modules meet computer algebra}
\label{sub:facto}

Elliptic curves play a pivotal role in computer algebra. They are used for
primality testing (\emph{Elliptic Curve Primality Proving} method, developed by
Goldwasser--Killian, refined by Altkin and Morain, see
\cite{goldwasser_almost_1986, atkin_elliptic_1993}) or integer factorization
(\emph{Elliptic Curve Method}, developed by H. Lenstra
\cite{lenstra_factoring_1987}). Integer factorisation is a notoriously hard
problem, for which the best algorithms (like the \emph{General Number Field
Sieve} \cite{lenstra_factoring_1987}) only attain sub-exponential complexity.
On the other hand, factorizing polynomials in $A=\Fq[T]$ can be done very efficiently
using probabilistic methods. Some of them rely on Drinfeld modules.

In Subsections~\ref{ssec:facto:supersingular}--\ref{ssec:facto:implementation} below, we present an algorithm proposed by Doliskani, Narayanan, and Schost~\cite{doliskani_drinfeld_2021}.
It utilises many of the tools we have presented in the previous sections (supersingularity,
structure of the endomorphism ring), and matches the asymptotic complexity of the best methods
(as of 2025).
Finally, in Subsection~\ref{facto:compar}, we give an overview of other factorisations
algorithms, and compare their performances.

\subsubsection{Supersingular reductions}
\label{ssec:facto:supersingular}

Recall that $K$ is $\mathrm{Frac}(A) = \Fq(T)$, and let $\gamma: A \to K$ be
the canonical injection. We let $\phi : A \to K\{\tau\}$ be a Drinfeld module 
of rank $2$ defined by
$$\phi_T = T + g\tau + \Delta \tau^2, \quad g \in K, \, \Delta \in K^\times.$$
As $\phi$ is not defined over a finite field, it does not have a
Frobenius endomorphism. It thus cannot be supersingular. However, one can
wonder if it becomes so upon reducing it at a prime polynomial $f_i \in A$ (see
\cref{sssec:arith:Lseries} for reductions of Drinfeld modules). In that case,
we say that $\phi$ has \emph{supersingular reduction} at $f_i$. By
\cref{prop_ordin_frobenius_trace} and \cref{rem:hasseinvariant}, this is
easy to check: one computes the Hasse invariant of $\phi \bmod{f_i}$. 
In order to do so, we define the sequence $(r_{\phi, k})_{k \geq 0}$ by
$r_{\phi, 0} = 1$, $r_{\phi, 1} = g$ and the recurrence relation
\[
  r_{\phi, k+2} =   g^{q^{k+1}} r_{\phi, k+1}
                  - \big(T^{q^{k+1}} - T\big) \Delta^{q^k} r_{\phi, k}.
\]
Then, the Hasse invariant of $\phi \bmod{f_i}$ is the image of 
$r_{\phi, \deg(f_i)}$ in $A/f_i A$.

\begin{sageexplain}
The following function computes the Hasse sequence $r_{\phi, k}$ 
modulo the given modulus up to $k = n$ (recall that $q = 7$).

  \begin{sagecolorbox}
  \begin{sagecommandline}

    sage: def hasse_sequence(g, Delta, n, modulus):
    ....:     r = [1, g] + (n-1) * [None]
    ....:     for k in range(n-1):
    ....:         r[k+2] = (g^(7^(k+1)) * r[k+1] 
    ....:                 - (T^(7^(k+1)) - T) * Delta^(7^k) * r[k]) 
    ....:     return r

  \end{sagecommandline}
  \end{sagecolorbox}

We compute the $r_{\phi, 2} \bmod {T^{7^2} - T}$ for
$\phi_T = T + T \tau + (T{+}1) \tau^2$.

  \begin{sagecolorbox}
  \begin{sagecommandline}

    sage: r = hasse_sequence(T, T+1, 2, T^49 - T)
    sage: h = r[2]
    sage: h
    sage: h.factor()

  \end{sagecommandline}
  \end{sagecolorbox}

The irreducible factors of $h$ of degree $2$ are exactly the
irreducible polynomials of degree $2$ modulo which $\phi$ has 
supersingular reduction.
We check it below in two cases.

  \begin{sagecolorbox}
  \begin{sagecommandline}

    sage: F1.<U1> = F7.extension(T^2 + 2*T + 2) # $U_1$ is the image of $T$ modulo $T^2 + 2T + 2$
    sage: phi1 = DrinfeldModule(A, [U1, U1, U1 + 1])
    sage: phi1.is_supersingular()

    sage: F2.<U2> = F7.extension(T^2 + 1)       # $U_2$ is the image of $T$ modulo $T^2 + 1$
    sage: phi2 = DrinfeldModule(A, [U2, U2, U2 + 1])
    sage: phi2.is_supersingular()

  \end{sagecommandline}
  \end{sagecolorbox}

In the second case, we check moreover that the image of $h$ in $F_2$ is the Hasse
invariant of $\phi_2$, that is the coefficient in $\tau^2$ of $\phi_2(T^2 + 1)$.

  \begin{sagecolorbox}
  \begin{sagecommandline}

    sage: F2(h)
    sage: phi2(T^2 + 1)

  \end{sagecommandline}
  \end{sagecolorbox}

\end{sageexplain}

In our use case, the $f_i$ will be the irreducible factors of a given 
polynomial $f$, and so their degrees are \emph{a priori} not known, which
prevents computing the Hasse invariant. However, building on the fact that the sequence $(r_{\phi, k})_{k \geq 0}$ satisfies a recurrence of order~$2$, one can prove the following alternative characterisation, which only involves
the degree of~$f$.

\begin{proposition}[\protect{\cite[Lemma~6]{doliskani_drinfeld_2021}}]
\label{prop:lemme-6}
  For $f \in A$, $f \neq 0$, we define
\[
  \overline{h}_{\phi, f}
  = \gcd\big(r_{\phi, \deg(f)} \bmod{f},\,
              r_{\phi, \deg(f)+1} \bmod{f}\big).
\]
  Let $f_i$ be an irreducible divisor of $f$ where $\phi$ has good reduction.
  Then,
  $\phi$ has supersingular reduction at $f_i$ if, and only if, $f_i$ divides
  $\overline{h}_{\phi, f}$.
\end{proposition}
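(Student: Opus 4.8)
The plan is to reduce everything to the finite field $\F_{f_i} := A/f_iA$ and then to exploit the fact that, modulo an irreducible polynomial, the Hasse recurrence has \emph{periodic} coefficients. Throughout we may assume $f_i$ monic (divisibility by $f_i$ is unaffected) and we set $d_i := \deg f_i$ and $n := \deg f$, so $d_i \le n$ because $f_i \mid f$. Since $\phi$ has good reduction at $f_i$, the reduced module $\phi \bmod f_i \colon A \to \F_{f_i}\{\tau\}$ has rank~$2$, with $\F_{f_i} \simeq \F_{q^{d_i}}$ and $A$-characteristic $(f_i)$. First I would note that the recurrence of \cref{rem:hasseinvariant} has coefficients in $A$ and is compatible with reduction modulo $f_i$ (reduction and the $q$-power map are commuting ring homomorphisms of $A$), whence $r_{\phi,k}\bmod f_i = r_{\phi\bmod f_i,k}$ for all $k\ge0$. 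By \cref{rem:hasseinvariant} and the third item of \cref{prop_ordin_frobenius_trace}, $\phi$ has supersingular reduction at $f_i$ if and only if the Hasse invariant $r_{\phi\bmod f_i,d_i}$ vanishes, i.e.\ if and only if $f_i \mid r_{\phi,d_i}$. As $f_i \mid f$ and ``$f_i\mid\gcd(a,b)$'' is equivalent to ``$f_i\mid a$ and $f_i\mid b$'', the statement to prove reduces to the equivalence $f_i\mid r_{\phi,d_i} \iff f_i\mid r_{\phi,n}$ and $f_i\mid r_{\phi,n+1}$.

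Next I would work inside $\F_{f_i}\simeq\F_{q^{d_i}}$, writing $\bar r_k := r_{\phi,k}\bmod f_i$ and letting $\theta,\bar g,\bar\Delta$ be the images of $T,g,\Delta$. The crucial observation is that $\theta,\bar g,\bar\Delta\in\F_{q^{d_i}}$ are fixed by the $q^{d_i}$-power, so the coefficients of the recurrence $\bar r_{k+2} = \bar g^{q^{k+1}}\bar r_{k+1} - (\theta^{q^{k+1}}-\theta)\bar\Delta^{q^k}\bar r_k$ are periodic in $k$ of period $d_i$. Encoding the recurrence by the column vectors $w_k := (\bar r_k,\bar r_{k+1})$ and the companion matrices
\[
  M_k = \begin{pmatrix} 0 & 1 \\ -(\theta^{q^{k+1}}-\theta)\bar\Delta^{q^k} & \bar g^{q^{k+1}} \end{pmatrix},
  \qquad w_{k+1} = M_k w_k,
\]
periodicity yields $M_{k+d_i} = M_k$, while $\det M_k = (\theta^{q^{k+1}}-\theta)\bar\Delta^{q^k}$ vanishes exactly when $d_i \mid k+1$ (since $\bar\Delta\neq0$ by good reduction, and $\theta$ generates $\F_{q^{d_i}}$ over $\Fq$, being a root of the irreducible polynomial $f_i$).

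The heart of the argument is a single identity. Specializing the recurrence at $k = d_i-1$, the coefficient $\theta^{q^{d_i}}-\theta$ vanishes and $\bar g^{q^{d_i}}=\bar g$, so $\bar r_{d_i+1} = \bar g\,\bar r_{d_i}$; hence $w_{d_i} = \bar r_{d_i}\,w_0$, where $w_0 = (1,\bar g)$. Setting $N := M_{d_i-1}\cdots M_1M_0$, this reads $Nw_0 = \bar r_{d_i}\,w_0$: the vector $w_0$ is an eigenvector of the one-period transition matrix with eigenvalue $\bar r_{d_i}$. Using $M_{k+d_i}=M_k$ repeatedly gives $w_{td_i} = N^{t}w_0 = \bar r_{d_i}^{\,t}w_0$, and writing $n = td_i + s$ with $0\le s<d_i$ and $t\ge1$ (possible since $d_i\le n$) one obtains
\[
  w_n = (M_{s-1}\cdots M_0)\,w_{td_i} = \bar r_{d_i}^{\,t}\,(M_{s-1}\cdots M_0)\,w_0 = \bar r_{d_i}^{\,t}\,w_s .
\]
Finally $w_s\neq0$, because $M_0,\dots,M_{s-1}$ are invertible (their indices are all $\not\equiv d_i-1\pmod{d_i}$ as $s<d_i$) and $w_0=(1,\bar g)\neq0$. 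Therefore $w_n=0$ if and only if $\bar r_{d_i}=0$; equivalently $\bar r_n=\bar r_{n+1}=0$ if and only if $\bar r_{d_i}=0$, which is exactly the equivalence isolated in the first paragraph.

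I expect the only genuine difficulty — the hard part — to be the bookkeeping around the degenerate index $k=d_i-1$, where $\det M_{d_i-1}=0$: it is precisely this degeneracy that produces the eigenvector relation $Nw_0=\bar r_{d_i}w_0$ and rules out a naive ``run the recurrence backward'' argument, and it has to be combined cleanly with the boundary cases $d_i=n$ and $s=0$. Everything else is a direct unwinding of definitions and of \cref{prop_ordin_frobenius_trace}.
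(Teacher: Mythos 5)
Your argument is correct and complete: the reduction to the vanishing of $\bar r_{d_i}$ via \cref{rem:hasseinvariant}, the eigenvector identity $Nw_0=\bar r_{d_i}w_0$ coming from the degenerate factor at $k=d_i-1$, and the invertibility of the remaining transfer matrices together give exactly the required equivalence. The paper itself only cites \cite[Lemma~6]{doliskani_drinfeld_2021} without reproducing a proof, but it explicitly flags the order-$2$ recurrence as the key ingredient, and your periodic transfer-matrix argument is precisely the intended route.
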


\cref{prop:lemme-6} readily suggests a method for factoring $f$: we compute 
the gcd of $f$ and $\overline{h}_{\phi, f}$ and hope that it yields a
nontrivial divisor of $f$. This will actually occur as soon as $f$ has two
irreducible divisors $f_1$ and $f_2$ such that $\phi$ has ordinary reduction
modulo $f_1$ and supersingular reduction modulo $f_2$.

\subsubsection{Drinfeld modules with complex multiplications}
\label{ssec:facto:complexmultiplication}

To ensure that the previous event will occur with good probability, one must carefully choose $\phi$.
Indeed, picking it randomly would not work. However, this changes a lot when considering Drinfeld modules with complex multiplications, defined as
follows.

\begin{definition}
A Drinfeld module $\phi : A \to K\{\tau\}$ of rank $2$ 
has \emph{complex multiplications}
if its endomorphism algebra $\End^0_{\Ks}(\phi) = \End_{\Ks}(\phi) \otimes_A K$ 
(see~\cref{subsection:endomorphism-ring}) is isomorphic, as an $A$-algebra, to an imaginary quadratic function field.
\end{definition}

Drinfeld modules with complex multiplications
are scarcer than other Drinfeld modules (those whose endomorphism algebra is
isomorphic to $K$), but they enjoy extra criteria for supersingular reduction.
Namely, if $f_i$ is unramified in $\End^0_{\Ks}(\phi)$, then $\phi$ has good
supersingular reduction at $f_i$ if, and only if, $f_i$ is inert in
$\End^0_{\Ks}(\phi)$. Together with \cref{prop:lemme-6}, this gives the following.

\begin{proposition}
\label{prop:lemme-6-inert}
  We assume that $\phi$ has complex multiplications, and that the irreducible
  divisors of $f$ are all unramified in $\End^0_{\Ks}(\phi)$. 
  Then $\gcd(\overline{h}_{\phi, f}, f)$ is the product of all
  monic prime factors $f_i$ of $f$ that are inert in
  $\End^0_{\Ks}(\phi)$.
\end{proposition}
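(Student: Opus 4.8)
The plan is to combine the general gcd criterion of \cref{prop:lemme-6} with the complex-multiplication criterion for supersingular reduction recalled just above the statement. First I would reduce the claim to a divisibility statement: since $f$ factors into monic irreducibles in $A = \Fq[T]$, it suffices to determine, for each monic prime $f_i \mid f$, whether $f_i$ divides $\gcd(\overline{h}_{\phi,f}, f)$, and to check that no prime divides this gcd to a higher power than it divides $f$. By \cref{prop:lemme-6}, since by hypothesis $\phi$ has good reduction at every $f_i \mid f$, the prime $f_i$ divides $\overline{h}_{\phi,f}$ if and only if $\phi$ has supersingular reduction at $f_i$. Hence $f_i$ divides $\gcd(\overline{h}_{\phi,f}, f)$ if and only if $\phi \bmod f_i$ is supersingular.

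Second, I would invoke the CM criterion: because $\phi$ has complex multiplications and each $f_i$ is, by assumption, unramified in $\End^0_{\Ks}(\phi)$, the reduction $\phi \bmod f_i$ is supersingular if and only if $f_i$ is inert in $\End^0_{\Ks}(\phi)$ (this is precisely the statement recalled before the proposition, itself a function-field avatar of Deuring's criterion: in an imaginary quadratic CM order, a prime where the module has good reduction gives a supersingular reduction exactly when the Frobenius fails to generate the quadratic field, i.e.\ when the prime does not split, and unramifiedness rules out the ramified case). Chaining the two equivalences gives: $f_i \mid \gcd(\overline{h}_{\phi,f}, f)$ if and only if $f_i$ is inert in $\End^0_{\Ks}(\phi)$.

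Third, I would treat multiplicities to conclude that the gcd is exactly the \emph{squarefree} product of the inert prime factors. On one hand $\gcd(\overline{h}_{\phi,f}, f)$ divides $f$, so each prime occurs in it at most to the multiplicity with which it occurs in $f$. On the other hand, one checks that $\overline{h}_{\phi,f}$ is squarefree at the primes of good reduction: $\overline{h}_{\phi,f} = \gcd(r_{\phi,\deg f} \bmod f, \, r_{\phi,\deg f+1}\bmod f)$, and modulo a supersingular prime $f_i$ the Hasse invariant $r_{\phi,\deg f_i}$ vanishes to order exactly one — this is where the order-$2$ linear recurrence satisfied by $(r_{\phi,k})$ is used, together with the fact that two consecutive terms of a nondegenerate order-$2$ recurrence cannot both vanish to high order modulo $f_i$ without the recurrence degenerating, which good reduction forbids. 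Combining the divisibility from both sides yields that $f_i$ divides $\gcd(\overline{h}_{\phi,f}, f)$ to multiplicity exactly $\min(1, v_{f_i}(f)) = v_{f_i}(f)$ when $f_i$ is inert and $0$ otherwise, giving the asserted product formula.

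The main obstacle I expect is the multiplicity bookkeeping in the third step: showing that $\overline{h}_{\phi,f}$ contributes \emph{exactly} one factor of each supersingular prime (rather than possibly more), which is needed to get the product — not merely the radical — statement and which forces a careful analysis of the $2$-term recurrence modulo $f_i$ and of how $\gcd$ with $f$ interacts with it. The first two steps are essentially a concatenation of results already established (\cref{prop:lemme-6} and the CM reduction criterion) and should be routine.
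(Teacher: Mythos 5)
Your first two steps are exactly the paper's argument: the proposition is stated there without a separate proof, as the immediate concatenation of \cref{prop:lemme-6} (for unramified primes of good reduction, $f_i \mid \overline{h}_{\phi,f}$ if and only if $\phi \bmod f_i$ is supersingular) with the CM criterion recalled just before it (supersingular reduction at an unramified $f_i$ if and only if $f_i$ is inert). So on the substance you match the paper. Your third step, on multiplicities, goes beyond what the paper does; the issue only arises when $f$ is not squarefree (in the factorisation context one reduces to squarefree $f$ first, and ``the product of all monic prime factors'' is to be read over distinct factors), so it is not needed for the intended statement. Moreover, as written your justification of ``vanishes to order exactly one'' is shaky: the backward induction on the order-$2$ recurrence requires the coefficient $\bigl(T^{q^{k+1}}-T\bigr)\Delta^{q^k}$ to be a unit modulo $f_i$, and $T^{q^{k+1}}-T$ does vanish modulo $f_i$ whenever $\deg(f_i)$ divides $k+1$, so that argument would need repair if one really wanted the multiplicity refinement. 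None of this affects the correctness of the proof of the proposition as the paper intends it.
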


Our task now is thus to find an explicit Drinfeld module $\phi : A \to K\{\tau\}$ with complex multiplications.
We explain how to do so when $q$ is odd; when $q$ is even, we refer to \cite[Remark~7]{doliskani_drinfeld_2021}.
Picking an element $a \in \Fq$, we consider the field $K(\omega)$, where $\omega$ is a square root of $T{-}a$,
together with the Drinfeld module $\psi : A \to K(\omega)\{\tau\}$ defined by 
$$\psi_T = (\tau + \omega)^2 + a = T + (\omega + \omega^q) \tau + \tau^2.$$
One readily checks that the Ore polynomial $\tau + \omega$ defines an endomorphism $u$ of $\psi$ whose square is the scalar multiplication by $T{-}a$.
We thus get a ring homomorphism $K(\omega) \to \End^0(\psi)$, $\omega \mapsto u$, proving that $\psi$ has complex multiplications.
Nonetheless, $\psi$ is not defined over $K$. We fix this issue by noticing that its $j$-invariant (see \cref{ssec:drinfeld:isomorphisms}) is
\begin{equation}
\label{equation:j-facto}
  j = (\omega + \omega^q)^{q+1}
    = \omega^{q+1}(1 + \omega^{q-1})^{q+1}
    = (T-a)^{\frac{q+1}{2}}\left(1 + (T-a)^{\frac{q-1}{2}}\right)^{q+1}
\end{equation}
and so it lies in $K$ given that $q$ is odd.
We can then consider the Drinfeld module 
$$\phi : A \to K\{\tau\}, \quad T \mapsto \phi_T = T + \tau + j^{-1} \tau^2.$$
It is isomorphic to $\psi$ over $\Ks$; hence it has complex multiplications as well.

\begin{sageexplain}
Doliskani, Narayanan, and Schost's algorithm can be easily implemented
as follows.

  \begin{sagecolorbox}
  \begin{sagecommandline}

    sage: def factor_DNS(f, a):
    ....:     n = f.degree()
    ....:     j = (T-a)^4 * (1 + (T-a)^3)^8   # here $q=7$
    ....:     _, Delta, _ = j.xgcd(f)   # $\Delta = j^{-1} \bmod{f}$
    ....:     r = hasse_sequence(1, Delta, n+1, f)
    ....:     return gcd([f, r[n], r[n+1]])

  \end{sagecommandline}
  \end{sagecolorbox}

In what precedes, we use Equation~\eqref{equation:j-facto} for the definition of
$j$, and $a$ is a parameter in the ground field $\mathbb F_7$.
Depending on the value of $a$, the function may or may not output a nontrivial
divisor. Below, we see that a nontrivial divisor of 
$$f = (T^2{+}1)\cdot (T^2{+}2) = T^4 + 3T^2 + 2$$
is found when $a = 1$.

  \begin{sagecolorbox}
  \begin{sagecommandline}

    sage: f = T^4 + 3*T^2 + 2
    sage: factor_DNS(f, a=0)
    sage: factor_DNS(f, a=1)
    sage: factor_DNS(f, a=2)

  \end{sagecommandline}
  \end{sagecolorbox}

Indeed, when $a = 1$, we check that the corresponding Drinfeld module has
ordinary reduction modulo the first factor of $f$, but has supersingular reduction modulo the second one.

  \begin{sagecolorbox}
  \begin{sagecommandline}

    sage: j = (T-1)^4 * (1 + (T-1)^3)^8

    sage: F1.<U1> = F7.extension(T^2 + 1)
    sage: phi1 = DrinfeldModule(A, [U1, 1, 1/j(U1)])
    sage: phi1.is_supersingular()

    sage: F2.<U2> = F7.extension(T^2 + 2)
    sage: phi2 = DrinfeldModule(A, [U2, 1, 1/j(U2)])
    sage: phi2.is_supersingular()

  \end{sagecommandline}
  \end{sagecolorbox}

  We use Equation~\eqref{equation:j-facto} again for the definition of $j$, and
  Remark~\ref{rem:j-inv-representative} for the Drinfeld modules.

\end{sageexplain}

\subsubsection{Implementation details and complexity}
\label{ssec:facto:implementation}

Turning the strategy presented above into an actual efficient algorithm requires some additional arguments and optimisations.

First of all, one needs to estimate the probability of success of the
algorithm. By \cref{prop:lemme-6-inert}, it can be estimated by studying
the proportions of polynomials of $A$ that remain inert in the imaginary 
quadratic function field $K\big(\sqrt T\big)$. This, in turn, can be done 
using standard results from the arithmetic of function fields, such as the 
Chebotarev density theorem or the Riemann--Hurwitz genus formula.
All in all, one finally finds that the probability of success in one round
of the algorithm is at least $\frac 1 4$ (see \cite[Lemma~8]{doliskani_drinfeld_2021} and the
discussion starting after Lemma~6 in \cite{doliskani_drinfeld_2021}).

Another important ingredient is a fast method for computing
$\overline{h}_{\phi, f}$. Indeed, a direct naive method is not performant
enough for the overall factorisation algorithm to match the complexity 
of the state-of-the-art. A crucial contribution of Doliskani, Narayanan
and Schost is actually a new procedure to compute $\overline{h}_{\phi, f}$. It
relies on modular composition and fast multiplication of two-by-two matrices
with entries in $A/fA$.

Regarding modular composition, the current algorithm with the best asymptotic complexity is that of Kedlaya and Umans \cite{kedlaya_fast_2011}.
One of the innovations of this algorithm was to perform bit operations, as
opposed to only algebraic operations. 
However, no efficient implementation of this algorithm yet exists.
For these reasons, the authors of \cite{doliskani_drinfeld_2021} elect to give two kinds of complexity analyses: one using only algebraic algorithms (even for modular composition) and counting
operations in $\Fq$, the second using the Kedlaya--Umans algorithm, and counting
bit operations. In the second case, one obtains a total bit complexity which grows
in $n^{\frac 3 2 + o(1)}$ (see \cite[Theorem~2]{doliskani_drinfeld_2021} for more general statements).

\subsubsection{Comparison with other algorithms}
\label{facto:compar}
For comparing algorithms, it is important to look at both the asymptotic
complexity and the practical efficiency. These usually largely differ. In the
case of factorisation of polynomials in $A$, the asymptotically best (as of
2025) algorithm is the combination of the Kaltofen--Schoup factorization algorithm \cite{kaltofen_subquadratic-time_1995} with the Kedlaya--Umans algorithm for modular composition \cite{kedlaya_fast_2011}. This leads to an asymptotic complexity, for factorising a polynomial in $A$ of degree $n$, of $n^{\frac 3 2 + o(1)}$ bit operations. As mentioned, this algorithm is highly impractical and would
only theoretically become gainful for inputs of significant size. Therefore,
the Cantor--Zassenhaus algorithm \cite{cantor_new_1981}, which leverages ideas developed by Berlekamp \cite{berlekamp_factoring_1967}, is often the
preferred method for implementation. It factors a polynomial with almost quadratic complexity (counted in operations in $\Fq$) with respect to $n$.

We also mention that all polynomial-time methods are probabilistic. The problem of finding a deterministic polynomial algorithm for the factorisation of polynomials on a finite field is still open.

\paragraph{Methods using Drinfeld modules.}

The known factorisation methods using Drinfeld modules are as follows.
\begin{itemize}
  
  \item The method of van der Heiden \cite{van_der_heiden_factoring_2004,
    van_der_heiden_addendum_2004} is an analogue of Lenstra's \emph{Elliptic Curve Method} method for factorising integers \cite{lenstra_factoring_1987}.
    Running the algorithm on a polynomial of degree $n$ costs $O(n^2 \log q +
    dn^3)$ arithmetic operations in $\Fq$
    \cite[Proposition~4.3]{van_der_heiden_factoring_2004}. The method requires
    $f$ to be given as a product of $k$ distinct prime polynomials with a
    certain degree $d$. Finding $d$ and normalising accordingly can be
    achieved by computing gcd's and derivatives following the first steps of
    the Berlekamp \cite{berlekamp_factoring_1967} and Cantor--Zassenhaus
    \cite{cantor_new_1981} algorithms. The method of van der Heiden is based on the identification of certain prime factors in the characteristic polynomial
    of the endomorphism $x \phi_T(x)$ of $K$, where $K$ is a given finite field
    and $\phi$ a random Drinfeld module with rank $d$. We can therefore use the
    independent works of Reiner \cite{reiner_number_1961} and Gerstenhaber
    \cite{gerstenhaber_number_1961} to compute the probability of finding a
    factor of $f$ in one step \cite[Remark~4.5]{van_der_heiden_factoring_2004}. Knowing bounds on the proportion of
    $d$-by-$d$ matrices with coefficients in $\Fq$ that have an irreducible
    characteristic polynomial, van der Heiden shows that by picking $\phi$
    randomly, the success probability is about $1 - \frac{(d-1)^k + 1}{d^k}$,
    provided that $q$ is sufficiently large (see
    \cite[Proposition~4.3]{van_der_heiden_factoring_2004} for a general
    statement). We also mention the survey of Randrianarisoa, which specifically targets the work of van der Heiden
    \cite{randrianarisoa_number_2014}.

  \item While van der Heiden proposed the first known complexity and success analysis of a factorisation method based on Drinfeld modules, the ideas behind his algorithm can be traced back to the work of Potemine and
    Panchishkin \cite[Section~4.3]{potemine_arithmetique_1997},
    \cite{panchishkin_algorithmes_1993}. This was acknowledged by van der
    Heiden \cite{van_der_heiden_addendum_2004}. In
    \cite{panchishkin_algorithmes_1993}, Panchiskin also mentions that one
    could adapt classical methods (\emph{Elliptic Curve Primality Proving}) for
    primality testing in $A$ using Drinfeld modules. Panchishkin also discusses
    a possible analogue of the Schoof point counting algorithm
    \cite{schoof_elliptic_1985}. One is now known to be described in
    \cite[Section~6]{musleh_computing_2019} (see \cref{sec:review-frobenius-charpoly}).

  \item Shortly before the work of Doliskani, Narayanan and Schost, Narayanan
    proposed two factorisation methods using Drinfeld modules
    \cite{narayanan_polynomial_2018}.
    \begin{itemize}

      \item The first one is not a factorisation algorithm \emph{per se}, but an algorithm to compute the degree of the smallest prime factor of $f$.
        Knowing this information allows for significant speed-ups in the Kedlaya--Umans version of the Kaltofen--Schoup: the bit asymptotic
        complexity falls to $n^{1 + o(1)} (\log q)^{2 + o(1)}$.
        Narayanan reads
        this smallest degree on the ``number of points''
        (see \cref{th:number-points}) of a random rank-$2$ Drinfeld module
        over a finite field.
      \item The second method is a tweaked version of the Berlekamp algorithm, in which the action of a Frobenius endomorphism is replaced by the
        action of a Drinfeld module. No complexity analysis is provided by the author.
    \end{itemize}
\end{itemize}

Among these methods, only that of Doliskani, Narayanan and Schost
\cite{doliskani_drinfeld_2021} matches the asymptotic state of the art. It is also the only method among those that does not use general random Drinfeld
modules, but random Drinfeld modules with complex multiplications. The authors
have developed an implementation using the NTL C++ library \cite{NTL}, which uses more straightforward methods for modular composition. It is slower
than the default NTL function by a constant factor, due to the manipulation of
two-by-two polynomial matrices, according to the authors.

\subsection{Drinfeld modules meet cryptography}
\label{sub:crypto}

Cryptography relies on computationally hard problems, be them used in encryption, key exchange, signature or authentication protocols. Classical computationally hard problems include factoring integers, computing discrete
logarithms on finite fields and elliptic curves over finite fields, or
computing isogenies between elliptic curves over finite fields. These are used
in the RSA \cite{rivest_method_1978}, Diffie-Hellman \cite{diffie_new_1976},
Elliptic Curve Diffie-Hellman \cite{koblitz_elliptic_1987, miller_use_1986},
and SQIsign cryptosystems \cite{de_feo_sqisign_2020}, respectively.

In the hope of gaining efficiency with function field arithmetics, there have
been many attempts at introducing cryptosystems based on Drinfeld modules, inspired by the aforementioned classical constructions. Unfortunately, all these attempts have failed so far.

\begin{itemize}

  \item In \cite{scanlon_public_2001}, Scanlon defined the \emph{Drinfeld
    module discrete logarithm problem}, and the \emph{Drinfeld module inversion problem}, in the hope of building Drinfeld module analogues of the RSA and
    Diffie-Hellman cryptosystems. He proved the computational ease of these
    problems in the same paper.

  \item In \cite{joux_drinfeld_2019}, Joux and Narayanan derived analogues of
    the SIDH \cite{jao_towards_2011} and CSIDH \cite{castryck_csidh_2018}
    cryptosystems, and claimed their weakness.
  
  \item In \cite{leudiere_hard_2022}, Leudière and Spaenlehauer proposed an
    analogue of the CRS cryptosystem. The work of Wesolowski on the computation
    of isogenies of Drinfeld modules (\cref{computing_isogenies}) proved that this new construction, as well as that of Joux and Narayanan, cannot be
    used safely.
\end{itemize}

\begin{remark}
  We mention that the SIDH cryptosystem was broken in 2022, following a
  series of attacks started by Castryck and Decru \cite{castryck_efficient_2023},
  Maino and Martindale \cite{maino_attack_2022}, and that culminated in the
  unconditional cryptanalysis of Robert \cite{robert_breaking_2023}. As of 2025, CSIDH has not proven insecure. However, the ideas behind SIDH and
  CSIDH were adapted in the construction of SQISign, a signature protocol in the NIST competition for standardising post-quantum cryptosystems.
\end{remark}
Let $\phi$ be a Drinfeld module defined over a finite $A$-field $(F, \gamma)$.
Given $x,y \in F$, the \emph{Drinfeld module discrete logarithm problem}
consists in finding, if it exists, an element $a \in A$ such that $\phi_a(x) =
y$. As far as it is concerned, the \emph{Drinfeld module inversion problem}
asks, given $a \in A$ such that $\phi_a$ is a bijective $\Fq$-linear
endomorphism of $K$, to find $b \in A$ such that $\phi_b$ (as an $\Fq$-linear
endomorphism of $K$) is the compositional inverse of $\iota(\phi_a)$. Scanlon
shows that both problems can be solved using linear algebra techniques: the
maps $a \mapsto \phi_a(x)$ and $x \mapsto \phi_a(x)$ are both $\Fq$-linear. In
\cite{gillard_utilisation_2003}, Gillard, Leprévost, Panchishkin and Roblot
proposed a fix for the \emph{Drinfeld module inversion problem}. Their proposal was proven insecure later, by Blackburn, Cid and Galbraith, in
\cite{blackburn_cryptanalysis_2006}.

As far as the constructions of Joux--Narayanan and Leudière--Spaenlehauer are concerned, these are all insecure because one can efficiently compute isogenies
between Drinfeld modules over a finite field, thanks to the work of Wesolowski
(see \cref{computing_isogenies}).

The common point of these attacks is a finite-dimensional vector space over
$\Fq$. No such thing exists in the classical case, because no field lies
beneath the ring of integers. We refer to
\cite[Appendix~A]{leudiere_morphisms_2024} for details on these attempts.

\begin{remark}

  It may not be obvious that the Drinfeld module inversion problem is related
  to the RSA cryptosystem. Yet, there are important symmetries between the two
  constructions. Both involve a module (over $\Z$ or over $A$). In each case,
  we have an encryption cryptosystem, and we now explain (see
  \cref{table:RSA}) that the encryption
  function can be seen as a linear endomorphism on a torsion space of this
  module. For simplicity, let us assume that we work with rank-$1$ Drinfeld
  modules, and that we only consider elements $\phi_a$ when $a$ is away from
  the $A$-characteristic. 

  \begin{table}[h]
    \centering  
    \begin{tabular}{|C{2cm}||c|c|}
      \hline
      & \textbf{Classical RSA} & \textbf{Drinfeld module RSA} \\\hline
      \hline

      \textbf{Module}
      & \makecell{$\Z$-module defined by: \\
          $\begin{array}{rcl}
            \Z\times \Q^\ast & \to & \Q^\ast \\
            (n, x) &\mapsto & n \ast x = x^n
          \end{array}$
        }
      & \makecell{$A$-module defined by: \\
          $\begin{array}{rcl}
            A \times F & \to & F \\
            (a, x) &\mapsto & a \ast x = \phi_a(x)
          \end{array}$
        }
      \\ \hline

      \textbf{Torsion}
      & $n$-torsion (in $\overline{\Q}$): $\Z/n\Z$
      & $a$-torsion (in $\Fs$): $A/aA$
      \\ \hline

      \textbf{Encryption function}
      & \makecell{Linear automorphism \\
        $\begin{array}{rcl}
          \Z/n\Z & \to & \Z/n\Z \\
          x &\mapsto & e \ast x
        \end{array}$ \\
        with $e$ carefully chosen}
      & \makecell{Linear automorphism \\
        $\begin{array}{rcl}
          F & \to & F \\
          x &\mapsto & a \ast x
        \end{array}$ \\
        with $a$ carefully chosen}
      \\ \hline
    \end{tabular}
  \caption{Comparison between the classical RSA setting and its Drinfeld analogue.}\label{table:RSA}
  \end{table}

  One might wonder why the Drinfeld module RSA encryption function would not be
  defined on $A/aA$. In the classical case, the isomorphism between the
  $n$-torsion and $\Z/n\Z$ is explicit, and given by primitive $n$-th roots of
  unity. This allows us to view the multiplicative law $\ast$ directly on
  $\Z/n\Z$. No such explicit isomorphism is given in the construction of the
  Drinfeld module case (see \cref{prop:torsion}). That being said, given that
  $F$ is finite, all elements of $\phiact F$ are torsion elements, namely, they
  are all of $\chi_\phi(1)$-torsion---where $\chi_\phi$ is the characteristic polynomial
  of the Frobenius endomorphism---by \cref{th:number-points},
  and $\phiact F$ can be seen as an
  $A$-submodule of a torsion space.

  These similarities between the classical and Drinfeld module RSA construction
  come from the constructions of cyclotomic number fields and function fields.
  As we saw in \cref{applications_cyclotomic_theory}, cyclotomic number
  fields are obtained by adding roots of unity to $\Q$, while cyclotomic
  function fields are obtained by adding to $\Fq(T)$ the $a$-torsion of a rank-$1$ Drinfeld module defined over $\Cinf$.
\end{remark}

\subsection{Drinfeld modules meet linear codes}\label{sub:codes}

\emph{Error-correcting codes} are mathematical tools designed to detect and correct errors in transmitted or stored data. A data item is encoded by adding redundancy before being transmitted or stored. Then the original data can be recovered through a decoding operation, even if it has been corrupted during transmission in space and time. 
The most widely studied model of error-correcting codes is that of \emph{linear codes}. 

In this section, we present some known connections between Drinfeld modules and coding theory. Drinfeld modules played a significant role in the construction of asymptotically good codes, which historically gave algebraic geometry its credentials in coding theory. We briefly review this piece of history in \cref{subsub:tower}. Very recently, Drinfeld modules have reemerged in coding theory to design locally recoverable codes in the rank metric \cite{BDM24preprint}. We present this construction in \cref{subsub:LRC}, which originally relies on Carlitz modules, and we discuss its generalisation to higher-rank Drinfeld modules.

\subsubsection{Linear codes in the Hamming and the rank metrics}

A \emph{linear code} is a $\Fq$-vector subspace of some finite-dimensional $\Fq$-linear ambient space. The efficiency of a linear code is captured by three main parameters, which are its \emph{length}, defined as the dimension of the ambient space in which the code sits, its \emph{dimension} as a vector space, and its \emph{minimum distance}, which depends on the metric put on the ambient space. Notably, the dimension of a code represents the quantity of data that one can encode with it, the length is the size of the transmitted message, which includes the redundancy added to the raw data, while the minimum distance is related to the error detection and correction capacity. Indeed, it is well known that a code with minimum distance $d$ can correct up to $\frac{d-1}{2}$ errors or $d-1$ erasures.

 Traditionally, linear codes are defined as $\Fq$-vector subspaces of $\Fq^n$ endowed with the \emph{Hamming distance}, which measures the number of positions in which two codewords, seen as vectors in $\Fq^n$, differ.
 \begin{definition}[Codes in the Hamming metric]   
The \emph{Hamming distance} between $x,y\in\Fq^{n}$ is defined as 
\[d(x,y)=\#\{i\in[n]\;:\; x_i\neq y_i\}.\]
A \emph{code in the Hamming metric} $\mathcal{C}$ is an $\Fq$-linear subspace of $\Fq^{n}$ endowed with the Hamming distance. Its \emph{dimension} $k$ is $\dim_{\Fq} \mathcal{C}$, its \emph{minimum distance} is defined as 
\begin{align*}
d(\mathcal{C})\coloneqq
&\min\{d(x,y)\;:\; x,y\in\mathcal{C},x\neq y\}\\
=&\min\{d(x,0)\;:\; x\in\mathcal{C},x\neq 0\}.
\end{align*}
We call a code $\mathcal{C}\subseteq \Fq^{n}$ of dimension $k$ and minimum distance $d(\mathcal{C})=d$ a $[n,k,d]_q$-code. 
\end{definition}

A large family of codes in the Hamming metric is composed of so-called \emph{evaluation codes}, which are built, as the name suggests, by evaluating a space of functions at a set of elements. This is the case of \emph{Reed--Solomon} \cite{RS60} and \emph{Reed--Muller codes} \cite{reed1954class, muller1954application}, where one evaluates polynomials in one or $m$ variables at distinct elements of $\Fq$ or $\Fq^m$, respectively, and of \emph{Algebraic Geometry codes} \cite{Goppa81}, obtained by evaluating rational functions at points over an algebraic curve. A $[n,k,d]_q$-code in the Hamming metric respects the so-called \emph{Singleton bound} $k+d\leq n+1$. Codes attaining this bound are called \emph{Maximum Distance Separable} (MDS). The aforementioned Reed--Solomon code, described below, provides an example of an MDS code.

\begin{definition}
Let $k \leq n \leq q$ be integers. Let $\mathbf{x}=(x_1,\dots,x_n)$ be a tuple of pairwise distinct elements of the finite field $\Fq$. The \emph{Reed--Solomon code} of support $\mathbf{x}$ and dimension $k$ is defined as
    \[\textsf{RS}_k(\mathbf{x})=\{(f(x_1),\dots,f(x_n)) : f \in \Fq[T], \: \deg f < k \}.\]
    It is an $[n,k,n-k+1]_q$-code.
\end{definition}

More recently, metrics other than that of Hamming have been considered for error-correcting codes, better suited to address specific information theory problems. Among them is the \emph{rank metric} \cite{Delsarte}. 

\begin{definition}[Codes in the rank metric] 
Let $\Fq^{n\times m}$ denote the space of $n\times m$ matrices with coefficients in $\Fq$. The \emph{rank distance} between $M,N\in\Fq^{n\times m}$ is defined as 
\[d_\rk(M,N)=\rk(M-N).\]
A \emph{rank metric code} $\mathcal{C}$ is an $\Fq$-linear subspace of $\Fq^{n\times m}$ endowed with the rank distance. Its dimension $k$ is  $\dim_{\Fq} \mathcal{C}$, its minimum rank distance is defined as 
\begin{align*}
d_\rk(\mathcal{C})\coloneqq
&\min\{d_\rk(M,N)\;:\; M,N\in\mathcal{C},M\neq N\}\\
=&\min\{\rk(M)\;:\; M\in\mathcal{C},M\neq \boldsymbol{0}\}.
\end{align*}
We call a code $\mathcal{C}\subseteq \Fq^{n\times m}$ of dimension $k$ and minimum distance $d \coloneqq d_\rk(\mathcal{C})$ an \emph{$[nm,k,d]_q$ rank-metric code}. 
\end{definition}

\begin{remark}
Here, we present rank-metric codes in the formalism of matrices to comply with the notation of \cite{BDM24preprint}, a paper we will present in \cref{subsub:LRC}. However, rank-metric codes can also be seen as spaces of linear morphisms. Let $V$ be a $\Fq$-subspace of $\Fqm$ of dimension $n$. The \emph{rank distance} between ${f,g\in\Hom(V,\Fqm)}$ can naturally  be defined as 
$d_\rk(f,g)=\rk(f-g)$.
A \emph{rank metric code} $\mathcal{C}$ is then an $\Fq$-linear subspace of $\Hom(V,\Fqm)$ endowed with the rank distance. One can recover the matrix point of view by fixing $\Fq$-bases for $V$ and $\Fqm$.
\end{remark}

A particular case of rank-metric codes that we will focus on is the so-called \emph{vector rank metric codes}. After choosing an $\Fq$-basis of $\Fqm$, one can associate to any vector $x\in\Fqm^n$ a matrix $M_x\in\Fq^{n\times m}$. Then any vector subspace of $\Fqm^n$ can be endowed with the rank metric and be regarded as a matrix code. Compared to matrix rank-metric codes, they naturally come with the extra property of $\Fqm$-linearity.
\begin{definition}
    A \emph{(vector) rank metric code} $\mathcal{C}$ is an $\Fqm$-linear subspace of $\Fqm^{n}$ endowed with the rank distance. Its dimension $k$ is  $\dim_{\Fqm} \mathcal{C}$, its minimum rank distance is defined as 
\[d_\rk(\mathcal{C})\coloneqq
\min\{\rk(M_x)\;:\; x\in\mathcal{C},x\neq \boldsymbol{0}\}.\]
We call a code $\mathcal{C}\subseteq \Fqm^n$ of dimension $k$ and minimum distance $d_\rk(\mathcal{C})=d$ an \emph{$[n,k,d]_{q^m}$ rank-metric code}. 
\end{definition}

Rank-metric codes are particularly effective in contexts where errors may affect entire rows or columns rather than isolated symbols, such as in network coding, space-time coding, and cryptography. The interested reader can consult \cite{bartz2022rank} for a survey on the applications of rank metric codes.
\smallskip

The theory of Ore polynomials allowed the construction of evaluation codes in the rank metric, leading to the analogues of all the aforementioned codes \cite{Delsarte,ALG18,ACLN21,BC25,BC24}. In particular, the counterparts of Reed--Solomon codes in the rank metric world are Gabidulin codes, introduced by Delsarte \cite{Delsarte} and Gabidulin \cite{Gabidulin}.

\begin{definition}
For a positive integer $\kappa \leq m$ and a $\Fq$-linear subspace $W$ of $\Fqm$
of dimension $n$, the associated \emph{Gabidulin code} is defined as the image of the map 
\Function
      {\enc}
      {\Fqm\{\tau\}_{\leq \kappa-1}}
      {\Hom_{\Fq}(W,\Fqm)\simeq\Fq^{n\times m}}
      {f}
      {f|_W}
where $\Fqm\{\tau\}_{\leq \kappa-1}$ is the vector space of Ore polynomials of degree bounded by $\kappa-1$, and we recall from Subsection~\ref{subsubsection:ore-pols-kernels} that, in a slight abuse of notation, we continue to write $f$ for the induced linear map on $\Fqm$.

The Gabidulin code is an $[mn,m\kappa,n{+}1{-}\kappa]_q$ rank-metric code. One can look at a Gabidulin code as an $\Fqm$-linear subspace of $\Fqm^n$, thus obtaining a $[n,\kappa,n{+}1{-}\kappa]_{q^m}$ (vector) rank-metric code.
\end{definition}
The parameters of rank-metric codes satisfy the rank-Singleton bound \cite[Theorem 3.5]{Gor21}:
$$k\leq m (n-d_\rk(\mathcal{C})+1).$$
Codes whose parameters reach this bound are called \emph{Maximum Rank Distance} (MRD) codes. Gabidulin codes provide examples of MRD codes.

While Ore polynomials have been successfully applied in coding theory in the last forty years, Drinfeld modules have not been used in this context until very recently, when Carlitz modules were exploited to construct codes in the rank metric with optimal properties for distributed storage \cite{BDM24preprint}, giving so-called locally recoverable codes. This construction will be described in \cref{subsub:LRC}. Before that, in the next section, we briefly discuss the importance of Drinfeld modular curves for codes in the Hamming metric.

\subsubsection{Modular curves for asymptotically good codes in the Hamming metric}\label{subsub:tower}

Given a sequence of codes $(C_s)_{s\in \N}$ of parameters $[n_s,k_s,d_s]_q$, we define its \emph{rate} $R \coloneqq \lim_{s \rightarrow \infty} k_s/n_s$ and its relative distance $\delta \coloneqq \lim_{s \rightarrow \infty} d_s/n_s$. A sequence of codes $(C_s)_{s\in \N}$ is said to form a family of \emph{asymptotically good codes} if $R > 0$ and $\delta>0$.

The \emph{asymptotic Gilbert--Varshamov bound} \cite[Proposition~8.4.4]{Sti09} states that families of increasingly long random codes achieve $R > 1 -h_q(\delta) + o(1)$ where $h_q$ is the $q$-ary entropy function defined by \[\forall x \in [0,1], \quad h_q(x)=x\log_q(q-1)-x\log_q(x)-(1-x)\log_q(1-x).\] For a long time, it was believed that the Gilbert--Varshamov bound was indeed a bound, that is, no family of structured codes could achieve a better rate and relative distance than random codes, until Tsfasman, Vlăduţ, and Zink constructed Algebraic Geometry codes that beat the Gilbert--Varshamov bound \cite{TVZ82}. 
The parameters of an Algebraic Geometry code constructed from a curve $\calX$  depend on the geometry of $\calX$. For instance, the length is bounded from above by the number of $\Fq$-points $\#\calX(\Fq)$ of $\calX$. Tsfasman, Vlăduţ, and Zink's construction relies on an asymptotically good tower of curves.

A \emph{tower of curves} is an infinite sequence of curves and surjective maps
\[\dots \twoheadrightarrow \calX_3 \twoheadrightarrow \calX_2 \twoheadrightarrow \calX_1 \twoheadrightarrow \calX_0,\]
which are all defined over a finite field $\Fq$ and whose genera $g(\calX_s)$ satisfy 
\begin{equation}\label{eq:tower}
	\lim_{s \rightarrow \infty} g(\calX_s) = +\infty.
\end{equation}
By the Hasse--Weil theorem, the number of $\Fq$-points of each curve $\calX_s$ is bounded by
\[\#\calX_s(\Fq) \leq q+1+2\sqrt{q}g(\calX_s).\]

The tower is said to be \emph{asymptotically good} if the number of $\Fq$-points on the curves $\calX_s$ grows significantly faster than their genera, \ie
\[\limsup_{n \rightarrow \infty} \frac{\#\calX_n(\Fq)}{g(\calX_n)} > 0.
\] 
From the Hasse-Weil bound, we know that $\frac{\#\calX_n(\Fq)}{g(\calX_n)}\leq 2\sqrt{q}$. Ihara \cite{ihara1981} noticed that this bound is far from being met for large-genus curves and introduced the quantity
\[A(q)=\limsup_{g \rightarrow \infty} \frac{\max\{\#\calX(\Fq) : \calX \text{ curve of genus }g \}}{g}, \]
now called \emph{Ihara's constant}. He proved that $A(q)  \geq \sqrt{q}-1$ when $q$ is a square. Later, Vlăduţ and Drinfeld proved that $A(q) \leq \sqrt{q}-1$
\cite{vluaduct1983}, meaning that $A(q)=\sqrt{q}-1$ when $q$ is a square. A tower $(\calX_n)$ is said to be \emph{optimal} if
\[\limsup_{n \rightarrow \infty} \frac{\#\calX_n(\Fq)}{g(\calX_n)} = \sqrt{q}-1.
\] 

Modular curves provide asymptotically good towers of curves, which are essential for constructing Algebraic Geometry codes with strong asymptotic performance. The classical modular curves $X_0(N)$, parametrising elliptic curves with cyclic $N$-isogenies, form asymptotically good towers over $\F_{p^2}$, for prime $p$. This was leveraged by Tsfasman, Vlăduţ, and Zink to construct Algebraic Geometry codes beating the Gilbert--Varshamov bound \cite{TVZ82}. The curious reader is invited to read \cite{C22survey} for a detailed introduction to this topic.

Similarly, Drinfeld modular curves, which parametrise rank-$2$ Drinfeld modules with level structures, also yield asymptotically good towers over any quadratic field $\F_{q^2}$ \cite{bassa2015good}, and more generally any non-prime field. These curves are particularly useful over finite fields $\Fq$ where $q$ is a square, as seen in the optimal towers constructed by Garcia and Stichtenoth \cite{GS95}. Elkies gave a modular interpretation for certain recursive towers previously studied in coding theory, such as the Garcia-Stichtenoth towers, providing a deeper structural explanation for their optimality \cite{E01}. More precisely, he showed that the tame case corresponds to classical modular curves \cite{E97}, while the wild case corresponds to Drinfeld modular curves \cite{E01}. Subsequently, many explicitly known recursively defined towers have been given a modular interpretation. Bassa, Beelen and Nguyen \cite{bassa2014good,bassa2015good} showed that the defining equations for these (classical or Drinfeld) modular towers can be read off directly from the modular polynomial. The theory of modular curves is therefore considered an efficient machinery to produce explicitly defined families of curves, particularly good and optimal towers of curves. We refer the reader to Beelen's survey \cite{beelen2022surveyrecursivetowersiharas} for more details.

\subsubsection{Locally recoverable codes in the rank metric}\label{subsub:LRC}
One main application of coding theory is data recovery in distributed storage systems. Originally, data replication was used to ensure reliability against node (\eg individual machine) failures. In the last decades, distributed storage systems have been transitioning to the use of \emph{erasure codes} as they offer higher reliability at significantly lower storage costs. One can easily check that an $[n,k,d]$ linear code can recover up to $d-1$ erasures. For instance, in 2014, Facebook deployed a version of the Hadoop Distributed File System (HDFS) that relies on a $[14, 10, 5]$ Reed--Solomon code and can thus resist as many as $4$ node failures. However, this requires accessing the $10$ other nodes and performing high-degree polynomial interpolation.

\emph{Locally recoverable codes} are a class of error-correcting codes designed to efficiently address this issue by allowing each symbol in a codeword to be recovered by accessing only a small subset of other symbols.  Locally recoverable codes were originally introduced in the Hamming metric \cite{GHSY12,lrc}. We recall their definition below.
\begin{definition}{\cite[Definition 2]{BTV17}}\label{def:hamminglocality}
A code $\mathcal{C}\subseteq \Fq^{n}$ is \emph{locally recoverable} with \emph{locality} $(t,\delta)$ if for every coordinate index $i\in\{1,\dots, n\}$ there exists a subset $\Gamma[i]\subseteq \{1,\dots, n\}$ containing $i$ such that $|\Gamma(i)| \leq t+\delta-1$ and the code $\mathcal C|_{\Gamma(i)}$ obtained by restricting $\mathcal{C}$ to the coordinates in $\Gamma(i)$ has minimumdistance $d(\mathcal C|_{\Gamma(i)})\geq \delta$.
\end{definition}
The definition was generalised to the rank metric in \cite{duursma} in the following way.

\begin{definition}\label{def:ranklocality}
A rank metric code $\mathcal{C}\subseteq \Fq^{n\times m}$ is \emph{locally recoverable} with \emph{rank-locality} $(t,\delta)$ if, for every column index $i\in\{1,\dots, m\}$, there exists a set of columns $\Gamma[i]\subseteq \{1,\dots, m\}$ containing $i$ such that $|\Gamma(i)| \leq t+\delta-1$ and the code $\mathcal C|_{\Gamma(i)}$ obtained by restricting $\mathcal{C}$ to the coordinates in $\Gamma(i)$ has minimum rank distance $d_\rk(\mathcal C|_{\Gamma(i)})\geq \delta$.
\end{definition}

\begin{remark}
The above definition is a natural extension of the one originally introduced in the Hamming metric. In particular, when the rank-metric code $\mathcal{C}$ is $\Fqm$-linear, it coincides exactly with the classical definition of locally recoverable codes in the Hamming setting. 
The notion of locality relates to code puncturings. A puncturing of a rank-metric matrix code is a projection onto a coordinate subspace that, in opposition to the Hamming setting, can be precomposed with a rank-preserving isomorphism $A \in GL_n(\Fq)$ \cite[Definition~2.15]{neri19}. \cref{def:ranklocality} only involves puncturings with trivial isomorphisms $A=I_n$. Therefore, the current notion of locality in the rank metric does not fully leverage the richness of the rank metric and may be considered somewhat unsatisfactory. However, since this is the definition used in \cite{BDM24preprint}, we will stick to it.
\end{remark}

A locally recoverable rank-metric code $\mathcal{C}\subseteq\Fq^{n\times m}$ 
of dimension $m\cdot k$ and with rank-locality $(t,\delta)$ satisfies the following Singleton-type bound \cite[Theorem 1]{KED19}
\begin{equation}\label{eq:lr-sing-bound}
    d_\rk(\mathcal{C})\leq n-k+1-\left(\left\lceil \frac{k}{t}\right\rceil -1\right)(\delta-1).
\end{equation}

In what follows, we present the construction of locally recoverable codes in the rank metric from Carlitz modules as outlined in \cite{BDM24preprint}, while generalising it to Drinfeld modules of any rank. 

Let $\phi: \Fq[T]\to \F_{q^m}\{\tau\}$ be a Drinfeld Module of rank $r$ with $\phi_T\coloneqq z+g_1 \tau + \dots + g_r\tau^r$. 
We consider the message space to be
\[\mathcal{M}\coloneqq\left\{\sum_{k=0}^s f_k(\tau)\phi_T^k \;:\; f_k\in \F_{q^m}\{\tau\}_{\leq t-1}\right\}.\]

For some positive integer $\ell$ such that $\ell\geq s+1$ we choose $a_1,\dots,a_\ell\in\Fq^*$ and build the polynomial $h=\prod_{i=1}^\ell (T{-}a_i)$. Considering the $h$-torsion  $\phi[h]$ as defined in Section \ref{ss:torsion}, we have $\phi[h]\simeq \oplus_{i=1}^\ell \phi[T{-}a_i]$. We formulate the following hypothesis:

\begin{equation}\label{hyp}\tag{\textbf{H}}
\forall i\in\{1,\dots,\ell\}, \quad \phi[T{-}a_i]\subseteq \Fqm.
\end{equation}

By \cref{prop:torsion}, we know that $\dim_{\Fq} \phi[T{-}a_i]=r$. 

\begin{definition}\label{def:enc}
    Consider the encoding map
    \Function
      {\enc}
      {\mathcal M}
      {\Hom_{\Fq}\big(\phi[h],\Fqm\big)=\bigoplus_{i=1}^\ell \Hom_{\Fq}\big(\phi[T{-}a_i],\Fqm\big)\simeq \Fqm^{\ell r}}
      {f}
      {f|_{\phi[h]}}
      where the last isomorphism is given after the choice of bases of $\phi[T{-}a_i]$, for all $i\in\{1,\dots,\ell \}$.  
We define the code $\mathcal{C}(\phi,h)$ as the image $\enc(\mathcal{M})$. 
\end{definition}

The parameters of the locally recoverable codes in the rank metric obtained with Drinfeld modules of rank $1$ are given in \cite[Theorem~3.4]{BDM24preprint}. Here, we largely follow their proof to show, in the following theorem, that our generalised construction with Drinfeld modules of higher rank has the same parameters.
\begin{theorem}
    Let $s,\ell$ be two integers such that $s+1 \leq \ell$. For $t,\delta$ two integers such that $t\geq 1$ and $\delta\geq 2$, consider a Drinfeld module $\phi$ of rank $r=t+\delta-1>0$. Then $\mathcal{C}(\phi,h)$ is a $[ m \ell r,\, m(s{+}1) t,\, \ell r{-}rs{-}t{+}1]$ code with rank-locality $(t,\delta)$. Furthermore, the code is optimal, that is, it attains the Singleton-type bound.
\end{theorem}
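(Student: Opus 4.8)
The strategy is to compute the three parameters $[n,k,d]$ of $\mathcal{C}(\phi,h)$ one by one, closely following the rank-$1$ argument of \cite[Theorem~3.4]{BDM24preprint} but keeping track of the rank~$r$ of $\phi$ throughout. The length is the easiest: by \cref{def:enc} the code sits inside $\Hom_{\Fq}(\phi[h],\Fqm)\simeq\Fqm^{\ell r}$, and since $\phi[h]\simeq\bigoplus_{i=1}^\ell\phi[T{-}a_i]$ with each summand of $\Fq$-dimension $r$ by \cref{prop:torsion} (recall $T{-}a_i$ is irreducible since $a_i\in\Fq$, and it is coprime to the $A$-characteristic), we get an ambient $\Fqm$-space of dimension $\ell r$, i.e.\ a matrix space of size $m\ell r$ over $\Fq$. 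Hypothesis~\eqref{hyp} guarantees $\phi[T{-}a_i]\subseteq\Fqm$, so evaluation genuinely lands in $\Fqm$ and the identification $\Hom_{\Fq}(\phi[T{-}a_i],\Fqm)\simeq\Fqm^r$ after a choice of basis is legitimate.

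For the dimension, the plan is to show $\enc$ is injective on $\mathcal{M}$, so that $k=\dim_{\Fqm}\mathcal{M}=(s{+}1)t$, whence $\dim_{\Fq}=m(s{+}1)t$. An element of $\mathcal{M}$ has the form $F=\sum_{k=0}^s f_k(\tau)\phi_T^k$ with $\deg_\tau f_k\le t-1$; viewing $\phi_T$ as an Ore polynomial of $\tau$-degree $r$, such an $F$ has $\tau$-degree at most $rs+t-1$, and the map $\mathcal{M}\to\Fqm\{\tau\}_{\le rs+t-1}$ is injective (this is exactly the base-$\phi_T$ expansion of \cref{prop:basismotive}, with remainders of degree $<r=t+\delta-1$; here one uses $t\le r$, i.e.\ $\delta\ge 2$). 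Now if $F|_{\phi[h]}=0$ then $F$ vanishes on all of $\phi[h]$, a Galois-stable $\Fq$-space of dimension $\ell r$; by \cref{prop:Orebijection} (or simply counting roots of the separable polynomial attached to $F$) this forces $\deg_\tau F\ge \ell r$. Combined with $\deg_\tau F\le rs+t-1<\ell r$ (using $s+1\le\ell$ and $t\le r$, so $rs+t-1\le r(\ell-1)+r-1<\ell r$), we conclude $F=0$. Hence $\enc$ is injective.

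For the minimum rank distance, the argument is again an Ore-polynomial root count. A nonzero $F\in\mathcal{M}$ restricted to $\phi[h]$ is, in the matrix picture, a block vector $(F|_{\phi[T-a_1]},\dots,F|_{\phi[T-a_\ell]})$; its rank equals $\dim_{\Fq}\bigl(\phi[h]/\ker(F|_{\phi[h]})\bigr)=\ell r-\dim_{\Fq}\ker(F|_{\phi[h]})$. Since $F$ is a nonzero Ore polynomial of $\tau$-degree at most $rs+t-1$, its kernel inside $\Fqm$ (a fortiori inside $\phi[h]$) has $\Fq$-dimension at most $rs+t-1$, giving $d_{\rk}\ge \ell r-rs-t+1$. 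Conversely one exhibits a codeword attaining this by choosing $F$ of $\tau$-degree exactly $rs+t-1$ whose kernel lies in $\phi[h]$ and has full dimension $rs+t-1$: take $F$ to be (a scalar multiple of) the Ore polynomial with constant term $1$ vanishing exactly on a suitable $\Fq[T]/(T-a_i)$-submodule of $\phi[h]$ of total dimension $rs+t-1$ — concretely, all of $\phi[T-a_1]\oplus\cdots\oplus\phi[T-a_s]$ plus a $t$-dimensional piece of $\phi[T-a_{s+1}]$; one must check this $F$ actually lies in $\mathcal{M}$, which follows because $\phi_{(T-a_1)\cdots(T-a_s)}$ right-divides it (its kernel contains the corresponding torsion) so $F=f\,\phi_{(T-a_1)\cdots(T-a_s)}$ with $\deg_\tau f\le t-1$, matching the shape defining $\mathcal{M}$. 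This sharp example is the step I expect to require the most care — ensuring the constructed polynomial genuinely has the prescribed membership in $\mathcal{M}$ rather than merely the right degree.

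Finally, optimality: substitute $n=\ell r$, $k=(s{+}1)t$, $\delta$, and locality parameter $t$ into the right-hand side of \eqref{eq:lr-sing-bound}, namely $n-k+1-\bigl(\lceil k/t\rceil-1\bigr)(\delta-1)$. Here $k/t=s{+}1$ is already an integer, so $\lceil k/t\rceil-1=s$, and the bound reads $\ell r-(s{+}1)t+1-s(\delta-1)=\ell r-st-t+1-s(\delta-1)=\ell r-t+1-s(t+\delta-1)=\ell r-t+1-sr$, which is exactly the minimum distance $\ell r-rs-t+1$ computed above. Hence the code meets the Singleton-type bound and is optimal. It remains to verify the $(t,\delta)$-rank-locality claim itself: for each column block $i$, take $\Gamma[i]$ to be the $r$ columns corresponding to $\phi[T-a_i]$ (so $|\Gamma[i]|=r=t+\delta-1$), and observe that the restriction $\mathcal{C}(\phi,h)|_{\Gamma[i]}$ is the image of $\mathcal{M}$ under evaluation on the single torsion piece $\phi[T-a_i]$; since the kernel of any nonzero $F\in\mathcal{M}$ meets $\phi[T-a_i]$ in a space of dimension at most $\deg_\tau F\bmod{(T-a_i)}$-considerations bounded by $r-1$ once $F$ does not vanish on that block — one gets $d_{\rk}(\mathcal{C}|_{\Gamma[i]})\ge\delta$. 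I would organize the write-up as: (1) length, (2) injectivity and dimension, (3) distance lower bound, (4) sharp codeword, (5) locality, (6) optimality via direct substitution.
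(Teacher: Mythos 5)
Most of your plan matches the paper's proof: the length count, the injectivity argument for the dimension (a nonzero Ore polynomial of $\tau$-degree $rs+t-1<(s{+}1)r\leq\ell r$ cannot kill all of $\phi[h]$), the kernel-dimension lower bound on the rank distance, and the final substitution into the Singleton-type bound are all exactly what the paper does. Your explicit sharp codeword is an unnecessary detour — once you have the $(t,\delta)$-locality, the upper bound $d_\rk\leq \ell r-rs-t+1$ comes for free from \eqref{eq:lr-sing-bound}, which you compute anyway in step (6); and as written it has an off-by-one (a kernel of dimension $rs+t-1$ needs a $(t{-}1)$-dimensional, not $t$-dimensional, piece of $\phi[T{-}a_{s+1}]$, consistent with $\deg_\tau(f\phi_g)=rs+t-1$).

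The genuine gap is in the locality step, which is the heart of the theorem. You bound the dimension of $\ker F\cap\phi[T{-}a_i]$ by $r-1$; that only yields $d_\rk(\mathcal C|_{\Gamma[i]})\geq 1$, not $\geq\delta=r-t+1$. What you need, and what the paper uses, is that $\phi_T$ acts on $\phi[T{-}a_i]$ as multiplication by the scalar $a_i\in\Fq$, so for $F=\sum_{k=0}^s f_k\phi_T^k$ and $x\in\phi[T{-}a_i]$ one has $F(x)=\sum_k f_k(a_i^k x)=\sum_k a_i^k f_k(x)$: the restriction of $F$ to the block is the restriction of the Ore polynomial $\sum_k a_i^k f_k$ of $\tau$-degree at most $t-1$. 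Hence the local code is contained in an $[r,t]_{q^m}$ Gabidulin code and has minimum rank distance at least $r+1-t=\delta$. Your phrase ``$\deg_\tau F\bmod(T{-}a_i)$-considerations'' gestures at this, but right-division of $F$ by $\phi_{T-a_i}$ only gives a remainder of degree $\leq r-1$; the collapse to degree $\leq t-1$ comes specifically from the eigenvalue action together with the shape of $\mathcal M$ (each $f_k$ having degree $\leq t-1$), and without it the $(t,\delta)$-locality — and therefore also the Singleton upper bound on $d_\rk$ — is not established.
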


\begin{proof}
    By construction, the elements in $\Fqm^{\ell r}$ are matrices of size $m \cdot \ell r$. As for the dimension, note that an Ore polynomial of degree $sr+t-1 < (s+1)r$ cannot be the zero map on the space $\Fqm^{\ell r}$ of $\Fqm$-dimension $\ell r \geq (s+1)r$, hence the encoding map is injective. Since $\mathcal M$  has $\Fqm$-dimension $(s+1)\cdot t$, the code has $\Fq$-dimension $m\cdot (s+1)\cdot t$. Now, observe that $\mathcal M$ is contained in the ring of Ore polynomials of degree at most $sr+t-1$. Therefore, $\mathcal{C}(\phi,h)$ is a subcode of a Gabidulin code of parameters $(\ell r,sr+t)$. Since Gabidulin codes are MRD, we obtain
    \begin{equation}\label{eq:lr-min-dist}
    d_\rk(\mathcal{C}(\phi,h))\geq \ell r-sr-t-1.
    \end{equation} 
    We now want to establish the rank-locality property according to \cref{def:ranklocality}.
    For each $i \in \{1, \ldots, r\}$, let $\Lambda(i)$ be the set of column indices corresponding to the summand $\Hom_{\Fq}\big(\phi[T{-}a_i],\Fqm\big) \simeq \Fqm^r$.
    We have $|\Lambda(i)| = \dim \phi[T{-}a_i] = r = t + \delta - 1$.
    Besides, for any $f\in\mathcal{M}$ and any $x\in \phi[T{-}a_i]$, we have
    \[f|_{\phi[T-a_i]}(x)=\sum_{k=0}^s f_k(a_i^k x)=\sum_{k=0}^s a_i^k f_k(x),\]
    where $f_k(x)\in\Fqm\{\tau\}_{\leq t-1}$. We conclude that $\mathcal{C}(\phi,h)|_{\Lambda(i)}$ is included in a $[r,t]_{q^m}$ Gabidulin code, hence, it has minimum distance at least $r+1-t=\delta$.
    Now, if $\gamma \in \{1, \ldots, \ell r\}$ is a column index, we set $\Gamma(\gamma) := \Lambda(i)$ for the unique index $i$ such that $\gamma \in \Lambda(i)$.
    By what precedes, $\mathcal{C}(\phi,h)|_{\Gamma(\gamma)}$ has minimum distance at least $\delta$, proving that $\mathcal{C}(\phi,h)$ has $(t,\delta)$ rank-locality.
    From the Singleton-type bound of Equation \eqref{eq:lr-sing-bound} we entail that
    \[d_\rk(\mathcal{C}(\phi,h))\leq \ell r - rs-t+1,\]
    which, together with Equation~\eqref{eq:lr-min-dist}, finally gives $d_\rk(\mathcal{C}(\phi,h))= \ell r - rs-t+1$.
\end{proof}

A key ingredient in the construction of locally recoverable codes in any metric is the choice of a \emph{good} polynomial respecting some conditions which allow the recovery of the lost pieces of data (see, for instance, \cite[Section A]{TamoBarg14}). The present construction of LR codes in the rank metric makes no exception. 
Here, the evaluation points of \cref{def:enc} are carefully chosen by selecting a polynomial $h\in\Fq[T]$ so that the $h$-torsion is defined over $\Fqm$, to satisfy \eqref{hyp}. For Drinfeld modules of rank $1$, the existence of such a polynomial $h$ is guaranteed using an effective version of Dirichlet's Theorem \cite[\S~5]{BDM24preprint}. It would be interesting to study under which conditions such a polynomial $h$ exists when generalising the construction to Drinfeld modules of any rank. This question could be approached by developing an explicit Chebotarev density theorem for Drinfeld modules, using the theory of $L$-series presented in~\cref{sssec:arith:Lseries} as a main tool.

    \section*{Acknowledgements}

    This presentation is inspired by the talks given in the second edition of the \textit{Coding theory, cryptogrAphy, arIthmetic geometry, and comPuter Algebra Itinerant} symposium\footnote{CAIPI:\, \url{https://caipi_symposium.pages.math.cnrs.fr/page-web/index-en.html}}.
    The authors are very grateful to the sponsors of this edition, namely the Mathematics Institute of Marseille (I2M – UMR7373) and the FRUMAM research federation (FR 2291 - CNRS).
    CA was supported by ANR {\it Jinvariant} (ANR-20-CE40-0003) and ANR \emph{PadLEfAn} (ANR-22-CE40-0013).
    EB was supported by the grant ANR-22-CPJ2-0047-01.
    XC and AL are grateful to the ANR \emph{PadLEfAn} (ANR-22-CE40-0013) for supporting several ``research in pairs'' devoted to code development.
    AL gratefully acknowledges that this research was supported in part by the Pacific Institute for the Mathematical Sciences.
    JN is supported by the French government \textit{Investissements d’Avenir} program ANR-11-LABX-0020-01.
    FP was supported by ANR {\it Jinvariant} (ANR-20-CE40-0003), and is grateful to the University of Bordeaux for the hospitality.
    This work was partially funded by the grant ANR-21-CE39-0009-BARRACUDA. 

	\bibliography{biblioMD}
	\bibliographystyle{alpha}
	
\end{document}